\documentclass[11pt,reqno,a4paper]{amsart}
\usepackage[T1]{fontenc}
\usepackage{lmodern}
\usepackage{mathtools,amssymb,tikz-cd}
\usepackage[margin=1.05in]{geometry}
\usepackage{microtype}
\usepackage{enumitem}
\usepackage{needspace}
\usepackage[colorlinks=true,linkcolor=blue,citecolor=blue,urlcolor=blue]{hyperref}
\numberwithin{equation}{section}
\newtheorem{theorem}{Theorem}[section]
\newtheorem{proposition}[theorem]{Proposition}
\newtheorem{lemma}[theorem]{Lemma}
\newtheorem{corollary}[theorem]{Corollary}
\theoremstyle{definition}
\newtheorem{definition}[theorem]{Definition}
\newtheorem{notation}[theorem]{Setting}
\theoremstyle{remark}

\DeclareMathOperator{\Pic}{Pic}
\DeclareMathOperator{\Spec}{Spec}
\DeclareMathOperator{\Spf}{Spf}
\DeclareMathOperator{\Hom}{Hom}

\DeclareMathOperator{\im}{im}
\DeclareMathOperator{\ord}{ord}
\DeclareMathOperator{\divisor}{div}
\DeclareMathOperator{\Supp}{Supp}

\newcommand{\OO}{\mathcal O}
\newcommand{\II}{\mathcal I}
\newcommand{\FF}{\mathcal F}
\newcommand{\JJ}{\mathcal J}
\newcommand{\MM}{\mathcal M}
\newcommand{\LL}{\mathcal L}
\newcommand{\YY}{\mathfrak Y}
\newcommand{\Gm}{\mathbb G_m}
\newcommand{\Ga}{\mathbb G_a}
\newcommand{\Q}{\mathbb Q}
\newcommand{\Z}{\mathbb Z}
\newcommand{\RR}{\mathbb R}
\newcommand{\sheafHom}{\mathcal Hom}
\newcommand{\sheafExt}{\mathcal Ext}

\setlist[enumerate,1]{label=\textup{(\roman*)},leftmargin=*,itemsep=3pt}
\title[Abundance for threefolds in positive characteristic]{Abundance for threefolds in positive characteristic}
\author{Fabio Bernasconi, Jakub Witaszek and Zheng Xu}
\date{}
\subjclass[2020]{14E30, 14J30, 14B20, 14F17}
\keywords{Abundance conjecture, log canonical pair, positive characteristic}
\hypersetup{pdftitle={Abundance for threefolds in positive characteristic},
  pdfauthor={Zheng Xu}}

\address{Dipartimento di Matematica ``Guido Castelnuovo'', SAPIENZA Universita di Roma, 
Piazzale Aldo Moro 5, I-00185, Roma}
\email{fabio.bernasconi@uniroma1.it}

\address{Northwestern University, Department of Mathematics, Lunt Hall, 2033 Sheridan Road, Evanston, IL 60208, USA}
\email{jakub.witaszek@northwestern.edu}

\address{Department of Mathematics, Southern University of Science and Technology (SUSTech), 1088 Xueyuan Rd, Nanshan District, Shenzhen, Guangdong, China, 518055}
\email{xuz6@sustech.edu.cn}

\begin{document}
\begin{abstract}
We prove the abundance conjecture for threefolds in characteristic $>42$.
\end{abstract}
\maketitle
\setcounter{tocdepth}{1}
\tableofcontents

\section{Introduction}

One of the central remaining problems in the study of positive characteristic birational geometry in dimension three is the abundance conjecture.

\smallskip
\noindent\textbf{Abundance conjecture.}
\emph{Let $(X,B)$ be a projective lc pair of dimension $3$ over a perfect
field $k$ of characteristic $p>0$. If $K_X+B$ is nef, then it is
semi-ample.}

\smallskip
In characteristic $0$, the abundance conjecture for threefolds was proved
in \cite{Miy88,Kaw92,KMM94} (see also \cite{Kol92}). In characteristic $p>3$, the conjecture has been
proven when the numerical dimension $\nu(K_X+B)\ne1$ (see \cite{XZ19,Xu23}). 
In mixed characteristic, the abundance conjecture for
arithmetic threefolds whose closed points have residue characteristic
greater than five was proved in \cite{BBS24}.

In this paper, we prove the abundance conjecture for threefolds in
characteristic $p>42$.

\begin{theorem}\label{thm:abundance}
Let $(X,B)$ be a projective lc pair of dimension $3$ over a perfect field
$k$ of characteristic $p>42$. If $K_X+B$ is nef, then it is semi-ample.
\end{theorem}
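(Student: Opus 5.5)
By \cite{XZ19,Xu23} the statement is already known when $\nu(K_X+B)\neq 1$ (and $p>3$), so I will reduce to the case $\nu(K_X+B)=1$. The first step is the standard reductions. I pass to a dlt modification, which exists in characteristic $p>5$ by the three-dimensional MMP, and replace $(X,B)$ by a $\Q$-factorial dlt pair; semi-ampleness descends, since the pullback of $K_X+B$ is the new log canonical divisor. If $\kappa(K_X+B)\geq 0$, I would then conclude by the known results: log nonvanishing plus $\nu=1$ gives $\kappa=1$ via a Kawamata-type argument, using Keel's theorem and the fact that semi-ampleness in positive characteristic can be checked on the exceptional locus. So the heart of the proof is to show $\kappa(K_X+B)\ge 0$, i.e.\ nonvanishing, when $\nu=1$.

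For nonvanishing I would split according to whether $\lfloor B\rfloor\neq 0$. If $S=\lfloor B\rfloor\neq0$, adjunction gives a surface slc pair $(S,B_S)$ with $K_S+B_S$ nef. Abundance for surfaces then makes $(K_X+B)|_S$ semi-ample, and I would lift sections via a Kawamata--Viehweg/Frobenius-type extension argument; this is where a bound on $p$ for vanishing on surfaces should be needed. If $\lfloor B\rfloor=0$, so the pair is klt, I run an MMP on $K_X$ (or perturb $B$) and use the structure of $\nu=1$. A general fibre-like argument via the Albanese or a suitable fibration should reduce us to the case where $X$ is rationally chain connected or uniruled. In the uniruled case I take an MRC/Mori fibre structure $X\to Z$ with $\dim Z\le 2$ and apply the canonical bundle formula. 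Its positive-characteristic version requires the general fibre to be tamely behaved, and this is where I expect $p>42$ to come in: for a fibration whose general fibre is a curve or surface of Fano type, the relevant boundedness (the index $42$ appearing as in the Hurwitz/$1/42$ bound for log canonical thresholds of curves, $\frac1{2}+\frac13+\frac17$) ensures that wild phenomena, such as inseparable or non-reduced fibres and nontrivial moduli part behaviour, cannot occur.

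The main obstacle is the klt non-uniruled case with $\nu=1$. There I would aim to show that $K_X+B\equiv$ an effective divisor by proving that the numerical class restricted to a general surface has sections. First I would try to produce a nonzero section of $m(K_X+B)$ using $\chi$ computations: Riemann--Roch on a threefold gives $\chi(\OO_X(m(K_X+B)))$ linear in $m$, with leading term involving $c_2$-type intersections. I would then use the Bogomolov/Miyaoka-type inequality, which holds in characteristic $p$ when $X$ is not uniruled, after controlling Frobenius-destabilising behaviour, to force a positive coefficient. When the coefficient vanishes, I would fall back on a Bertini/foliation argument to produce a fibration whose fibres are contracted by $K_X+B$ numerically. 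Making these Frobenius-semistability estimates effective, with the explicit characteristic bound matching $42$, is the step I expect to be hardest.
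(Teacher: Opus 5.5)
\textbf{There is a genuine gap.} You put the difficulty in the wrong place, and the step you treat as known is exactly the open case.

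You claim that once $\kappa(K_X+B)\ge0$, nonvanishing plus $\nu=1$ gives $\kappa=1$ by a Kawamata-type argument together with Keel's theorem. Neither tool does this in characteristic $p$:
\begin{enumerate}
\item Keel's criterion reduces semiampleness of a nef $L$ to its exceptional locus $\mathbb E(L)$. When $L$ is not big (here $L^3=0$), $X$ itself belongs to that locus, so $\mathbb E(L)=X$ and the criterion says nothing.
\item Kawamata-type arguments rest on Kawamata--Viehweg vanishing, which fails in positive characteristic.
\end{enumerate}
Conversely, the step you call the heart of the proof is already available. After the cases $\nu\ne1$ and $n(K_X+B)\le2$ are handled by \cite{XZ19,Xu23}, Proposition~\ref{prop:reduction} produces a model with $K_Y+B_Y\sim_{\Q}\sum d_iS_i$, where $d_i>0$ and the $S_i$ are disjoint prime divisors. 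It does this by applying \cite[Theorem 4.4]{Xu24} to a $(K_X+B)$-trivial $K_X$-MMP. So the Bogomolov/Miyaoka estimates, MRC fibrations and canonical bundle formula in your plan address a settled step. Your account of where $p>42$ enters is also not where the bound is used.

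What remains, and what your proposal leaves untouched, is $\kappa=0$, $\nu=1$: showing that each $S_i$ is semiample. Here $L|_{S_i}\equiv0$, and slc surface abundance gives a Cartier multiple $m_iS_i$ with $\OO(m_iS_i)|_{S_i}$ trivial. By Totaro's criterion (Lemma~\ref{lem:principal-parts}) you then need $\OO(E)|_E\simeq\OO_E$ for some Cartier $E=lS_i$. That is, a trivialization must be extended through all the infinitesimal thickenings $S\subset 2S\subset\cdots$.

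Your suggested ``Kawamata--Viehweg/Frobenius-type extension'' cannot do this, for two reasons:
\begin{enumerate}
\item $m(K_X+B)-S$ is not of the form $K_X+(\text{nef and big})$.
\item The obstructions lie in groups such as $H^1(S,\OO_S(-jS))$, and these are genuinely nonzero when $h^1(\OO_S)=2$. This happens for abelian surfaces, and for slc surfaces with an elliptic cycle conductor or a torus dual complex.
\end{enumerate}

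This extension problem is the entire content of the paper (Corollary~\ref{cor:local-canonical-relation}). The argument runs as follows:
\begin{enumerate}
\item Take tame formal cyclic covers, made possible by prime-to-$p$ index bounds for log Calabi--Yau surfaces.
\item Reduce to a cyclic group of order $R\le12$ acting faithfully on $H^1$.
\item Derive explicit boundary formulas for the obstruction classes from the relation $K_X+S\sim_{\Q}aS$.
\item Use a length-two Witt vector lifting to obtain contradictory congruences.
\end{enumerate}
The hypothesis $p>42$ is consumed there, through the index bounds and $2R<p$, not in any tameness statement for fibrations. Without an argument for this extension problem, your proposal does not prove the theorem.
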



\subsection*{Outline of the proof}

The known cases of abundance reduce Theorem~\ref{thm:abundance}
to the numerical dimension $\nu(K_X+B)=1$ and the nef dimension $n(K_X+B)=3$.
By Proposition~\ref{prop:reduction}, after birational
modifications we may assume that $(X,B)$ is $\Q$-factorial lc,
$X\setminus B$ is terminal, and
\begin{equation}\label{eq:intro-abundance-reduction}
 B=\coprod_{i=1}^t S_i,\qquad
 L:=K_X+B\sim_{\Q}\sum_{i=1}^t d_iS_i,\qquad
 d_i>0,\qquad \nu(L)=1.
\end{equation}
Here $L$ is nef and the $S_i$ are prime divisors.
The reduction preserves the Iitaka and numerical dimensions.

The main technical result is
Corollary~\ref{cor:local-canonical-relation}.
It proves semiampleness of a prime effective $\Q$-Cartier
divisor $S$ on a normal projective threefold $X$
provided that, on a neighbourhood $U$ of $S$:
\begin{equation}\label{eq:intro-local-hypotheses}
 \begin{gathered}
 X\text{ is klt},\qquad (X,S)\text{ is lc},\qquad
 (K_X+S)|_U\sim_{\Q}aS|_U,\qquad a\in\Q_{>0},\\
 mS\text{ is Cartier on }X,\qquad
                  \OO_X(mS)|_S\simeq\OO_S
 \end{gathered}
\end{equation}
for some integer $m>0$. Theorem~\ref{thm:abundance} follows readily from this result   (see Lemma~\ref{lem:abundance-component}  and Section~\ref{sec:abundance} for details).


\smallskip
It suffices to show Corollary~\ref{cor:local-canonical-relation}.
By Totaro's criterion \cite{Tot09}, it suffices to find and integer $l>0$ such that
\begin{equation}\label{eq:intro-finite-target}
 E=lS\text{ Cartier},\qquad \OO_X(E)|_E\simeq\OO_E.
\end{equation}
large infinitesimal neighbourhood.

\smallskip
Note that $S$ is a slc Calabi-Yau surface. Denote the formal completion of $X$ along $S$ by $\widehat X_S$. As an application of the bounds on the global indices of log Calabi--Yau surfaces the authors proved in \cite{BWX26}, we can take cyclic covers of degree prime
to $p$ on $\widehat X_S$ whose reduced boundary $A$ satisfies
$\omega_A\simeq\OO_A$ and is Gorenstein slc.
On such a formal cover $\YY$, write
\[
 \II_j=\OO_{\YY}(-jA),\qquad
 \FF_j=\II_j/\II_{j+1},\qquad \MM=\omega_{\YY}(A).
\]

If the cohomology groups $H^1(A,\FF_j)$ vanish,
the required trivialization extends. For instance, it is the case when $A$ is a lc but not klt  surface. In the remaining case,
after a further tame cover there is an integer $c>0$ such that
\[
 \JJ:=\II_c\text{ is invertible},\quad
 H^*(A,\FF_j)=0\ (0<j<c),\quad
 D_\ell:=V(\JJ^\ell),\quad D:=D_1,
\]
and
\begin{equation}\label{eq:intro-cup}
 \begin{gathered}
 \JJ|_D\simeq\OO_D,\qquad
 \Pic^0_{D/k}\text{ is smooth semiabelian},\\
 V:=H^1(D,\OO_D),\qquad \dim V=2,\qquad
 \bigwedge^2V\xrightarrow{\sim}H^2(D,\OO_D)=k.
 \end{gathered}
\end{equation}
We can further assume that the Galois group $G$ of the formal cover acts faithfully on $V$, and each of its characters has order  at most $12$. This case is achieved by careful computations of obstruction classes (see Section \ref{sec:connecting} and \ref{sec:finite}).

\smallskip

\subsection*{Organization}

Section~\ref{sec:divisorial} fixes the notation and reviews
preliminary results. 
Section~\ref{sec:surface-cover} proves a semi-ampleness criterion needed for the slc Calabi--Yau boundary. Sections~\ref{sec:connecting}--\ref{sec:canonical-relation}
prove the local semiampleness criterion, and
Section~\ref{sec:abundance} proves Theorem~\ref{thm:abundance}.
Appendix~\ref{app:surface} treats normal surfaces and their
divisorial sheaves. Appendix~\ref{app:models} treats nonnormal
surfaces and their conductors. 
Appendix~\ref{app:witt} constructs
some cohomology classes which is used for Proposition \ref{prop:comparison-class}. The bounds on the global indices of log Calabi--Yau surfaces used here are proved
in the companion paper \cite{BWX26} that we will publish independently in the nearby future; their application
to the ambient threefold is explained in
Section~\ref{sec:ambient-index}.

\subsection*{Acknowledgements}
We thank Jefferson Baudin, Bhargav Bhatt, J\'anos Koll\'ar, Wenhao Ou, Zsolt Patakfalvi, Burt Totaro, Lei Zhang, and Chenyang Xu for helpful discussions related to the content of the paper.

Fabio Bernasconi was supported by the grant PZ00P2-21610 from SNSF, Jakub Witaszek is supported by NSF CAREER grant 2540921 and NSF grant 2401360. Zheng Xu is supported by start-up and matching funds from the Southern University of Science and Technology (SUSTech), and he would like also thank Zhiyu Tian and Wenhao Ou for support over the past years.

\subsection*{AI disclosure}

We started the project a couple of years before the advent of AI.
The main ideas of using the boundedness of indices of log Calabi--Yau surface pairs, the evaporation technique, $F$-splittings, and the classification of slc surfaces emerged from our collaboration, without any input from AI models.
In particular, this settles the case where the slc Calabi--Yau boundary $A$ has zero irregularity, leaving open the irregular case.

In July 2026, we first used OpenAI's ChatGPT 5.6 to solve a very special case of Corollary \ref{cor:local-canonical-relation}, in which $S$ is a bi-elliptic surface and $\OO_X(S)|_S$ is a $2$-torsion line bundle.
We then obtained Proposition \ref{prop:comparison-class} by working interactively with ChatGPT 5.6.
ChatGPT was also used to help write this draft of the article.

We decided to upload this (very) preliminary version of the article following the release of OpenAI's list of solved problems in mathematics, which claims to solve the abundance conjecture for threefolds in characteristic $p>3$. 
We will continue working on the draft and update it, together with its companion paper, in the near future.



\section{Preliminaries}\label{sec:divisorial}

\subsection{Notation and conventions}

Unless another field is specified, $k$ is algebraically closed of
characteristic $p>42$. 

\noindent $\bullet$ All schemes are excellent and noetherian.
A variety is an integral separated scheme of finite type over a
field. A surface is allowed to be reduced and reducible.
Boundaries in threefold pairs are $\Q$-divisors.

\noindent $\bullet$ For the basic knowledge and notions in birational geometry, such as pairs and singularities, we refer to \cite{KM98}. Boundaries in threefold pairs are $\Q$-divisors.
The nef dimension $n$ is used as in \cite[\S2.1]{Xu23}.

\noindent $\bullet$ A reduced surface $A$ is
demi-normal if it is $S_2$ and has only ordinary double points in
codimension one. It is semi-log canonical (slc) if, in addition,
$K_A$ is $\Q$-Cartier and $(A^\nu,C^\nu)$ is lc, where
$\nu:A^\nu\to A$ is its normalization and $C^\nu$ is the
conductor divisor.
Explicitly, the conductor ideal is
$\operatorname{Ann}_{\OO_A}(\nu_*\OO_{A^\nu}/\OO_A)$,
and it defines a curve $C\subset A$ and its inverse image
$C^\nu\subset A^\nu$.
The normalization of $C^\nu$ has an involution which
interchanges the two branches over the generic double
locus. In the elliptic-conductor case $C^\nu$ is already
smooth, so the involution acts on $C^\nu$ itself.
A superscript $-$ denotes its anti-invariant summand.
We call such a surface Gorenstein when it is
Cohen--Macaulay and its dualizing sheaf is invertible.

\noindent $\bullet$ Standard coefficients are the numbers in
$\{1\}\cup\{1-1/r:r\in\Z_{>0}\}$.

\noindent $\bullet$ For an integral Weil divisor $L$ on a normal scheme, $\OO(L)$
denotes its rank-one reflexive divisorial sheaf. Reflexive powers
and pullbacks of a coherent sheaf $\FF$ are denoted by $\mathcal F^{[r]}$ and
$f^{[*]}\mathcal F=(f^*\mathcal F)^{**}$.
Tensor products with invertible sheaves are sometimes written
without the symbol $\otimes$. On a proper surface, we write
\[
 h^*(\mathcal F)=(h^0(\mathcal F),h^1(\mathcal F),h^2(\mathcal F)),
 \qquad
 H^*(\mathcal F)=0
 \ \Longleftrightarrow\
 H^i(\mathcal F)=0\text{ for every }i.
\]
A sheaf with this vanishing is called acyclic.

\noindent $\bullet$ The symbol $\Pic^0_{Z/k}$ denotes the identity component of the
Picard group scheme, not its reduction.
A semiabelian variety is a smooth connected commutative group
which is an extension of an abelian variety by a torus.
A vector group associated to a finite-dimensional $k$-space $W$
is the additive group of its affine space.
In an exact sequence of group schemes we also write $W$ for
this vector group.

\noindent $\bullet$ All finite groups acting on the covers in this paper have order prime to $p$.
An equivariant sheaf is equipped with a compatible action of
the group on the sheaf, also called a linearization.
For a character $\chi:G\to k^*$ and a representation $W$, set
\[
 W_\chi=\{w\in W:g(w)=\chi(g)w\text{ for all }g\in G\}.
\]
Taking invariants, and taking character spaces for an abelian
group, are exact. The average $|G|^{-1}\sum_{g\in G}g$ is the
projection to invariants.

\noindent $\bullet$ If $T\subset X$ is a proper closed subscheme, $\widehat X_T$
denotes the formal completion of $X$ along $T$.

\noindent $\bullet$ For a square-zero closed immersion $Z\hookrightarrow Z'$ with
ideal $\mathcal K$, the sequence
\[
 1\longrightarrow1+\mathcal K\longrightarrow\OO_{Z'}^*
       \longrightarrow\OO_Z^*\longrightarrow1,\qquad
 1+\mathcal K\simeq(\mathcal K,+),
\]
describes both extension of invertible sheaves and extension of
their trivializations \cite[Tag 0C6Q]{Sta}.
The obstruction to extending an invertible sheaf lies in
$H^2(\mathcal K)$. The obstruction to extending a chosen
trivialization of a fixed invertible sheaf lies in
$H^1(\mathcal K)$.
In particular, the kernel of restriction on Picard groups is
a quotient of $H^1(\mathcal K)$.
Induction shows that the Picard kernel of any finite nilpotent
thickening in characteristic $p$ is $p$-primary.

\noindent $\bullet$ We use Serre duality with its canonical trace
\cite[Tag 0FVU]{Sta}. For a proper Cohen--Macaulay surface
$Z$ and any coherent sheaf $\mathcal H$,
\[
 H^2(Z,\mathcal H)^*\simeq\Hom_Z(\mathcal H,\omega_Z).
\]
This top-degree assertion does not require $\mathcal H$ to be
Cohen--Macaulay.

\subsection{MMP reduction step}\label{sec:mmp-author}
First, we recall the theory of MMP for projective lc threefold pairs
over a perfect field of characteristic $p>3$.

\begin{theorem}[\cite{HNT19}, Theorem 1.1, and \cite{HW22a}]
\label{thm:mmp}
Let $(X,B)$ be a lc threefold pair over a perfect field $k$ of
characteristic $>3$ and $f:X\to Y$ a projective surjective morphism
to a quasi-projective variety. If $K_X+B$ is pseudo-effective
(resp.\ not pseudo-effective) over $Y$, then we can run a
$(K_X+B)$-MMP to get a log minimal model (resp.\ Mori fibre space)
over $Y$.
\end{theorem}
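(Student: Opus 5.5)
This theorem is quoted from the literature, so the plan is to assemble it from the known pieces of the three-dimensional MMP in characteristic $p>3$ rather than prove anything new. First reduce to the case where $k$ is algebraically closed. Since $k$ is perfect, $X_{\bar k}=X\times_k\bar k$ is normal, $(X_{\bar k},B_{\bar k})$ is still lc, and pseudo-effectivity of $K_X+B$ over $Y$ is unchanged. One would then run a $\mathrm{Gal}(\bar k/k)$-equivariant MMP: contract at each step a Galois-invariant extremal face rather than a single ray, so that every contraction and flip descends to $k$. This is the framework of \cite{HW22a}, which handles non-closed perfect (indeed more general) base fields. The descent uses that the cone theorem yields finitely many $(K+B)$-negative rays away from any ample perturbation, so a Galois orbit of rays spans a face that can be contracted.

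Over $\bar k$, reduce the lc case to the dlt (in fact $\Q$-factorial) case. Take a dlt modification $g\colon (X',B')\to(X,B)$ with $K_{X'}+B'=g^*(K_X+B)$. Its existence in dimension three for $p>3$ follows from the klt MMP of Hacon--Xu, Birkar and Birkar--Waldron, applied to a log resolution, which exists for threefolds by Cutkosky and Cossart--Piltant. A log minimal model (respectively Mori fibre space) of $(X',B')$ over $Y$ is one for $(X,B)$ over $Y$. For $\Q$-factorial dlt pairs one needs four ingredients:
\begin{enumerate}
\item the cone and contraction theorems, which follow from the klt case by perturbing $B'$ to $B'-\epsilon\lfloor B'\rfloor+\epsilon\Delta$ for a suitable ample $\Delta$;
\item existence of dlt flips, via reduction to pl-flips and Hacon--Xu's argument;
\item termination of the flips, by special termination along $\lfloor B'\rfloor$ together with the difficulty function in dimension three;
\item the dichotomy that a pseudo-effective $K+B$ leads to a minimal model, while a non-pseudo-effective one leads to a Mori fibre space after scaling an ample divisor.
\end{enumerate}
The genuinely lc refinements, where contractions are only known to exist as algebraic spaces and the base-point-free theorem on lc centres is needed, are exactly what \cite[Thm.~1.1]{HNT19} supplies.

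The main obstacle is contraction of extremal rays meeting the non-klt locus, i.e.\ the lc base-point-free theorem. In positive characteristic Kodaira vanishing fails, so Kawamata--Viehweg cannot be used. One instead proves semiampleness of $K+B+A$ restricted to $\lfloor B\rfloor$ via Keel's theorem on the exceptional locus, together with abundance for slc surfaces. This is where $p>3$ (through surface Frobenius-splitting and vanishing results) and \cite{HNT19} enter essentially. In the writeup I would cite this step rather than reprove it.
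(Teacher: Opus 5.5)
Your strategy of assembling the statement from the literature is also what the paper does: it gives no argument and simply cites \cite[Theorem 1.1]{HNT19} together with \cite{HW22a}. The gap is that your assembly does not reach the stated range $p>3$, because it never handles characteristic $5$.

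Every result you invoke is proved only for $p>5$. The klt/dlt threefold MMP of Hacon--Xu, Birkar and Birkar--Waldron (pl-flips and flips, dlt modifications, termination with scaling, Mori fibre spaces) assumes $p>5$. The reason is that the construction of pl-flips uses strong $F$-regularity of klt surface singularities on the plt centre, which Hara proved for $p>5$ and which fails in general for smaller $p$. The log canonical MMP of \cite{HNT19} is built on that MMP and is likewise a $p>5$ result. So your claim that dlt modifications exist ``for $p>3$'' by these works is false at $p=5$, and nothing else in your outline covers that case. The missing ingredient is Hacon--Witaszek's extension of the threefold MMP to characteristic five. That is what the paper cites \cite{HW22a} for: once the $\Q$-factorial dlt MMP is available at $p=5$, the log canonical arguments give the statement for all $p>3$.

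Relatedly, you misassign the roles of the citations and misplace where the characteristic bound enters. You credit \cite{HW22a} with the descent from $\bar k$ to a perfect field, and you locate the restriction on $p$ in the lc base-point-free step. In fact, what \cite{HW22a} supplies here is the lower characteristic bound. Keel's theorem holds in every positive characteristic, so the bound enters through the klt MMP, via the existence of pl-flips.

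There is also a smaller inaccuracy in your descent step. The span of a Galois orbit of extremal rays of $\overline{\mathrm{NE}}(X_{\bar k})$ need not be a face; for example, two opposite edges of a square cone are swapped by a symmetry but do not span a face. What one contracts over $\bar k$ is the preimage in $\overline{\mathrm{NE}}(X_{\bar k})$ of a $(K_X+B)$-negative extremal ray of $\overline{\mathrm{NE}}(X)$. This is the face cut out by the Galois-invariant pullback of a supporting divisor of that ray.
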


Moreover, we define a partial MMP over algebraically closed fields
of characteristic $p>3$.

\begin{definition}[\cite{Xu23}, Definition 2.23]\label{def:partial-mmp}
Let $(X,B)$ be a $\Q$-factorial projective lc threefold pair over
an algebraically closed field $k$ of characteristic $>3$ and $A$
be an effective $\Q$-divisor such that $(X,B+A)$ is lc and
$K_X+B+A$ is nef. We can run a partial $(K_X+B)$-MMP with scaling
of $A$ as follows.

Let $\lambda_0=\inf\{t:K_X+B+tA\text{ is nef}\}$. If $\lambda_0<1$,
then we stop. Otherwise, there exists a $(K_X+B)$-negative
extremal ray $R_0$ which satisfies $(K_X+B+A)\cdot R_0=0$.
We contract this extremal ray. If the contraction is a Mori fibre
contraction, we stop. Otherwise let $\mu_0:X\dashrightarrow X_1$
be the divisorial contraction or flip. Repeat this process for
$(X_1,\mu_{0*}B)$, $\mu_{0*}A$ and so on.

We call this construction a $(K_X+B)$-MMP which is
$(K_X+B+A)$-trivial.
\end{definition}

\begin{proposition}\label{prop:reduction}
Let $(X,B)$ be a lc projective threefold pair over an algebraically
closed field $k$ of characteristic $>3$. Assume that $K_X+B$ is nef.
Then we have either the nef dimension $n(K_X+B)\leq2$, or there
exist a $\Q$-factorial lc projective threefold pair $(Y,B_Y)$
such that the following conditions hold:
\begin{enumerate}[label=\textup{(\arabic*)}]
\item $B_Y$ is reduced, and $(Y\setminus B_Y)$ is terminal,
\item $K_Y+B_Y\sim D_Y$ for an effective $\Q$-divisor $D_Y$
with $\Supp D_Y=B_Y$,
\item $K_Y+B_Y$ is nef,
\item $\kappa(K_X+B)=\kappa(K_Y+B_Y)$ and
$\nu(K_X+B)=\nu(K_Y+B_Y)$,
\item any connected component $S$ of $B_Y$ is integral.
\end{enumerate}
\end{proposition}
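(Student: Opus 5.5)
We argue by cases on the nef dimension. If $n(K_X+B)\le 2$ there is nothing to prove, so assume $n(K_X+B)=3$. We first pass to a dlt model, then run a partial MMP to make $K+B$ effective and supported on the boundary, and finally separate the boundary components.

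\textbf{Step 1: a dlt model with terminal complement.} We take a dlt modification $g\colon(X',B')\to(X,B)$ with $K_{X'}+B'=g^*(K_X+B)$, $X'$ $\Q$-factorial and $(X',B')$ dlt (available for $p>3$ by \cite{HNT19,HW22a}). Next we extract all divisors of discrepancy $\le 0$ over $X'\setminus\lfloor B'\rfloor$ (finitely many, since the pair is klt there), putting them into the boundary with coefficient one; this replaces $B'$ by a reduced divisor after also raising the fractional coefficients to $1$. This changes $K+B$ by an effective divisor $F$. We then run a $(K_{X'}+B'+F)$-MMP over $X$. This is the standard argument that makes $F$ exceptional-negative and contracts it, using negativity in the spirit of \cite[\S2]{Xu23}. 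The result is a pair $(Y_0,B_0)$ with $B_0$ reduced, $Y_0\setminus B_0$ terminal, and $K_{Y_0}+B_0$ nef with the same $\kappa$ and $\nu$. Rounding up the coefficients is what needs care. Raising coefficients may destroy nefness, so instead we run a $(K+\lceil B'\rceil)$-MMP with scaling. The key input is that $n=3$ forces $K+B$ to be big off nowhere: since every curve through a very general point is $K+B$-positive, the MMP steps are $(K+B)$-trivial (Definition~\ref{def:partial-mmp}), which preserves $\kappa$ and $\nu$.

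\textbf{Step 2: effectivity and support.} By the known nonvanishing for lc threefolds in characteristic $>3$ (\cite{XZ19,Xu23}), $K_{Y_0}+B_0\sim_\Q D\ge0$. We write $D=D'+D''$ with $\Supp D'\subset B_0$ and $D''$ having no common component with $B_0$. If $D''\neq0$, we add $\epsilon D''$ to the boundary and run a $(K+B_0)$-MMP with scaling of $\epsilon D''$, which is $(K+B_0+\epsilon D'')$-trivial. We then repeat the extraction trick: for the new pair $(Y_0,B_0+\operatorname{Supp}D'')$ we have $K+B_0+\operatorname{Supp}D''\sim_\Q D+\operatorname{Supp}D''$, whose support equals the boundary. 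We must show that nefness and the invariants survive. For this we use the standard inequality $\kappa(K+B+tD'')=\kappa(K+B)$ for effective $D''$ supported in $\Supp D$, together with the fact that the $\epsilon$-scaled MMP contracts only curves on which $K+B$ is zero. The latter uses $n(K+B)=3$ to ensure that no Mori fibre space occurs, since a Mori fibre space would give a covering family of $K+B$-trivial curves. This yields (1)--(4).

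\textbf{Step 3: connected components.} Condition (5) requires each connected component of $B_Y$ to be irreducible. If two components $S_1,S_2$ meet along a curve $C$, we again run a partial MMP with scaling: we decrease the coefficient of one of them and use $K+B\sim_\Q\sum d_iS_i$. Choosing $S_1$ with minimal $d_1$ among the components meeting others, the divisor $K+B-\epsilon(\ldots)$ is negative on curves in $S_1\cap S_2$. The resulting MMP either contracts $S_1$ or separates the components; termination holds for threefolds in characteristic $>3$.

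I expect the main obstacle to be precisely this separation step. One must ensure that the MMP does not create new intersections and does not change $\nu$, and that the process terminates, for instance by decreasing the number of components or the number of intersection curves. I would try to control this via $n(K+B)=3$, since every MMP step is $(K+B)$-trivial and so preserves $\kappa$ and $\nu$.
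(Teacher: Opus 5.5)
There is a genuine gap in your Steps~1--2. You never produce the property that drives the proposition: an effective $D\sim_{\Q}K_X+B$ with $\Supp D\supseteq\Supp B$.

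In your decomposition $D=D'+D''$, nothing forces $\Supp D'=B_0$. If some component of $B_0$ is missing from $D$, then $D+\Supp D''$ does not have support equal to the new boundary $B_0+\Supp D''$, so (2) fails. The same property is needed elsewhere too:
\begin{enumerate}
\item The paper deduces (4) from Koll\'ar's comparison lemma for effective nef divisors, which is where matching supports matter. Your statement about $\kappa(K+B+tD'')$ says nothing about $\nu$, nor about components of $B_0$ outside $\Supp D$.
\item Termination of the separating MMP in (5) uses that $K_Y+B_Y-\varepsilon S\sim_{\Q}D_Y-\varepsilon S$ is effective, i.e.\ that every component of $B_Y$ occurs in $D_Y$.
\end{enumerate}

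The paper obtains the support property, and at the same time the nef-dimension dichotomy, by running a $K_X$-MMP with scaling of $B$ on a terminal $\Q$-factorial dlt model (Definition~\ref{def:partial-mmp}). Such an MMP is $(K_X+B)$-trivial. If it ends in a Mori fibre space, the fibres are covered by $(K_X+B)$-trivial curves and $n(K_X+B)\leq2$. Otherwise it ends with $K_{X'}+(1-\varepsilon)B'$ nef. Nonvanishing then gives $\kappa(K_X+(1-\varepsilon)B)\geq0$, hence $K_X+B\sim_{\Q}D_0+\varepsilon B$ with $D_0\geq0$. This decreasing-coefficient MMP is the only place where $n=3$ genuinely enters, and your proposal does not have it.

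Several of the MMPs you do describe would also fail.
\begin{enumerate}
\item In Step~2, $K_{Y_0}+B_0$ is already nef, so a $(K+B_0)$-MMP with scaling of $\varepsilon D''$ stops at once.
\item The pair $(Y_0,B_0+\Supp D'')$ need not be lc, for instance if $\Supp D''$ contains a double curve of $B_0$ or is itself badly singular. So no MMP can be run on it. The paper avoids this by passing to a log resolution $g\colon W\to X$ of $(X,\Supp D)$ with $B_W=\Supp(g^{-1}_*D)+E$. This pair is log smooth, and $K_W+B_W\sim_{\Q}D_W$ with $D_W\geq0$ and $\Supp D_W=B_W$; effectivity of $D_W$ uses both that $(X,B)$ is dlt and that $\Supp D\supseteq\Supp B$. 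The paper then runs $(K_W+B_W)$-MMPs first over $X$ and then globally. Terminality of $Y\setminus B_Y$ comes from $X$ being terminal and these MMPs being isomorphisms off $\Supp D$.
\item In Step~1, the claim that a $(K+\lceil B'\rceil)$-MMP has $(K+B)$-trivial steps because $n=3$ is a non sequitur. Nef dimension constrains curves through a very general point, not the extremal rays being contracted. A $(K+B+F)$-negative ray with $K+B$ nef may well be $(K+B)$-positive.
\item In Step~3, the assertion that $K+B-\varepsilon(\cdots)$ is negative on curves in $S_1\cap S_2$ is unjustified, since a double curve can have positive intersection with $K+B$.
\end{enumerate}
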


\begin{proof}
By \cite[Theorem 2.27]{Xu23}, replacing $(X,B)$ by a dlt model,
we can assume that $X$ is terminal, and $(X,B)$ is $\Q$-factorial
and dlt. By Definition~\ref{def:partial-mmp} we run a
$K_X$-MMP which is $(K_X+B)$-trivial. Since $X$ is terminal,
it terminates by \cite[Lemma 2.26]{Xu23}. If it terminates with
a Mori fibre space, then we have $n(K_X+B)\leq2$. The assertion
holds. Hence we can assume that the $K_X$-MMP which is
$(K_X+B)$-trivial terminates with a lc pair $(X',B')$ such that
$K_{X'}+(1-\varepsilon)B'$ is nef for any sufficiently small
$\varepsilon>0$. By \cite[Theorem 4.4]{Xu24}, we have
\[
 \kappa(K_X+(1-\varepsilon)B)
 =\kappa(K_{X'}+(1-\varepsilon)B')\geq0
\]
for any sufficiently small rational $\varepsilon>0$.
Hence there exists an effective $\Q$-divisor
$D\sim_{\Q}K_X+B$ such that $\Supp B\subseteq\Supp D$.

Let $g:W\to X$ be a log smooth resolution of $(X,\Supp D)$ with
the reduced exceptional divisor $E$. Set
$B_W:=\Supp(g^{-1}_*D)+E$. Since $(X,B)$ is dlt, we can write
$K_W+B_W\sim_{\Q}D_W$, where
$D_W=g^*D+(K_W+B_W-g^*(K_X+B))$ is an effective $\Q$-divisor with
\[
 \Supp D_W=\Supp(g^{-1}_*D)+E=B_W.
\]
By replacing $W$ by the output of a $(K_W+B_W)$-MMP over $X$ by
Theorem~\ref{thm:mmp} and using again that $(X,B)$ is dlt,
we can assume that
$W\setminus\Supp D_W\cong X\setminus\Supp D$ and
$\Supp D_W=\Supp g^*D$.

Now, since $K_W+B_W$ is pseudo-effective, by
Theorem~\ref{thm:mmp} we can run a $(K_W+B_W)$-MMP
which terminates with a minimal model $h:W\dashrightarrow Y$.
In particular $K_Y+B_Y$ is nef for $B_Y:=h_*B_W$.
Furthermore, $K_Y+B_Y\sim_{\Q}D_Y$ and $\Supp D_Y=B_Y$, where
$D_Y:=h_*D_W$. Hence, (2) and (3) holds. Since $h$ and $g$ are
isomorphisms outside of $D_W$, we get (1). Note that both
$K_X+B$ and $K_Y+B_Y$ are effective and nef $\Q$-divisors.
Applying \cite[Lemma 11.3.3]{Kol92}) to pullbacks of $K_X+B$
and $K_Y+B_Y$ on any common resolution of $X$ and $Y$, we get
\[
 \kappa(K_X+B)=\kappa(K_Y+B_Y),\qquad
 \nu(K_X+B)=\nu(K_Y+B_Y),
\]
that is (4). To get (5), note that we can run
$(K_Y+B_Y-S)$-MMP for any irreducible component of $B_Y$ and
it terminates by \cite[Theorem 4.5]{Xu24} since
$K_Y+B_Y-\varepsilon S$ is effective for any sufficiently
small $\varepsilon\geq0$. Hence we can assume that $(Y,B_Y)$
satisfies (5) by the proof of \cite[Lemma 13.3.2]{Kol92}.
\end{proof}

\Needspace{6\baselineskip}
\subsection{The formal threefold and sheaves on the divisor}

\begin{notation}\label{not:formal}
Let $\YY$ be a normal excellent Cohen--Macaulay formal threefold,
complete along a connected reduced proper effective $\Q$-Cartier
divisor $A$. Assume that
\begin{equation}\label{eq:formal-assumptions}
 \begin{gathered}
 \II_j:=\OO_{\YY}(-jA)\text{ is maximal Cohen--Macaulay}
 \quad(j\in\Z),\\
 \MM:=\omega_{\YY}(A)\text{ is invertible},\quad
 \MM|_A\simeq\OO_A.
 \end{gathered}
\end{equation}
Set $\FF_j=\II_j/\II_{j+1}$. All products of these quotients are
induced by $\II_i\II_j\subseteq\II_{i+j}$. Suppose also that
$NA$ is Cartier and $\II_N|_A\simeq\OO_A$ for some $N>0$.
\end{notation}

The following lemma helps us study the dual of $\FF_j$ and the Cartierness of a multiple of $A$.

\begin{lemma}\label{lem:quotient-duality}
Under Setting~\ref{not:formal}, $A$ is Gorenstein,
$\omega_A\simeq\MM|_A$, and multiplication induces
\begin{equation}\label{eq:quotient-duality}
       \FF_{-j}\xrightarrow{\sim}\sheafHom_A(\FF_j,\OO_A).
\end{equation}
If $\FF_j$ is invertible at $x\in A$, then $jA$ is Cartier
near $x$.
\end{lemma}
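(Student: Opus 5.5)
The plan is to pass from $\YY$ to $A$ by adjunction for the Cartier-like divisor $A$, using that all the $\II_j$ are maximal Cohen--Macaulay. First, $A$ is Cohen--Macaulay. The sequence $0\to\II_1\to\OO_{\YY}\to\OO_A\to0$ has $\OO_{\YY}$ and $\II_1$ maximal Cohen--Macaulay of depth $3$ at closed points. Since $\II_1\subsetneq\OO_{\YY}$ is a proper ideal, the depth lemma gives $\operatorname{depth}\OO_A\ge2$, so $A$ is CM of dimension $2$. The same argument applied to $0\to\II_{j+1}\to\II_j\to\FF_j\to0$ shows that every $\FF_j$ is a maximal CM $\OO_A$-module (its support is $A$, since $\II_j$ is rank one and $A$ is its divisor of poles/zeros).

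Next we compute $\omega_A$. Since $\II_1$ is MCM and $\YY$ is CM, duality gives
\[
\sheafExt^1_{\YY}(\OO_A,\omega_{\YY})\simeq\operatorname{coker}\bigl(\omega_{\YY}\to\sheafHom(\II_1,\omega_{\YY})\bigr).
\]
Here $\sheafHom(\II_1,\omega_{\YY})=\omega_{\YY}(A)$ as reflexive sheaves, because both are reflexive and agree on the regular locus. Hence $\omega_A\simeq\omega_{\YY}(A)/\omega_{\YY}=\MM/\MM(-A)=\MM|_A$. This uses that $\MM$ is invertible, so that $\MM\otimes\II_1$ is already reflexive and equals $\omega_{\YY}$. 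Since $\MM|_A\simeq\OO_A$ by assumption, $\omega_A$ is invertible and $A$ is Gorenstein.

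For the duality \eqref{eq:quotient-duality} we apply the same formula with $\II_j$ in place of $\OO_{\YY}$. For MCM $\FF_j$ on CM $A$ we have $\sheafHom_A(\FF_j,\omega_A)\simeq\sheafExt^1_{\YY}(\FF_j,\omega_{\YY})$. Dualizing $0\to\II_{j+1}\to\II_j\to\FF_j\to0$ into $\omega_{\YY}$, and using that $\sheafExt^1(\II_i,\omega_{\YY})=0$ for MCM $\II_i$, we get
\[
\sheafExt^1_{\YY}(\FF_j,\omega_{\YY})=\sheafHom(\II_{j+1},\omega_{\YY})/\sheafHom(\II_j,\omega_{\YY})=\MM\otimes(\II_{-j}/\II_{-j+1})\cdot\II_1^{-1}.
\]
More precisely, $\sheafHom(\II_i,\omega_{\YY})=\omega_{\YY}(iA)^{**}=\MM\otimes\II_{1-i}$, since $\MM$ is invertible. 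The quotient is therefore $\MM\otimes\II_{-j}/\II_{-j+1}=\MM\otimes\FF_{-j}$, and untwisting by $\omega_A\simeq\MM|_A\simeq\OO_A$ gives $\FF_{-j}\simeq\sheafHom_A(\FF_j,\OO_A)$.

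The main obstacle is checking that this abstract isomorphism is the one induced by multiplication $\FF_{-j}\otimes\FF_j\to\FF_0=\OO_A$. I would handle this by proving that the multiplication pairing is perfect. It is perfect at codimension-one points of $A$: there $\YY$ is regular or $A$ is Cartier, the $\FF$'s are invertible, and the pairing is an isomorphism. Both $\FF_{-j}$ and $\sheafHom_A(\FF_j,\OO_A)$ are $S_2$ on the $S_2$ scheme $A$ (the first because it is MCM, the second as a Hom into an $S_2$ sheaf). A map between $S_2$ sheaves that is an isomorphism in codimension one is an isomorphism. The step needing care is that $A$ is only demi-normal-type in codimension one: at the generic point of a singular curve of $A$ the local ring of $\YY$ may be singular, so one must know that $\FF_j$ is invertible there. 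Using that $NA$ is Cartier and $\II_N|_A\simeq\OO_A$, the classes $\II_j$ are periodic modulo $N$ up to twist, and at height-two points a quotient-singularity analysis is needed. I would expect to establish perfectness there by a local computation on the two-dimensional CM local ring.

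Finally, for the last claim, suppose $\FF_j$ is invertible at $x$. Lift a generator to $f\in\II_j$. By Nakayama, $f$ generates $\II_j$ modulo $\II_{j+1}$ near $x$. Then $f\cdot\II_{-j}\subseteq\OO_{\YY}$, and its image in $\FF_0=\OO_A$ is all of $\OO_A$ by the perfect pairing. Hence $f\II_{-j}$ contains a unit, so $f^{-1}\in\II_{-j}$, and $\II_j=f\OO_{\YY}$ near $x$ by reflexivity. Thus $jA$ is Cartier near $x$.
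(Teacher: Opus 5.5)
Your computation of $\omega_A\simeq\MM|_A$, the Gorenstein property, the abstract isomorphism $\FF_{-j}\simeq\sheafHom_A(\FF_j,\OO_A)$, and your final Cartier argument all agree with the paper. The gap is the step you flag yourself: showing that this isomorphism is the one induced by multiplication.

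Your codimension-one check assumes that at every codimension-one point of $A$ either $\YY$ is regular or $A$ is Cartier, so that the $\FF_j$ are invertible. This is false. At a generic point of a double curve of $A$, the transversal surface germ of $\YY$ can be a cyclic quotient singularity. For example, take a $\frac1r(1,1)$ point with $r\geq3$ and $A$ its two toric branches; there $\MM$ is invertible. Then $A$ is not Cartier there, and $\FF_j$ can be the non-free module $K[[x]]\oplus K[[y]]$ over $K[[x,y]]/(xy)$. The paper needs the multiplicative duality in exactly this non-free case, in Step~1 of Lemma~\ref{lem:elliptic-quotients}.

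Even where $\FF_j$ and $\FF_{-j}$ are free, the argument does not close. The multiplication map $\FF_{-j}\to\sheafHom_A(\FF_j,\OO_A)$ is then multiplication by some $a\in\OO_{A,y}$, and the abstract isomorphism does not make $a$ a unit. In fact $a$ is a unit iff the pairing $\FF_{-j}\otimes\FF_j\to\OO_A$ is surjective at $y$. That holds iff $\II_j$ is principal at $y$, which is what you later deduce from perfectness. So the postponed ``local computation'' is the whole content of the claim, and the plan is circular wherever $\YY$ is singular.

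The missing idea is to follow the duality isomorphism itself rather than re-prove perfectness after the fact.
\begin{enumerate}
\item The identification $\sheafHom_{\YY}(\II_{j+1},\omega_{\YY})=\MM\II_{-j}$ is already given by multiplication.
\item The change-of-rings isomorphism $\sheafExt^1_{\YY}(\FF_j,\omega_{\YY})\simeq\sheafHom_A(\FF_j,\omega_A)$ sends a class $\xi$ to the map $\bar f\mapsto\bar f^{*}\xi\in\sheafExt^1_{\YY}(\OO_A,\omega_{\YY})=\omega_A$. Here a local section $\bar f$ is viewed as $\OO_A\to\FF_j$.
\item Multiplication by a lift $f\in\II_j$ maps $0\to\II_1\to\OO_{\YY}\to\OO_A\to0$ to $0\to\II_{j+1}\to\II_j\to\FF_j\to0$.
\item Naturality of the connecting homomorphisms then sends $mg$, with $m$ a local generator of $\MM$ and $g\in\II_{-j}$, to the homomorphism $\bar f\mapsto m\,\overline{gf}\in\MM|_A=\omega_A$.
\end{enumerate}
This is the paper's ``naturality with respect to multiplication''. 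It holds at every point, free or not, and with it your final Cartier argument goes through unchanged.
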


\begin{proof}
The depth lemma shows that every $\FF_j$ is Cohen--Macaulay.
Dualizing $0\to\II_{j+1}\to\II_j\to\FF_j\to0$ gives
\[
 \sheafExt^1_{\YY}(\FF_j,\omega_{\YY})
   \simeq\MM\FF_{-j},
 \qquad
 \sheafHom_{\YY}(\II_j,\omega_{\YY})=\MM\II_{1-j}.
\]
The higher Ext groups of the $\II_j$ vanish by maximal
Cohen--Macaulayness and canonical-module duality. Taking $j=0$
gives $\omega_A=\MM|_A$. Finite duality then identifies the
displayed Ext module with $\sheafHom_A(\FF_j,\omega_A)$.
Naturality with respect to multiplication identifies the resulting
pairing with the evaluation pairing in \eqref{eq:quotient-duality}.

If $\FF_j$ is invertible at $x$, choose inverse generators in
$\FF_j$ and $\FF_{-j}$, and lift them to $u\in\II_j$ and
$v\in\II_{-j}$. Then $uv\equiv1\pmod{\II_1}$, so $uv$ is a
unit. For $w\in\II_j$,
\[
                         w=u\frac{vw}{uv}.
\]
Thus $u$ generates $\II_j$ near $x$.
\end{proof}

We next identify exactly the degrees in which sections can occur.
This will later control sections of line bundles on thickenings.

\begin{lemma}\label{lem:section-period}
Under Setting~\ref{not:formal}, let $c$ be the least positive integer for which $cA$ is Cartier and
$\II_c|_A\simeq\OO_A$. Then, for every $j\in\Z$,
\begin{equation}\label{eq:section-period}
 H^0(A,\FF_j)\ne0\ \Longleftrightarrow\ c\mid j
 \ \Longleftrightarrow\ H^2(A,\FF_j)\ne0.
\end{equation}
In either nonzero case the corresponding group is one-dimensional.
\end{lemma}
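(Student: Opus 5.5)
We first treat the equivalence for $H^0$, and then obtain the $H^2$ statement from it by Serre duality. By Lemma~\ref{lem:quotient-duality}, $A$ is Gorenstein with $\omega_A\simeq\MM|_A\simeq\OO_A$. Hence Serre duality together with \eqref{eq:quotient-duality} gives
\[
 H^2(A,\FF_j)^*\simeq\Hom_A(\FF_j,\omega_A)\simeq\Hom_A(\FF_j,\OO_A)=H^0(A,\FF_{-j}).
\]
Since $c\mid j$ if and only if $c\mid -j$, the second equivalence in \eqref{eq:section-period} and the dimension claim for $H^2$ follow from the corresponding statements for $H^0$. It therefore suffices to show that $H^0(A,\FF_j)\neq0$ if and only if $c\mid j$, and that in this case the space is one-dimensional.

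\emph{If $c\mid j$.} Write $j=cq$. Since $cA$ is Cartier, $\II_c$ is invertible and $\II_{cq}=\II_c^{q}$. The multiplication $\II_c^{\otimes q}\to\II_{cq}$ is then an isomorphism, and restricting to $A$ gives $\FF_{cq}\simeq(\II_c|_A)^{\otimes q}\simeq\OO_A$. The last isomorphism uses $\II_c|_A\simeq\OO_A$, and the same argument applies to $q<0$ using inverses. Because $A$ is connected, reduced and proper, $H^0(A,\OO_A)$ is a finite reduced connected $k$-algebra, hence equal to $k$ ($k$ algebraically closed). So the group is one-dimensional.

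\emph{Conversely,} suppose $0\neq s\in H^0(A,\FF_j)$. The key step is to show that $\FF_j$ is invertible, with $s$ a generator. Multiplication gives a pairing $\FF_j\otimes\FF_{-j}\to\FF_0=\OO_A$, and we want a section $t$ of $\FF_{-j}$ with $st$ a nonzero constant. For this we use $N$. The sheaf $\FF_N\simeq\OO_A$ is invertible, so by periodicity $\FF_{j+N}\simeq\FF_j\otimes\FF_N\simeq\FF_j$. This is justified because $\II_N$ is invertible, so $\II_{j+N}=\II_j\II_N$ for reflexive rank-one sheaves. Thus $\FF_j^{\otimes N}$ maps to $\FF_{jN}\simeq\OO_A$, and $s^N$ becomes a global section of $\OO_A$, hence a constant. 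The main obstacle is showing that this constant is nonzero.

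To handle this, argue generically on each irreducible component of $A$. At a generic point of a component, $A$ is regular in codimension zero, $\FF_j$ is reflexive of rank one on the $S_2$ surface, and $\FF_{jN}$ is generated by the image of $\FF_j^{\otimes N}$ (at generic points every $\II_i$ is locally principal, since $\YY$ is normal). So $s^N$ is nonzero at the generic point of any component where $s\neq0$. Hence $s^N$ is a nonzero constant, in particular a unit, and $A$ connected ensures that $s$ is nonzero on every component. Then $s\cdot s^{N-1}$ is a unit, where $s^{N-1}\in H^0(\FF_{(N-1)j})=H^0(\FF_{-j})$ up to twisting by $\FF_{Nj}\simeq\OO_A$. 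It follows that $s$ generates $\FF_j$ everywhere, so $\FF_j\simeq\OO_A$.

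Lemma~\ref{lem:quotient-duality} then shows that $jA$ is Cartier everywhere. The ideal $\II_j$ is invertible, and $\II_j|_A=\FF_j\simeq\OO_A$. Consequently the set of integers $j$ with $jA$ Cartier and $\II_j|_A\simeq\OO_A$ is a subgroup of $\Z$, since $\II_{a+b}=\II_a\II_b$ when these ideals are invertible. This subgroup is generated by its least positive element $c$, so $c\mid j$, which completes the proof.
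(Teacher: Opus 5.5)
Your proposal is correct and is essentially the paper's argument: the paper also writes $s^N=\lambda\tau^j$ with $\lambda\neq0$ by purity of $\FF_j$ at a generic point, and sets $t=\lambda^{-1}\tau^{-j}s^{N-1}\in H^0(A,\FF_{-j})$ so that $st=1$, with associativity $w=s(tw)$ showing that $s$ generates. It then invokes Lemma~\ref{lem:quotient-duality} and the subgroup $c\Z$, and obtains the $H^2$ statement from \eqref{eq:quotient-duality} and Serre duality. The only difference is that you spell out the direction $c\mid j\Rightarrow\FF_j\simeq\OO_A$, $H^0(A,\OO_A)=k$ explicitly, which the paper leaves implicit in the subgroup step.
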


\begin{proof}
Let $s\in H^0(A,\FF_j)$ be nonzero, and let
$\tau$ be the chosen generating section of $\II_N|_A$.
Since $NA$ is Cartier, multiplication identifies
$\FF_{Nj}$ with $(\II_N|_A)^{\otimes j}$. Thus
\[
 s^N=\lambda\tau^j\in H^0(A,\FF_{Nj}),\qquad
                         \lambda\in H^0(A,\OO_A)=k.
\]
Purity of $\FF_j$ implies that $s$ is nonzero at a generic
point of $A$. Hence $\lambda\ne0$.
Multiplication by the invertible sheaf $(\II_N|_A)^{-j}$ gives
\[
 t:=\lambda^{-1}\tau^{-j}s^{N-1}\in H^0(A,\FF_{-j}),
 \qquad st=1.
\]
For every local section $w$ of $\FF_j$, associativity gives
$w=s(tw)$. Consequently $s$ generates $\FF_j$, and
Lemma~\ref{lem:quotient-duality} makes $jA$ Cartier.
The set of integers $j$ with $jA$ Cartier and
$\II_j|_A\simeq\OO_A$ is a subgroup of $\Z$, hence equals
$c\Z$. The assertion for $H^2$ follows from
\eqref{eq:quotient-duality} and Serre duality.
\end{proof}

\Needspace{6\baselineskip}
\subsection{Passing to one Cartier multiple}

The following setting is  natural in the context of 
the surface calculation in
Section~\ref{sec:surface-cover}. 

\begin{notation}\label{not:cartier-thickenings}
In addition to \eqref{eq:formal-assumptions}, fix $c\in\Z_{>0}$
and assume
\begin{equation}\label{eq:acyclic-layers}
 \JJ:=\II_c\text{ is invertible},\qquad \JJ|_A\simeq\OO_A,
 \qquad H^*(A,\FF_j)=0\quad(0<j<c).
\end{equation}
Set
\[
 D_l=V(\JJ^l),\qquad D=D_1=cA,
 \qquad\OO_{D_l}=\OO_{\YY}/\JJ^l\quad(l\geq1).
\]
We assume that these schemes are proper over $k$.
\end{notation}

\begin{lemma}\label{lem:cohomology-and-picard}
Restriction induces an isomorphism of graded $k$-algebras and an
isomorphism of fppf group functors
\begin{equation}\label{eq:cohomology-picard}
 H^*(D,\OO_D)\xrightarrow{\sim}H^*(A,\OO_A),\qquad
             \Pic_{D/k}\xrightarrow{\sim}\Pic_{A/k}.
\end{equation}
In particular $\JJ|_D$ and $\MM|_D$ are trivial.
\end{lemma}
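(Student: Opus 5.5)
The plan is to filter $\OO_D=\OO_{\YY}/\II_c$ by the images of $\II_j$ for $0\le j\le c$:
\[
 \OO_D\supset \II_1/\II_c\supset\cdots\supset\II_{c-1}/\II_c\supset 0 .
\]
The successive quotients are $\FF_0=\OO_A,\FF_1,\dots,\FF_{c-1}$. By \eqref{eq:acyclic-layers} the layers $\FF_j$ with $0<j<c$ are acyclic. Induction on the short exact sequences
\[
 0\to\II_j/\II_c\to\II_{j-1}/\II_c\to\FF_{j-1}\to0
\]
gives $H^*(\II_1/\II_c)=0$. Then the sequence $0\to\II_1/\II_c\to\OO_D\to\OO_A\to0$ shows that restriction $H^i(D,\OO_D)\to H^i(A,\OO_A)$ is bijective for every $i$. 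Restriction is a ring homomorphism, and cup product is compatible with pullback along the closed immersion $A\hookrightarrow D$. Hence this is an isomorphism of graded $k$-algebras. The whole argument base-changes: the sheaves $\FF_j$ are $\OO_A$-modules that are flat over $k$, so after any field extension or any $k$-algebra $R$ the layers $\FF_j\otimes_kR$ remain acyclic by flat base change of cohomology.

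For the Picard functors, I would first treat the nilpotent thickening $A\subset D$ one layer at a time. For $1\le j<c$, set $Z_j=V(\II_j)\subset\YY$, so that $Z_1=A$ and $Z_c=D$. The closed immersion $Z_j\hookrightarrow Z_{j+1}$ has ideal $\II_j/\II_{j+1}=\FF_j$, which is square-zero because $\II_j^2\subseteq\II_{2j}\subseteq\II_{j+1}$. The exact sequence
\[
 1\to 1+\FF_j\to\OO^*_{Z_{j+1}}\to\OO^*_{Z_j}\to1
\]
from the conventions then gives, after base change to any $k$-algebra $R$,
\[
 H^1(\FF_j\otimes R)\to \Pic(Z_{j+1,R})\to\Pic(Z_{j,R})\to H^2(\FF_j\otimes R).
\]
Both outer groups vanish, so each restriction is bijective on $R$-points. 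I would also check that $H^0(\OO^*)$ maps onto $H^0(\OO^*)$ at each step, using $H^1(\FF_j)=0$ again, so that rigidification causes no trouble. The relative Picard functor is the fppf sheafification of $R\mapsto\Pic(Z_R)$. Composing over $j$ therefore gives $\Pic_{D/k}\xrightarrow{\sim}\Pic_{A/k}$ as fppf sheaves, since an isomorphism of presheaves induces an isomorphism of sheafifications. The main care here is the base change: I need $\FF_j\otimes_k R$ to be the ideal of $Z_{j,R}\subset Z_{j+1,R}$, which holds because everything is flat over the field $k$, and I need its cohomology to be $H^*(\FF_j)\otimes R$, which holds by flat base change.

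Finally, for triviality of the two line bundles: $\JJ|_A\simeq\OO_A$ by \eqref{eq:acyclic-layers}, and $\MM|_A\simeq\OO_A$ by \eqref{eq:formal-assumptions}. Both $\JJ|_D$ and $\MM|_D$ are invertible on $D$, since $\JJ$ and $\MM$ are invertible on $\YY$. Their classes in $\Pic(D)$ restrict to the trivial class in $\Pic(A)$, so injectivity of $\Pic(D)\to\Pic(A)$ on $k$-points forces them to be trivial. I expect the only delicate point to be the functorial (fppf) version of the Picard statement rather than the $k$-point statement, and I would handle it exactly by the flat base change argument above.
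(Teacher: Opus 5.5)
Your proposal is correct and follows the paper's proof: filter $\II_1/\II_c$ by the acyclic layers $\FF_1,\ldots,\FF_{c-1}$, run the square-zero units sequences along the same filtration, note that the vanishing survives flat base change to any $k$-algebra so the statement holds for the fppf Picard functors, and then use injectivity of $\Pic(D)\to\Pic(A)$ on the trivial restrictions $\JJ|_A$ and $\MM|_A$. Your extra check that global units lift at each step is harmless but not needed for the fppf sheafification argument.
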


\begin{proof}
The ideal $\II_1/\II_c$ has successive quotients
$\FF_1,\ldots,\FF_{c-1}$. Its cohomology vanishes by
\eqref{eq:acyclic-layers}. This proves the first assertion,
including compatibility with products. The square-zero units
sequences for the same filtration give the second assertion
\cite[Tag 0C6Q]{Sta}. The vanishings persist after every affine
base change over $k$, so this is an assertion about fppf Picard
functors. Applying it to the two specified trivial restrictions on $A$, we get the latter assertion.
\end{proof}

Although $D=cA$ is Cartier on $\YY$, its dualizing module need
not be invertible, since $\YY$ is only Cohen--Macaulay. The
following map will be useful for dealing with the canonical divisor of $\YY$.

\begin{lemma}\label{lem:dualizing-comparison}
For $l\geq1$ there is a natural homomorphism
\begin{equation}\label{eq:phi}
 \begin{split}
 \phi_l:\MM\JJ^{1-l}|_{D_l}&\longrightarrow\omega_{D_l},\\
 \MM(\II_{c-cl}/\II_c)&\longrightarrow
                         \MM(\II_{1-cl}/\II_1).
 \end{split}
\end{equation}
Its kernel and cokernel are respectively
\begin{equation}\label{eq:phi-errors}
 \MM(\II_1/\II_c),\qquad
                    \MM\JJ^{-l}(\II_1/\II_c).
\end{equation}
They are acyclic, also after tensoring by any power of $\JJ$.
Consequently $\phi_l$ induces cohomology isomorphisms compatible
with multiplication and canonical trace.
\end{lemma}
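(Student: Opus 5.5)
The map will come from duality for the Cartier divisor $D_l=V(\JJ^l)$ inside the Cohen--Macaulay formal scheme $\YY$. Since $\JJ^l$ is invertible, $\omega_{D_l}=\sheafExt^1_{\YY}(\OO_{D_l},\omega_{\YY})$. As in the proof of Lemma~\ref{lem:quotient-duality}, I will compute this by dualizing
\[
0\to\JJ^l\to\OO_{\YY}\to\OO_{D_l}\to0 .
\]
This is not the same as dualizing $0\to\II_{cl}\to\II_0\to\II_0/\II_{cl}\to0$ with the reflexive identifications $\sheafHom(\II_j,\omega_{\YY})=\MM\II_{1-j}$. The point is that $\omega_{\YY}=\MM\II_1$ is not $\JJ$-adically aligned. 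The duality computation gives
\[
\omega_{D_l}\simeq \omega_{\YY}\JJ^{-l}/\omega_{\YY}=\MM(\II_{1-cl}/\II_1),
\]
which is the target in \eqref{eq:phi}. The source is
\[
\MM\JJ^{1-l}|_{D_l}=\MM\JJ^{1-l}/\MM\JJ=\MM(\II_{c-cl}/\II_c),
\]
using that $\JJ$ is invertible, so $\JJ^{a}=\II_{ca}$.

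I then define $\phi_l$ as the map induced by the inclusions $\II_{c-cl}\subseteq\II_{1-cl}$ and $\II_c\subseteq\II_1$. These are valid since $c\ge1$; they come from $\OO_{\YY}(-jA)\subseteq\OO_{\YY}(-j'A)$ for $j\ge j'$. The map is natural, and it is compatible with multiplication by sections of $\OO_{D_l}$ and by powers of $\JJ$. Kernel and cokernel come from the snake lemma applied to the map of short exact sequences
\[
0\to\MM\II_c\to\MM\II_{c-cl}\to\text{source}\to0,
\qquad
0\to\MM\II_1\to\MM\II_{1-cl}\to\text{target}\to0 .
\]
Both vertical inclusions are injective. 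This gives a six-term sequence
\[
0\to\ker\phi_l\to\MM\II_1/\MM\II_c\to\MM\II_{1-cl}/\MM\II_{c-cl}\to\operatorname{coker}\phi_l\to0,
\]
once I check that $\ker\phi_l=0$ on the first cokernel terms. More directly,
\[
\ker\phi_l=(\MM\II_{c-cl}\cap\MM\II_1)/\MM\II_c=\MM(\II_1/\II_c)
\]
when $l\ge1$, because $c-cl\le 0<1$, so $\II_1\subseteq\II_{c-cl}$. Similarly,
\[
\operatorname{coker}\phi_l=\MM\II_{1-cl}/(\MM\II_{c-cl}+\MM\II_1)=\MM(\II_{1-cl}/\II_{c-cl})=\MM\JJ^{-l}(\II_1/\II_c),
\]
where the last step uses invertibility of $\JJ$. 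This yields \eqref{eq:phi-errors}.

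Acyclicity is the next step. The sheaf $\II_1/\II_c$ has a filtration with graded pieces $\FF_1,\dots,\FF_{c-1}$. Tensoring with $\MM\JJ^{a}$, which is invertible, preserves these sequences. By Lemma~\ref{lem:cohomology-and-picard}, $\MM|_D$ and $\JJ|_D$ are trivial, and these pieces are killed by $\II_1$, hence are $\OO_A$-modules. So $\MM\JJ^{a}\otimes\FF_j\simeq\FF_j$, and $H^*(A,\FF_j)=0$ for $0<j<c$ by \eqref{eq:acyclic-layers}. Induction along the filtration then gives acyclicity of both error terms and of all their $\JJ$-twists. Splitting the four-term sequence $0\to\ker\to\text{source}\to\text{target}\to\operatorname{coker}\to0$ into two short exact sequences, the long exact sequences show that $\phi_l$ induces isomorphisms on all cohomology.

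The main obstacle is the compatibility with multiplication and canonical trace. Multiplicativity is formal: $\phi_l$ is $\OO_{D_l}$-linear and compatible with the products $\II_i\II_j\subseteq\II_{i+j}$. For the trace, I would check that the composite $H^2(\MM\JJ^{1-l}|_{D_l})\to H^2(\omega_{D_l})\to k$ is compatible with the transition maps for varying $l$. This reduces it to $l=1$. There the trace on $\omega_D\simeq\MM(\II_{1-c}/\II_1)$ restricts on the top graded piece $\MM\FF_0=\omega_A$ to the trace of $A$, because $\omega_A\hookrightarrow\omega_D$ is dual to $\OO_D\to\OO_A$. The identification is therefore the one from Lemma~\ref{lem:quotient-duality} and \eqref{eq:cohomology-picard}. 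The delicate point is matching the Ext-computation isomorphism with the naive quotient description, since both the sign and the normalization must agree with the canonical trace of \cite[Tag 0FVU]{Sta}.
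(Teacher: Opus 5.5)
Your proposal is correct and is essentially the paper's argument. Cartier adjunction gives $\omega_{D_l}=\MM(\II_{1-cl}/\II_1)$, $\phi_l$ is induced by $\II_{c-cl}\subseteq\II_{1-cl}$, the kernel and cokernel are the intersection and quotient of these fractional ideals, acyclicity comes from the filtration with pieces $\MM|_A\otimes\JJ^r|_A\otimes\FF_j$ ($0<j<c$), and trace compatibility comes from the image of $\phi_1$ being the trace subsheaf $\omega_A\subseteq\omega_D$. Two of your asides are off but harmless: dualizing $0\to\II_{cl}\to\II_0\to\II_0/\II_{cl}\to0$ is literally the sequence $0\to\JJ^l\to\OO_{\YY}\to\OO_{D_l}\to0$ you use, and in your snake-lemma sequence the middle map $\MM\II_1/\MM\II_c\to\MM\II_{1-cl}/\MM\II_{c-cl}$ is simply zero because $\II_1\subseteq\II_{c-cl}$, which is exactly what your direct computation exploits.
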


\begin{proof}
Cartier adjunction gives
$\omega_{D_l}=\MM(\II_{1-cl}/\II_1)$. The inclusion
$\II_{c-cl}\subseteq\II_{1-cl}$ defines \eqref{eq:phi}.
Intersection and quotient of these fractional ideals give
\eqref{eq:phi-errors}. Their filtrations have quotients
$\MM|_A\otimes\JJ^r|_A\otimes\FF_j$ with $0<j<c$;
these are acyclic. The compatibilities follow from the defining
inclusions and functoriality of duality
\cite[Tag 0FVU]{Sta}. For $l=1$, the image is the canonical
trace subsheaf $\omega_A\subseteq\omega_D$. Thus the induced
map on top cohomology is the trace identification, with no scalar
factor $c$.
\end{proof}

\Needspace{6\baselineskip}
\subsection{The semiampleness criterion}

We recall Totaro's semiampleness criterion and then explain how
to apply it to an invariant trivialization on a formal cover.
The exponent of the line bundle and the multiplicity of the
Cartier divisor in this application must agree.

\begin{lemma}[Totaro]\label{lem:principal-parts}
Let $X$ be a normal projective variety over an algebraically closed
field of characteristic $p>0$. If $E$ is an effective Cartier
divisor and $\OO_X(E)|_E\simeq\OO_E$, then $E$ is semiample.
\end{lemma}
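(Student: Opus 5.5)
The plan follows Totaro's argument from \cite{Tot09}. The idea is to use the trivialization of $\OO_X(E)|_E$ to produce sections of $\OO_X(p^eE)$ that do not vanish identically along $E$. Combined with the tautological section $s_E$, whose zero locus is exactly $E$, this kills the base locus.

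\textbf{Step 1 (trivializations on thickenings).} Let $m$ be a positive integer. From the sequence
\[
0\to\OO_X(-mE)|_{E}\to\OO_{(m+1)E}^*\to\OO_{mE}^*\to1,
\]
read via the truncated units sequence of the conventions, the obstruction to extending a trivialization of a line bundle from $mE$ to $(m+1)E$ lies in $H^1$ of a coherent sheaf. That obstruction need not vanish, so instead of extending trivializations step by step I would use Frobenius. If $\LL:=\OO_X(E)$ and $\LL|_E\simeq\OO_E$, then
\[
\LL^{p^e}|_{p^eE}\simeq F^{e*}(\LL|_E)\simeq\OO_{p^eE}.
\]
This holds because the $p^e$-th power map $\OO_{p^eE}^*\to\OO_{p^eE}^*$ factors through $\OO_E^*$: locally, if $f$ is an equation of $E$, then $(u+fv)^{p^e}=u^{p^e}+f^{p^e}v^{p^e}$. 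Concretely, a trivializing cocycle of $\LL$ on $E$ lifts arbitrarily to $X$, and its $p^e$-th power is a well-defined cocycle on $p^eE$, trivial there. Hence $\OO_X(p^eE)|_{p^eE}\simeq\OO_{p^eE}$ for all $e\ge0$.

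\textbf{Step 2 (lifting the section).} Let $E_e=p^eE$ and $L_e=\OO_X(E_e)$. From
\[
0\to\OO_X\to L_e\to L_e|_{E_e}\simeq\OO_{E_e}\to0
\]
I get a unit section $1\in H^0(E_e,\OO_{E_e})$, and want to lift it, or a suitable power of it, to $H^0(X,L_e^{\otimes r})$. The obstruction lies in $H^1(X,\OO_X)$, or in $H^1(X,L_e^{r-1})$ for higher powers. The key trick is that $H^1(X,\OO_X)$ is finite-dimensional. Write $\delta_e\in H^1(X,\OO_X)$ for the obstruction attached to $E_e$. Functoriality of Frobenius gives $\delta_{e+1}=F(\delta_e)$, where $F$ is the Frobenius-semilinear endomorphism of $H^1(X,\OO_X)$.

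Decompose $H^1(X,\OO_X)=H_{ss}\oplus H_{nil}$. If $\delta_e$ lies in the nilpotent part, then $\delta_{e'}=0$ for $e'\gg e$, and the section lifts. In general the semisimple part is spanned by $F$-fixed vectors, so after passing to $\delta_e$ with $e$ large one has $F^{n}\delta_e=\delta_e$ for some $n$. I would then take the difference of the connecting classes for $E_{e+n}$ and $p^nE_e$, which is exactly $F^n\delta_e-\delta_e=0$. This difference is the obstruction for $p^nE_e$ versus the Frobenius pullback of $E_e$. A cleaner route is to note that the classes live in the group $H^1(X,\OO_X)$ and to multiply by powers of the section instead. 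I expect this bookkeeping to be the main obstacle: converting Frobenius-periodicity of $\delta_e$ into an actual section of some $\OO_X(NE)$ nonvanishing on $E$. This may require using $\Pic^0$ and the finiteness of $p$-torsion in the additive-type kernel, as in Totaro's original argument.

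\textbf{Step 3 (conclusion).} Once there is a section $t\in H^0(X,\OO_X(NE))$ whose restriction to $E$ is nowhere vanishing, the two sections $t$ and $s_E^N$ have disjoint zero loci: $s_E^N$ vanishes only on $E$, and $t$ does not vanish on $E$. So $|NE|$ is base-point free, and $E$ is semiample.
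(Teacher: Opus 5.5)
Your Steps~1 and~3 are fine, but Step~2 is the entire content of the lemma, and you do not establish it; the route you sketch also does not work. (The paper does not reprove the statement; it simply applies \cite[Lemma 4.1]{Tot09} with $m=1$, $r=0$.)

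First, Frobenius-periodicity of $\delta_e$ fails over a general algebraically closed field. Suppose $w_1,\dots,w_s$ are $F$-fixed vectors spanning the semisimple part and $\delta_e=\sum c_iw_i$. Then $F^n\delta_e=\sum c_i^{p^n}w_i$, which equals $\delta_e$ only if every $c_i\in\mathbb F_{p^n}$. Rescaling the trivialization only multiplies all the $c_i$ by a common factor, so it cannot fix this.

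Second, even an $F$-fixed obstruction does not help, because the sections you try to lift are Frobenius powers $\tau^{p^e}$ of the trivialization $\tau$. Their obstructions are $F^e(\delta_0)$, so if $\delta_0$ has nonzero semisimple part, none of them ever lifts. This really happens. Take $X=\mathbb P_B(\mathcal E)$, with $B$ an ordinary elliptic curve and $\mathcal E$ the nonsplit extension of $\OO_B$ by $\OO_B$. The section $C$ given by the quotient $\mathcal E\to\OO_B$ has trivial normal bundle and $h^0(\OO_X(C))=1$. Hence $\delta_0\neq0$ in the one-dimensional space $H^1(X,\OO_X)$, on which $F$ is bijective. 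After rescaling, $F^e\delta_0=\delta_0\neq0$ for all $e$, yet $|pC|$ is base point free. So the conversion step you flag as the main obstacle is exactly what is missing.

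The missing idea is to lift not $\tau^{p^m}$ but a linear combination of the sections
\[
 s^{p^m-p^j}\tau^{p^j}\in H^0\bigl(p^mE,\OO_X(p^mE)|_{p^mE}\bigr),\qquad 0\leq j\leq m,
\]
where $s$ is the canonical section of $\OO_X(E)$. This is what your remark about multiplying by powers of the section should become.

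The $p^j$-th power map carries $0\to\OO_X\xrightarrow{s}\OO_X(E)\to\OO_X(E)|_E\to0$ to the corresponding sequence for $p^jE$, and it induces $F^j$ on $H^1(X,\OO_X)$. Multiplication by $s^{p^m-p^j}$ maps the sequence for $p^jE$ to the one for $p^mE$, inducing the identity on $\OO_X$. Hence the obstruction of $s^{p^m-p^j}\tau^{p^j}$ is $F^j(\delta)$, where $\delta=\partial(\tau)$.

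Since $h=\dim_kH^1(X,\OO_X)<\infty$, the vectors $\delta,F\delta,\dots,F^h\delta$ are linearly dependent. Choose a relation $\sum_{j=0}^m a_jF^j(\delta)=0$ with $a_m\neq0$. Then $\sum_j a_js^{p^m-p^j}\tau^{p^j}$ lifts to some $t\in H^0(X,\OO_X(p^mE))$ with $t|_E=a_m\tau^{p^m}$ nowhere vanishing, and your Step~3 finishes the proof.

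No periodicity, finite-field hypothesis or $\Pic^0$ argument is needed. In the example above, the relation $F\delta_0=\delta_0$ says that $\tau^p-s^{p-1}\tau$ lifts; the lift is an Artin--Schreier-type section.
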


\begin{proof}
Apply \cite[Lemma 4.1]{Tot09} to the effective Cartier divisor
$D=E$ and the line bundle $L=\OO_X(E)$.
The order of $L|_D$ in $\Pic(D)$ is $m=1$, and the condition
in that lemma holds with $r=0$. Thus $L$ is semiample.
The cited lemma allows $D$ to be nonreduced.
\end{proof}

\Needspace{12\baselineskip}
\begin{lemma}\label{lem:invariant-descent}
Suppose $h:\YY\to\widehat X_S=\YY/G$ is a finite quasi-\'{e}tale
quotient by a group of order prime to $p$, where $X$ is normal
projective, $S$ is reduced and $\Q$-Cartier, and
$A=h^*S$ is reduced.
Use Setting~\ref{not:cartier-thickenings}. If the character of
$\JJ|_A$ has order $R$ and $\JJ^{Rl}|_{D_{Rl}}$ has a
$G$-invariant trivialization, then $cRlS$ is Cartier and
\begin{equation}\label{eq:invariant-descent}
             \OO_X(cRlS)|_{cRlS}\simeq\OO_{cRlS}.
\end{equation}
Hence $S$ is semiample.
\end{lemma}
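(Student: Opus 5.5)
Set $n=cRl$. The plan is to descend the invariant trivialization from $\YY$ to $\widehat X_S$. Then we pass from the formal completion to $X$ itself, and finish with Lemma~\ref{lem:principal-parts}.

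\textbf{Step 1: $nS$ is Cartier.} The sheaf $\JJ^{Rl}=\II_{n}=\OO_{\YY}(-nA)$ is invertible. The hypothesis on $\JJ^{Rl}|_{D_{Rl}}$ lets us pick a $G$-invariant generator $\bar u$ of $\JJ^{Rl}/\JJ^{2Rl}$ over $\OO_{D_{Rl}}$. Since $|G|$ is prime to $p$, averaging lifts $\bar u$ to a $G$-invariant local section $u\in\II_n$. By Nakayama, $u$ generates $\II_n$ near every point of $A$. Because $h$ is quasi-\'etale and $A=h^*S$, we have $(h_*\OO_{\YY}(-nA))^G=\OO_{\widehat X_S}(-nS)$ as reflexive sheaves (both are reflexive and agree in codimension one). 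So $u$ descends to a generator of $\OO_{\widehat X_S}(-nS)$ along $S$. Since $X$ and its formal completion along $S$ have the same local rings up to completion, and Cartierness is detected after faithfully flat completion, $nS$ is Cartier on $X$ near $S$. It is trivially Cartier away from $S$.

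\textbf{Step 2: triviality on the thickening.} The ideal of $nS$ in $X$ pulls back to $\II_n$, and $\II_n^{2}=\II_{2n}$ since $\II_n$ is invertible. Hence $h^{-1}(nS)=V(\II_n)=D_{Rl}$, and $\OO_{nS}=(h_*\OO_{D_{Rl}})^G$, using exactness of invariants. The sheaf $\OO_X(nS)|_{nS}$ pulls back to $\JJ^{-Rl}|_{D_{Rl}}$. The dual of the given trivialization is a $G$-invariant nowhere-vanishing section of this sheaf, so it defines a $G$-invariant isomorphism $\OO_{D_{Rl}}\to h^*\OO_X(nS)|_{nS}$. Taking $G$-invariants of $h_*$ gives a global section of $\OO_X(nS)|_{nS}$. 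It is nowhere vanishing, because its pullback is, and $h$ is surjective on the finite thickening. This proves \eqref{eq:invariant-descent}.

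\textbf{Step 3: conclusion.} Apply Lemma~\ref{lem:principal-parts} with $E=nS$. This shows $nS$, hence $S$, is semiample.

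The role of $R$ is to make the linearization compatible: on $A$, the character of $\JJ|_A$ must become trivial so that the trivialization is a $G$-equivariant isomorphism with $\OO$ carrying its trivial linearization. I expect the main obstacle to be Step 1. There one must carefully identify invariant pushforwards of the reflexive ideals with $\OO_{\widehat X_S}(-nS)$, using quasi-\'etaleness. One must also check that the descended generator is a generator of the ideal of $nS$ on the algebraic $X$ and not just on the completion, which uses faithful flatness of completion of local rings at points of $S$.
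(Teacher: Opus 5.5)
Your proposal is correct and follows essentially the paper's argument. You lift and average an invariant generator, descend it through $(h_*\OO_{\YY}(-nA))^G=\OO_{\widehat X_S}(-nS)$, pass from the completion to $X$ by faithfully flat descent, use exactness of invariants to identify $(h_*\OO_{D_{Rl}})^G$ with $\OO_{nS}$ and descend the trivialization, and finish with Lemma~\ref{lem:principal-parts}. The only difference is that the paper first shows $cRS$ is Cartier from the triviality of the reduced character of $\JJ^R$, whereas you obtain Cartierness of $cRlS$ directly from the given invariant trivialization; that is equally valid and suffices here.
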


\begin{proof}
The reduced character of $\JJ^R$ is trivial. On an affine open
of the quotient, lift its reduced generator to the finite inverse
image and apply $|G|^{-1}\sum_{g\in G}g$. The resulting invariant
section still generates by Nakayama's lemma. Since
\[
 (h_*\OO_{\YY}(-cRA))^G=\OO_{\widehat X_S}(-cRS),
\]
$cRS$ is Cartier on the completion. Faithfully flat descent from
completion gives Cartierness near $S$, and it holds outside $S$
as well. Exactness of invariants identifies the quotient by the
$l$-th power with $\OO_{cRlS}$ and descends the given trivialization.
Apply Lemma~\ref{lem:principal-parts} and we are done.
\end{proof}

\subsection{Sections on thickenings and roots of rational functions}

The following observation will be used in Sections~\ref{sec:finite}
and~\ref{sec:canonical-relation}. 
Let a finite group $G$ act on the data of
Setting~\ref{not:cartier-thickenings}.
For an equivariant line bundle $\mathcal H$ with
$\mathcal H|_D\simeq\OO_D$, we define a filtration of $H^0(D_N,\mathcal H)^G$ by
\[
 \operatorname{Fil}^j
  =H^0(D_N,\mathcal H\JJ^j/\mathcal H\JJ^N)^G
  \subseteq H^0(D_N,\mathcal H)^G
  \quad(0\leq j\leq N).
\]
A section has order $j$ if it belongs to
$\operatorname{Fil}^j\setminus\operatorname{Fil}^{j+1}$.

\begin{lemma}\label{lem:section-filtration}
For $0\leq j<N$, the $j$-th graded piece of this filtration
is nonzero if and only if
$\mathcal H\JJ^j|_{D_{N-j}}$ has a $G$-invariant generating
section. Every nonzero graded piece is one-dimensional.
\end{lemma}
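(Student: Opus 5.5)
The plan is to identify the graded piece directly. Set $\operatorname{gr}^j=\operatorname{Fil}^j/\operatorname{Fil}^{j+1}$. Since $\JJ$ is invertible, the sequence
\[
 0\to\mathcal H\JJ^{j+1}/\mathcal H\JJ^N\to\mathcal H\JJ^j/\mathcal H\JJ^N\to\mathcal H\JJ^j|_D\to0
\]
is exact and $G$-equivariant. Taking $G$-invariants of global sections is exact, because $|G|$ is prime to $p$. Hence $\operatorname{gr}^j$ injects into $H^0(D,\mathcal H\JJ^j|_D)^G$.

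First I will show that $\mathcal H\JJ^j|_D$ is trivial. By hypothesis $\mathcal H|_D\simeq\OO_D$, and $\JJ|_D\simeq\OO_D$ by Lemma~\ref{lem:cohomology-and-picard}. By the same lemma, $H^0(D,\OO_D)\simeq H^0(A,\OO_A)=k$. So $H^0(D,\mathcal H\JJ^j|_D)$ is one-dimensional, and its nonzero elements are exactly the generating sections. Consequently $\operatorname{gr}^j$ has dimension at most one. Moreover, a nonzero element of $\operatorname{gr}^j$ restricts on $D$ to a generator.

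Next comes the identification. Note that $\mathcal H\JJ^j/\mathcal H\JJ^N\simeq\mathcal H\JJ^j\otimes\OO_{D_{N-j}}$, so
\[
 \operatorname{Fil}^j=H^0(D_{N-j},\mathcal H\JJ^j|_{D_{N-j}})^G .
\]
For the forward direction, take an invariant section $s\in\operatorname{Fil}^j$ whose image in $\operatorname{gr}^j$ is nonzero. Its restriction to $D$ generates $\mathcal H\JJ^j|_D$. Since $D_{N-j}$ is a nilpotent thickening of $D$ with the same underlying space, Nakayama's lemma shows that $s$ generates $\mathcal H\JJ^j|_{D_{N-j}}$.

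For the converse, an invariant generating section of $\mathcal H\JJ^j|_{D_{N-j}}$ is an element of $\operatorname{Fil}^j$. Its image in $\mathcal H\JJ^j|_D$ is a generator, hence nonzero. So it lies outside $\operatorname{Fil}^{j+1}$, and $\operatorname{gr}^j\neq0$.

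The only point needing care is that the filtration is defined through invariants. It is exactness of $G$-invariants for $p\nmid|G|$ that makes $\operatorname{gr}^j$ embed into the invariant sections on $D$. Once that is in place, the one-dimensionality comes entirely from $H^0(D,\OO_D)=k$ via Lemma~\ref{lem:cohomology-and-picard}. I do not expect a genuine obstacle; the main thing to check is that the identification $\mathcal H\JJ^j/\mathcal H\JJ^N\simeq\mathcal H\JJ^j|_{D_{N-j}}$ is $G$-equivariant, which holds because $\JJ$ is a $G$-stable invertible ideal.
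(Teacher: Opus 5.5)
Your proposal is correct and is essentially the paper's proof: identify $\mathcal H\JJ^j/\mathcal H\JJ^N$ with $\mathcal H\JJ^j|_{D_{N-j}}$, embed the graded piece by restriction into the one-dimensional space $H^0(D,\mathcal H\JJ^j|_D)=k$, and use Nakayama's lemma to pass between a nonzero restriction on $D$ and generation on $D_{N-j}$. One small remark: the embedding only needs left exactness of $H^0$ and of invariants, so your appeal to $p\nmid|G|$ at that step is unnecessary, though harmless.
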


\begin{proof}
The invertibility of $\JJ$ identifies the sheaf
$\mathcal H\JJ^j/\mathcal H\JJ^N$ with
$\mathcal H\JJ^j|_{D_{N-j}}$.
A section of order $j$ restricts to a nonzero element of
$H^0(D,\mathcal H\JJ^j|_D)=k$, and hence generates the
line bundle on $D_{N-j}$ by Nakayama's lemma.
Conversely a generating section gives a section of order $j$.
The restriction map embeds the graded piece into the
one-dimensional space.
\end{proof}

\begin{lemma}\label{lem:field-powers}
Let $X$ be a variety over a perfect field $k$ of
characteristic $p>0$, and let $x\in X(k)$ be a smooth point.
Set $K=k(X)$ and $\widehat K=\operatorname{Frac}\widehat\OO_{X,x}$.
For every integer $h\geq0$,
\[
                         K\cap\widehat K^{p^h}=K^{p^h}.
\]
\end{lemma}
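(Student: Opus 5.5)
The inclusion $K^{p^h}\subseteq K\cap\widehat K^{p^h}$ is clear, so the content is the reverse inclusion. I will reduce it to a statement about $p$-bases and then use faithful flatness of completion. Put $R=\OO_{X,x}$ and $\widehat R=\widehat\OO_{X,x}$. Since $k$ is perfect and $x$ is a smooth $k$-point, we may choose a regular system of parameters $t_1,\dots,t_n\in R$. Then $\widehat R\simeq k[[t_1,\dots,t_n]]$, and $k$ perfect gives
\[
\widehat R^{p^h}=k[[t_1^{p^h},\dots,t_n^{p^h}]].
\]
Hence $\widehat R$ is a free $\widehat R^{p^h}$-module with basis the monomials $t^{\alpha}$, $0\le\alpha_i<p^h$. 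After inverting, $\widehat K$ is a $\widehat K^{p^h}$-vector space with the same basis.

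Next I compare with $K$. Because $X$ is a variety over a perfect field, $K$ is finitely generated and $[K:K^{p^h}]=p^{hn}$. The $t_i$ form a $p$-basis of $K$: their differentials $dt_i$ form a basis of $\Omega_{K/k}=\Omega_{K/\mathbb F_p}$, since $x$ is smooth and the $t_i$ are local parameters, and a differential basis is a $p$-basis. Hence the monomials $t^\alpha$ ($0\le\alpha_i<p^h$) form a basis of $K$ over $K^{p^h}$.

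Now let $f\in K\cap\widehat K^{p^h}$. Since $K\subseteq\widehat K$, the natural map $K\otimes_{K^{p^h}}\widehat K^{p^h}\to\widehat K$ sends the basis $\{t^\alpha\}$ to a basis, so it is an isomorphism. Expand $f=\sum_\alpha c_\alpha t^\alpha$ with $c_\alpha\in K^{p^h}$. This is also the expansion of $f$ in $\widehat K$ over $\widehat K^{p^h}$. Since $f\in\widehat K^{p^h}$, uniqueness of coordinates forces $c_\alpha=0$ for $\alpha\ne0$. Therefore $f=c_0\in K^{p^h}$.

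The main point to check carefully is the compatibility of the two bases, namely that $K$ and $\widehat K^{p^h}$ are linearly disjoint over $K^{p^h}$ inside $\widehat K$. I handle it by the dimension count above: $\{t^\alpha\}$ spans $\widehat K$ over $\widehat K^{p^h}$ with exactly $p^{hn}$ elements, and it is simultaneously a basis of $K$ over $K^{p^h}$.

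An alternative that avoids $p$-bases in $K$ is to work in $R$ directly. Here $R^{p^h}\to R$ is finite because $R$ is F-finite, and $\widehat{R^{p^h}}=\widehat R^{p^h}$. Faithful flatness of $R^{p^h}\to\widehat{R^{p^h}}$ then gives $R\cap \widehat R^{p^h}=R^{p^h}$, using $R/R^{p^h}\hookrightarrow \widehat R/\widehat R^{p^h}$ after completing the finite module $R$. To pass to fraction fields, write $f=a/b$ and replace it by $ab^{p^h-1}/b^{p^h}$.
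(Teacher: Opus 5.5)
Your main argument is correct, but it runs differently from the paper's.

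\emph{The paper's route.} The paper treats only $h=1$. The derivations of $K$ dual to $dt_1,\dots,dt_n$ extend continuously to $\widehat{\OO}_{X,x}$ and then to $\widehat K$, so they kill every element of $\widehat K^{p}$. Hence $df=0$ in $\Omega_{K/k}$, and since $\ker(d)=K^p$ for perfect $k$, one gets $f\in K^p$. General $h$ then follows by induction, using injectivity of Frobenius to match the root found in $K$ with the root in $\widehat K$.

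\emph{Your route.} You compare coordinates in the monomial basis $t^\alpha$, $0\leq\alpha_i<p^h$. This basis serves simultaneously for $K$ over $K^{p^h}$ and for $\widehat K$ over $\widehat K^{p^h}$. This handles all $h$ at once, with no induction, and makes explicit the linear disjointness of $K$ and $\widehat K^{p^h}$ over $K^{p^h}$. The cost is that you need the description $\widehat R^{p^h}=k[[t_1^{p^h},\dots,t_n^{p^h}]]$ in $\widehat R$.

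Both proofs rest on the same input: a differential basis of $\Omega_{K/k}$ is a $p$-basis, and $\ker d=K^p$ is one form of that fact. You also use slightly less than you state: you only need that the $t^\alpha$ span $K$ over $K^{p^h}$ and are independent over $\widehat K^{p^h}$, not the full isomorphism $K\otimes_{K^{p^h}}\widehat K^{p^h}\to\widehat K$.

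\emph{Your alternative.} The faithful-flatness argument is genuinely more general. It never uses smoothness, only F-finiteness of $R$, so it works wherever $\widehat{\OO}_{X,x}$ is a normal domain. However, in the passage to fraction fields you should say why $ab^{p^h-1}\in R\cap\widehat K^{p^h}$ actually lies in $\widehat R^{p^h}$. The reason is that its $p^h$-th root in $\widehat K$ is integral over $\widehat R$, hence lies in $\widehat R$ by normality.
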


\begin{proof}
For $h=1$, choose \'{e}tale parameters at $x$.
Their differentials form a basis of $\Omega_{K/k}$, and the
dual derivations extend continuously to $\widehat\OO_{X,x}$
and then to $\widehat K$. If $f\in K$ is a $p$-th power
in $\widehat K$, all these derivations annihilate $f$.
Thus $df=0$, so $f\in K^p$ by \cite[Tag 031W]{Sta}.
The reverse inclusion is immediate.
The root in $K$ agrees with the root in $\widehat K$ by
injectivity of Frobenius. Induction gives the assertion for $h>1$.
\end{proof}

\subsection{Normalized tame cyclic covers}

We record the cyclic-cover results in the form used below.
The normalization is taken in a finite product of fields.
One may then choose a connected component.

\begin{lemma}[Ambro]\label{lem:normalized-cyclic-algebra}
Let $X$ be a normal variety over $k$, let $q>0$ be
prime to $p$, and let $f\in k(X)^*$. Set
$H=q^{-1}\divisor_X(f)$.
The normalization $Y$ of $X$ in $k(X)[z]/(z^q-f)$ is finite,
and its algebra is
\begin{equation}\label{eq:normalized-cyclic-algebra}
 \pi_*\OO_Y
   =\bigoplus_{i=0}^{q-1}\OO_X(\lfloor iH\rfloor)z^i .
\end{equation}
Multiplication is induced by $z^q=f$ in the generic algebra.
For a prime divisor $P\subset X$, the ramification index at
every prime divisor above $P$ is
\begin{equation}\label{eq:cyclic-ramification-index}
                    e_P=\frac{q}{\gcd(q,v_P(f))}.
\end{equation}
We use the convention $\gcd(q,0)=q$.
\end{lemma}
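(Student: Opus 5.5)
The plan is to work locally over an affine open $U=\Spec R\subset X$, reduce to a question about one prime divisor $P$, and compute everything inside the graded decomposition of the generic algebra. Set $K=k(X)$ and $L=K[z]/(z^q-f)$. Since $q$ is prime to $p$ and $k$ is algebraically closed, the $\mu_q$-action $z\mapsto\zeta z$ gives a decomposition $L=\bigoplus_{i=0}^{q-1}Kz^i$ into eigenspaces. The integral closure $\pi_*\OO_Y$ of $\OO_X$ in $L$ is $\mu_q$-stable, because the action is by $\OO_X$-algebra automorphisms. Since $|\mu_q|$ is invertible, it therefore splits into the eigencomponents $\pi_*\OO_Y\cap Kz^i$, via the projectors $q^{-1}\sum_\zeta\zeta^{-i}\zeta^*$. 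Finiteness is standard: $X$ is excellent and $L/K$ is separable (a product of separable extensions), so the normalization is finite. The same follows directly once the eigencomponents are identified with coherent sheaves.

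The main step is to show that $gz^i$ with $g\in K$ is integral over $\OO_X$ if and only if $g\in\OO_X(\lfloor iH\rfloor)$. Since $\OO_X$ and $\pi_*\OO_Y$ are both $S_2$/normal, it suffices to check this at codimension-one points $P$. Both sides are reflexive: the eigencomponents of a normal algebra are reflexive rank-one sheaves. In the DVR $\OO_{X,P}$ with valuation $v=v_P$, the element $w=gz^i$ satisfies $w^q=g^qf^i$. Hence $w$ is integral if and only if $g^qf^i$ is integral, since $w$ is a root of $T^q-g^qf^i$ and conversely integrality is preserved by taking the $q$-th power. So the condition is $v(g^qf^i)\ge0$, that is, $qv(g)+iv(f)\ge0$, or $v(g)\ge -iv(f)/q$. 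Because $v(g)\in\Z$, this is equivalent to $v(g)\ge\lceil -iv(f)/q\rceil=-\lfloor iv(f)/q\rfloor$, which is exactly $g\in\OO_X(\lfloor iH\rfloor)_P$.

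The one subtlety I expect is the direction "$w^q$ integral $\Rightarrow$ $w$ integral" in the reducible case, where $L$ is a product of fields. It is handled by the monic equation $T^q-w^q$ and is uniform in all cases. The multiplication rule then follows from $z^iz^j=z^{i+j}$ or $fz^{i+j-q}$ in $L$.

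For the ramification index, I would localize at $P$ and pass to the strict henselization, or simply compute valuations. Write $v_P(f)=a$ and $d=\gcd(q,a)$. After multiplying $f$ by a $q$-th power of a uniformizer $t$, which does not change $L$, we can arrange $f=ut^{a}$ with $u$ a unit. Let $w$ be a valuation of $L$ (on some factor) over $v_P$, normalized with values in $\frac1e\Z$. Then $w(z)=a/q$, which has exact denominator $q/d$, so $e\ge q/d$. Conversely, $z^{q/d}$ satisfies $(z^{q/d})^d=f$, so $L$ is obtained from the unramified-over-$P$ extension generated by $(ut^{a})^{1/d}=u^{1/d}t^{a/d}$. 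This is étale over $P$ because $d$ is prime to $p$ and $u$ is a unit. The remaining extension is then a Kummer extension of degree at most $q/d$, which gives $e\le q/d$. When $a=0$ the convention $\gcd(q,0)=q$ gives $e_P=1$: the cover is étale in codimension one there, since $u$ is a unit and $q$ is invertible. The main thing to handle carefully is this bookkeeping when $L$ splits into several fields, and the argument above is done factor by factor.
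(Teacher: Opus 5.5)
Your proof is correct, but it takes a different route from the paper. The paper does not prove the lemma: it cites \cite[Proposition 3.1 and Lemmas 3.2, 3.4, 3.5]{Amb15} for the normalized algebra and for the ramification formula. It writes out only the case $v_P(f)=qd$, where the substitution $w=t^{-d}z$ identifies the stalk with the finite \'{e}tale algebra $\OO_{X,\eta_P}[w]/(w^q-u)$; this is the case used later for quasi-\'{e}taleness.

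You give a self-contained argument instead. The $\mu_q$-eigendecomposition splits the integral closure into graded pieces; this is legitimate because $k$ is algebraically closed and $q$ is prime to $p$. The observation that $gz^i$ is integral if and only if $g^qf^i$ is integral turns each piece into a valuative condition, and that condition is exactly $\OO_X(\lfloor iH\rfloor)$. For the ramification index, the value $w(z)=v_P(f)/q$ gives $q/d\mid e$. The upper bound comes from the subalgebra $K[z^{q/d}]\simeq K[Y]/(Y^d-f)$, which is \'{e}tale over $\OO_{X,P}$, followed by a Kummer step of degree at most $q/d$.

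What your route buys is independence from the reference and a proof of the general formula $e_P=q/\gcd(q,v_P(f))$, not just the \'{e}tale case the paper writes out. The citation buys brevity.

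Two small simplifications are available.
\begin{enumerate}
\item You do not need reflexivity of the eigencomponents or a reduction to height-one primes of $\pi_*\OO_Y$. Since $\OO_X$ is normal, $gz^i$ is integral over $\OO_X$ if and only if $g^qf^i\in\OO_X$. That holds if and only if $qv_P(g)+iv_P(f)\geq0$ for every prime divisor $P$, which is exactly $g\in\OO_X(\lfloor iH\rfloor)$. This identifies the algebra in one line, and finiteness then follows from coherence of the summands.
\item Rescaling $f$ by powers of $t^q$ is unnecessary, since $f=ut^{v_P(f)}$ with $u$ a unit already holds in $\OO_{X,P}$.
\end{enumerate}
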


\begin{proof}
The algebra and its identification with the normalization are
\cite[Proposition 3.1 and Lemma 3.2]{Amb15}.
The ramification formula is \cite[Lemma 3.5]{Amb15}.
To specify the codimension-one assertion, let $t$ be a
uniformizer at $P$ and write $f=t^nu$ with $u$ a unit.
\cite[Lemma 3.4]{Amb15} describes the stalk of
\eqref{eq:normalized-cyclic-algebra}.
In particular, if $n=qd$, substitution $w=t^{-d}z$ gives
the finite \'{e}tale algebra
$\OO_{X,\eta_P}[w]/(w^q-u)$.
The construction applies to a rational function with poles
as well as to a regular function.
\end{proof}

\subsection{Application of the bounds on the global indices of log Calabi--Yau surfaces}\label{sec:ambient-index}

The bounds on the global indices of log Calabi--Yau surfaces are proved separately in
\cite{BWX26}.

\begin{lemma}\label{lem:ambient-section-lifting}
Let $Y$ be a normal Cohen--Macaulay threefold,
$T$ a reduced effective $\Q$-Cartier divisor, and
$D$ an integral $\Q$-Cartier divisor.
Assume that $\OO_Y(-T)$, $\OO_Y(D)$ and $\OO_Y(D-T)$
are maximal Cohen--Macaulay. Set
\[
                  \mathcal E_D=\OO_Y(D)/\OO_Y(D-T).
\]
Assume that $D$ is Cartier at every generic curve point
of $T$ and that $\mathcal E_D\simeq\OO_T$. Then $D$
is Cartier near $T$ and $\OO_Y(D)|_T\simeq\OO_T$.
\end{lemma}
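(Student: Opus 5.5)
The plan is to imitate the second half of the proof of Lemma~\ref{lem:quotient-duality}: lift a generator of $\mathcal E_D\simeq\OO_T$ to a section of $\OO_Y(D)$ and show that it generates $\OO_Y(D)$ near $T$. The only new ingredient is a dual sheaf that plays the role of $\FF_{-j}$.

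\textbf{Step 1: the dual quotient.} Put $\mathcal E_{-D}=\OO_Y(-D)/\OO_Y(-D-T)$. I would show, as in Lemma~\ref{lem:quotient-duality}, that multiplication induces
\[
 \mathcal E_{-D}\xrightarrow{\sim}\sheafHom_T(\mathcal E_D,\OO_T).
\]
The cleanest route is to check this where $D$ is Cartier and then extend. At the generic points of the curves in $T$, $D$ is Cartier by hypothesis, so both sides are invertible there and the map is clearly an isomorphism. At the generic points of $T$ itself, $Y$ is regular, so the same holds. The remaining points lie in codimension $\ge2$ on $T$. There I would use that $\mathcal E_D\simeq\OO_T$ and that $\OO_T$ is $S_2$: the depth lemma applied to $0\to\OO_Y(D-T)\to\OO_Y(D)\to\mathcal E_D\to 0$ gives that $\mathcal E_D$ is Cohen--Macaulay, so $T$ is Cohen--Macaulay. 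Then both sides are $S_2$ sheaves on $T$ agreeing in codimension one, and the map extends to an isomorphism.

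For the left-hand side to be $S_2$ I need $\OO_Y(-D)$ and $\OO_Y(-D-T)$ to be maximal Cohen--Macaulay. These are not among the hypotheses. I would obtain them by duality: $\OO_Y(-D)\simeq\sheafHom(\OO_Y(D),\OO_Y)$, and one can twist by $\omega_Y$. Alternatively I would avoid this by working only with the reflexive hull and the $S_2$ property of $T$.

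\textbf{Step 2: the unit argument.} Let $s$ be a generator of $\mathcal E_D\simeq\OO_T$, and let $t\in H^0(\mathcal E_{-D})$ be the dual section with $st=1$, which exists by Step~1. Locally at $x\in T$, lift them to $u\in\OO_Y(D)$ and $v\in\OO_Y(-D)$. Then $uv\in\OO_Y$ satisfies $uv\equiv1\pmod{\OO_Y(-T)}$, so $uv$ is a unit near $x$. For any $w\in\OO_Y(D)$ we have $w=u\cdot\frac{vw}{uv}$ with $vw\in\OO_Y$. Hence $u$ generates $\OO_Y(D)$ near $x$, so $D$ is Cartier near $T$.

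\textbf{Step 3: the restriction.} Once $D$ is Cartier near $T$, we have $\OO_Y(D-T)=\OO_Y(D)\otimes\OO_Y(-T)$ there. Therefore $\mathcal E_D=\OO_Y(D)|_T$, and the hypothesis gives $\OO_Y(D)|_T\simeq\OO_T$.

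\textbf{Main obstacle.} The hard part is Step~1 at points of codimension $\ge2$ on $T$, namely producing the global inverse section $t$ from codimension-one data. This needs the $S_2$-ness of $T$ and of $\mathcal E_{-D}$.

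An alternative is to bypass $\mathcal E_{-D}$ entirely. Take a lift $u$ of the generator. The cokernel of $u\colon\OO_Y\to\OO_Y(D)$ is supported away from codimension-one points of $T$, since $u$ generates at the generic points of curves in $T$ by Nakayama. Then reduce modulo $\OO_Y(-T)$ and use $\mathcal E_D\simeq\OO_T$ together with Nakayama.

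The subtle point in that route is that $\OO_Y(D)\otimes\OO_T\to\mathcal E_D$ is surjective, so $u$ mod $T$ generating $\mathcal E_D$ only says $\OO_Y(D)=u\OO_Y+\OO_Y(D-T)$. This is not yet generation. To close the gap one needs $\OO_Y(D-T)=\OO_Y(-T)\cdot\OO_Y(D)$ near $x$, and this is exactly where Step~1's duality is required. So I would carry out Step~1 carefully, using the MCM hypotheses via the depth lemma.
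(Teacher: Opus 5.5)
Your Step~1 is not justified, and Steps~2--3 rest on it. The lift $v\in\OO_Y(-D)_x$ of the dual generator $t$ comes from extending the codimension-one isomorphism $\mathcal E_{-D}\to\sheafHom_T(\mathcal E_D,\OO_T)\simeq\OO_T$ across finitely many points. For that you need $\mathcal E_{-D}$ to be $S_2$, which essentially means $\OO_Y(-D)$ and $\OO_Y(-D-T)$ are maximal Cohen--Macaulay. These are not hypotheses, and your repair does not produce them. Since $Y$ is only Cohen--Macaulay, duality makes $\sheafHom(\OO_Y(D),\omega_Y)=\OO_Y(K_Y-D)$ maximal Cohen--Macaulay. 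Getting from there to $\OO_Y(-D)$ would require twisting by $-K_Y$, which is not invertible.

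The $S_2$ property of $T$ alone does not help either. The image of $u\cdot\OO_Y(-D)$ in $\OO_T$ near $x$ is an ideal sheaf equal to $\OO_T$ away from finitely many points. Showing that it equals $\OO_T$ at $x$ means finding $v$ with $uv$ a unit, which is the statement being proved. Your alternative route, as you note, falls back on the same step. A symptom of the problem is that you never use that $D$ is $\Q$-Cartier.

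The paper uses exactly that hypothesis to manufacture the inverse instead of obtaining it by duality. Choose $N$ with $ND$ Cartier. Multiplication $\mathcal E_D^{\otimes N}\to\mathcal E_{ND}=\OO_Y(ND)|_T$ is a map of invertible $\OO_T$-modules. It is an isomorphism off the finitely many points of $T$ where $D$ is not Cartier, hence everywhere because $T$ is $S_2$. This is the only depth input needed, and it comes from $\OO_Y$ and $\OO_Y(-T)$ being maximal Cohen--Macaulay. So $u^N$ generates $\OO_Y(ND)$ near $x$ by Nakayama, and $v=u^{N-1}(u^N)^{-1}\in\OO_Y(-D)$ satisfies $uv=1$; your Step~2 then concludes.

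Your alternative route can also be closed without Step~1. The locus where $u$ fails to generate $\OO_Y(D)$ is $\Supp(\divisor(u)+D)$, which is a divisor. A component through $x$ is not a component of $T$. Since $T$ is $\Q$-Cartier, that component meets $T$ in a curve through $x$. At the generic point of that curve, $u$ generates by your Nakayama argument, which is a contradiction.
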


\begin{proof}
Choose $N>0$ with $ND$ Cartier. On the complement of
finitely many points of $T$, actual multiplication gives
an isomorphism
\[
                    \mathcal E_D^{\otimes N}
                       \longrightarrow\mathcal E_{ND}.
\]
Both sides are lines on $T$. Its inverse extends across
the omitted points by $S_2$, so it is an isomorphism
everywhere.

At a point of $T$ lift the generating section of $\mathcal E_D$
to $u\in\OO_Y(D)$. The element $u^N$ generates
$\OO_Y(ND)$ modulo the ideal of $T$, and hence
locally by Nakayama's lemma. In the fractional
divisorial modules set
\[
                  v=u^{N-1}(u^N)^{-1}\in\OO_Y(-D).
\]
Then $uv=1$, and every $w\in\OO_Y(D)$ is
$u(vw)$ with $vw\in\OO_Y$. Thus $u$ freely generates
$\OO_Y(D)$ and restricts to the chosen generating section.
\end{proof}

\begin{lemma}\label{lem:ambient-log-index}
Let $V$ be a normal threefold and $T$
a connected proper reduced effective $\Q$-Cartier divisor.
Assume that $V$ is klt and $(V,T)$ is lc near $T$, and that
\[
 q(K_V+T)\text{ is Cartier},\qquad
 \OO_V(q(K_V+T))|_T\simeq\OO_T
\]
for some $q>0$, with Cartierness required only near $T$.
Then there is such an integer $q_0$ prime to $p$.
\end{lemma}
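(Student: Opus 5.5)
Write $q=p^e q'$ with $q'$ prime to $p$, and set $L:=\OO_V(q'(K_V+T))$, a rank-one reflexive sheaf near $T$. The claim is that $q_0=q'$, or a prime-to-$p$ multiple of it, already works. The argument has two parts: show that $L|_T$ is torsion of order prime to $p$ in a suitable sense, then use Lemma~\ref{lem:ambient-section-lifting} to upgrade a trivialization on $T$ to Cartierness and triviality near $T$.

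\emph{Step 1: pass to $T$.} By adjunction, $(K_V+T)|_T$ defines the $\Q$-Cartier class $K_T+\mathrm{Diff}$. Here $T$ is slc, since $(V,T)$ is lc. The hypothesis says that $\omega_T^{[q]}$, suitably twisted by the different, is trivial: $(T,\mathrm{Diff}_T)$ is an slc log Calabi--Yau surface with $q(K_T+\mathrm{Diff}_T)\sim 0$. The bounds of \cite{BWX26} on global indices of log Calabi--Yau surfaces (for $p>42$ the relevant indices are bounded by numbers $<p$) give an integer $q_0$ prime to $p$ with $q_0(K_T+\mathrm{Diff}_T)\sim 0$. The point is that the $p$-part of the index cannot occur, because every index in the bounded list is prime to $p$.

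\emph{Step 2: identify $\mathcal E_D$ with the adjunction sheaf.} Take $D=q_0(K_V+T)$. Since $V$ is klt and $(V,T)$ is lc in dimension three, I expect $\OO_V(-T)$, $\OO_V(D)$ and $\OO_V(D-T)$ to be maximal Cohen--Macaulay near $T$. For klt threefolds in characteristic $p>5$ this is a known consequence of the MMP and vanishing results (depth statements for divisorial sheaves); I would cite them or reduce to the dlt/plt case.

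At the generic points of curves in $T$, $(V,T)$ is lc, so $V$ is either smooth or a cyclic quotient along $T$. From the coefficients of the different, one checks that $D$ is Cartier there when $q_0$ is divisible by the local indices. These local indices are prime to $p$, because these are tame quotient singularities.

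The residue/adjunction map then identifies $\mathcal E_D$, up to its $S_2$-hull, with $\OO_T(q_0(K_T+\mathrm{Diff}_T))\simeq\OO_T$. Lemma~\ref{lem:ambient-section-lifting} now yields that $D$ is Cartier near $T$ and $\OO_V(D)|_T\simeq\OO_T$.

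\emph{Main obstacle.} The hardest part is Step 2's identification of $\mathcal E_D$ with $\OO_T$. It needs $\mathcal E_D$ to be $S_2$ on $T$, which follows from the Cohen--Macaulay hypotheses via the depth lemma. It also needs the adjunction isomorphism to extend across finitely many points.

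A secondary concern is whether the hypothesis with $q$ divisible by $p$ really forces the global index of $(T,\mathrm{Diff}_T)$ to be prime to $p$. If \cite{BWX26} bounds only the index of the abundant part, I would instead argue as follows. The $p$-primary kernel on thickenings plays no role on $T$ itself, and $\Pic(T)$ has no $p$-torsion obstruction here. So the prime-to-$p$ part $q'$ of $q$, multiplied by the bounded index, already trivializes the bundle.
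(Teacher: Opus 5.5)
Your overall architecture matches the paper's: the index bounds of \cite{BWX26}, Cohen--Macaulayness of the divisorial sheaves from \cite{BK23}, Cartierness at generic curve points of $T$, and then Lemma~\ref{lem:ambient-section-lifting}. However, two steps do not go through as you justify them.

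\emph{The local step.} You say the local indices of $K_V+T$ at generic points of curves in $T$ are prime to $p$ ``because these are tame quotient singularities.'' Nothing local forces this. A plt germ of $(V,T)$ along a curve is toric of type $\tfrac1n(1,q)$, with local Weil class group $\Z/n$ and different coefficient $1-1/n$, and $p\mid n$ is entirely possible in characteristic $p$. For example, take $V=S\times\mathbb A^1$ with $S$ the cone over the rational normal curve of degree $p$, and $T$ a ruling times $\mathbb A^1$. Also, the non-plt unibranch curve points are not cyclic quotients at all.

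In the paper the prime-to-$p$ property comes from a global statement. The different denominators divide the global order of $K_{T_i^\nu}+\Delta_i$ on the components of the normalization, and \cite{BWX26} makes that order prime to $p$. Then \cite[Lemma 3.3]{HW19} shows the denominator annihilates the local Weil class group at plt curve points, and non-plt curve points are handled separately. If your Step~1 really yields $q_0$ prime to $p$ with $q_0\mathrm{Diff}_T$ integral, you could extract the local statement from there, but not from tameness.

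\emph{The nonnormal part, which is the more serious gap.} What must be trivial is the specific $S_2$ sheaf $\mathcal E_D$ on $T$. Triviality of its normalized reflexive pullback on each $T_i^\nu$ does not give this, because the gluing along the conductor can carry torsion. Your fallback that ``$\Pic(T)$ has no $p$-torsion'' is false in general: $\Pic^0$ of an ordinary abelian surface, or of a surface with an elliptic ruled component over an ordinary elliptic curve, has $p$-torsion points. It is also not the statement that is needed.

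The paper argues as follows.
\begin{enumerate}
\item Choose $q_1$ prime to $p$ such that $q_1L$ is Cartier at every generic curve point of $T$ and $\mathcal F=\mathcal E_{q_1L}$ has trivial normalized reflexive pullback.
\item Observe $\mathcal F^{[d]}=\mathcal E_{dq_1L}$. This holds where $q_1L$ is Cartier and extends by $S_2$. So the hypothesis gives $\mathcal F$ a finite order $p^hs$ with $(s,p)=1$.
\item Invoke the specific input from \cite{BWX26} that $\mathcal F^{[s]}$, which has $p$-power order and trivial normalized pullback, is trivial.
\item Only then apply Lemma~\ref{lem:ambient-section-lifting}, with $D=sq_1L$.
\end{enumerate}

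Your Step~1 tacitly assumes a version of \cite{BWX26} for the slc surface $T$ that already contains this gluing statement. Your Step~2 identification $\mathcal E_D\simeq\OO_T(q_0(K_T+\mathrm{Diff}_T))\simeq\OO_T$ is exactly where that statement would be used. As written, the identification is unjustified when $T$ is nonnormal.
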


\begin{proof}
Set $L=K_V+T$. The divisorial Cohen--Macaulay theorem
\cite[Theorems 3 and 19(2)]{BK23} makes each
\[
            \mathcal E_D=\OO_V(D)/\OO_V(D-T)
\]
an $S_2$ sheaf when $D$ and $D-T$ are $\Q$-Cartier.
All divisors used below have this property. It also
makes $T$ an $S_2$ surface. The reduced lc boundary
is nodal in codimension one and weakly normal by \cite[Lemma 2.3.4]{AP26}.

Adjunction gives standard-coefficient lc pairs
$(T_i^\nu,\Delta_i)$ with
\[
               (K_V+T)|_{T_i^\nu}=K_{T_i^\nu}+\Delta_i
                                      \sim_{\Q}0.
\]
By \cite{BWX26}, the global orders
on these normal surfaces are prime to $p$.
At a plt curve point the different denominator
annihilates the local Weil class group
\cite[Lemma 3.3]{HW19}.
At a non-plt curve point
\cite{BWX26} applies. Hence
there is a common prime-to-$p$ multiple $q_1$ such that
$q_1L$ is Cartier at every generic curve point of $T$
and $\mathcal F=\mathcal E_{q_1L}$ has trivial
normalized reflexive pullback. The identification is
given by the actual pluricanonical residue on this
locus and extended on each normalization by reflexivity.

Some reflexive power of $\mathcal F$ is trivial by
the given restriction isomorphism. Its reflexive powers are precisely
$\mathcal E_{dq_1L}$ since this holds where $q_1L$ is
Cartier and extends by $S_2$. Write an annihilating
integer as $p^hs$, $(s,p)=1$.
\cite{BWX26}, applied to
$\mathcal F^{[s]}$, gives
$\mathcal E_{sq_1L}\simeq\OO_T$.
The divisor $sq_1L$ is still Cartier at all generic
curve points. Lemma~\ref{lem:ambient-section-lifting}
therefore applies with $D=sq_1L$.
Taking $q_0=sq_1$ proves the assertion.
\end{proof}

The lemma concerns the logarithmic index of $K_V+T$.
It does not assert prime-to-$p$ Cartierness of $T$.

\subsection{Length-two Witt vectors and their connecting homomorphism}
\label{sec:witt-preliminaries}

We recall the properties of length-two Witt vectors used in
Section~\ref{sec:witt-lifting} and Appendix~\ref{app:witt}.
All exact sequences below are taken in the category of
abelian sheaves.

For a ring $B$ of characteristic $p$, the underlying set of
$W_2(B)$ is $B\times B$. Set
\begin{equation}\label{eq:witt-addition-polynomial}
 B_p(x,y)=\frac{x^p+y^p-(x+y)^p}{p}\in\Z[x,y].
\end{equation}
The quotient is taken over $\Z$ before reduction to $B$.
The addition law is
\[
 (x_0,x_1)+(y_0,y_1)
   =(x_0+y_0,x_1+y_1+B_p(x_0,y_0)).
\]
We use the following maps:
\[
 \begin{aligned}
 [x]&=(x,0),& V_W(x)&=(0,x),\\
 F_W(x_0,x_1)&=(x_0^p,x_1^p),&
 \operatorname{pr}_1(x_0,x_1)&=x_0.
 \end{aligned}
\]
Here $[x]$ is the Teichm\"uller representative, $F_W$ is
Frobenius, and $V_W$ is Verschiebung. The last three maps
are additive, whereas $x\mapsto[x]$ is generally not.
In particular,
\[
 [x]+[y]=[x+y]+V_W(B_p(x,y)),\qquad F_WV_W(x)=V_W(x^p).
\]
The second coordinate $\operatorname{pr}_2(x_0,x_1)=x_1$
is not an additive map. Nevertheless, since $p>2$,
\[
                  (x,a)-(x,b)=V_W(a-b).
\]
Thus two Witt vectors with the same first coordinate have
difference $V_W(b)$ for a uniquely determined $b\in B$.
This elementary observation is used in Appendix~\ref{app:witt}.

For a proper $k$-scheme $Z$, the Witt sequence is
\begin{equation}\label{eq:witt-short-exact}
 0\longrightarrow\OO_Z\xrightarrow{V_W}W_2\OO_Z
 \xrightarrow{\operatorname{pr}_1}\OO_Z\longrightarrow0.
\end{equation}
Write
\[
 \beta_Z:H^1(Z,\OO_Z)\longrightarrow H^2(Z,\OO_Z)
\]
for its connecting homomorphism, also called the first Witt
Bockstein. Exactness gives
\[
 \ker(\beta_Z)=
 \im\bigl(H^1(Z,W_2\OO_Z)\xrightarrow{\operatorname{pr}_1}
                                      H^1(Z,\OO_Z)\bigr).
\]
The map $\beta_Z$ is additive and functorial in $Z$.
Multiplication by $[c]$, for $c\in k$, acts on the left
term of \eqref{eq:witt-short-exact} by $c^p$ and on the
right term by $c$. Consequently
\[
                         \beta_Z(c\alpha)=c^p\beta_Z(\alpha).
\]
If $\Pic_{Z/k}$ is smooth at the identity, Mumford's
criterion gives
\begin{equation}\label{eq:smooth-picard-bockstein}
                              \beta_Z=0,
\end{equation}
see \cite[Remark 5.15]{Kle05}. No reducedness or smoothness
assumption on $Z$ is required here.

Fix an indexed affine covering $(U_a)$ of $Z$, with
\[
 (df)_{ab}=f_b-f_a,\qquad
 (dc)_{abc}=c_{bc}-c_{ac}+c_{ab}.
\]
The two copies of $\OO_Z$ in \eqref{eq:witt-short-exact}
have no higher cohomology on affine intersections.
The same is therefore true of $W_2\OO_Z$, so this covering
computes all three cohomology groups.
For an additive $1$-cocycle $z=(z_{ab})$, one has
\begin{equation}\label{eq:bockstein-cocycle}
 \begin{gathered}
 \beta_Z([z])=[B_p(z_{ab},z_{bc})],\\
 (z,b)\text{ is a Witt }1\text{-cocycle}
       \quad\Longleftrightarrow\quad
 db=-B_p(z_{ab},z_{bc}).
 \end{gathered}
\end{equation}
In the first line, brackets denote the cohomology class of
the displayed $2$-cocycle.
If $[z]\in\ker\beta_Z$, the particular representative $z$
can be prescribed as the first coordinate: choose a Witt
cocycle $\zeta'$ with $\operatorname{pr}_1\zeta'-z=da$ and
replace it by $\zeta'-d[a]$.

\section{Surface cohomology on tame formal covers}\label{sec:surface-cover}

In this section $k$ is algebraically closed of characteristic $p>42$.
In this section, we prove the following result, which reduces the general case to two special cases.

\begin{proposition}\label{prop:surface-cover}
Let $V$ be a normal projective threefold and $T$ a nonempty
connected reduced effective $\Q$-Cartier divisor.
Assume that $V$ is klt and $(V,T)$ is lc on a neighbourhood of
$T$. Suppose, on this neighbourhood, that
\begin{equation}\label{eq:geometric-hypotheses}
 NT\text{ is Cartier},\quad\OO_V(NT)|_T\simeq\OO_T,\quad
 q(K_V+T)\text{ is Cartier},\quad
 \OO_V(q(K_V+T))|_T\simeq\OO_T
\end{equation}
for some $N,q>0$. There is a finite abelian quasi-\'{e}tale cover
of degree prime to $p$
\[
 h:\widetilde\YY\longrightarrow\widehat V_T,\qquad
 \widetilde\YY/\widetilde G=\widehat V_T,\qquad
                         \widetilde A=h^*T,
\]
with $\widetilde A$ connected and reduced, satisfying
\eqref{eq:formal-assumptions}. Moreover one of the following holds:
\begin{enumerate}
\item $H^1(\widetilde A,\widetilde\FF_j)=0$ for all $j\in\Z$;
\item for some $c>0$, \eqref{eq:acyclic-layers} holds on
$\widetilde\YY$, and $\Pic^0_{\widetilde A/k}$ is smooth
semiabelian of dimension two, with
$h^*(\OO_{\widetilde A})=(1,2,1)$.
\end{enumerate}
In case \textup{(i)}, $T$ is semiample.
\end{proposition}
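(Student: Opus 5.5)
First I will build the cover in two tame cyclic steps. By Lemma~\ref{lem:ambient-log-index} I may take $q$ prime to $p$. Near $T$ write $q(K_V+T)=\divisor(f)$ locally on the completion; more precisely, on $\widehat V_T$ the sheaf $\OO(q(K_V+T))$ is trivial near $T$ after passing to a large thickening. I then take the normalized cyclic cover of Lemma~\ref{lem:normalized-cyclic-algebra} with $z^q=f$, i.e.\ the index-one cover of $K_V+T$ along $T$. This cover is quasi-\'etale because $K_V+T$ is Cartier at generic points of $V$. On it $\MM=\omega(A)$ becomes invertible with $\MM|_A\simeq\OO_A$. The maximal Cohen--Macaulay property of the $\II_j$ comes from the divisorial Cohen--Macaulay theorem of \cite{BK23}, as in the proof of Lemma~\ref{lem:ambient-log-index}. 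The condition $NA$ Cartier with $\II_N|_A$ trivial is inherited by pullback. Finally, Lemma~\ref{lem:quotient-duality} shows that $A$ is Gorenstein slc with $\omega_A\simeq\OO_A$. I choose a connected component, so the group is abelian of order prime to $p$.

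Next I split according to $h^*(\OO_A)$ for the Gorenstein slc surface $A$ with trivial dualizing sheaf. Serre duality and Lemma~\ref{lem:section-period} show that $\FF_j$ has $h^0=h^2=0$ unless $c\mid j$, and in that case both equal $1$. Hence $h^1(\FF_j)=-\chi(\FF_j)$ for $c\nmid j$. By a Riemann--Roch and deformation-invariance argument on the $\Q$-line bundles $\FF_j$, numerically trivial on $A$, I expect $\chi(\FF_j)=\chi(\OO_A)$ up to torsion. I also expect $\chi(\OO_A)\in\{0,1,2\}$. This is the point where the classification of slc Calabi--Yau surfaces enters (Appendix~\ref{app:models}).
\begin{enumerate}
\item If $h^1(\OO_A)=0$, or more generally $A$ is lc but not klt, then the vanishing $H^1(A,\FF_j)=0$ for all $j$ gives case (i).
\item Otherwise $A$ is abelian-, bielliptic- or elliptic-type with $\chi(\OO_A)=0$, and $H^*(\FF_j)=0$ for $c\nmid j$. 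Passing to a further tame cover that kills the prime-to-$p$ part of the torsion of $\FF_c$, I get $\JJ=\II_c$ invertible with trivial restriction, which is \eqref{eq:acyclic-layers}.
\end{enumerate}
The irregularity must then be $2$. To make $\Pic^0$ smooth semiabelian and $h^*(\OO)=(1,2,1)$, I would pass to a tame cover on which $A$ is abelian-like: a quasi-\'etale cover unwinding a bielliptic quotient, of degree at most $12$ since $p>42$ avoids wild cases. Smoothness of $\Pic^0$ then follows from \eqref{eq:smooth-picard-bockstein} in reverse, via $h^1=\dim\Pic^0$.

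In case (i), I would extend the trivialization of $\II_N|_A$ step by step through the thickenings. The obstructions lie in $H^1(A,\FF_j)=0$, using the units sequence from the preliminaries. This produces a $G$-invariant trivialization after averaging, since $|G|$ is prime to $p$. Lemma~\ref{lem:invariant-descent} then gives semiampleness of $T$.

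I expect the main obstacle to be the dichotomy. Specifically, one must show that whenever $H^1(\FF_j)\neq0$ for some $j$, one can actually reach the precise structure in (ii). This requires the slc classification and control of the torsion of $\FF_j$ by prime-to-$p$ characters, which I would take from \cite{BWX26}.
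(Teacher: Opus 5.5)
Your proposal has a genuine gap at its central step, the dichotomy between (i) and (ii).

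\textbf{The Euler characteristic claim fails.} You cannot use $\chi(\FF_j)=\chi(\OO_A)$. The $\FF_j$ are rank-one divisorial sheaves that are not invertible at the non-Cartier points of $A$, and, when $A$ is nonnormal, possibly along the whole conductor. So Riemann--Roch picks up local and gluing corrections, and $h^1(\OO_A)=0$ does not give case (i).
\begin{enumerate}
\item A Kummer surface $A=B/\langle-1\rangle$ is Gorenstein klt with $\omega_A\simeq\OO_A$ and $h^*(\OO_A)=(1,0,1)$. Yet the order-two reflexive sheaf $\FF$, the anti-invariant summand of the pushforward of $\OO_B$ under $B\to A$, has $h^*(\FF)=(0,2,0)$.
\item Lemma~\ref{lem:elliptic-path} shows the same phenomenon for a chain of elliptic conductor curves when $\FF_j$ is twisted at both ends.
\end{enumerate}

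\textbf{The missing idea is the order of operations.} Before deciding between (i) and (ii), you must pass to further tame covers that absorb the prime-to-$p$ part of the period of the $\FF_j$:
\begin{enumerate}
\item the cyclic algebra of $\II_{p^v}\mathcal U^{-1}$ with $\mathcal U^r\simeq\II_N$, combined with \cite{BWX26} and B\'ezout to make the boundary Cartier in the normal klt case (this unwinds the Kummer example to $B$);
\item the double cover $\OO_{\YY}\oplus\II_{d/2}\mathcal U^{-1}$, which turns an elliptic chain into a cycle.
\end{enumerate}
Only then can you prove acyclicity of the remaining nontrivial $\FF_j$, case by case:
\begin{enumerate}
\item Riemann--Roch for genuine torsion line bundles on an abelian or K3 resolution;
\item the conductor sequence of Lemma~\ref{lem:elliptic-quotients} for elliptic conductor;
\item for rational conductor, the Frobenius-trace splitting of Lemma~\ref{lem:power-splitting}, which needs $p$-power order and Frobenius bijective on $H^*(\OO_{A'})$.
\end{enumerate}

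\textbf{Other unsupported claims in the dichotomy.} Your assertion that the irregularity must be $2$ is itself a theorem in the paper. In the rational-conductor case it uses the threefold input $P\cdot C=0$ and Hodge index (Lemma~\ref{lem:weighted-hodge}). Also, bielliptic surfaces never occur: by Lemma~\ref{lem:normal-klt-classification}, a Gorenstein klt surface with trivial dualizing sheaf has K3 or abelian minimal resolution.

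\textbf{The Picard-scheme part of (ii) is unproved.} Equation~\eqref{eq:smooth-picard-bockstein} gives only a necessary condition for smoothness. The equality $\dim\Pic^0_{A/k}=h^1(\OO_A)$ that you invoke is equivalent to the smoothness you want, so the argument is circular. Nothing in it excludes a unipotent ($\Ga$) part either, so semiabelianity is not addressed. The paper computes $\Pic^0$ explicitly in each case:
\begin{enumerate}
\item the dual abelian surface;
\item a $\Gm$-torsor over $E^\vee$, obtained by gluing along the conductor cycle, using that the monodromy preserves a nonzero differential and so is a translation;
\item $\Hom(H_1(\Delta,\Z),\Gm)\simeq\Gm^2$, from gluing constants on a crepant model, followed by descent to $A$.
\end{enumerate}

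\textbf{A smaller point on the first cover.} There is no function $f$ with $\divisor(f)=q(K_V+T)$ near $T$. The sheaf $\OO(q(K_V+T))$ is only known to be trivial on $T$, not on its thickenings. This is repaired as in Lemma~\ref{lem:formal-log-cover}: take the unique $q$-th roots in $1+\mathcal K$ of the transition functions to obtain $\mathcal U$ with $\mathcal U^{q}\simeq\OO(q(K_V+T))$ on $\widehat V_T$.

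Your treatment of semiampleness in case (i) is essentially the paper's: lift the trivialization of $\II_N|_A$ through the $\FF_j$-filtration, average, and descend.
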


The singularity and log canonical hypotheses are needed only
near $T$, and all the covers below are constructed on $\widehat V_T$.
Projectivity of $V$ is used for applying Lemma~\ref{lem:principal-parts}.

\Needspace{6\baselineskip}
\subsection{The log canonical cover}

First we make the log canonical line invertible. 

\begin{lemma}\label{lem:formal-log-cover}
Under \eqref{eq:geometric-hypotheses}, there is a cyclic
quasi-\'{e}tale formal cover of degree prime to $p$ with klt
ambient space and connected reduced Gorenstein slc boundary $A$
satisfying $\omega_A\simeq\OO_A$. All $\II_j$ are maximal Cohen--Macaulay.
\end{lemma}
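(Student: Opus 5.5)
We construct the cover as an index-one cover of $K_V+T$ on the formal neighbourhood. First apply Lemma~\ref{lem:ambient-log-index} to replace $q$ by an integer $q_0$ prime to $p$. Then $L:=q_0(K_V+T)$ is Cartier near $T$ and $\OO_V(L)|_T\simeq\OO_T$. Choose a generating section of $\OO_V(L)|_T$.

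The first step is to lift this section to $\widehat V_T$. The obstructions lie in $H^1$ of the successive layers $\OO_V(L-jT)/\OO_V(L-(j+1)T)$, which need not vanish. To avoid them, work instead with a rational section. Pick $f\in k(V)^*$ with $\divisor(f)=L+P$, where $P$ is a Cartier divisor near $T$ whose restriction to $T$ is trivial. Alternatively, work directly on $\widehat V_T$: the trivialization of $\OO_V(L)|_T$ lifts to a formal generator $\sigma$ of $\OO_{\widehat V_T}(L)$ in a neighbourhood of each point. Globally, the key observation is that we only need a generator of $\OO(L)$ on $\widehat V_T$. This exists because $\OO(L)|_T$ is trivial and the Picard kernel of each nilpotent thickening is $p$-primary. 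Since $\OO(q_0 L)$ differs from a line bundle on $\widehat V_T$ that is trivial on every thickening only by $p$-primary classes, replacing $q_0$ by $p^e q_0$ on each finite thickening is not allowed: the degree must stay prime to $p$. I would therefore instead use the fact that, by Grothendieck existence, the formal Picard group of $\widehat V_T$ restricted to $T$ has $p$-primary kernel that becomes trivial after a suitable Witt-type argument.

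In practice I expect the authors do the following. Take the cyclic cover $\YY\to\widehat V_T$ defined by $\OO(L)\simeq\OO$ on $\widehat V_T$, assuming such a trivialization exists, via the algebra $\bigoplus_{i=0}^{q_0-1}\OO(-i(K_V+T))$ as in Lemma~\ref{lem:normalized-cyclic-algebra}. This cover is quasi-\'etale, since $K_V+T$ is Cartier in codimension one and the ramification index is $1$ there. It has degree $q_0$, prime to $p$, hence is tame, so the klt and lc properties are preserved (after taking a connected component). On $\YY$ the divisor $K_\YY+A$ is Cartier and trivial, where $A$ is the preimage of $T$, and $A$ is reduced because the cover is \'etale over generic points of $T$.

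Next, $(\YY,A)$ is lc with $K_\YY+A$ Cartier, and $\YY$ is klt. The divisorial Cohen--Macaulay theorem \cite{BK23} makes $\YY$ Cohen--Macaulay and each $\OO_\YY(-jA)$ maximal Cohen--Macaulay. Adjunction gives $\omega_A\simeq\OO_\YY(K_\YY+A)|_A\simeq\OO_A$, so $A$ is Cohen--Macaulay with invertible dualizing sheaf. Moreover $A$ is slc: it is nodal in codimension one and $S_2$ by \cite[Lemma 2.3.4]{AP26}. Connectedness is arranged by choosing a component.

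The main obstacle is trivializing $\OO(L)$ on the whole formal neighbourhood rather than only on $T$. My plan is to use Lemma~\ref{lem:ambient-section-lifting} on each thickening. The kernel of $\Pic(\widehat V_T)\to\Pic(T)$ is an inverse limit of $p$-primary groups, so an element trivial on $T$ is $p$-divisible-compatible. Using prime-to-$p$ exponents, the $q_0$-th root extraction of $\sigma$ on $T$ lifts uniquely through the square-zero sequences, because $z\mapsto z^{q_0}$ is bijective on $1+\mathcal K$. Concretely, one does not need a trivialization of $\OO(L)$. It suffices to form the cover étale-locally by $z^{q_0}=\sigma$ for local generators $\sigma$ and glue, using that the ratios of local generators are units whose restriction to $T$ are $q_0$-th powers; by Hensel's lemma in $1+\mathcal K$ they are then $q_0$-th powers on the formal scheme. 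This gives the global index-one cover.
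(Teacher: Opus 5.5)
Your last paragraph is the paper's proof. Choosing local generators of $\OO_{\widehat V_T}(q_0(K_V+T))$ that lift one fixed trivialization on $T$ puts the transition functions in $1+\mathcal K$. Their unique $q_0$-th roots there satisfy the cocycle condition and define $\mathcal U$ with $\mathcal U^{q_0}\simeq\OO_{\widehat V_T}(q_0(K_V+T))$ and $\mathcal U|_T\simeq\OO_T$, and the cover is the cyclic algebra $\bigoplus_{j=0}^{q_0-1}\OO_{\widehat V_T}(j(K_V+T))\otimes\mathcal U^{-j}$. After that, quasi-\'etaleness, klt via \cite[Proposition 5.20]{KM98}, Cohen--Macaulayness via \cite{BK23}, adjunction, and passing to a connected component go as you describe.

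Discard the earlier detour, however. The $p$-primary Picard kernel does not produce a generator of $\OO(L)$ on $\widehat V_T$, and none is needed. The cocycle condition uses that the ratios are $1$ on $T$; merely being $q_0$-th powers there would leave a $\mu_{q_0}$ ambiguity. Finally, on the resulting cover $\pi\colon\YY\to\widehat V_T$ one has $\OO(K_{\YY}+A)\simeq\pi^*\mathcal U$, which is trivial only on $A$, not on $\YY$ --- and that is exactly what $\omega_A\simeq\OO_A$ requires.
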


\begin{proof}
For every $j\in\Z$, $\OO_V(jT)$ is Cohen--Macaulay near $T$ by
\cite[Theorems 3 and 19(2)]{BK23}, applied with
$\Delta=\Delta_D=0$. Here $jT$ is $\Q$-Cartier. The depth
lemma makes $T$ Cohen--Macaulay and hence $S_2$. Its generic
double points are nodes, so $T$ is weakly normal for $p\ne2$ by \cite[Lemma 2.3.4]{AP26}.
Lemma~\ref{lem:ambient-log-index} supplies $q_0$ prime to $p$
such that $q_0(K_V+T)$ is Cartier near $T$ and trivial on $T$.

Set $\mathcal L=\OO_{\widehat V_T}(q_0(K_V+T))$.
We construct an invertible sheaf $\mathcal U$ with
\[
 \mathcal U^{q_0}\simeq\mathcal L,\qquad
                               \mathcal U|_T\simeq\OO_T.
\]
Let $\mathcal K$ be the ideal of $T$ in $\widehat V_T$.
Completeness and Hensel's lemma, with $q_0$ invertible, give
\[
 1+\mathcal K\xrightarrow[\ u\mapsto u^{q_0}\ ]{\sim}1+\mathcal K;
\]
see \cite[Tag 0ALJ]{Sta}.
Choose local generators of $\mathcal L$ lifting a fixed
trivialization on $T$. Their transition functions satisfy
$g_{ab}\in1+\mathcal K$.
Let $h_{ab}\in1+\mathcal K$ be their unique $q_0$-th roots.
Uniqueness gives
\[
 (h_{ab}h_{bc})^{q_0}=g_{ab}g_{bc}=g_{ac}=h_{ac}^{q_0},
 \qquad h_{ab}h_{bc}=h_{ac}.
\]
The functions $h_{ab}$ therefore define $\mathcal U$ directly
on $\widehat V_T$, with the required power isomorphism and
trivial reduced restriction.
Form
the cyclic algebra
\begin{equation}\label{eq:formal-log-algebra}
 \bigoplus_{j=0}^{q_0-1}
       \OO_{\widehat V_T}(j(K_V+T))\mathcal U^{-j},
\end{equation}
whose multiplication uses the chosen $q_0$-th power isomorphism.
Its summands are Cohen--Macaulay. The algebra is \'{e}tale in codimension one, so it is normal. \cite[Proposition 5.20]{KM98} gives the singularity conditions.
The reflexive pullback of $\OO(K_V+T)$ is the invertible
log canonical line, and its restriction to $A$ is trivial.
Lemma~\ref{lem:quotient-duality} proves the Cohen--Macaulayness.

Idempotents lift along nilpotent thickenings. Choose a connected
component of the cover and replace the group by its stabilizer.
The components form one orbit because the quotient is connected.
Their images coincide and cover $T$, so the chosen component
still surjects onto $T$.
\end{proof}

\Needspace{6\baselineskip}
\subsection{Normal surfaces}

In this subsection, we deal with the case where $A$ in Lemma \ref{lem:formal-log-cover} is normal. The lc but non-klt case has vanishing first cohomology.
In the klt case a tame root removes the prime-to-$p$ part of
the normal divisor. The remaining surface is K3 or abelian,
where the cohomology of non-trivial torsion line bundles vanishes.

\begin{lemma}\label{lem:normal-surface-cover}
If the boundary in Lemma~\ref{lem:formal-log-cover} is normal,
then, after a further tame abelian quasi-\'{e}tale formal cover,
one of the cases of Proposition~\ref{prop:surface-cover}
holds.
\end{lemma}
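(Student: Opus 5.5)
We split according to the singularities of the normal Gorenstein surface $A$ with $\omega_A\simeq\OO_A$ produced by Lemma~\ref{lem:formal-log-cover}. Since $A$ is a Cartier-type boundary of an lc pair and $\omega_A$ is trivial, $A$ is a normal Gorenstein lc surface with $K_A\sim0$. Either $A$ is not klt, so it has simple elliptic or cusp singularities, or $A$ has only canonical (du Val) singularities. In the first case we aim for alternative (i). In the second case we aim for (i) or (ii) after one further cyclic cover.

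\emph{Non-klt case.} We show $H^1(A,\FF_j)=0$ for all $j$. For $j=0$ we use the minimal resolution: a Gorenstein lc non-klt surface with trivial canonical class is birational to a ruled surface over an elliptic curve or to a rational surface. In the rational and cusp cases the classification gives $h^1(\OO_A)=0$; in the elliptic-ruled case the standard computation via Leray for the resolution gives $h^1(\OO_A)=0$ as well. For the twisted pieces, $\FF_j$ is a rank-one reflexive sheaf $\OO_A(-jA|_A)$, numerically trivial on $A$ because some multiple is trivial. Here we would use a Kawamata–Viehweg type vanishing for numerically trivial Weil divisors on lc Gorenstein surfaces with $K_A\sim0$, available in large characteristic (Appendix~\ref{app:surface}). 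Alternatively, we would compute via the resolution, using that the non-klt centre forces $H^1$ of every torsion divisorial sheaf to vanish (the sheaf is "anti-effective" along the elliptic or cycle exceptional locus). By Serre duality and \eqref{eq:quotient-duality}, $H^2(\FF_j)\simeq H^0(\FF_{-j})^*$. Lemma~\ref{lem:section-period} then controls $H^0$ and $H^2$, while $\chi$ is constant, so $H^1$ vanishing for $j=0$ propagates once we know $\chi(\FF_j)$ equals $\chi(\OO_A)$ or $0$ consistently. I expect this local-to-global $\chi$ comparison at the singular points to be the main obstacle, and would handle it by a Riemann–Roch for Weil divisors with the local correction terms.

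\emph{Canonical case.} Let $c$ be as in Lemma~\ref{lem:section-period}. The prime-to-$p$ part of the class of $\FF_1$ in the local class groups and in $\Pic$ is killed by a tame cyclic root: take $f$ with $\divisor(f)=c'A$ on a suitable formal neighbourhood and apply Lemma~\ref{lem:normalized-cyclic-algebra}, so that over the cover the new $c$ is a $p$-power times something coprime. Local class groups of du Val points in $p>42$ are tame, so the singular part disappears and $A$ becomes, after this cover, a surface with $K\sim0$ whose divisorial layers are line bundles away from a $p$-power torsion part. By Lemma~\ref{lem:normalized-cyclic-algebra} the quasi-étale condition holds and \eqref{eq:formal-assumptions} is preserved.

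Now run through the classification of canonical surfaces with $K\sim0$ in $p>42$: K3-type, Enriques, abelian, and bielliptic. In characteristic $>3$, Enriques and bielliptic surfaces are tame quotients, and a further étale cover reduces to K3 or abelian. On a K3 surface $h^1(\OO)=0$ and torsion line bundles are trivial or have vanishing cohomology, giving (i). On an abelian surface, nontrivial torsion line bundles $\FF_j$ ($0<j<c$) are acyclic, so \eqref{eq:acyclic-layers} holds. Moreover $\Pic^0$ is an abelian surface and $h^*(\OO)=(1,2,1)$, giving (ii). In both cases the prime-to-$p$ abelian covers compose to an abelian Galois cover, after choosing a connected component as in the proof of Lemma~\ref{lem:formal-log-cover}. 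Finally, in case (i) the trivialization of $\II_c|_A$ lifts through all $D_l$ because the obstructions lie in $H^1(\FF_{j})=0$, and Lemma~\ref{lem:invariant-descent} gives semiampleness of $T$.
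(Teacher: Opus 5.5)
Your decomposition matches the paper's. In the non-klt case you apply the appendix vanishing; in the klt case you take a tame root making the boundary Cartier and then use the K3/abelian dichotomy. The non-klt half is fine if you simply cite Proposition~\ref{lem:normal-strict-lc-cohomology}: each $\FF_j$ is Cohen--Macaulay of rank one on the normal surface $A$, hence reflexive, and it is torsion since $\FF_j^{[N]}\simeq\FF_{Nj}\simeq\OO_A$. Discard the Euler-characteristic route, whose premise is false. Since $\chi(\OO_A)=2$, a nontrivial torsion line bundle would have $h^1=-2$. So every nontrivial $\FF_j$ is non-invertible at some elliptic or cusp point, and $\chi$ really does change. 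Those local corrections, including the $p$-power orders handled by quasi-$F$-splitting, are precisely what the appendix computes.

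The genuine gap is in the klt case, at the claim that local class groups of du Val points are tame for $p>42$. This is false. An $A_{p-1}$ point has local class group $\Z/p$ in every characteristic. It occurs on the boundary of the toric plt pair $\bigl(\tfrac1p(1,-1,a),S\bigr)$, where $K+S$ is Cartier and $S$ has index exactly $p$. So nothing local supplies an integer $b$ prime to $p$ with $bA$ Cartier. Without such $b$ the tame root cannot make $A'$ Cartier, and your leftover ``$p$-power part'' is never treated. Your later torsion-line-bundle arguments then do not apply to non-invertible layers: on a singular Kummer surface the order-two reflexive sheaf has $h^1=2$. The paper takes $b$ from \cite{BWX26}.

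In this normal case you could instead classify first. By Lemma~\ref{lem:normal-klt-classification} the resolution is K3 or abelian. If it is abelian, $A$ is smooth and Lemma~\ref{lem:quotient-duality} already makes it Cartier. If it is K3, the exceptional curves together with an ample pullback are independent in a N\'eron--Severi group of rank $\leq22$. Hence each local class group has order $\leq22<p$, and Lemma~\ref{lem:quotient-duality} applied to $\FF_b=\FF_1^{[b]}$ gives the required $b$.

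Three further repairs are needed:
\begin{enumerate}
\item $\divisor(f)=c'A$ is not available on the formal neighbourhood, because $\II_N$ is invertible and trivial on $A$ but not principal. Write $N=p^vr$, take the Hensel root $\mathcal U^r\simeq\II_N$ with $\mathcal U|_A\simeq\OO_A$, and use the cyclic algebra of $\II_{p^v}\mathcal U^{-1}$. Then apply B\'ezout to the Cartier divisors $p^vA'$ and $rA'$.
\item Abelianness of the total group needs the explicit extension $\{(g,z):z^r=\eta(g)\}$; a composite of abelian covers is not automatically abelian.
\item Enriques and bielliptic resolutions cannot occur, because the crepant resolution has trivial canonical bundle.
\end{enumerate}
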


\begin{proof}
If $A$ is not klt, Proposition~\ref{lem:normal-strict-lc-cohomology}
gives $H^1(A,\FF_j)=0$ for every $j$. Indeed the quotients $\FF_j$ are
rank-one reflexive sheaves, with the power maps induced by
$\II_N|_A\simeq\OO_A$.

Suppose $A$ is klt. At a regular point of $A$, the
Cohen--Macaulay module $\FF_1$ is free. Thus $A$ is Cartier
there by Lemma~\ref{lem:quotient-duality}, and
$\FF_j=\FF_1^{[j]}$ on the normal surface. By
\cite{BWX26}, some integer $b$ prime to $p$
makes $\FF_b$ invertible. Hence $bA$ is Cartier. Write
$N=p^vr$, $(r,p)=1$. B\'ezout's identity implies that $rA$
is Cartier.

Choose a formal root $\mathcal U^r=\II_N$ with trivial
reduced root and take the reflexive cyclic algebra of
$\II_{p^v}\mathcal U^{-1}$. Its pullback boundary $A'$ has
$p^vA'$ Cartier and trivial on $A'$. Since $rA'$ is also
Cartier, $A'$ is Cartier. The cover is \'{e}tale at the
generic points of curves in $A$, and its boundary is
Cohen--Macaulay and regular in codimension one, hence normal.

Let $f:\widetilde A'\to A'$ be the minimal resolution.
By Lemma~\ref{lem:normal-klt-classification}, $\widetilde A'$
is a K3 surface or an abelian surface. Since $A'$ is normal,
$f_*\OO_{\widetilde A'}=\OO_{A'}$.
For every invertible sheaf $L$ on $A'$, the projection
formula and the Leray spectral sequence give
\cite[Tags 01E6 and 01EY]{Sta}
\[
 f_*f^*L\simeq L,\qquad
 H^1(A',L)\hookrightarrow H^1(\widetilde A',f^*L).
\]
In particular, $f^*L\simeq\OO_{\widetilde A'}$ implies
$L\simeq\OO_{A'}$.
For a nontrivial torsion invertible sheaf $M$ on $\widetilde A'$,
$H^0(M)=H^2(M)=0$. Riemann--Roch gives
\[
 \chi(M)=\begin{cases}2&\text{on a K3 surface},\\0&\text{on an
 abelian surface}.
 \end{cases}
\]
Thus a K3 surface has no nontrivial torsion invertible
sheaf. It follows that
give $\Pic(A')_{\mathrm{tors}}=0$ and
$H^1(A',\OO_{A'})=0$, thereby proving (i).
If $\widetilde A'$ is abelian, it has no exceptional
rational curves; hence $f$ is an isomorphism.
Every nontrivial torsion invertible sheaf is acyclic
by the same Riemann--Roch calculation.
Taking the order of $\OO_{A'}(-A')$ gives (ii).

A root of order $r$ replaces a
character $\eta$ by the group
$\{(g,z):z^r=\eta(g)\}$, and its kernel is the cyclic grading
group. This is finite abelian of order prime to $p$.
\end{proof}

\Needspace{6\baselineskip}
\subsection{Nonnormal surfaces with elliptic conductor}
\label{subsec:elliptic-conductor}
In this subsection, we deal with the case where $A$ is non-normal with elliptic conductor.
Let $A$ be the boundary of Lemma~\ref{lem:formal-log-cover}.
Assume its normalized conductor is elliptic.
Use the normalization and conductor morphisms
\[
 \nu:\coprod_{i\in I}V_i\to A,\qquad
 q:\overline C\to C,\qquad\tau:\overline C\to\overline C.
\]
By Lemma~\ref{lem:nonnormal-conductor-types}, $\overline C$
is a disjoint union of smooth elliptic curves and the
components form a chain or a cycle.
For a fixed integer $j$, set
\[
       \mathcal L_i=(\FF_j\otimes\OO_{V_i})^{**}.
\]
The calculations on the normal surfaces $V_i$ are in
Appendix~\ref{app:surface}. We now specify how their
restriction maps compute cohomology on $A$.

\begin{lemma}\label{lem:elliptic-quotients}
If $\FF_j$ is not locally free at a generic conductor
point, then $H^*(A,\FF_j)=0$.
Otherwise there is an exact sequence
\begin{equation}\label{eq:conductor-sequence}
0\to\FF_j\to\nu_*\Bigl(\bigoplus_i\mathcal L_i\Bigr)
 \xrightarrow{\rho}(q_*\mathcal L_{\overline C})^-\to0,
\end{equation}
where $\mathcal L_{\overline C}$ is the restriction of the
$\mathcal L_i$ to $\overline C$ with its induced
$\tau$-linearization, and
$\rho=\tfrac12(1-\tau)\circ\operatorname{res}_{\overline C}$.
\end{lemma}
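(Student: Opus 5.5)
The plan is to work with the standard description of a demi-normal $S_2$ sheaf as a subsheaf of its pushforward to the normalization, cut out by gluing conditions along the conductor. Since $\FF_j$ is Cohen--Macaulay (Lemma~\ref{lem:quotient-duality}) and hence $S_2$ of rank one on the $S_2$ surface $A$, the natural map $\FF_j\to\nu_*\bigl(\bigoplus_i\mathcal L_i\bigr)$ is injective. It is an isomorphism over the normal locus $A\setminus C$. Its image is determined by codimension-one data, because both sheaves are $S_2$ and the cokernel is supported on $C$. So the argument reduces to computing the image at the generic points of $C$, where $A$ has an ordinary node, and then extending by reflexivity.

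\textbf{Case 1: $\FF_j$ is not locally free at a generic conductor point.} At such a generic point $\eta$ the local ring is a one-dimensional node $R$ (after localizing), and rank-one reflexive (torsion-free, Cohen--Macaulay) modules over a node are either $R$ or the normalization $\overline R$. Non-freeness means $\FF_{j,\eta}\simeq\overline R$. Since $\FF_j$ is $S_2$ and agrees with $\nu_*\bigoplus\mathcal L_i$ generically along every conductor curve, we get $\FF_j\simeq\nu_*\bigl(\bigoplus_i\mathcal L_i\bigr)$. Hence $H^*(A,\FF_j)=\bigoplus_iH^*(V_i,\mathcal L_i)$, and the claim becomes a vanishing on the normal pieces. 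I would obtain that vanishing from Appendix~\ref{app:surface}, as follows.
\begin{enumerate}
\item By duality \eqref{eq:quotient-duality}, $\FF_{-j}$ is also non-free at $\eta$.
\item The sections $s^N=\lambda\tau^j$ trick of Lemma~\ref{lem:section-period} shows $H^0=H^2=0$, since a nonzero section would force $\FF_j$ invertible.
\item The Euler characteristic on each $(V_i,\overline C_i)$ is computed by Riemann--Roch. Here $K_{V_i}+\overline C_i\sim0$, $\overline C_i$ is elliptic, and $\mathcal L_i$ is a torsion reflexive sheaf whose local class at the conductor is nontrivial. Riemann--Roch then gives $\chi=0$, hence $H^1=0$.
\end{enumerate}
The main obstacle is step (iii): one must check that the nontrivial local class along $\overline C_i$ kills the correction term. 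I would try the appendix formula expressing $\chi(\mathcal L_i)$ as $\chi(\OO_{V_i})$ plus local contributions, comparing with $\chi(\OO_{V_i}(-\overline C_i))$.

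\textbf{Case 2: $\FF_j$ is locally free at generic conductor points.} Here the standard Koll\'ar-type description applies: locally at $\eta$, a section of $\overline R$-module $\overline R\otimes\FF_j$ lies in $\FF_j$ if and only if its values on the two branches agree under the gluing $\tau$. The linearization of $\mathcal L_{\overline C}$ is induced by identifying both branch restrictions with $\FF_j\otimes k(C)$. The kernel of $\operatorname{res}_{\overline C}$ followed by $\tfrac12(1-\tau)$ (using $p\neq2$ to split into $\pm$ parts) is exactly the $\tau$-invariant gluing condition. This gives exactness on the left and in the middle away from finitely many points, and then everywhere by the $S_2$ property of $\FF_j$.

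Surjectivity of $\rho$ must be checked. Because $\overline C$ is smooth in the elliptic-conductor case, the restriction $\bigoplus\mathcal L_i\to\mathcal L_{\overline C}$ is surjective: $\mathcal L_i$ is invertible near $\overline C$, which holds since $(V_i,\overline C_i)$ is plt along smooth $\overline C_i$ and the generic local-freeness propagates. The anti-invariant projection is also surjective. The point needing care is that $\mathcal L_{\overline C}$ really is a line bundle on $\overline C$ at the finitely many non-generic points. I would check this using the local structure of slc surfaces along elliptic conductors from Lemma~\ref{lem:nonnormal-conductor-types}.
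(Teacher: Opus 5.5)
Your Case~2 follows the paper's argument, but Case~1 has two genuine gaps.

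\textbf{First gap: non-freeness at one conductor curve versus all of them.} The hypothesis only gives non-freeness at \emph{one} generic conductor point. Your identification $\FF_j\simeq\nu_*\bigl(\bigoplus_i\mathcal L_i\bigr)$ needs non-freeness along \emph{every} conductor curve, and your Case~2 likewise needs freeness along every one. A mixed configuration is covered by neither case. The missing ingredient is a local computation at the node, which comes in three steps.
\begin{enumerate}
\item For $\overline R=K[[x]]\oplus K[[y]]$, the evaluation pairing with its dual has image the conductor $\mathfrak c=(x)\oplus(y)$.
\item Via \eqref{eq:quotient-duality} and the $N$-th power map this gives $\mathcal L_i^{[N]}\simeq\OO_{V_i}\bigl(-\sum_E b_{i,E}E\bigr)$, with $b_{i,E}=0$ at free curves and $0<b_{i,E}<N$ at non-free ones. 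At a non-free curve the dual coefficients on a branch sum to $N$, and neither is zero since $(1,0)^N\notin R$.
\item On a ruled component, integrality of the degree on a general fibre forces $b_{i,C_-}+b_{i,C_+}\in\{0,N\}$. So positivity passes from one section to the other, and connectedness of the chain or cycle propagates non-freeness to all conductor points.
\end{enumerate}
This is the dichotomy that underlies the two cases of the statement.

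\textbf{Second gap: your step (iii) is not a proof.} There are three problems.
\begin{enumerate}
\item The appendix contains no Riemann--Roch formula with local contributions.
\item $\mathcal L_i$ may fail to be Cartier at the rational double points, so the local correction terms are uncontrolled.
\item $V_i$ is regular along $\overline C_i$, so a ``nontrivial local class at the conductor'' is not the relevant invariant. What matters is the coefficient $b_{i,E}\in(0,N)$ above.
\end{enumerate}
For example, on a rational component with elliptic boundary, Riemann--Roch only gives $\chi=1-\tfrac12\alpha(1-\alpha)K_{V_i}^2$ up to local terms, with no a priori reason to vanish. In the paper, the vanishing there rests on the quasi-$F$-splitting and cyclic-cover argument of Proposition~\ref{lem:rational-elliptic-sheaves}, which deals with $p$-primary orders.

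Once $0<b_{i,E}<N$ is known on every conductor curve, acyclicity of each $\mathcal L_i$ follows directly from the appendix:
\begin{enumerate}
\item Lemma~\ref{lem:elliptic-ruling-sheaves}: the case $\alpha+\gamma=1$ gives $Rf_*\mathcal L_i=0$.
\item Proposition~\ref{lem:rational-elliptic-sheaves}: $H^1\neq0$ forces $b=0$, and $H^0=H^2=0$ when $b>0$.
\item Lemma~\ref{lem:contracted-elliptic-sheaves}.
\end{enumerate}

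\textbf{A smaller point in Case~2.} The generic gluing isomorphism extends over the closed points of $\overline C$ because its $N$-th power is the prescribed unit, so it has no zeros or poles. The local structure of slc surfaces is not the tool you need here.
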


\begin{proof}
\noindent\emph{Step 1. The local modules and their power maps.}
At a split generic conductor point, write
\[
 R=K[[x,y]]/(xy),\qquad
 \overline R=K[[x]]\oplus K[[y]],\qquad
 \mathfrak c=(x)\oplus(y).
\]
A torsion-free module of rank one on both branches is
isomorphic to $R$ or $\overline R$.
For $R$ the product with its dual has image $R$;
for $\overline R$ its image is the conductor $\mathfrak c$.
The multiplication \eqref{eq:quotient-duality} and the
specified $N$-th power map therefore give
\begin{equation}\label{eq:conductor-fractions}
 \mathcal L_i^{[N]}
       \simeq\OO_{V_i}\Bigl(-\sum_{E\subset C_i}b_{i,E}E\Bigr),
                   \qquad0\leq b_{i,E}<N.
\end{equation}
Here $b_{i,E}=0$ at a free conductor point.
At a nonfree point, the two dual coefficients on a
fixed branch sum to $N$, because the evaluation image
has conductor order one. Neither coefficient is zero:
if the $N$-th power of the generator $(1,0)$ were
$(u(x),0)$ with $u(0)\ne0$, it would not belong to $R$.
Applying this to both branches and to the dual module
proves $0<b_{i,E}<N$ in the nonfree case.

\smallskip
\noindent\emph{Step 2. The nonfree case.}
On a component ruled over an elliptic curve, the degree
of \eqref{eq:conductor-fractions} on a general fibre gives
\[
              (b_{i,C_-}+b_{i,C_+})/N\in\{0,1\}.
\]
One coefficient is positive if and only if both are
positive. Nonfreeness therefore propagates through every
interior component of the connected chain or cycle.
It holds at all generic conductor points.
Lemma~\ref{lem:elliptic-ruling-sheaves},
Proposition~\ref{lem:rational-elliptic-sheaves}, and
Lemma~\ref{lem:contracted-elliptic-sheaves} make all
$\mathcal L_i$ acyclic.
Moreover
$\FF_j=\nu_*(\bigoplus_i\mathcal L_i)$:
the equality holds in codimension one by the module
$\overline R$, and both sheaves satisfy $S_2$.
This proves the vanishing assertion.

\smallskip
\noindent\emph{Step 3. The free case.}
Now all $b_{i,E}$ are zero. Near each conductor curve
$\mathcal L_i$ is invertible. Its generic gluing
isomorphism extends over the whole smooth curve:
its $N$-th power is the prescribed unit, so its valuation
at every closed point is zero.
Restriction to $\overline C$, followed by the
anti-invariant projector, is surjective because $p\ne2$.
The resulting kernel has depth two at closed points:
the normalization terms have depth two and the conductor
term has depth one.
This kernel agrees with $\FF_j$ in codimension one,
where it imposes equality of the two branch values.
The $S_2$ property proves \eqref{eq:conductor-sequence}.
This argument includes fixed points of $\tau$.
\end{proof}

If we are in the second case of Lemma \ref{lem:elliptic-quotients}, define
$\mathcal Q_j=(q_*\mathcal L_{\overline C})^-$.
The exact sequence, Serre duality, and
Lemma~\ref{lem:section-period} give
\begin{align}
 \chi(A,\FF_j)&=\sum_i\chi(V_i,\mathcal L_i)-\chi(C,\mathcal Q_j),
                                      \label{eq:conductor-Euler}\\
 h^1(A,\FF_j)&=2h^0(A,\FF_j)-\chi(A,\FF_j).
                                      \label{eq:Euler-first-cohomology}
\end{align}
Here $h^0(A,\FF_j)=1$ if $\FF_j\simeq\OO_A$, and it
is zero otherwise, by Lemma~\ref{lem:section-period}.
Thus it remains to compute the explicitly specified terms
in \eqref{eq:conductor-Euler}.

\begin{lemma}\label{lem:elliptic-cycle}
If the conductor graph is a cycle, then every
$\FF_j\not\simeq\OO_A$ is acyclic. Moreover
\begin{equation}\label{eq:elliptic-picard}
 h^*(\OO_A)=(1,2,1),\qquad
 1\to\Gm\to\Pic^0_{A/k}\to E^\vee\to0
\end{equation}
for an elliptic curve $E$. The integer $c$ of
Lemma~\ref{lem:section-period} satisfies
\eqref{eq:acyclic-layers}.
\end{lemma}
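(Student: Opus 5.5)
Since the conductor graph is a cycle, every component $V_i$ of the normalization is glued to two others along two elliptic conductor curves. By Lemma~\ref{lem:nonnormal-conductor-types} and the classification of lc Calabi--Yau pairs $(V_i,C_i)$ with elliptic boundary in Appendix~\ref{app:surface}, this forces each $V_i$ to be ruled over an elliptic curve $E$, with $C_i=C_{i,-}+C_{i,+}$ two disjoint sections. I would first record this, together with the fact that the gluing involution $\tau$ identifies the sections $C_{i,+}$ and $C_{i+1,-}$ compatibly with the rulings, up to a translation of $E$. The case of a one-component cycle, where $\tau$ swaps the two sections of $V_1$, is included.

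\emph{Step 1: acyclicity of $\FF_j\not\simeq\OO_A$.} By Lemma~\ref{lem:elliptic-quotients} I may assume $\FF_j$ is locally free at the generic conductor points, so \eqref{eq:conductor-sequence} holds with all $b_{i,E}=0$. Then \eqref{eq:conductor-fractions} shows that each $\mathcal L_i$ has a torsion $N$-th power. On a ruled surface over $E$, such a sheaf is $\pi_i^*P_i$ twisted by a fibre-degree-zero multiple of the sections; degree zero on fibres forces the twist to be a pullback, so $\mathcal L_i\simeq\pi_i^*P_i$ with $P_i\in\Pic^0(E)$ torsion. Compatibility along the conductor forces all $P_i$ to agree up to the translations, giving a single $P$.
\begin{itemize}
\item If $P\ne\OO_E$, then every term of \eqref{eq:conductor-sequence} is acyclic, since $h^*(\pi^*P)=h^*(E,P)=0$ and $\mathcal Q_j$ is a sum of copies of $P$ on elliptic curves. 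Hence $\FF_j$ is acyclic.
\item If $P=\OO_E$, then $\mathcal L_i\simeq\OO_{V_i}$. The gluing data around the cycle produce a monodromy scalar $\lambda\in k^*$. The map $\rho$ on $H^0$ is then $k^I\to k^I$, $(a_i)\mapsto(a_{i+1}-\lambda a_i)$. On $H^1$ it is the analogous map on $\bigoplus_i H^1(\OO_E)$, composed with the identification $H^1(V_i,\OO)=H^1(E,\OO)$.
\end{itemize}
When $\lambda\ne1$, both maps are isomorphisms and $h^2$ of the $V_i$ vanishes, so the long exact sequence gives $H^*(\FF_j)=0$. When $\lambda=1$, $\FF_j$ has a nowhere vanishing section, hence $\FF_j\simeq\OO_A$ by Lemma~\ref{lem:section-period}. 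I expect the main obstacle to be here: tracking the translations in the gluing along $E$, and checking that nontrivial gluing data really yield the same $P$, or acyclicity, on all of $E$ rather than only generically.

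\emph{Step 2: $h^*(\OO_A)$ and $\Pic^0$.} For $\FF_0=\OO_A$, take $\lambda=1$ above. The kernel and cokernel of $\rho$ on $H^0$ are both one-dimensional, and likewise on $H^1$, because $H^1(\OO_E)$ is one-dimensional. Hence $h^0=1$ and $h^1=1+1=2$, and $h^2=1$ follows from $\omega_A\simeq\OO_A$ and Serre duality. For the Picard scheme, use the normalization sequence of units
\[
1\to\OO_A^*\to\nu_*\textstyle\bigoplus_i\OO_{V_i}^*\to(q_*\OO_{\overline C}^*)^{\tau\text{-}}\to1 .
\]
Its long exact sequence gives $\Gm$ as the cokernel on global units, corresponding to the cycle monodromy $\lambda$. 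The image on $\Pic^0$ is the subgroup of $\prod\Pic^0(V_i)=\prod E^\vee$ compatible under the gluings, which is a diagonal copy of $E^\vee$. This gives \eqref{eq:elliptic-picard}. Its dimension equals $h^1=2$, so $\Pic^0_{A/k}$ is smooth semiabelian.

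\emph{Step 3: the integer $c$.} Let $c$ be as in Lemma~\ref{lem:section-period}. For $0<j<c$ we have $\FF_j\not\simeq\OO_A$, so $\FF_j$ is acyclic by Step 1. Moreover $\JJ=\II_c$ is invertible with $\JJ|_A\simeq\OO_A$, so \eqref{eq:acyclic-layers} holds.
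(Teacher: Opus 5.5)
There is a genuine gap, at exactly the point you flag as the expected obstacle. In your setup you assert that the gluing identifies $C_{i,+}$ with $C_{i+1,-}$ ``compatibly with the rulings, up to a translation of $E$''. This is not automatic. The gluings only induce isomorphisms of curves $g_i\colon E_i\to E_{i+1}$. The composite $a=g_r\circ\cdots\circ g_1$ around the cycle is a priori an arbitrary automorphism of the curve $E=E_1$, possibly with nontrivial linear part.

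Your Picard computation needs $a$ to be a translation. Compatible tuples in $\prod_i\Pic^0(V_i)$ satisfy $M_i=g_i^*M_{i+1}$, so they are parametrized by $\ker(1-a^*\colon E^\vee\to E^\vee)$. This kernel is finite unless $a^*=\mathrm{id}$, in which case you would get only $\Gm$ as identity component rather than an extension of $E^\vee$ by $\Gm$.

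The same issue sits under your $H^1$ claims. After the identifications $H^1(V_i,\OO)\simeq H^1(E_i,\OO)$, the map on $H^1$ induced by $\rho$ is a cyclic map whose monodromy around the cycle is $\lambda$ times the eigenvalue of $a^*$ on $H^1(E,\OO_E)$. So ``both maps are isomorphisms'' when $\lambda\neq1$, and ``likewise on $H^1$, because $H^1(\OO_E)$ is one-dimensional'', are not justified as written; one-dimensionality alone gives nothing.

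The missing idea is that $\omega_A\simeq\OO_A$ forces $a$ to be a translation. The paper proves this with residues:
\begin{itemize}
\item a generator of $\omega_A$ pulls back to $\eta_i\in H^0(V_i,\omega_{V_i}(C_i^-+C_i^+))$;
\item by the residue theorem on the fibres, its residues along the two sections are $\pm f_i^*\omega_i$ for a nonzero $\omega_i\in H^0(E_i,\omega_{E_i})$;
\item opposite residues on identified branches give $g_i^*\omega_{i+1}=\omega_i$, hence $a^*\omega_1=\omega_1$, and for $p>3$ this makes $a$ a translation.
\end{itemize}
Within your own framework you could instead argue as follows. For $\FF_0=\OO_A$, the cokernel of the $H^1$-map is $H^2(A,\OO_A)\simeq H^0(A,\omega_A)^*=k$. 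This forces $a^*=\mathrm{id}$ on $H^1(E,\OO_E)$, hence on $H^0(E,\omega_E)$ by duality.

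For the cohomological assertions the paper avoids this bookkeeping entirely. In the free case every term of \eqref{eq:conductor-sequence} is either the pullback of a torsion line bundle from an elliptic curve or a torsion line bundle on an elliptic curve, so $\chi(A,\FF_j)=0$. Lemma~\ref{lem:section-period} and Serre duality give $h^0(\FF_j)=h^2(\FF_j)\in\{0,1\}$, with value $1$ exactly when $\FF_j\simeq\OO_A$. Hence $h^1=2h^0$. This yields both the acyclicity of every nontrivial $\FF_j$ and $h^*(\OO_A)=(1,2,1)$ without tracking $\lambda$ or translations; the translation property is needed only for the Picard scheme.

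Separately, your justification of $\mathcal L_i\simeq\pi_i^*P_i$ (a pullback twisted by the sections) is inaccurate, since $V_i$ may have rational double points and reducible fibres. Cite Lemma~\ref{lem:elliptic-ruling-sheaves} instead.
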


\begin{proof}
\noindent\emph{Step 1. Cohomology of $\FF_j$.}
The nonfree case was proved in
Lemma~\ref{lem:elliptic-quotients}.
In the free case, each ruling $f_i:V_i\to E_i$ satisfies
\[
 \mathcal L_i=f_i^*L_i,\qquad Rf_{i*}\mathcal L_i=L_i,
                         \qquad L_i\text{ torsion},
\]
by Lemma~\ref{lem:elliptic-ruling-sheaves}.
The conductor terms are torsion line bundles on elliptic
curves as well. All their Euler characteristics are zero.
Equations \eqref{eq:conductor-Euler} and
\eqref{eq:Euler-first-cohomology} give
\[
 h^*(A,\FF_j)=
 \begin{cases}(1,2,1),&\FF_j\simeq\OO_A,\\
               (0,0,0),&\FF_j\not\simeq\OO_A.
 \end{cases}
\]
The assertion about $c$ follows from
Lemma~\ref{lem:section-period}.

\smallskip
\noindent\emph{Step 2. The Picard scheme.}
Write \(\nu:\coprod_{i=1}^r V_i\to A\), with the components
ordered cyclically. Let \(f_i:V_i\to E_i\) be the elliptic
ruling, with conductor sections \(C_i^-,C_i^+\), and let
\(\varphi_i:C_i^+\xrightarrow{\sim}C_{i+1}^-\) be the gluing
isomorphism. Indices are taken modulo \(r\). Set
\[
 g_i=(f_{i+1}|_{C_{i+1}^-})\circ\varphi_i
                 \circ(f_i|_{C_i^+})^{-1},
 \qquad
 E=E_1,\qquad a=g_r\circ\cdots\circ g_1.
\]

Choose a generator \(\eta\) of \(\omega_A\), and denote its
pullback to \(V_i\) by
\[
 \eta_i\in
 H^0\bigl(V_i,\omega_{V_i}(C_i^-+C_i^+)\bigr).
\]
The residue theorem on the fibres of \(f_i\) gives a nonzero
\(\omega_i\in H^0(E_i,\omega_{E_i})\) such that
\[
 \operatorname{Res}_{C_i^-}\eta_i
       =(f_i|_{C_i^-})^*\omega_i,
 \qquad
 \operatorname{Res}_{C_i^+}\eta_i
       =-(f_i|_{C_i^+})^*\omega_i.
\]
Since the \(\eta_i\) come from a section of \(\omega_A\),
their residues on the identified branches satisfy
\[
 \operatorname{Res}_{C_i^+}\eta_i+
 \varphi_i^*\operatorname{Res}_{C_{i+1}^-}\eta_{i+1}=0.
\]
Thus \(g_i^*\omega_{i+1}=\omega_i\), and hence
\(a^*\omega_1=\omega_1\).
Since \(p>3\), an automorphism of an elliptic curve preserving
a nonzero regular differential is a translation.
Consequently \(a^*=\mathrm{id}\) on \(E^\vee\).

Let
\[
 P=(\nu^*)^{-1}
 \left(\prod_i\operatorname{Pic}^0_{V_i/k}\right)
 \subset\operatorname{Pic}_{A/k}.
\]
Under the identifications
\(f_i^*:E_i^\vee\simeq\operatorname{Pic}^0_{V_i/k}\),
compatible line-bundle classes satisfy
\(M_i=g_i^*M_{i+1}\).
They are therefore parametrized by
\(\ker(1-a^*:E^\vee\to E^\vee)=E^\vee\).
Gluing line bundles along the conductor gives
\[
 1\longrightarrow\mathbf G_m
 \longrightarrow P
 \longrightarrow E^\vee
 \longrightarrow0.
\]
Indeed, the kernel consists of scalar gluing parameters
\((\lambda_i)\), modulo the changes
\[
 \lambda_i\longmapsto(u_{i+1}/u_i)\lambda_i;
\]
their product identifies this quotient with \(\mathbf G_m\).
The same construction applies to families, so the sequence
is exact as a sequence of fppf group schemes
\cite[Tag 0D2J]{Sta}.
Thus \(P\to E^\vee\) is a \(\mathbf G_m\)-torsor, and \(P\)
is smooth and connected.
Since \(P\) is an open and closed subgroup scheme containing
the identity, \(P=\operatorname{Pic}^0_{A/k}\).
This proves \eqref{eq:elliptic-picard} and smoothness.
\end{proof}

The chain is the graph defined in the proof of
Lemma~\ref{lem:nonnormal-conductor-types}. An endpoint is
either a component with one elliptic conductor curve or
an auxiliary vertex for an involution $E\to E/\tau\simeq\mathbb P^1$.
This description fixes the objects referred to in the
following calculation.

\begin{lemma}\label{lem:elliptic-path}
Suppose that the conductor graph of $A$ is a chain.
Then either $H^1(A,\FF_j)=0$ for every $j\in\Z$, or there
is a finite quasi-\'{e}tale double cover $\pi:\YY'\to\YY$
whose reduced boundary $A'=(\pi^{-1}A)_{\mathrm{red}}$ is
connected and has conductor graph a cycle of elliptic ruled
components. In the latter case, the deck involution $\iota$
satisfies
\[
 \dim_k H^1(A',\OO_{A'})=2,\qquad
 \iota^*|_{H^1(A',\OO_{A'})}=-\operatorname{id}.
\]
\end{lemma}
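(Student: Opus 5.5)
We split according to whether some $\FF_j$ is non-locally free at a generic conductor point. If so, Lemma~\ref{lem:elliptic-quotients} gives $H^*(A,\FF_j)=0$. The remaining case is $\FF_j$ free along the conductor, where the sequence \eqref{eq:conductor-sequence} and the formulas \eqref{eq:conductor-Euler}, \eqref{eq:Euler-first-cohomology} reduce the computation to Euler characteristics on the components $V_i$ and on $\mathcal Q_j$. The plan is to show that in the free case either every such $\FF_j$ has $H^1=0$, or the chain has the special shape that produces the double cover.

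\textbf{Classify the endpoints.} A chain has two endpoints. Each is either a component whose conductor is a single elliptic curve (a rational surface, or a contracted elliptic cone, treated in Proposition~\ref{lem:rational-elliptic-sheaves} and Lemma~\ref{lem:contracted-elliptic-sheaves}), or an auxiliary vertex for an involution $\tau$ on an elliptic conductor curve $E$ with $E/\tau\simeq\mathbb P^1$.

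\textbf{First case.} Suppose some endpoint is of the first kind. For the interior ruled components, Lemma~\ref{lem:elliptic-ruling-sheaves} gives $\chi(\mathcal L_i)=0$. The endpoint surface contributes $\chi=1$ if $\mathcal L_i$ is trivial and $0$ otherwise, using the appendix computations. A $\tau$-endpoint contributes through $\chi(\mathcal Q_j)$: the anti-invariant part of $q_*$ of a degree-zero bundle on $E$ has Euler characteristic $-1$ or $0$, by Riemann--Hurwitz for the four fixed points and $p\neq 2$. We then check that $h^1(A,\FF_j)=2h^0-\chi$ vanishes in every configuration except when both endpoints are $\tau$-endpoints. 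We expect that when $\FF_j\simeq\OO_A$ this gives $h^*(\OO_A)=(1,0,1)$, and that the nontrivial $\FF_j$ are acyclic. The main obstacle is this bookkeeping: making sure that the linearization of $\mathcal L_{\overline C}$ at a $\tau$-endpoint gives the claimed Euler characteristic in every case, including at the fixed points of $\tau$.

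\textbf{Second case: both endpoints are $\tau$-endpoints.} Here we construct $\pi$. The two involutions $\tau_1,\tau_2$ determine a $2$-torsion class on $A$; equivalently, $\tau$ acts on $\omega$-residues by $-1$, which gives a character of the relevant monodromy. Taking the corresponding degree-two quasi-étale cover of $\widehat V$ is possible because $p\neq 2$ and a tame root extends formally, as in Lemma~\ref{lem:formal-log-cover} via Hensel's lemma. The cover doubles the chain and unfolds the two involutions, gluing the two copies into a cycle of elliptic ruled components. Connectedness of $A'$ holds because the class is nontrivial.

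Lemma~\ref{lem:elliptic-cycle} then gives $h^1(\OO_{A'})=2$. Since $h^1(\OO_A)=0$ and taking invariants is exact, the invariant part $H^1(A',\OO_{A'})^\iota=H^1(A,\OO_A)$ is zero, so $\iota^*=-\mathrm{id}$. Finally, in this second case we verify that the required dichotomy holds, namely that it is the latter alternative of the statement that occurs.
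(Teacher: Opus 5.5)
\textbf{There is a genuine gap: your dichotomy rests on wrong endpoint Euler characteristics, so it is organized around the wrong invariant.}

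\emph{Wrong endpoint values.}
\begin{itemize}
\item At a rational endpoint whose sheaf restricts trivially to $E$, Corollary~\ref{cor:exceptional-restriction} leaves $\mathcal L_i\simeq\OO_{V_i}$ or $\mathcal L_i\simeq\varepsilon_i$. Proposition~\ref{lem:rational-elliptic-sheaves} gives $h^*(\varepsilon_i)=(0,1,0)$, so $\chi=-1$, not $0$.
\item At a $\tau$-endpoint you allow only $\chi(\mathcal Q_j)\in\{-1,0\}$. The trivial bundle with the \emph{negative} linearization has anti-invariant part $\OO_{\mathbb P^1}$, so $\chi(\mathcal Q_j)=+1$.
\end{itemize}
Your values are internally inconsistent. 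A contracted endpoint with $\OO$ and a rational endpoint with $\varepsilon$ would give $\chi(A,\FF_j)=1$ and $h^0=0$, hence $h^1=2h^0-\chi=-1$.

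With the correct values each end contributes $1-2t_e$, where $t_e=1$ records exactly these two twists, and $\chi(A,\FF_j)=2-2t$ with $t=t_{e_1}+t_{e_2}$. So $H^1(A,\FF_j)\neq0$ exactly when both ends are twisted. That happens just as well for two rational endpoints carrying $\varepsilon$, or for one rational and one $\tau$-endpoint. Hence your claim that a surface endpoint forces vanishing is false. Conversely, two $\tau$-endpoints by themselves give neither nonvanishing nor a cover. The dichotomy must be made sheaf by sheaf, in terms of $t$ for a given $j$. Before that, you must dispose of the case of nontrivial conductor restriction: it propagates along the chain and gives acyclicity.

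\emph{The construction of $\pi$ also has a gap.} There is no $2$-torsion line bundle on $A$ to use. The relevant rank-one sheaf is not invertible at the pinch points of a negatively linearized $\tau$-curve (locally a Whitney umbrella), nor at the points where $\varepsilon$ fails to be Cartier. The double cover $A'\to A$ is therefore branched there, not \'{e}tale. You also give no mechanism by which a double cover of $A$ extends to a quasi-\'{e}tale cover of the threefold $\YY$. Hensel's lemma as in Lemma~\ref{lem:formal-log-cover} only extracts roots of \emph{invertible} sheaves with trivial restriction to $A$.

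The missing link is to the divisorial ideals of $\YY$:
\begin{enumerate}
\item From $t_{e_1}=t_{e_2}=1$ one gets $\FF_{2j}\simeq\OO_A$, since $\varepsilon$ and the negative linearization both square to trivial.
\item Hence the period $d$ of Lemma~\ref{lem:section-period} is even and $\FF_j\simeq\FF_{d/2}$.
\item Take a formal square root $\mathcal U$ of the invertible sheaf $\II_d$ and set $\pi_*\OO_{\YY'}=\OO_{\YY}\oplus\II_{d/2}\mathcal U^{-1}$.
\item This gives $\pi_{A*}\OO_{A'}\simeq\OO_A\oplus\FF_{d/2}$.
\end{enumerate}
From the last identification, connectedness follows from $H^0(A,\FF_{d/2})=0$. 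The eigenspaces are $H^1(A',\OO_{A'})^-\simeq H^1(A,\FF_{d/2})\simeq k^2$ and $H^1(A',\OO_{A'})^+\simeq H^1(A,\OO_A)=0$, with no appeal to Lemma~\ref{lem:elliptic-cycle}.

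For the cycle assertion you must also show that each rational endpoint lifts to an elliptic ruling with two conductor sections. Your description of ``unfolding the two involutions'' does not cover this case.
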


\begin{proof}
\noindent\emph{Step 1. Restrictions to the conductor.}
Fix $j\in\Z$. By Lemma~\ref{lem:elliptic-quotients}, assume
that $\FF_j$ is free at generic conductor points.
On each interior ruling, both conductor restrictions of
$\mathcal L_i=f_i^*L_i$ identify with $L_i$.
Thus nontriviality of one restriction propagates along the chain.
In that case Lemmas~\ref{lem:elliptic-ruling-sheaves} and
\ref{lem:contracted-elliptic-sheaves},
Proposition~\ref{lem:rational-elliptic-sheaves},
and Corollary~\ref{cor:exceptional-restriction} give
$H^*(V_i,\mathcal L_i)=0$. The conductor line bundles are
also acyclic. Hence $H^*(C,\mathcal Q_j)=0$, and
\eqref{eq:conductor-sequence} gives $H^*(A,\FF_j)=0$.

Assume now that all conductor restrictions are trivial.
Every interior $\mathcal L_i$ is then trivial.
At a rational endpoint, Corollary~\ref{cor:exceptional-restriction}
gives $\mathcal L_i\simeq\OO_{V_i}$ or $\varepsilon_i$.
Here $\varepsilon_i$ denotes the order-two sheaf in that corollary.
At an endpoint obtained by contracting a section, a nontrivial
$\mathcal L_i$ would give, for its conductor curve $E$,
\[
 k=H^0(E,\OO_E)\hookrightarrow H^1(V_i,\mathcal L_i(-E))
           \simeq H^1(V_i,\mathcal L_i^\vee)^*=0.
\]
The injection comes from restriction to $E$, and the vanishing
is Lemma~\ref{lem:contracted-elliptic-sheaves}.
Thus $\mathcal L_i\simeq\OO_{V_i}$ there.

\smallskip
\noindent\emph{Step 2. Cohomology from the two endpoints.}
At a surface endpoint $e$, put
$t_e=0$ if $\mathcal L_i\simeq\OO_{V_i}$ and $t_e=1$ if
$\mathcal L_i\simeq\varepsilon_i$. The cited surface results give
\[
                       \chi(V_i,\mathcal L_i)=1-2t_e.
\]
For an auxiliary endpoint $q_e:E\to E/\tau\simeq\mathbb P^1$,
the invariant and anti-invariant summands are
\[
 q_{e*}\OO_E=\OO_{\mathbb P^1}\oplus\OO_{\mathbb P^1}(-2).
\]
Put $t_e=0$ for the natural linearization of $\OO_E$ and
$t_e=1$ for its negative. Twisting the linearization
interchanges the two summands, so
\[
 \mathcal Q_j|_{\mathbb P^1}\simeq\OO_{\mathbb P^1}(-2+2t_e),
 \qquad-\chi(\mathbb P^1,\mathcal Q_j|_{\mathbb P^1})=1-2t_e.
\]
All interior contributions vanish. For the two endpoints
$e_1,e_2$ and $t=t_{e_1}+t_{e_2}$, \eqref{eq:conductor-Euler} gives
\begin{equation}\label{eq:path-Euler}
                         \chi(A,\FF_j)=2-2t.
\end{equation}
If $t=0$, rescale generators successively along the chain
to make the paired conductor restrictions agree.
The natural endpoint linearizations preserve these generators,
so $\FF_j\simeq\OO_A$.
If $t>0$, a nontrivial endpoint sheaf or a negative
linearization prevents such an isomorphism.
Lemma~\ref{lem:section-period} and
\eqref{eq:Euler-first-cohomology} therefore give
\begin{equation}\label{eq:path-cohomology}
 h^*(A,\FF_j)=
 \begin{cases}(1,0,1),&t=0,\\
               (0,0,0),&t=1,\\
               (0,2,0),&t=2.
 \end{cases}
\end{equation}
Taking $j=0$ gives $H^1(A,\OO_A)=0$.

\smallskip
\noindent\emph{Step 3. The double cover and the involution.}
Suppose $H^1(A,\FF_j)\ne0$. Then $t_{e_1}=t_{e_2}=1$.
The exceptional endpoint sheaves have trivial reflexive
squares, and the negative linearizations square to the natural
ones. Multiplication and the gluing argument in Step~2 give
$\FF_{2j}\simeq\OO_A$.
For the period $d$ of Lemma~\ref{lem:section-period},
\[
 d\mid2j,\qquad d\nmid j,\qquad
 2\mid d,\qquad j\equiv d/2\pmod d.
\]
Periodicity gives $\FF_{d/2}\simeq\FF_j$.
Choose $\mathcal U^2\simeq\II_d$ with $\mathcal U|_A\simeq\OO_A$
by the formal root construction of Lemma~\ref{lem:formal-log-cover}.
The multiplication on divisorial ideals defines the double cover
\begin{equation}\label{eq:elliptic-double-algebra}
 \pi:\YY'\longrightarrow\YY,\qquad
 \pi_*\OO_{\YY'}=\OO_{\YY}\oplus\II_{d/2}\mathcal U^{-1}.
\end{equation}
The same local calculation proves normality, quasi-\'{e}taleness,
and the $S_2$ property of $A'$.
For $\pi_A=\pi|_{A'}$, reduction of the two graded summands gives
\[
 \pi_{A*}\OO_{A'}\simeq\OO_A\oplus\FF_{d/2}.
\]
The deck involution acts by $+1$ and $-1$ on these summands.
Taking eigenspaces in cohomology, finiteness of $\pi_A$ gives
\[
 \begin{aligned}
 H^0(A',\OO_{A'})&\simeq k,\\
 H^1(A',\OO_{A'})^+&\simeq H^1(A,\OO_A)=0,\\
 H^1(A',\OO_{A'})^-&\simeq H^1(A,\FF_{d/2})\simeq k^2.
 \end{aligned}
\]
Thus $A'$ is connected, and the asserted action on
$H^1(A',\OO_{A'})$ follows.

Each interior normalization component splits into two copies.
Each rational endpoint lifts to the elliptic ruling with two
conductor sections of Proposition~\ref{lem:rational-elliptic-sheaves}.
At an auxiliary endpoint, the negative linearization pairs
the conductor curves in the two copies by
$(x,+)\sim(\tau(x),-)$.
Consequently every vertex of the new conductor graph has degree
two. The graph is connected because $A'$ is connected and $S_2$,
so it is a cycle.
The preceding abelian group $G_0$ lifts to
$\{(g,z)\in G_0\times k^*:z^2=\eta(g)\}$, where $\eta$ is
its character on $\II_d|_A$. This group remains abelian
of order prime to $p$.
\end{proof}

\subsection{Nonnormal surfaces with rational conductor}
\label{subsec:rational-conductor}
In this subsection, we deal with the case where $A$ is non-normal with rational conductor.

\begin{lemma}\label{lem:power-splitting}
Suppose $A$ is Gorenstein, $\omega_A\simeq\OO_A$,
Frobenius is bijective on $H^*(A,\OO_A)$, and
\eqref{eq:quotient-duality} holds. If
$\FF_{p^ej}\simeq\OO_A$ but $\FF_j\not\simeq\OO_A$,
then $H^*(A,\FF_j)=0$.
\end{lemma}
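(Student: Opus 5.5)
Let $\mathcal L=\FF_j$ and $e\geq 1$ the given exponent. The plan is to compare $\mathcal L$ with its ``Frobenius power'' $\FF_{p^ej}\simeq\OO_A$ via the $p^e$-th power map, and use bijectivity of Frobenius on $H^*(A,\OO_A)$ to force vanishing. The $p$-th power map $s\mapsto s^{p}$ is additive in characteristic $p$. It therefore gives a Frobenius-semilinear map of sheaves $\FF_j\to F_*\FF_{pj}$, and iterating gives
\[
 \Phi:\ F^e_*\bigl(\,\cdot\,\bigr):\ \FF_j\longrightarrow F^e_*\FF_{p^ej}\simeq F^e_*\OO_A .
\]
The first step is to note that $H^0(A,\FF_j)=0=H^2(A,\FF_j)$ by Lemma~\ref{lem:section-period}, since $\FF_j\not\simeq\OO_A$ means $c\nmid j$. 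So only $H^1(A,\FF_j)=0$ needs proof.

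For $H^1$, I would use duality. By \eqref{eq:quotient-duality}, $\FF_{-j}\simeq\sheafHom(\FF_j,\OO_A)$, and with $\omega_A\simeq\OO_A$, Serre duality identifies $H^1(A,\FF_j)^*\simeq\operatorname{Ext}^1(\FF_j,\OO_A)$. Since $\FF_j$ is only Cohen--Macaulay, and not necessarily invertible, I would avoid Ext and argue directly as follows. Consider the composite
\[
 H^1(A,\FF_j)\times H^1(A,\FF_{-j})\to H^2(A,\FF_0)=H^2(A,\OO_A)=k,
\]
together with the Frobenius maps. The cleaner route is the multiplication map
\[
 \FF_j\otimes\FF_{(p^e-1)j}\to\FF_{p^ej}\simeq\OO_A .
\]
The $e$-fold Frobenius on $H^1(A,\OO_A)$ factors as
\[
 H^1(\OO_A)\xrightarrow{}\ \text{(twist)}\ \cdots ,
\]
and I expect a factorization of the following form:
\[
 F^e:H^i(A,\OO_A)\to H^i(A,\OO_A)
\]
factors through $H^i(A,\FF_{-j}\otimes\FF_j\text{-type twists})$. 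Concretely, the idea is to use $\OO_A\to F^e_*\OO_A$ and to decompose $F^e_*\OO_A$ by the grading from the cyclic structure.

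The more promising concrete step uses the trace. Grothendieck trace $F^e_*\omega_A\to\omega_A$ is dual to Frobenius, so surjectivity of $F^e$ on $H^2(\OO_A)$ gives a Frobenius splitting-type statement, namely that $\OO_A\to F^e_*\OO_A$ splits on cohomology. Now consider the semilinear $p^e$-power map $\Phi:H^1(A,\FF_j)\to H^1(A,\FF_{p^ej})=H^1(A,\OO_A)$. The ring $\bigoplus_{r}\FF_{rj}$ modulo periodicity is then a $\mu_d$-graded algebra on which Frobenius permutes the graded pieces: it sends degree $j$ to degree $p^ej$, which is degree $0$ in the period-$c$ grading. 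I would pass to the cyclic cover $A''$ with $\OO_{A''}=\bigoplus_{0\le r<c}\FF_r$ (as in Lemma~\ref{lem:elliptic-path}). On $A''$ Frobenius is injective on $H^1$ in the graded pieces, and maps the piece for character $\chi_j$ to the piece for $\chi_j^{p^e}=\chi_{p^ej}$, which is the trivial character. Since the total Frobenius on $H^1(A'',\OO_{A''})$ respects dimension counts, and its restriction to the invariant part $H^1(\OO_A)$ is already bijective onto $H^1(\OO_A)$, injectivity forces the $\chi_j$-piece to be zero. The main obstacle is justifying injectivity of Frobenius on $H^1(A'',\OO_{A''})$. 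I would try to deduce it from $A''$ being Gorenstein with trivial dualizing sheaf, transferring $F$-splitness or surjectivity of the trace on $H^2$ from $A$ to the tame cover by the averaging projector.
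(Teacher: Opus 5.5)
There is a genuine gap. The step you flag as the main obstacle, injectivity of Frobenius on $H^1(A'',\OO_{A''})$, is the whole content of the lemma, and the route you propose for it cannot work.

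By Lemma~\ref{lem:section-period}, $\FF_m\simeq\OO_A$ if and only if $c\mid m$. So the hypotheses give $c\mid p^ej$ but $c\nmid j$, which forces $p\mid c$; in the application, $c\mid p^v$. Consequently $\bigoplus_{0\le r<c}\FF_r$ is never a tame cover here. There is no averaging projector, and the grading is by the non-reduced group scheme $\mu_c$. The resulting $A''$ is an inseparable cover that need not even be reduced, so its being Gorenstein with trivial dualizing sheaf gives no control of Frobenius on its $H^1$.

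Two further points. First, your dimension count needs $F^e$ to be injective on $H^1(\OO_A)\oplus H^1(\FF_j)$ jointly, since both images land in $H^1(\OO_A)$. Injectivity on each graded piece separately is not enough. Second, your first step is stated too weakly. Serre duality with $\omega_A\simeq\OO_A$ and bijectivity of Frobenius on $H^2(A,\OO_A)$ make $\Hom(F^e_*\OO_A,\OO_A)\to\Hom(\OO_A,\OO_A)$ surjective. This gives a sheaf-level retraction $\tau:F^e_*\OO_A\to\OO_A$ with $\tau(a^{p^e})=a$, which is much more than a splitting on cohomology, and you need that.

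The missing idea is to transfer $\tau$ to $\FF_j$ through the duality \eqref{eq:quotient-duality}, which you mention and then set aside. Put $Q=p^e$ and define
\[
 R:F^e_*\FF_{Qj}\longrightarrow\FF_j\simeq\sheafHom(\FF_{-j},\OO_A),\qquad
 \langle R(z),t\rangle=\tau(zt^{Q}).
\]
This map is $\OO_A$-linear and satisfies $R(s^{Q})=s$. So the $Q$-power map exhibits $\FF_j$ as a direct summand of $F^e_*\FF_{Qj}\simeq F^e_*\OO_A$.

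Moreover, composing the power map with $\tau$ gives an element of $\Hom(\FF_j,\OO_A)=H^0(A,\FF_{-j})=0$. Hence $\FF_j$ is in fact a direct summand of $\ker\tau$. Now $F^e_*\OO_A=\OO_A\oplus\ker\tau$, and the first summand induces the bijective Frobenius on $H^*(A,\OO_A)$, so $H^*(\ker\tau)=0$. Therefore $H^*(A,\FF_j)=0$ in all degrees at once.

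In your language, $R$ is exactly the degree-$j$ component of the graded Frobenius splitting of $A''$ you were looking for. It comes from duality, not from averaging. Your overall skeleton (compare $\FF_j$ with $F^e_*\OO_A$ via the power map, then use bijectivity of Frobenius) matches the paper's proof once this splitting is in place.
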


\begin{proof}
Set $Q=p^e$. Duality applied to
$\OO_A\to F_*^e\OO_A$ gives a retraction
$\tau:F_*^e\OO_A\to\OO_A$, with $\tau(a^Q)=a$.
The multiplication pairing defines
\begin{equation}\label{eq:power-retraction}
 R:F_*^e\FF_{Qj}\to\FF_j,\qquad
                \langle R(z),t\rangle=\tau(zt^Q)
                      \quad(t\in\FF_{-j}).
\end{equation}
By \eqref{eq:quotient-duality}, $R(s^Q)=s$.
Thus $\FF_j$ is a direct summand of $F_*^e\FF_{Qj}$.
Under the chosen identification $\FF_{Qj}\simeq\OO_A$,
its power map has image in $\ker\tau$, because
\[
 \Hom_A(\FF_j,\OO_A)=H^0(A,\FF_{-j})=0
\]
by Lemma~\ref{lem:section-period}.
Finally $F_*^e\OO_A=\OO_A\oplus\ker\tau$, and the
first summand induces an isomorphism on cohomology by
the Frobenius hypothesis. Hence $\ker\tau$, and its
direct summand $\FF_j$, are acyclic.
\end{proof}

\begin{lemma}\label{lem:rational-surface-cover}
If $A$ has rational conductor, a further tame abelian
formal cover satisfies one of the cases of
Proposition~\ref{prop:surface-cover}.
\end{lemma}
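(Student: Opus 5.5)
The plan is to show that, in the rational-conductor case, the boundary $A$ of Lemma~\ref{lem:formal-log-cover} has $h^*(\OO_A)=(1,0,1)$ with Frobenius bijective on cohomology. Then Lemma~\ref{lem:power-splitting} kills the cohomology of the non-trivial $\FF_j$ whose $p$-power order is the only obstruction. The remaining prime-to-$p$ torsion will be handled by a tame cyclic root, as in Lemma~\ref{lem:normal-surface-cover}.

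\emph{Step 1: cohomology of $\OO_A$.} Using the classification of Appendix~\ref{app:models} (Lemma~\ref{lem:nonnormal-conductor-types}), the components $V_i$ of the normalization are rational or have rational conductor. The conductor $\overline C$ is a union of rational curves, in a chain or a cycle. From the analogue of \eqref{eq:conductor-sequence} for $j=0$ I expect $H^1(A,\OO_A)=0$ and $H^2(A,\OO_A)\simeq H^0(A,\omega_A)^*=k$. The class in $H^2$ comes from the cycle of rational conductor curves, i.e.\ from $H^1$ of the dual complex, which is combinatorial. I would show that Frobenius is bijective on $H^2(A,\OO_A)$, i.e.\ that $A$ is $F$-split (globally ordinary), as follows. $\omega_A\simeq\OO_A$, so the Frobenius trace on $H^0(\omega_A)$ is either zero or bijective. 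The generator $\eta$ of $\omega_A$ has residues along the rational double curves giving logarithmic forms $dz/z$ on $\mathbb P^1$, and the Cartier operator fixes $dz/z$. Hence the trace is nonzero.

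\emph{Step 2: the sheaves $\FF_j$.} Fix $j$ with $\FF_j\not\simeq\OO_A$. As in Lemma~\ref{lem:elliptic-quotients}, if $\FF_j$ is nonfree at a generic conductor point, I would aim to show acyclicity by propagating nonfreeness and using the Appendix~\ref{app:surface} computations on the normal components. In the free case, $\FF_j$ is an invertible sheaf in codimension one with torsion class in $\Pic(A)$. Because $H^1(A,\OO_A)=0$, $\Pic^0_{A/k}$ is a finite group scheme plus possibly a torus contribution. Since the double curves are rational, the gluing data live in $\Gm$'s and $\mu$'s. Write the order of $\FF_j$ (via Lemma~\ref{lem:section-period}) as $p^e s$ with $s$ prime to $p$. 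If $s=1$, Lemma~\ref{lem:power-splitting} with Step~1 gives $H^*(A,\FF_j)=0$.

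\emph{Step 3: removing the prime-to-$p$ part.} Let $c$ be the period from Lemma~\ref{lem:section-period}, and write $c=p^vr$ with $(r,p)=1$. Choose a formal root $\mathcal U^r\simeq\II_c$ with trivial reduced restriction, by the Hensel argument of Lemma~\ref{lem:formal-log-cover}. Then take the reflexive cyclic algebra $\bigoplus_i\II_{ip^v}\mathcal U^{-i}$. This cover is quasi-\'etale at the generic conductor points, and the group extends by $\{(g,z):z^r=\eta(g)\}$, so it stays abelian of order prime to $p$. On the new boundary $A'$ the period becomes a power of $p$, so Step~2 gives case~(i) of Proposition~\ref{prop:surface-cover}.

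One caveat: the new boundary may have $H^1(A',\OO_{A'})\neq0$, since the cover can unwind the conductor cycle into a surface whose conductor is again elliptic or that is normal. If so, I would redirect to Lemmas~\ref{lem:normal-surface-cover}, \ref{lem:elliptic-cycle} and \ref{lem:elliptic-path}, which land in (i) or (ii).

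The main obstacle is Step~1, specifically the Frobenius bijectivity for every configuration allowed by the classification. I would attempt it through the log-form residue argument. Controlling the nonfree case in Step~2 is the second difficulty.
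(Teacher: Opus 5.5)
\textbf{Gap: the claim $h^*(\OO_A)=(1,0,1)$.} Your Step~1 claim that $h^*(\OO_A)=(1,0,1)$ is false in general, and the whole proposal rests on it. For rational conductor the surface is not organised as a chain or cycle; that structure belongs to the elliptic case of Lemma~\ref{lem:nonnormal-conductor-types}. Pass to a crepant model with rational components and smooth rational double curves (Lemma~\ref{lem:rational-crepant-modification}). There $H^j(A,\OO_A)\simeq H^j(\Delta,k)$, where $\Delta$ is the dual complex, a closed orientable surface. So any normalization--conductor computation gives $h^1(\OO_A)=b_1(\Delta)$, not $0$.

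For an abstract Gorenstein slc surface with $\omega\simeq\OO$, nothing bounds the genus of $\Delta$. The paper obtains $b_1(\Delta)\le 2$ from the threefold. The full pullback $P=\sum r_iB_i$ of $T$ satisfies $\OO(NP)|_B\simeq\OO_B$, so $P\cdot C_e=0$ on every double curve. This is the weighted triple-point relation \eqref{eq:weighted-edge}, and Hodge index on the components turns it into $b_1\le 2$ (Lemma~\ref{lem:weighted-hodge}).

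The torus case $b_1=2$ is genuinely allowed, e.g.\ the central fibre of a Type~III degeneration of abelian surfaces. Then $H^1(A',\OO_{A'})\simeq k^2$, so case (i) is impossible. You must instead verify case (ii): $h^*(\OO_{A'})=(1,2,1)$, the vanishing \eqref{eq:acyclic-layers}, and that $\Pic^0_{A'/k}$ is smooth semiabelian of dimension two. The paper computes $\Pic^0$ of the crepant model as the torus of gluing constants \eqref{eq:picard-gluing-complex}, namely $\Gm^2$, and descends it to $A'$. Your proposal contains neither the bound on $b_1$ nor any Picard-scheme computation. Your remark that $\Pic^0_{A/k}$ is ``finite plus possibly a torus'' when $H^1=0$ is also inconsistent, since $\operatorname{Lie}\Pic^0_{A/k}=H^1(A,\OO_A)$.

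\textbf{The caveat and the other steps.} The caveat in Step~3 does not close the gap. Your cover is quasi-\'etale and its boundary $A'$ still has rational conductor; Proposition~\ref{cor:rational-conductor-picard} is stated precisely for these covers. So there is nothing to redirect to Lemmas~\ref{lem:normal-surface-cover}, \ref{lem:elliptic-cycle} or \ref{lem:elliptic-path}. Moreover, nonzero $H^1$ is not an artefact of the cover; it can already occur on $A$.

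The remaining steps are sound or superfluous:
\begin{enumerate}
\item Your root construction is the paper's, with the period $c$ in place of $N$.
\item Lemma~\ref{lem:power-splitting} needs no freeness hypothesis, so the nonfree analysis in Step~2 is unnecessary.
\item Your residue argument is a reasonable alternative to the paper's coefficientwise Frobenius on $C^\bullet(\Delta,k)$: the Cartier operator fixes $dz/z$, so $A$ is $F$-split. It would give Frobenius bijectivity on all of $H^*(\OO)$, including $H^1$. However, it must be run on $A'$, not on $A$.
\end{enumerate}
With these repairs the argument becomes the paper's. For $c\nmid j$ apply Lemma~\ref{lem:power-splitting}; for $c\mid j$ one has $\FF_j\simeq\OO_{A'}$, and the dichotomy $b_1(\Delta)\in\{0,2\}$ decides between (i) and (ii).
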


\begin{proof}
Write $N=p^vr$ with $(r,p)=1$.
Choose $\mathcal U^r\simeq\II_N$ and
$\mathcal U|_A\simeq\OO_A$, and take the normalized
cyclic algebra of $\II_{p^v}\mathcal U^{-1}$.
For its connected boundary $A'$, the divisor $p^vA'$
is Cartier and its associated line bundle is trivial on $A'$.
Thus the new period $c$ divides $p^v$.

Proposition~\ref{cor:rational-conductor-picard} gives
\[
 H^*(A',\OO_{A'})\simeq H^*(\Delta,k),\qquad
 \Pic^0_{A'/k}\simeq\Gm^{b_1(\Delta)},\qquad b_1(\Delta)\in\{0,2\}.
\]
Frobenius is bijective. For $c\nmid j$, the quotient
$\FF_j$ is nontrivial and $\FF_{p^vj}\simeq\OO_{A'}$.
Lemma~\ref{lem:power-splitting} gives $H^*(A',\FF_j)=0$.
For $c\mid j$, $\FF_j\simeq\OO_{A'}$.
If $b_1(\Delta)=0$, all first cohomology groups vanish.
If $b_1(\Delta)=2$, \eqref{eq:acyclic-layers} and
case (ii) follow. The cyclic root construction
lifts the original action through a finite abelian
extension of order prime to $p$, as in
Lemma~\ref{lem:normal-surface-cover}.
\end{proof}

\begin{proof}[Proof of Proposition~\ref{prop:surface-cover}]
Take the cover of Lemma~\ref{lem:formal-log-cover}.
If $A$ is normal, apply Lemma~\ref{lem:normal-surface-cover}.
Otherwise Lemma~\ref{lem:nonnormal-conductor-types}
separates the elliptic and rational cases.
Lemmas~\ref{lem:elliptic-cycle}, \ref{lem:elliptic-path},
and \ref{lem:rational-surface-cover} prove the two cases.

In (i), filter the restriction map
$\OO_{NA}(NA)\to\OO_A(NA)$ by the quotients $\FF_j$.
Every kernel has zero $H^1$.
The given generating section on $A$ therefore lifts to
$NA$. The line bundle and its initial section are pulled
back from $V$. Averaging each lift over the finite group
preserves its restriction. Exactness of invariants
descends the generating section to
$\OO_V(NT)|_{NT}$.
Lemma~\ref{lem:principal-parts} proves semiampleness.
\end{proof}

\subsection{Descent and the action on first cohomology}

The surface calculations give a tame abelian cover but
do not bound the order of its group. We quotient by the
kernel of its action on first cohomology. Three assertions
must be checked separately: the period $c$,
the connected Picard scheme, and the orders of the
remaining characters.

\begin{proposition}\label{prop:bounded-action}
In case \textup{(ii)} of
Proposition~\ref{prop:surface-cover}, set
\[
 H=\ker\bigl(\widetilde G\to
       \operatorname{GL}(H^1(\widetilde A,\OO_{\widetilde A}))\bigr),
 \qquad \YY=\widetilde\YY/H,\qquad A=\widetilde A/H.
\]
The pair $(\YY,A)$ satisfies
\eqref{eq:formal-assumptions}, \eqref{eq:acyclic-layers},
and \eqref{eq:semiabelian-hypothesis}.
The group $\widetilde G/H$ acts faithfully on $H^1(A,\OO_A)$,
and each of its characters has order at most $12$.
\end{proposition}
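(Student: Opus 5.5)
The plan is to verify the three conditions for the intermediate quotient $\YY=\widetilde\YY/H$ separately, using that $H$ has order prime to $p$. Then the faithfulness and the bound on characters come from the structure of $\Pic^0$.

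\emph{Step 1: formal assumptions.} Since $H$ has order prime to $p$, taking $H$-invariants is exact. The quotient map $\widetilde\YY\to\YY$ is quasi-\'etale, so $\YY$ is normal. Reflexive sheaves $\OO_{\YY}(-jA)$ are the $H$-invariants of $h_*\OO_{\widetilde\YY}(-j\widetilde A)$. These are direct summands, so they are maximal Cohen--Macaulay. The sheaf $\MM=\omega_{\YY}(A)$ is the invariant part of $\widetilde\MM$. Since $\widetilde\MM|_{\widetilde A}\simeq\OO$ and $\widetilde G$ acts on $H^0=k$ by a character, invertibility and triviality of $\MM|_A$ hold after possibly enlarging $H$. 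I expect this character to be trivial on $H$: $H$ acts trivially on $H^1(\OO)$, hence trivially on $\bigwedge^2H^1\simeq H^2(\OO)$, which is Serre dual to $H^0(\omega)$. The same argument, via the cup-product identification, controls the character of $\JJ|_A$.

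\emph{Step 2: acyclic layers and the period.} The layers of $\YY$ are $\FF_j=(\widetilde\FF_j)^H$. More precisely, they are sums of character pieces $\widetilde\FF_{j'}\otimes\chi$ restricted to invariants. Cohomology is then $H^*(A,\FF_j)=H^*(\widetilde A,\widetilde\FF_j)^H$. So the vanishing for $0<j<c$ descends. The new period $c'$ is a multiple of $\widetilde c$ bounded by $\widetilde c\cdot|\widetilde G/H|$, by Lemma~\ref{lem:section-period}. For the intermediate layers $\widetilde c\mid j$ but $c'\nmid j$, we have $\widetilde\FF_j\simeq\OO$ with a nontrivial $H$-character. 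Its $H$-invariant cohomology then vanishes, because $H$ acts trivially on $H^*(\widetilde A,\OO)$. This is exactly where the choice of $H$ as the kernel on $H^1$ (and hence on $H^2=\bigwedge^2H^1$) is used.

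\emph{Step 3: Picard scheme and character orders.} There are two cohomology identities, $H^*(A,\OO_A)=H^*(\widetilde A,\OO)^H=H^*(\widetilde A,\OO)$ and $\Pic^0_{A/k}\to(\Pic^0_{\widetilde A/k})^H$. Together they give $h^*=(1,2,1)$. They also give that $\Pic^0_{A/k}$ is an isogenous quotient of the smooth semiabelian surface, so it is smooth semiabelian. This yields \eqref{eq:semiabelian-hypothesis} once $V=H^1(D,\OO_D)$ is matched with $H^1(A,\OO_A)$ via Lemma~\ref{lem:cohomology-and-picard}.

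Faithfulness of $\widetilde G/H$ on $H^1$ holds by construction. The main obstacle is the bound of $12$ on the orders of characters. I would realize $V=\operatorname{Lie}\Pic^0_{A/k}$, on which $\widetilde G/H$ acts by group automorphisms. There are two cases.

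\emph{Abelian case.} The action preserves the lattice or Tate module, so the eigenvalues are roots of unity whose minimal polynomials have degree at most $4$ over $\Q$. Hence $\varphi(n)\le4$, so $n\in\{1,\dots,6,8,10,12\}$.

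\emph{Semiabelian case.} Here $1\to\Gm^a\to\Pic^0\to B\to0$. The action on the character lattice of $\Gm^a$ and on the elliptic curve $B$ gives orders dividing $2,3,4$ or $6$.

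Since $p>42$, the reduction modulo $p$ of these characters is injective. The order of each character is therefore at most $12$.
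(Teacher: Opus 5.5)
Your architecture matches the paper's, and the first two steps are essentially right. You descend the formal assumptions by taking $H$-invariants. You make the character of $\widetilde\MM|_{\widetilde A}$ trivial on $H$ by duality with $H^2\simeq\bigwedge^2H^1$. You get the new period from the $H$-characters on $\widetilde\FF_{c_0b}\simeq\OO_{\widetilde A}\otimes\lambda^b$. Your direct-summand argument for maximal Cohen--Macaulayness of the $\II_j$ is fine and avoids the divisorial Cohen--Macaulay theorem. Two small corrections:
\begin{itemize}
\item The cup product does \emph{not} control the character of $\JJ|_A$. The restriction $\lambda|_H$ can be nontrivial, which is exactly why the period becomes $c_0\ord(\lambda|_H)$; it is not bounded by $\widetilde c\,|\widetilde G/H|$. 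Your Step~2 does handle this correctly.
\item Since the cover is quasi-\'etale, $\FF_j=(q_*\widetilde\FF_j)^H$ with the same index $j$; no other $\widetilde\FF_{j'}$ enter.
\end{itemize}

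The genuine gap is in the Picard-scheme step. Your sentence claiming that $\Pic^0_{A/k}$ is ``an isogenous quotient of the smooth semiabelian surface'' carries all of \eqref{eq:semiabelian-hypothesis}, and it does not follow from what you list. The map you wrote, $u=q_A^*:\Pic^0_{A/k}\to\Pic^0_{\widetilde A/k}$, points the wrong way to exhibit a quotient. The fact that $du$ is an isomorphism only shows that $\ker u$ is \'etale; it does not give $\dim\Pic^0_{A/k}=2$. A connected closed subgroup of $P=\Pic^0_{\widetilde A/k}$ can have the full Lie algebra and still be infinitesimal (for example the Frobenius kernel of $P$), and $\Pic^0_{A/k}$ is not assumed reduced. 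The missing steps are:
\begin{itemize}
\item[(a)] $H$ acts trivially on $P$ itself, not only on $\operatorname{Lie}P$. Use that $|H|$ is prime to $p$: average formal parameters at the origin, then use smoothness and connectedness.
\item[(b)] $\ker u\subseteq\Hom(H,\Gm)$, since a line bundle with trivial pullback is $\OO_{\widetilde A}$ with a character-twisted linearization.
\item[(c)] The norm $L\mapsto\bigotimes_{h\in H}h^*L$ descends to a map $v:P\to\Pic^0_{A/k}$ with $uv=[|H|]_P$. This is surjective, so $u$ is surjective and $\dim\Pic^0_{A/k}=2=h^1(A,\OO_A)$, which gives smoothness.
\item[(d)] The unipotent part of $\Pic^0_{A/k}$ maps trivially to the semiabelian $P$, so it lies in the finite kernel and vanishes.
\end{itemize}

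The character bound also needs repair. Injectivity of reduction on prime-to-$p$ roots of unity holds for every $p$, so $p>42$ plays no role there. More importantly, you never connect the eigenvalues on $H^1(A,\OO_A)=\operatorname{Lie}\Pic^0_{A/k}$, which live in characteristic $p$, with those on $X^*(T)$ or $V_\ell(B)$. Argue with element orders instead:
\begin{itemize}
\item A finite-order automorphism of $\Pic^0_{A/k}$ is determined by its actions on $X^*(T)\otimes\Q$ and $V_\ell(B)$, because $\Hom(B,T)=0$. You omit this in the semiabelian case.
\item Its characteristic polynomial is integral of degree $\dim T+2\dim B\leq4$. Hence its order, which is an lcm of the eigenvalue orders and not just a single $n$ with $\varphi(n)\leq4$, lies in $\{1,2,3,4,5,6,8,10,12\}$.
\item Faithfulness on $H^1$ bounds the order of every $g\in\widetilde G/H$ by this automorphism order.
\item Each character $\chi$ satisfies $\ord\chi=\ord\chi(g)$ for some $g$, so $\ord\chi\leq12$.
\end{itemize}
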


\begin{proof}
\noindent\emph{Step 1. The new period.}
Let $c_0$ be the period on $\widetilde A$, let $\lambda$
be the character on $\widetilde\II_{c_0}|_{\widetilde A}$,
and set $t=\ord(\lambda|_H)$.
For the finite quotient $q:\widetilde\YY\to\YY$,
\[
 (q_*\widetilde\II_j)^H=\II_j,\qquad
 (q_*\widetilde\FF_j)^H=\FF_j.
\]
The first equality follows in codimension one from
quasi-\'{e}taleness and extends by reflexivity; the second
uses exactness of invariants.
Thus the cohomology is zero if $c_0\nmid j$, and
\begin{equation}\label{eq:kernel-invariants}
 H^i(A,\FF_{c_0b})
  =\bigl(H^i(\widetilde A,\OO_{\widetilde A})
                                      \otimes\lambda^b\bigr)^H.
\end{equation}
The group $H$ fixes $H^0$ and $H^1$, and also $H^2$
because the cup product $\bigwedge^2H^1\to H^2$ is
an isomorphism. Hence the right side is zero unless
$t\mid b$.

Before applying Lemma~\ref{lem:section-period} on $\YY$,
we verify its hypothesis on the log canonical sheaf.
The reduced character of
$\widetilde{\mathcal M}=\omega_{\widetilde\YY}(\widetilde A)$
is trivial on $H$ by duality with $H^2$.
On an affine inverse image, lift its invariant reduced
generator and average over $H$. Nakayama's lemma makes
the average a generator. Its invariants therefore descend
$\widetilde{\mathcal M}$ to the invertible sheaf
$\mathcal M=\omega_{\YY}(A)$ with $\mathcal M|_A\simeq\OO_A$.
The quotient remains klt, and the Cohen--Macaulay property
of the divisorial ideals follows from
\cite[Theorems 3 and 19(2)]{BK23}.
Thus \eqref{eq:formal-assumptions} holds on $\YY$.
For $c=c_0t$, \eqref{eq:kernel-invariants} gives
$h^0(A,\FF_c)=1$.
Lemma~\ref{lem:section-period} now makes $cA$ Cartier and
$\FF_c\simeq\OO_A$. Formula~\eqref{eq:kernel-invariants}
then proves \eqref{eq:acyclic-layers}.

\smallskip
\noindent\emph{Step 2. The Picard scheme.}
Set $P=\Pic^0_{\widetilde A/k}$.
Each $h\in H$ induces the identity on
$\operatorname{Lie}(P)=H^1(\widetilde A,\OO_{\widetilde A})$.
If $h$ has order $r$, choose regular parameters $x_1,x_2$
at the identity of $P$ and set
\[
                    y_i=r^{-1}\sum_{a=0}^{r-1}(h^a)^*x_i.
\]
Then $y_i\equiv x_i\pmod{\mathfrak m^2}$ and $h^*y_i=y_i$.
They are formal parameters, so $h$ acts identically on
the completed local ring. The equality extends to $P$
because $P$ is smooth and connected. Thus $H$ fixes $P$.

For $q_A:\widetilde A\to A$, let
$u=q_A^*:\Pic^0_{A/k}\to P$.
A line bundle with trivial pullback is determined by a
character of $H$ on the trivial line bundle $\OO_{\widetilde A}$. Hence
\[
                      \ker u\subseteq\Hom(H,\Gm),
\]
a finite \'{e}tale group. For a line bundle $L$ on
$\widetilde A$, the tensor product
$\bigotimes_{h\in H}h^*L$ descends to $A$.
This construction is compatible with base change.
It defines $v:P\to\Pic^0_{A/k}$ with
\[
                         uv=[|H|]_P.
\]
Thus $u$ is an isogeny with \'{e}tale kernel.
Also
$\dim\operatorname{Lie}(\Pic^0_{A/k})
 =\dim H^1(A,\OO_A)=2=\dim P$,
so $\Pic^0_{A/k}$ is smooth.
Its connected unipotent subgroup maps trivially to
the semiabelian variety $P$ and must lie in the finite
kernel; it is therefore zero. Consequently
$\Pic^0_{A/k}$ is semiabelian of dimension two.
Together with \eqref{eq:kernel-invariants}, this proves
\eqref{eq:semiabelian-hypothesis}.

\smallskip
\noindent\emph{Step 3. The orders of characters.}
Write $0\to T\to\Pic^0_{A/k}\to B\to0$, with $T$
a torus and $B$ an abelian variety.
A finite-order group automorphism is detected on
\[
             X^*(T)\otimes\Q\quad\text{and}\quad V_\ell(B),
                         \qquad \ell\ne p.
\]
Indeed an automorphism trivial on both would differ
from the identity by a homomorphism $B\to T$, which is zero.
The characteristic polynomial is integral, of degree
$\dim T+2\dim B\leq4$, where for the abelian factor we use
\cite[Proposition 12.9]{Mil86}.
Its cyclotomic factors have total degree at most four.
The possible orders of the automorphism are therefore
\[
                          1,2,3,4,5,6,8,10,12.
\]
The residual action on the tangent space is faithful
by the definition of $H$. Every character of a finite
abelian group attains its order on an element.
Hence the assertion holds.
\end{proof}

Let $\mu^{-1}$ be the character on $\JJ|_A$ after this
quotient, and set $R=\ord(\mu)\leq12$.
If
\[
 \dim_k\Bigl(\bigoplus_{j=1}^{R-1}
                    H^1(A,\OO_A)_{\mu^j}\Bigr)\leq1,
\]
Corollary~\ref{cor:one-direction} and
Lemma~\ref{lem:invariant-descent} prove semiampleness.
Otherwise both characters of $H^1(A,\OO_A)$ belong to
$\langle\mu\rangle$. Then $\ker\mu$ acts trivially on
$H^1(A,\OO_A)$, so it is trivial by the proposition.
The remaining group is cyclic of order $R<p$, as
required in Section~\ref{sec:connecting}.

\section{Picard schemes and connecting homomorphisms}\label{sec:connecting}

\begin{notation}\label{not:equivariant}
Throughout Sections~\ref{sec:connecting} and~\ref{sec:finite},
let $\YY$ be a normal Cohen--Macaulay formal threefold
complete along a connected reduced proper divisor $A$.
Assume that the following conditions hold:
\[
 \begin{gathered}
 \II_j=\OO_{\YY}(-jA)\text{ is maximal Cohen--Macaulay}
                                      \quad(j\in\Z),\\
 \FF_j=\II_j/\II_{j+1},\qquad
 \MM=\omega_{\YY}(A)\text{ is invertible},\qquad
 \MM|_A\simeq\OO_A,\\
 \JJ=\II_c\text{ is invertible},\quad
 \JJ|_A\simeq\OO_A,\quad
 H^*(A,\FF_j)=0\quad(0<j<c),\\
 D_\ell=V(\JJ^\ell),\qquad D=D_1=cA .
 \end{gathered}
\]
The $D_\ell$ are proper over $k$.
These are the hypotheses of
Settings~\ref{not:formal} and~\ref{not:cartier-thickenings}.
Assume also that
\begin{equation}\label{eq:semiabelian-hypothesis}
 \Pic^0_{A/k}\text{ is smooth semiabelian},\qquad
                         h^1(A,\OO_A)=2.
\end{equation}
Let a finite abelian group $G$ of order prime to $p$ act
on these data. Set
\[
                         V=H^1(D,\OO_D)=H^1(A,\OO_A).
\]
Fix a generating section $s\in H^0(D,\JJ|_D)$, and write
$g(s)=\mu(g)^{-1}s$. Set $R=\ord(\mu)$ and assume $R<p$.
\end{notation}

Lemmas~\ref{lem:quotient-duality}
and~\ref{lem:cohomology-and-picard} give
\begin{equation}\label{eq:perfect-cup}
 H^0(D,\OO_D)=k,\qquad
 \bigwedge\nolimits^2V\xrightarrow{\sim}H^2(D,\OO_D)=k.
\end{equation}
The last identification uses a generating section of
$\omega_A$ and canonical trace.
The cup product is nondegenerate by Serre duality on the
Gorenstein surface $A$. It is alternating since $p\ne2$.

In this section, we prove two boundary formulas (see Proposition \ref{prop:first-boundary} and Proposition \ref{prop:canonical-boundary}), and a lifting result for certain character subaspaces of the first cohomology of some thickenings of $D$ (see Proposition \ref{lem:witt-lift}).
Fix a generating section $s$ of $\JJ|_D$.
If a generating section $\sigma$ of $\JJ^q|_{D_\ell}$
reduces to $s^q$, let $\delta^0_{\JJ^{-q}}$ be the boundary
for restriction of $\JJ^{-q}$ from $D_{\ell+1}$ to $D_\ell$.
The obstruction to extending this specified section is
\[
 \theta=s^{q-\ell}\delta^0_{\JJ^{-q}}(\sigma^{-1})\in V.
\]
By Lemma \ref{lem:obstruction-rules}, $\theta=0$ means that $\sigma$ extends, and nontriviality
of $\JJ^q|_{D_{\ell+1}}$ means that no generating section
on $D_\ell$ extends. 
We consider the following cohomology maps
\[
 \partial_{r,\ell}:
 H^1(\JJ^r/\JJ^{r+\ell})
       \longrightarrow H^2(\JJ^{r+\ell}/\JJ^{r+\ell+1})=k.
\]
For $\alpha\in H^1(\JJ^r/\JJ^{r+\ell})$, $\alpha_0\in V$ denotes its restriction
to $D$, using $s$ to identify the tensor powers of $\JJ|_D$.
The two bounadry formulas are
\begin{align*}
 \partial_{r,\ell}(\alpha)&=r\,\alpha_0\smile\theta
       &&\text{if }q=1\text{ and }\ell<p,\\
 \partial_{r,\ell}(\alpha)&=
       \frac{\beta+\ell+r}{q}\,\alpha_0\smile\theta
       &&\text{if }\MM\JJ^\beta\simeq\OO_{\YY}
          \text{ and }q\mid\beta+\ell+r.
\end{align*}

\subsection{Restriction maps and obstruction classes}

We first specify the two restriction sequences used below.
For $r\in\Z$ and $\ell\geq1$, the inclusion
$\JJ^{r+\ell+1}\subset\JJ^{r+\ell}\subset\JJ^r$ gives
\begin{equation}\label{eq:restriction-sequence}
 0\longrightarrow\JJ^{r+\ell}/\JJ^{r+\ell+1}
 \longrightarrow\JJ^r/\JJ^{r+\ell+1}
 \longrightarrow\JJ^r/\JJ^{r+\ell}\longrightarrow0.
\end{equation}
Its connecting homomorphism, followed by multiplication
by $s^{-r-\ell}$, is denoted by
\[
 \partial_{r,\ell}:H^1(\JJ^r/\JJ^{r+\ell})
                    \longrightarrow H^2(D,\OO_D)=k.
\]
For a class $\alpha$ in the source, set
\[
 \alpha_0=s^{-r}(\alpha|_D)\in V.
\]
The notation includes these identifications with powers
of $s$; it does not identify sheaves on different thickenings.

For $q\geq1$, the restriction sequence for $\JJ^{-q}$ is
\begin{equation}\label{eq:section-obstruction-sequence}
 0\longrightarrow\JJ^{\ell-q}/\JJ^{\ell-q+1}
 \longrightarrow\JJ^{-q}|_{D_{\ell+1}}
 \longrightarrow\JJ^{-q}|_{D_\ell}\longrightarrow0.
\end{equation}
Write its degree-zero connecting homomorphism as
\[
 \epsilon_{q,\ell}:H^0(D_\ell,\JJ^{-q}|_{D_\ell})
              \longrightarrow H^1(D,\JJ^{\ell-q}|_D).
\]
If $\sigma$ generates $\JJ^q|_{D_\ell}$ and $\sigma|_D=s^q$,
let $\sigma^{-1}$ be its dual section and define
\begin{equation}\label{eq:theta-definition}
 \theta_{q,\ell}(\sigma)
             :=s^{q-\ell}\epsilon_{q,\ell}(\sigma^{-1})\in V.
\end{equation}
The inverse in this definition fixes the sign in the
two boundary formulas.

We will use the following elementary product rule.
It is stated explicitly to specify the source and target
of each term in the formulas that follow.

\begin{lemma}\label{lem:boundary-product}
Set $Z=D_{\ell+1}$, $Z_0=D_\ell$, and
$\mathcal K_\ell=\JJ^\ell/\JJ^{\ell+1}$.
For an invertible sheaf $\mathcal H$ on $Z$, denote by
\[
 \delta^j_{\mathcal H}:H^j(Z_0,\mathcal H|_{Z_0})
        \longrightarrow H^{j+1}(D,\mathcal H|_D\otimes\mathcal K_\ell)
\]
the boundary of its restriction sequence.
For invertible sheaves $\mathcal L,\mathcal N$ on $Z$,
$\alpha\in H^1(Z_0,\mathcal L|_{Z_0})$, and
$\tau\in H^0(Z_0,\mathcal N|_{Z_0})$, one has
\begin{equation}\label{eq:boundary-product}
 \delta^1_{\mathcal L\mathcal N}(\alpha\tau)
  =\delta^1_{\mathcal L}(\alpha)\tau|_D
       -\alpha|_D\smile\delta^0_{\mathcal N}(\tau).
\end{equation}
Both sides belong to
$H^2(D,\mathcal L|_D\otimes\mathcal N|_D\otimes\mathcal K_\ell)$.
For two degree-zero classes, the corresponding formula has
a plus sign between its two terms.
\end{lemma}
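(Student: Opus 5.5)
We prove the product rule \eqref{eq:boundary-product} by a \v{C}ech cocycle computation on a fixed finite affine covering $(U_a)$ of $Z$ that trivializes $\mathcal L$ and $\mathcal N$. Since $Z$ and $Z_0$ are proper over $k$ and the $U_a$ and their intersections are affine, this covering computes the cohomology of every coherent sheaf appearing in the three restriction sequences. Each sequence has kernel $\mathcal H|_D\otimes\mathcal K_\ell$, which is annihilated by the ideal of $Z_0$. So on each affine intersection the boundaries $\delta^j$ are computed by the standard recipe: lift a cocycle from $Z_0$ to $Z$, apply the \v{C}ech differential, and read off the result in the kernel.

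\textbf{Step 1: the two lifts and their defects.} Represent $\alpha$ by a cocycle $(\alpha_{ab})$ with values in $\mathcal L|_{Z_0}$, and $\tau$ by a section $(\tau_a)$ of $\mathcal N|_{Z_0}$. Choose lifts $\tilde\alpha_{ab}\in\mathcal L(U_{ab})$ and $\tilde\tau_a\in\mathcal N(U_a)$. Then $\delta^1_{\mathcal L}(\alpha)$ is represented by $d\tilde\alpha$, and $\delta^0_{\mathcal N}(\tau)$ by $(\tilde\tau_b-\tilde\tau_a)$; both lie in the kernel sheaves.

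\textbf{Step 2: the product cocycle.} The cup product $\alpha\tau$ is represented by $(\alpha_{ab}\tau_b)$. This uses the convention that a $1$-cochain times a $0$-cochain is evaluated at the second index, and the convention must be fixed consistently with the cup product on $D$. Its lift is $(\tilde\alpha_{ab}\tilde\tau_b)$, and we compute
\[
 d(\tilde\alpha\tilde\tau)_{abc}
 =\tilde\alpha_{bc}\tilde\tau_c-\tilde\alpha_{ac}\tilde\tau_c+\tilde\alpha_{ab}\tilde\tau_b
 =(d\tilde\alpha)_{abc}\tilde\tau_c-\tilde\alpha_{ab}(\tilde\tau_c-\tilde\tau_b).
\]

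\textbf{Step 3: reduction.} The factors $(d\tilde\alpha)_{abc}$ and $(\tilde\tau_c-\tilde\tau_b)$ lie in the ideal $\mathcal K_\ell$-part, which the ideal of $D$ in $Z$ (namely $\JJ$) annihilates, because $\JJ\cdot\JJ^\ell\subset\JJ^{\ell+1}$. We may therefore replace $\tilde\tau_c$ and $\tilde\alpha_{ab}$ by their restrictions to $D$. This gives $\delta^1_{\mathcal L}(\alpha)\,\tau|_D-\alpha|_D\smile\delta^0_{\mathcal N}(\tau)$, where $\smile$ is the \v{C}ech cup product $(x\smile y)_{abc}=x_{ab}y_{bc}$. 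Changing the lifts changes both sides by coboundaries, so the identity holds in cohomology.

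\textbf{The degree-zero case.} For two degree-zero classes we use the same computation with $\tilde\sigma_b\tilde\tau_b-\tilde\sigma_a\tilde\tau_a=(\tilde\sigma_b-\tilde\sigma_a)\tilde\tau_b+\tilde\sigma_a(\tilde\tau_b-\tilde\tau_a)$. This gives the sum with a plus sign.

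\textbf{Main obstacle.} The only delicate point is sign bookkeeping. The sign depends on the order conventions for the \v{C}ech differential (fixed in \S\ref{sec:witt-preliminaries}), on the cup product, and on the identification of the kernel. We fix these to agree with the convention used for $\smile$ in \eqref{eq:perfect-cup}, and check that the relabelling in Step 2 produces exactly the minus sign.
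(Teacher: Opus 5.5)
Your proof is correct and follows essentially the same route as the paper. Both arguments lift \v{C}ech representatives of $\alpha$ and $\tau$ to $Z$, use the Leibniz rule $d(a\smile b)=da\smile b-a\smile db$ for the cup product $(a\smile b)_{abc}=a_{ab}b_{bc}$, and then use $\JJ\mathcal K_\ell=0$ to replace the other factor by its restriction to $D$. The only difference is that you expand the coboundary of $(\tilde\alpha_{ab}\tilde\tau_b)$ by hand, whereas the paper cites the Leibniz rule.
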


\begin{proof}
Fix an affine covering $\mathcal U=(U_i)$ of $Z$, and set
$U_{i_0\ldots i_r}=U_{i_0}\cap\cdots\cap U_{i_r}$.
We use the \v Cech complex
\[
 C^r(\mathcal U,\mathcal F)
   =\prod_{i_0,\ldots,i_r}\Gamma(U_{i_0\ldots i_r},\mathcal F),
\]
with differential
\[
 (dc)_{i_0\ldots i_{r+1}}
   =\sum_{a=0}^{r+1}(-1)^a
          c_{i_0\ldots\widehat{i_a}\ldots i_{r+1}}|_{U_{i_0\ldots i_{r+1}}}.
\]
For cochains $a$ and $b$ of degrees $r$ and $t$, respectively,
their cup product is
\[
 (a\smile b)_{i_0\ldots i_{r+t}}
   =a_{i_0\ldots i_r}|_{U_{i_0\ldots i_{r+t}}}
       \otimes b_{i_r\ldots i_{r+t}}|_{U_{i_0\ldots i_{r+t}}}.
\]
It satisfies the usual Leibniz rule
\cite[Tag 01FP]{Sta}:
\begin{equation}\label{eq:cech-leibniz}
 d(a\smile b)=da\smile b+(-1)^r a\smile db.
\end{equation}
Choose lifts $a\in C^1(\mathcal U,\mathcal L)$ and
$b\in C^0(\mathcal U,\mathcal N)$ of representatives
of $\alpha$ and $\tau$. Their coboundaries have coefficients
in $\mathcal L\mathcal K_\ell$ and
$\mathcal N\mathcal K_\ell$.
Since $\JJ\mathcal K_\ell=0$, the other factor in each
product in \eqref{eq:cech-leibniz} restricts to $D$.
Passing to cohomology gives \eqref{eq:boundary-product}.
For two degree-zero classes the same rule has sign $+1$.
\end{proof}

\begin{lemma}\label{lem:obstruction-rules}
Let $\sigma$ and $\theta=\theta_{q,\ell}(\sigma)$ be as above.
\begin{enumerate}
\item The specified section $\sigma$ extends to a generating
section on $D_{\ell+1}$ if and only if $\theta=0$.
\item For every $b\in\Z$, the boundary of $\sigma^{-b}$ satisfies
\begin{equation}\label{eq:integer-power-obstruction}
 s^{bq-\ell}\delta^0_{\JJ^{-bq}}(\sigma^{-b})=b\theta.
\end{equation}
\item If $\sigma$ is equivariant with character $\mu^{-q}$,
then $\theta\in V_{\mu^\ell}$.
\item If $u\in H^0(D_\ell,\OO_{D_\ell}^*)$ and $u|_D=1$,
then
\begin{equation}\label{eq:unit-obstruction-change}
 \theta_{q,\ell}(u\sigma)
     =\theta_{q,\ell}(\sigma)-s^{-\ell}\delta^0_{\OO}(u).
\end{equation}
\end{enumerate}
\end{lemma}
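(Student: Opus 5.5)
All four parts come from the restriction sequence \eqref{eq:section-obstruction-sequence} together with the long exact sequence in cohomology. We identify $\JJ^{\ell-q}/\JJ^{\ell-q+1}$ with $\JJ^{\ell-q}|_D$ and trivialize it by $s^{\ell-q}$, so that the target of $\epsilon_{q,\ell}$ becomes $V$.

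\emph{Part (i).} Exactness of
\[
 H^0(D_{\ell+1},\JJ^{-q})\to H^0(D_\ell,\JJ^{-q})\to H^1(D,\JJ^{\ell-q}|_D)
\]
shows that $\sigma^{-1}$ lifts to $D_{\ell+1}$ if and only if $\epsilon_{q,\ell}(\sigma^{-1})=0$, that is, if and only if $\theta=0$. Any lift is still a generator by Nakayama's lemma, since it restricts to the generator $s^{-q}$ on $D$. Dualizing, $\sigma^{-1}$ extends to a generator if and only if $\sigma$ does.

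\emph{Part (ii).} We first take $b\geq0$ and argue by induction using the degree-zero product rule of Lemma~\ref{lem:boundary-product}, which has a plus sign:
\[
 \delta^0_{\JJ^{-(b+1)q}}(\sigma^{-b-1})
   =\delta^0_{\JJ^{-bq}}(\sigma^{-b})\,s^{-q}+s^{-bq}\,\delta^0_{\JJ^{-q}}(\sigma^{-1}).
\]
Multiplying by $s^{(b+1)q-\ell}$ gives $b\theta+\theta$. The case $b=0$ is immediate, since $\delta^0(1)=0$. For negative $b$, we apply the same rule to $1=\sigma^{b}\sigma^{-b}$; this gives $\delta(\sigma^{b})=-s^{2bq}\delta(\sigma^{-b})$, which yields the formula with the sign of $b$ reversed. (The case $\sigma^{1}$ itself is covered by writing it as $(\sigma^{-1})^{-1}$.)

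\emph{Part (iii).} The sequence is $G$-equivariant, so the boundary map commutes with the $G$-action. The section $\sigma^{-1}$ has character $\mu^{q}$, and $s^{q-\ell}$ has character $\mu^{\ell-q}$. Hence $\theta$ has character $\mu^{\ell}$. The only point needing care is the sign convention $g(s)=\mu(g)^{-1}s$, which fixes the direction of each character. I expect this bookkeeping of conventions to be the main (mild) obstacle.

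\emph{Part (iv).} By the product rule again,
\[
 \delta^0(u^{-1}\sigma^{-1})=\delta^0_{\OO}(u^{-1})\,s^{-q}+\delta^0(\sigma^{-1}),
\]
using $u|_D=1$. The same rule applied to $1=uu^{-1}$ gives $\delta^0_{\OO}(u^{-1})=-\delta^0_{\OO}(u)$. Multiplying by $s^{q-\ell}$ then gives \eqref{eq:unit-obstruction-change}.
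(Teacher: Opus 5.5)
Your proposal is correct and follows essentially the same route as the paper. Part (i) uses exactness of the restriction sequence, Nakayama, and inversion. Part (ii) uses the degree-zero product rule, with induction for $b\geq0$ giving the paper's closed form $\delta^0_{\JJ^{-bq}}(\sigma^{-b})=b\,s^{-q(b-1)}\delta^0_{\JJ^{-q}}(\sigma^{-1})$, and the identity $1=\sigma^b\sigma^{-b}$ for $b<0$. Part (iii) is equivariance of the boundary, and part (iv) is the product rule applied to $u^{-1}\sigma^{-1}$ together with $\delta^0_{\OO}(u^{-1})=-\delta^0_{\OO}(u)$.
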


\begin{proof}
Exactness of \eqref{eq:section-obstruction-sequence}
identifies $\theta=0$ with extension of $\sigma^{-1}$.
Such an extension generates at every point by Nakayama's
lemma, since its reduction generates on $D$.
Taking its inverse proves (i), including the prescribed
reduction to $\sigma$.

The degree-zero product rule of
Lemma~\ref{lem:boundary-product} gives, for $b\geq0$,
\[
 \delta^0_{\JJ^{-bq}}(\sigma^{-b})
  =b\,s^{-q(b-1)}\delta^0_{\JJ^{-q}}(\sigma^{-1}).
\]
Apply the same rule to a section and its inverse to obtain
the formula for negative $b$ as well. Multiplication by
$s^{bq-\ell}$ proves (ii). In particular,
\[
       s^{-q-\ell}\delta^0_{\JJ^q}(\sigma)=-\theta.
\]
No integer other than a unit in $k$ has been inverted.

For (iii), the boundary is equivariant.
The characters of $\sigma^{-1}$ and $s^{q-\ell}$ are
$\mu^q$ and $\mu^{\ell-q}$, respectively. Their product
is $\mu^\ell$.
Finally $\delta^0_{\OO}(u^{-1})=-\delta^0_{\OO}(u)$
because $u|_D=1$; applying the product rule to
$u^{-1}\sigma^{-1}$ proves (iv).
\end{proof}

Thus $\theta$ is the obstruction for a specified trivialization.
Nontriviality of the line bundle on $D_{\ell+1}$ is the
stronger assertion that this class is nonzero for every
choice of $\sigma$ on $D_\ell$.

\subsection{Picard schemes and the short-length formula}

The first boundary formula requires a vanishing statement
for the structure sheaf. We obtain it from the Picard
scheme, using the truncated logarithm only for ideals
whose $p$-th power is zero.

\begin{lemma}\label{lem:short-picard}
Let $Z$ be a proper nilpotent thickening of $D$, with ideal
$\mathcal K$ satisfying $\mathcal K^p=0$.
Let $P$ be the inverse image of $\Pic^0_{D/k}$ under
$\Pic_{Z/k}\to\Pic_{D/k}$.
There is an exact sequence of smooth group schemes
\begin{equation}\label{eq:short-picard}
 0\longrightarrow H^1(Z,\mathcal K)
 \longrightarrow P\longrightarrow\Pic^0_{D/k}\longrightarrow0.
\end{equation}
In particular, $H^1(Z,\OO_Z)\to V$ is surjective.
\end{lemma}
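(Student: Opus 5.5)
The plan is to use the truncated exponential to identify the kernel of $\OO_Z^*\to\OO_D^*$ with the additive sheaf $\mathcal K$. Then I read off the Picard sequence from the long exact cohomology sequence, functorially in the base.

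\textbf{Step 1: the truncated exponential.} Since $\mathcal K^p=0$ and $p$ is the characteristic, the map
\[
 \exp_{<p}(x)=\sum_{i=0}^{p-1}x^i/i!
\]
is a homomorphism of sheaves of groups $(\mathcal K,+)\to 1+\mathcal K$. Indeed, the cross terms of degree $\geq p$ vanish. Its inverse is the truncated logarithm, so it is an isomorphism. This holds after any base change $Z_T\to D_T$ along affine $T\to\Spec k$, because $(\mathcal K\otimes\OO_T)^p=0$ still.

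\textbf{Step 2: the Picard sequence.} Consider the sequence $1\to\mathcal K\to\OO_{Z_T}^*\to\OO_{D_T}^*\to1$ and its long exact sequence. The terms are:
\begin{itemize}
\item $H^0(Z_T,\OO^*)\to H^0(D_T,\OO^*)$ is surjective, since $H^0(\OO_Z)=H^0(\OO_D)=k$ (the latter by \eqref{eq:perfect-cup}).
\item $H^1(Z_T,\mathcal K)\to\Pic(Z_T)\to\Pic(D_T)$ is exact.
\item $\Pic(D_T)\to H^2(Z_T,\mathcal K)$ is the obstruction.
\end{itemize}
For $H^0$ of units on $Z_T$, I would use that $Z$ is proper, connected, and has $H^0(Z,\OO_Z)$ a local Artinian $k$-algebra. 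The needed vanishing of $H^0(\mathcal K)\to H^0(\OO_Z)$ interference is just that units lift; injectivity of $H^1(\mathcal K)\to\Pic$ is not even needed, since we only need exactness after sheafification.

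Fppf-sheafifying gives an exact sequence
\[
 H^1(Z,\mathcal K)\otimes\Ga\to\Pic_{Z/k}\to\Pic_{D/k}\to R^2,
\]
using cohomology and base change for the coherent sheaf $\mathcal K$ on a proper scheme. The vector group image is smooth and connected. The kernel of the first map is the image of $H^0(\OO_D)\otimes\Ga$, and that image is zero because $H^0(\OO_Z)\to H^0(\OO_D)=k$ is split surjective.

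\textbf{Step 3: surjectivity onto $\Pic^0_{D/k}$ (main obstacle).} The obstruction map $\Pic^0_{D/k}\to H^2(\mathcal K)\otimes\Ga$ is a homomorphism from a smooth connected group. Here $\Pic^0_{D/k}\simeq\Pic^0_{A/k}$ is semiabelian by Lemma~\ref{lem:cohomology-and-picard} and \eqref{eq:semiabelian-hypothesis}. Any homomorphism from a semiabelian variety to a vector group is zero: tori and abelian varieties admit no nonzero maps to $\Ga^n$. Hence $P\to\Pic^0_{D/k}$ is an fppf epimorphism. The extension of smooth groups by a vector group is then smooth, which gives \eqref{eq:short-picard}.

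\textbf{Step 4: consequence on cohomology.} Taking Lie algebras of the exact sequence gives a surjection $\operatorname{Lie}P\to\operatorname{Lie}\Pic^0_{D/k}=V$, using smoothness of the kernel. The tangent map of $\Pic_{Z/k}\to\Pic_{D/k}$ at the identity is $H^1(Z,\OO_Z)\to H^1(D,\OO_D)$. Hence this map is surjective.
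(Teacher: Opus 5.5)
Your proposal is correct and follows essentially the same route as the paper's proof. The truncated exponential/logarithm identifies $1+\mathcal K$ with $\mathcal K$, and the base-changed units sequence gives $0\to H^1(Z,\mathcal K)\to P\to\Pic^0_{D/k}\to H^2(Z,\mathcal K)$, with injectivity coming from lifting constant units. The obstruction vanishes because a semiabelian variety has no nonzero homomorphism to a vector group, and surjectivity of $H^1(Z,\OO_Z)\to V$ is read off on tangent spaces.

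One small slip: $H^0(Z,\OO_Z)$ need not equal $k$, since a thickening can carry nilpotent global functions. As you note afterwards, all that is used is that the units $B^*=H^0(D_B,\OO_{D_B})^*$ lift to $Z_B$ as constants.
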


\begin{proof}
\noindent\emph{Step 1. The obstruction homomorphism.}
The polynomials
\[
 \log(1+x)=\sum_{j=1}^{p-1}\frac{(-1)^{j+1}x^j}{j},
 \qquad \exp(x)=\sum_{j=0}^{p-1}\frac{x^j}{j!}
\]
give inverse group isomorphisms
$(1+\mathcal K,\cdot)\simeq(\mathcal K,+)$.
The usual polynomial identities apply because all terms
of degree at least $p$ vanish in $\mathcal K$.
The units sequence \cite[Tag 0C6Q]{Sta} therefore gives
an exact sequence of fppf group functors
\begin{equation}\label{eq:short-picard-obstruction}
 0\longrightarrow H^1(Z,\mathcal K)\longrightarrow P
 \longrightarrow\Pic^0_{D/k}
          \xrightarrow{o}H^2(Z,\mathcal K).
\end{equation}
To justify this sequence after base change, let $B$ be a
$k$-algebra. Coherent cohomology on $Z_B$ is obtained by
tensoring over $k$, and the logarithm commutes with that
tensor product. Moreover
$H^0(D_B,\OO_{D_B})=B$; its units lift to $Z_B$ as constants.
The preceding boundary on global units is consequently zero,
which gives the initial injectivity in
\eqref{eq:short-picard-obstruction}.

\smallskip
\noindent\emph{Step 2. Vanishing on the semiabelian quotient.}
Write $0\to T\to\Pic^0_{D/k}\to A_0\to0$, with $T$ a
torus and $A_0$ an abelian variety.
Every group homomorphism $T\to\Ga$ is zero:
for $T=\Gm^d$, the identity $f(xy)=f(x)+f(y)$ has only
the zero solution among Laurent polynomials.
Thus $o|_T=0$, and $o$ factors through $A_0$.
A morphism from the proper connected variety $A_0$ to a
vector group is constant, and a constant group homomorphism
is zero. Hence $o=0$, proving exactness of
\eqref{eq:short-picard}.

\smallskip
\noindent\emph{Step 3. Smoothness and tangent spaces.}
The morphism $P\to\Pic^0_{D/k}$ is an fppf torsor under
the vector group $H^1(Z,\mathcal K)$, so it is smooth.
Its target is smooth by \eqref{eq:semiabelian-hypothesis}
and Lemma~\ref{lem:cohomology-and-picard}; hence $P$ is smooth.
The tangent space of $\Pic_{Z/k}$ is $H^1(Z,\OO_Z)$:
apply the units sequence with ideal
$\varepsilon\OO_Z$ on $Z\times\Spec k[\varepsilon]/(\varepsilon^2)$.
The tangent map of restriction is therefore
$H^1(Z,\OO_Z)\to H^1(D,\OO_D)$, which is surjective.
\end{proof}

\begin{proposition}\label{prop:first-boundary}
Let $1\leq n<p$, and let $\sigma$ generate
$\JJ|_{D_n}$ with $\sigma|_D=s$.
Set $\theta=\theta_{1,n}(\sigma)$.
For every $r\in\Z$ and
$\alpha\in H^1(\JJ^r/\JJ^{r+n})$, one has
\begin{equation}\label{eq:first-boundary}
                       \partial_{r,n}(\alpha)=r\,\alpha_0\smile\theta.
\end{equation}
\end{proposition}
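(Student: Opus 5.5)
The plan is to factor $\alpha$ through the structure sheaf using the section $\sigma$, and then apply the product rule of Lemma~\ref{lem:boundary-product}. Put $Z=D_{n+1}$ and $Z_0=D_n$. Since $\sigma$ generates $\JJ|_{D_n}$, its power $\sigma^r$ (with $r\in\Z$, negative powers meaning powers of the dual section) generates $\JJ^r|_{D_n}$. Multiplication by $\sigma^r$ is therefore an isomorphism
\[
 \OO_{D_n}\xrightarrow{\ \sim\ }\JJ^r/\JJ^{r+n},
\]
so every $\alpha\in H^1(\JJ^r/\JJ^{r+n})$ has the form $\alpha=\gamma\sigma^r$ for a unique $\gamma\in H^1(D_n,\OO_{D_n})$. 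Reducing to $D$ and using $\sigma|_D=s$ gives $\gamma|_D=s^{-r}(\alpha|_D)=\alpha_0$.

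Next we compute the boundary. Apply \eqref{eq:boundary-product} with $\mathcal L=\OO$, $\mathcal N=\JJ^r$, and $\tau=\sigma^r$:
\[
 \delta^1_{\JJ^r}(\gamma\sigma^r)=\delta^1_{\OO}(\gamma)\,s^r-\alpha_0\smile\delta^0_{\JJ^r}(\sigma^r).
\]
Here the sequence \eqref{eq:restriction-sequence} is identified with the restriction sequence of $\JJ^r$ from $Z$ to $Z_0$, with kernel $\JJ^r\mathcal K_n$. By Lemma~\ref{lem:obstruction-rules}(ii) with $q=1$, $\ell=n$ and $b=-r$, we get $s^{-r-n}\delta^0_{\JJ^r}(\sigma^r)=-r\theta$. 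After multiplying by $s^{-r-n}$, the second term becomes $+r\,\alpha_0\smile\theta$. The first term becomes $s^{-n}\delta^1_{\OO}(\gamma)$, which is exactly the connecting map $\partial_{0,n}(\gamma)$ for the structure sheaf.

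It remains to show $\partial_{0,n}=0$ on $H^1(D_n,\OO_{D_n})$, i.e.\ that $H^1(D_{n+1},\OO)\to H^1(D_n,\OO)$ is surjective. I expect this to be the main step, and the hypothesis $n<p$ is used here. The ideal of $D_n$ in $D_{n+1}$ is square-zero. The ideal $\mathcal K=\JJ/\JJ^{n+1}$ of $D$ in $D_{n+1}$ satisfies $\mathcal K^p=0$ because $n+1\le p$. Lemma~\ref{lem:short-picard}, applied to $Z=D_{n+1}$ and to $Z=D_n$, gives smooth group schemes $P_{n+1}$ and $P_n$, which are extensions of $\Pic^0_{D/k}$ by the vector groups $H^1(D_{n+1},\mathcal K)$ and $H^1(D_n,\JJ/\JJ^n)$. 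Their Lie algebras are $H^1(D_{n+1},\OO)$ and $H^1(D_n,\OO)$. The restriction $P_{n+1}\to P_n$ has obstruction to surjectivity in $H^2(D,\mathcal K_n)\simeq k$, via the units sequence. I would rerun Step~2 of Lemma~\ref{lem:short-picard} for the obstruction homomorphism $P_n\to H^2(D,\mathcal K_n)$. Its restriction to the vector group $H^1(D_n,\JJ/\JJ^n)$ need not vanish a priori, so I would use the truncated logarithm to reduce it to the additive map $H^1(\JJ/\JJ^n)\to H^2(\mathcal K_n)$. Then I would check that the composite with the torus and abelian parts is zero by the same rigidity argument. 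The cleaner route is to show that $P_{n+1}\to P_n$ is surjective using the exact sequences and dimension counts. Together with the smoothness of both, this gives surjectivity on tangent spaces, hence $\partial_{0,n}=0$.

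If this Picard argument turns out to be circular on the vector-group part, a fallback is an induction on $n$. The boundary $\partial_{0,n}$ on classes coming from $H^1(\JJ/\JJ^n)$ is computed by the already established formula for smaller length with $r\ge1$. Classes with nonzero reduction $\alpha_0$ are handled by lifting via the surjection $H^1(Z,\OO_Z)\to V$ of Lemma~\ref{lem:short-picard} for $Z=D_{n+1}$. Either way, once $\partial_{0,n}(\gamma)=0$, the displayed identity gives \eqref{eq:first-boundary}.
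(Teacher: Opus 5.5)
Your reduction is the same as the paper's Step~3. Writing $\alpha=\gamma\sigma^r$ and applying Lemma~\ref{lem:boundary-product} together with Lemma~\ref{lem:obstruction-rules}(ii) at $b=-r$ gives $\partial_{r,n}(\alpha)=\partial_{0,n}(\gamma)+r\,\alpha_0\smile\theta$. So everything hinges on $\partial_{0,n}=0$.

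Your primary Picard-scheme route to that vanishing does not work as sketched, for two reasons.
\begin{enumerate}
\item Surjectivity of $P_{n+1}\to P_n$ together with smoothness of both groups does not give surjectivity on Lie algebras in characteristic $p$. The Frobenius $\Ga\to\Ga$, $x\mapsto x^p$, is an fppf epimorphism of smooth groups with zero differential. You would need the morphism itself to be smooth, for example by showing its kernel is smooth.
\item More seriously, surjectivity of $P_{n+1}\to P_n$ requires the obstruction $P_n\to H^2(D,\JJ^n|_D)$ to vanish on the vector subgroup $H^1(D_n,\JJ/\JJ^n)$. After the truncated logarithm, that restriction is exactly the additive boundary $H^1(\JJ/\JJ^n)\to H^2(\JJ^n/\JJ^{n+1})$, which is part of what you are trying to prove. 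So this route is circular unless you add the induction.
\end{enumerate}

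Your fallback induction is correct and gives a complete proof, provided you state its key point explicitly. For $n\geq2$, naturality gives $\partial_{0,n}\circ\iota=\partial_{1,n-1}$ on $H^1(\JJ/\JJ^n)$; note this is $r=1$ at length $n-1$. The formula at that length, applied to the generator $\sigma|_{D_{n-1}}$, gives $\partial_{1,n-1}(\gamma')=\gamma'_0\smile\theta_{1,n-1}(\sigma|_{D_{n-1}})$. This is zero by Lemma~\ref{lem:obstruction-rules}(i), because $\sigma$ itself is a generating extension of $\sigma|_{D_{n-1}}$ to $D_n$.

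For general $\gamma\in H^1(D_n,\OO_{D_n})$:
\begin{enumerate}
\item Choose $\widetilde\gamma\in H^1(D_{n+1},\OO)$ with $\widetilde\gamma|_D=\gamma|_D$, which exists by Lemma~\ref{lem:short-picard} for $Z=D_{n+1}$ since $n+1\leq p$.
\item The difference $\gamma-\widetilde\gamma|_{D_n}$ maps to zero in $V$, so it lies in the image of $H^1(\JJ/\JJ^n)$, where the previous paragraph applies.
\item The base case $n=1$ needs only this lifting step.
\end{enumerate}

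This differs from the paper's route. The paper regards $\sigma$ as a global function $t$ on $D_{n+1}$ and checks that $D_{n+1}\to\Spec k[t]/(t^{n+1})$ is flat with the $D_j$ as reductions. It then applies cohomology and base change. The single surjection $H^1(D_{n+1},\OO)\to H^1(D,\OO_D)$ from Lemma~\ref{lem:short-picard} makes $H^1(D_{n+1},\OO)$ free over $k[t]/(t^{n+1})$ and compatible with base change, hence surjective onto $H^1(D_n,\OO)$.

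The paper's argument is non-inductive and gives freeness and base change to every $D_j$ at once. Yours avoids the flatness check and the base-change theorem and stays inside the section's product-rule and obstruction calculus. The cost is an induction on $n$ that invokes Lemma~\ref{lem:short-picard} at every length $m+1\leq n+1$.
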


\begin{proof}
\noindent\emph{Step 1. Flatness over a truncated polynomial ring.}
The inclusion $\JJ/\JJ^{n+1}\subset\OO_{D_{n+1}}$
sends $\sigma$ to a global function $t$.
It defines a proper morphism
\[
 f:D_{n+1}\longrightarrow\Spec R_n,
                \qquad R_n=k[t]/(t^{n+1}).
\]
Locally, $t$ is a unit times an equation of the Cartier
divisor $D$. Thus
\[
 \operatorname{Ann}_{\OO_{D_{n+1}}}(t^j)=(t^{n+1-j})
                  \qquad(1\leq j\leq n+1).
\]
The local flatness criterion over $R_n$ implies that $f$
is flat, and its fibre over $t=0$ is $D$.
Reduction modulo $t^j$ gives $D_j$ for $1\leq j\leq n+1$.

\smallskip
\noindent\emph{Step 2. The structure-sheaf boundary.}
The ideal $\mathcal K=\JJ/\JJ^{n+1}$ of $D$ in
$D_{n+1}$ satisfies $\mathcal K^p=0$ because $n<p$.
Lemma~\ref{lem:short-picard} gives
\[
                  H^1(D_{n+1},\OO)\twoheadrightarrow V.
\]
Apply cohomology and base change
\cite[Tags 07VJ and 0A1V]{Sta} to the proper flat
morphism $f$. The hypotheses in degrees zero and one are
\[
 H^i(D_{n+1},\OO)\otimes_{R_n}k
        \twoheadrightarrow H^i(D,\OO_D)\qquad(i=0,1):
\]
the first follows from the constant section $1$, and the
second was just proved. It follows that $H^1(D_{n+1},\OO)$
is a finite free $R_n$-module and that its formation
commutes with base change. In particular,
\[
 H^1(D_{n+1},\OO)\otimes_{R_n}R_n/(t^j)
     \xrightarrow{\sim}H^1(D_j,\OO)
                    \qquad(1\leq j\leq n+1).
\]
Taking $j=n$ gives
\begin{equation}\label{eq:structure-boundary-zero}
 H^1(D_{n+1},\OO)\twoheadrightarrow H^1(D_n,\OO),
                         \qquad \partial_{0,n}=0.
\end{equation}

\smallskip
\noindent\emph{Step 3. Tensor powers of the trivializing section.}
Set $\alpha'=\sigma^{-r}\alpha\in H^1(D_n,\OO_{D_n})$.
Then $\alpha'|_D=\alpha_0$ and
$\alpha=\alpha'\sigma^r$.
Lemma~\ref{lem:obstruction-rules}(ii) gives
\[
             s^{-r-n}\delta^0_{\JJ^r}(\sigma^r)=-r\theta.
\]
Apply Lemma~\ref{lem:boundary-product} to
$\alpha'\sigma^r$ and use \eqref{eq:structure-boundary-zero}.
After multiplying the target by $s^{-r-n}$, the result is
\[
 \partial_{r,n}(\alpha)
        =0-\alpha_0\smile(-r\theta)
        =r\,\alpha_0\smile\theta.
\]
This proves the formula for all integers $r$.
\end{proof}

\subsection{Duality and the log canonical line bundle}

At arbitrary length, the structure-sheaf boundary need
not vanish. We use the Serre duality to obtain another boundary formula.

\begin{proposition}\label{prop:canonical-boundary}
Fix an isomorphism $\MM\JJ^\beta\simeq\OO_{\YY}$ with
$\beta\in\Z$. Let $q,\ell\geq1$ and $r\in\Z$.
Let $\sigma$ generate $\JJ^q|_{D_\ell}$ with
$\sigma|_D=s^q$, and set $\theta=\theta_{q,\ell}(\sigma)$.
If $q\mid\beta+\ell+r$, then
\begin{equation}\label{eq:canonical-boundary}
 \partial_{r,\ell}(\alpha)
  =\frac{\beta+\ell+r}{q}\,\alpha_0\smile\theta
       \qquad\bigl(\alpha\in H^1(\JJ^r/\JJ^{r+\ell})\bigr).
\end{equation}
The quotient on the right is an integer before its image
in $k$ is taken.
\end{proposition}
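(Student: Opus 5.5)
Throughout, $\partial_{r,\ell}$ is computed through the dual pairing on $D_\ell$. The idea is to use the trivialization $\MM\JJ^\beta\simeq\OO_{\YY}$ to identify $\JJ^{-r-\ell}/\JJ^{-r}$ with the dual of $\JJ^r/\JJ^{r+\ell}$, up to a power of $\JJ$. Serre duality then converts the degree-one boundary $\partial_{r,\ell}$ into a degree-zero boundary on the dual side. That degree-zero boundary can be computed from the section $\sigma$ alone.

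\emph{Duality setup.} By Lemma~\ref{lem:dualizing-comparison}, $\phi_\ell$ identifies $\MM\JJ^{1-\ell}|_{D_\ell}$ with $\omega_{D_\ell}$ on cohomology, compatibly with trace. Using $\MM\simeq\JJ^{-\beta}$, we get $\omega_{D_\ell}\approx\JJ^{-\beta+1-\ell}|_{D_\ell}$ cohomologically. Serre duality on $D_{\ell+1}$ and $D_\ell$ then gives a perfect pairing between $H^1(\JJ^r/\JJ^{r+\ell})$ and $H^1(\JJ^{-r-\beta+1-\ell}/\JJ^{-r-\beta+1})$. The sequence \eqref{eq:restriction-sequence} is dual to the sequence including $\JJ^{-r-\beta-\ell}/\JJ^{-r-\beta-\ell+1}$ (a copy of $\OO_D$) into $\JJ^{-r-\beta-\ell}|_{D_{\ell+1}}$. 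Hence $\partial_{r,\ell}$ is, up to sign and the fixed $s$-identifications, the transpose of the degree-zero boundary
\[
\delta^0:H^0(D,\OO_D)=k\longrightarrow H^1\bigl(\JJ^{-r-\beta-\ell+1}|_{D_\ell}\bigr).
\]
This is the coboundary of the class that obstructs lifting a generator of $\JJ^{m}|_D$, with $m=-(r+\beta+\ell)$, from $D$ across the sequence. More precisely, the pairing of $\partial_{r,\ell}(\alpha)$ with $1$ equals the pairing of $\alpha$ with the extension class of $0\to\JJ^{m+\ell}/\JJ^{m+\ell+1}\to\cdots$. The extension classes are compatible with the Cech Leibniz rule of Lemma~\ref{lem:boundary-product}, so this is a formal manipulation of connecting maps together with the trace compatibility from Lemma~\ref{lem:dualizing-comparison}.

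\emph{Evaluating the dual boundary.} Since $q\mid\beta+\ell+r$, write $m=-bq$ with $b=(\beta+\ell+r)/q$. The section $\sigma^{-b}$ is then a global generator of $\JJ^{-bq}|_{D_\ell}$, and Lemma~\ref{lem:obstruction-rules}(ii) gives $s^{bq-\ell}\delta^0_{\JJ^{-bq}}(\sigma^{-b})=b\theta$. By Lemma~\ref{lem:boundary-product}, pairing $\alpha$ against this boundary reduces to $\alpha_0\smile b\theta$ in $H^2(D,\OO_D)=k$. The cross term $\delta^1(\alpha)\cdot\sigma^{-b}|_D$ is what we are computing, and the remaining product of $\alpha$ with the extended section $\sigma^{-b}$ lands in a trivial group. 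Concretely, I would multiply $\alpha$ by $\sigma^{-b}$ to move it into $H^1(\MM\JJ^{1-\ell}|_{D_\ell})\approx H^1(\omega_{D_\ell})$ twisted appropriately. I would then use that the boundary $H^1(\omega_{D_\ell})\to H^2(\omega_D)$ (restriction of $\omega$ from $D_{\ell+1}$) vanishes, because it is dual to the surjection $H^0(\OO_{D_{\ell+1}})\to H^0(\OO_{D_\ell})$: constants lift. This vanishing replaces the Picard-scheme input of Proposition~\ref{prop:first-boundary}, and it holds for arbitrary $\ell$. Applying \eqref{eq:boundary-product} to $\alpha=(\alpha\sigma^{-b})\cdot\sigma^{b}$ then yields $\partial_{r,\ell}(\alpha)=b\,\alpha_0\smile\theta$, with the signs fixed as in Proposition~\ref{prop:first-boundary}.

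\emph{Main obstacle.} The delicate step is the vanishing of the boundary for the dualizing sheaf: specifically, identifying $\MM\JJ^{\beta+?}$ twists with $\omega_{D_{\ell+1}}$ and $\omega_{D_\ell}$ compatibly with the restriction maps. Here $\phi_\ell$ is only a cohomological isomorphism, with acyclic kernel and cokernel, so one must check that the $\phi$'s for $\ell$ and $\ell+1$ commute with the restriction sequence. The sign and the exact exponent (getting $\beta+\ell+r$ rather than $\beta+\ell+r\pm1$) must also be tracked carefully. I would do this by working with the fractional ideals $\MM\II_{\ast}$ directly, as in \eqref{eq:phi}, and checking degrees on the $\FF_j$ filtration.
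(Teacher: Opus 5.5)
Your concrete argument is essentially the paper's proof. You twist $\alpha$ by $\tau=u\sigma^{-b}$, where $u$ generates $\MM\JJ^\beta$ on all of $\YY$ and so has zero boundary, and land in $H^1(D_\ell,\MM\JJ^{-\ell}|_{D_\ell})$. You show $\delta^1_{\MM\JJ^{-\ell}}=0$ by comparing, via $\phi_1,\phi_{\ell+1},\phi_\ell\otimes\JJ^{-1}$, with the dualized sequence $0\to\omega_D\to\omega_{D_{\ell+1}}\to\omega_{D_\ell}\JJ^{-1}\to0$. The product rule together with Lemma~\ref{lem:obstruction-rules}(ii) then finishes. (Your factorization $\alpha=(\alpha\tau)\tau^{-1}$ is equivalent to the paper's expansion of $\delta^1(\alpha\tau)$.)

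On the bookkeeping you flagged: the correct twist is $\MM\JJ^{-\ell}$, not $\MM\JJ^{1-\ell}$. The vanishing comes from injectivity of $H^2(\omega_D)\to H^2(\omega_{D_{\ell+1}})$, which is dual to the surjection $H^0(\OO_{D_{\ell+1}})\to H^0(\OO_D)$, not to a map onto $H^0(\OO_{D_\ell})$. Finally, only $H^2(\phi_1)$ needs to be an isomorphism, and the diagram commutes automatically because all its vertical maps are fractional-ideal inclusions.
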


\begin{proof}
\noindent\emph{Step 1. The section paired with the source.}
Let $u$ be the generating section of $\MM\JJ^\beta$
specified by the isomorphism, and set
\[
 b=\frac{\beta+\ell+r}{q}\in\Z,
 \qquad\tau=u\sigma^{-b}
    \in H^0(D_\ell,\MM\JJ^{-\ell-r}|_{D_\ell}).
\]
Here $\beta-bq=-\ell-r$, which verifies the line bundle
containing $\tau$.
The section $u$ is defined on the whole formal scheme,
so its restriction has zero boundary.
Lemma~\ref{lem:obstruction-rules}(ii) therefore gives
\begin{equation}\label{eq:canonical-section-boundary}
 \delta^0_{\MM\JJ^{-\ell-r}}(\tau)
                =b\,u|_D\,s^{\ell-bq}\theta.
\end{equation}
The target of this equality is $H^1(D,\MM\JJ^{-r}|_D)$.
In particular, this step uses the integer power $b$ and
does not divide by $q$ in $k$.

\smallskip
\noindent\emph{Step 2. Vanishing of the dualizing boundary.}
The comparison maps of Lemma~\ref{lem:dualizing-comparison}
give a commutative diagram with exact rows:
\begin{equation}\label{eq:canonical-diagram}
\begin{tikzcd}[column sep=small,row sep=large]
0\ar[r]&\MM|_D\ar[r]\ar[d,"\phi_1"']&
\MM\JJ^{-\ell}|_{D_{\ell+1}}\ar[r]\ar[d,"\phi_{\ell+1}"]&
\MM\JJ^{-\ell}|_{D_\ell}\ar[r]\ar[d,"\phi_\ell\otimes\JJ^{-1}"]&0\\
0\ar[r]&\omega_D\ar[r]&\omega_{D_{\ell+1}}\ar[r]&
\omega_{D_\ell}\JJ^{-1}\ar[r]&0.
\end{tikzcd}
\end{equation}
The lower row is obtained by dualizing
$0\to\JJ/\JJ^{\ell+1}\to\OO_{D_{\ell+1}}\to\OO_D\to0$.
The modules are Cohen--Macaulay, so duality in dimension two
gives the displayed exact sequence.
The upper row is the restriction sequence for
$\MM\JJ^{-\ell}$; all its vertical maps are defined
by the fractional-ideal inclusions in \eqref{eq:phi}.
Thus no choice of a local equation changes the diagram.

Under Serre duality the map
\[
 H^2(D,\omega_D)\longrightarrow
                     H^2(D_{\ell+1},\omega_{D_{\ell+1}})
\]
is dual to
$H^0(D_{\ell+1},\OO)\to H^0(D,\OO_D)=k$.
The latter is surjective because $1$ lifts. Hence the
former is injective, and exactness makes the preceding
connecting map zero.
The map $H^2(\phi_1)$ is an isomorphism by
Lemma~\ref{lem:dualizing-comparison}.
Commutativity of \eqref{eq:canonical-diagram} consequently gives
\begin{equation}\label{eq:canonical-product-boundary-zero}
 \delta^1_{\MM\JJ^{-\ell}}:
 H^1(D_\ell,\MM\JJ^{-\ell}|_{D_\ell})
                    \longrightarrow H^2(D,\MM|_D),
 \qquad\delta^1_{\MM\JJ^{-\ell}}=0.
\end{equation}

\smallskip
\noindent\emph{Step 3. Evaluation of the product rule.}
The product $\alpha\tau$ belongs to the source in
\eqref{eq:canonical-product-boundary-zero}.
Apply Lemma~\ref{lem:boundary-product} and then
\eqref{eq:canonical-section-boundary}:
\[
 0=\delta^1_{\JJ^r}(\alpha)\tau|_D
               -\alpha|_D\smile
                    \delta^0_{\MM\JJ^{-\ell-r}}(\tau).
\]
Since
\[
 \alpha|_D=s^r\alpha_0,\qquad
 \tau|_D=u|_D s^{-bq},\qquad
 bq=\beta+\ell+r,
\]
both terms contain the same generating section
$u|_D s^{-\beta}$ of $\MM|_D$.
Canceling that section gives
\[
                   0=\partial_{r,\ell}(\alpha)-b\alpha_0\smile\theta,
\]
as required. This argument uses dualizing modules and
does not assume that $\omega_{D_\ell}$ is invertible.
\end{proof}

\subsection{Lifting to length-two Witt cohomology}
\label{sec:witt-lifting}

Throughout this subsection, $G$ is cyclic, $\mu$ is faithful,
and $|G|=R<p$. Suppose that $\JJ|_{D_n}$ is trivial but
$\JJ|_{D_{n+1}}$ is not, where $1\leq n<R$.
Choose an equivariant generating section $\sigma$ on $D_n$
with $\sigma|_D=s$, and set
\[
 \theta=\theta_{1,n}(\sigma)\ne0,\qquad \chi=\mu^n,
 \qquad V_0=H^1(\JJ/\JJ^p).
\]
Lemma~\ref{lem:obstruction-rules} gives $\theta\in V_\chi$.

Our aim is to prove
\[
 \beta_{D_L}\bigl(H^1(D_L,\OO_{D_L})_\chi\bigr)=0
                         \qquad\bigl(L\leq p(p-n)\bigr),
\]
where $\beta_{D_L}$ is defined in
Section~\ref{sec:witt-preliminaries}.
This gives the Witt lift used in Appendix~\ref{app:witt}.
For $L\leq p$, it follows from the smoothness of the Picard
scheme. For larger $L$, we use two facts proved below.
First, every class in $(V_0)_\chi$ has a representative
with values in $\JJ^{p-n}/\JJ^p$
(Lemma~\ref{lem:high-order-class}). Second, the Witt boundary
is contained in the image of the degree-$p$ part of the
multiplicative restriction boundary for $D_p\subset D_L$
(Lemma~\ref{lem:bockstein-comparison}). The resulting terms
lie in $\JJ^{p(p-n)}/\JJ^L=0$, which proves the assertion
in Lemma~\ref{lem:witt-lift}.

We begin with the ordinary coherent-cohomology statement.

\begin{lemma}\label{lem:high-order-class}
One has
\begin{equation}\label{eq:high-order-class}
 (V_0)_\chi\subseteq
 \im\bigl(H^1(\JJ^{p-n}/\JJ^p)\longrightarrow V_0\bigr).
\end{equation}
\end{lemma}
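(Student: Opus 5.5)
The plan is to prove \eqref{eq:high-order-class} by an upward induction along the $\JJ$-adic filtration of $\JJ/\JJ^p$. For $1\leq j\leq p-n$, let $W_j\subseteq V_0$ be the image of $H^1(\JJ^j/\JJ^p)$; all the maps involved are equivariant, so we may work with $(W_j)_\chi$. Since $W_1=V_0$, it suffices to show $(W_j)_\chi\subseteq W_{j+1}$ for $1\leq j<p-n$. The exact sequence
\[
 0\to\JJ^{j+1}/\JJ^p\to\JJ^j/\JJ^p\to\JJ^j/\JJ^{j+1}\to0
\]
reduces this to one claim. Take $\gamma\in H^1(\JJ^j/\JJ^p)_\chi$ and let $\alpha_0=s^{-j}(\gamma|_D)\in V$ be its leading term. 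The claim is that $\alpha_0=0$, possibly after replacing $\gamma$ by $\gamma$ minus another class in $H^1(\JJ^j/\JJ^p)$ with the same image in $V_0$. Then $\gamma$ lifts to $\JJ^{j+1}/\JJ^p$.

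\emph{Step 1: the character of the leading term.} Since $g(s)=\mu(g)^{-1}s$, the class $\alpha_0$ lies in $V_{\chi\mu^{j}}=V_{\mu^{n+j}}$. By Lemma~\ref{lem:obstruction-rules}(iii), $\theta\in V_{\mu^n}$.

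\emph{Step 2: the boundary constraint.} Let $\alpha$ be the image of $\gamma$ in $H^1(\JJ^j/\JJ^{j+n})$; this makes sense because $j+n<p$. The class $\alpha$ lifts to $\JJ^j/\JJ^{j+n+1}$, namely the image of $\gamma$. Hence $\partial_{j,n}(\alpha)=0$, and Proposition~\ref{prop:first-boundary} (valid since $n<p$) gives $j\,\alpha_0\smile\theta=0$. Since $0<j<p$, the integer $j$ is a unit in $k$, so $\alpha_0\smile\theta=0$. The cup product on the two-dimensional space $V$ is a perfect alternating pairing by \eqref{eq:perfect-cup}, and $\theta\neq0$, so $\alpha_0\in k\theta$. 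Comparing characters with Step~1, $\alpha_0\neq0$ forces $\mu^{n+j}=\mu^n$, that is, $R\mid j$ because $\mu$ is faithful. So for $R\nmid j$ we get $\alpha_0=0$ and the induction step is done.

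\emph{Step 3: the main obstacle, $R\mid j$.} Here $j\geq R>n$ and $\alpha_0=\lambda\theta$. I would first try to show $\lambda=0$ directly by a second constraint. Apply Proposition~\ref{prop:canonical-boundary} to a suitable truncation of $\gamma$, with $q=1$ and the given $\sigma$ on $D_n$. If $\MM\JJ^\beta\simeq\OO_{\YY}$ for some $\beta$, choose the shift $r$ and length $\ell=n$ so that the coefficient $\beta+n+r$ differs from $r$ modulo $p$. Comparing the two formulas for the same $\partial_{r,n}$ then forces $\alpha_0\smile\theta$ to vanish for two different coefficients. This does not kill the component along $\theta$ itself, so I expect it to be insufficient on its own.

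The fallback is to subtract a correcting class with leading term $s^j\theta$ and character $\chi$. To build it, use that $\JJ^j/\JJ^{j+n}\simeq\OO_{D_n}\sigma^j$ via the equivariant section $\sigma$. By Lemma~\ref{lem:short-picard} together with the base-change argument in Step~2 of Proposition~\ref{prop:first-boundary}, lift $\theta$ to $H^1(D_{p-j},\OO)$; this is where $p-j<p$ is used. Multiply the lift by a power of $\sigma$, extended as far as possible. The difficulty is that $\sigma$ only exists on $D_n$, so the product must be extended across the levels $n+1,\dots,p-j$. I would handle this level by level: at each level $\ell$ the obstruction is computed by Lemma~\ref{lem:boundary-product} as a cup product with $\theta$. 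This cup product vanishes because the relevant factor is itself proportional to $\theta$, and $\theta\smile\theta=0$ since the pairing is alternating. This alternating cancellation is the step I expect to require the most care, including checking that the characters remain equal to $\chi$ throughout.
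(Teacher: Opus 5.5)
Your Steps~1 and~2 are correct and coincide with the paper's argument. Since $j+n+1\le p$, the truncation of $\gamma$ lifts one level further. Proposition~\ref{prop:first-boundary} then gives $j\,\alpha_0\smile\theta=0$, hence $\alpha_0=\lambda\theta$, and comparing characters forces $R\mid j$ whenever $\lambda\ne0$.

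The gap is in Step~3. Write $\iota_j\colon H^1(\JJ^j/\JJ^p)\to V_0$. The class $\beta$ you subtract must satisfy $\iota_j(\beta)=0$, or at least $\iota_j(\beta)\in W_{j+1}$. Otherwise, the fact that $\gamma-\lambda\beta$ lifts to $\JJ^{j+1}/\JJ^p$ only yields $\iota_j(\gamma)\in W_{j+1}+k\,\iota_j(\beta)$, which is the original problem again. Your construction (a lift of $\theta$ multiplied by $\sigma^j$, then extended upward) gives no control over its image in $V_0$.

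The extension itself is also not justified. $\JJ^j|_{D_{n+1}}$ is nontrivial, because the kernel of $\Pic(D_{n+1})\to\Pic(D)$ is $p$-primary and $p\nmid j$. So for lengths $\ell>n$ there is no generating section to which Proposition~\ref{prop:first-boundary} or Lemma~\ref{lem:boundary-product} could be applied. The cancellation $\theta\smile\theta=0$ is only available for the single obstruction $\partial_{j,n}$. Your canonical-boundary attempt also relies on an isomorphism $\MM\JJ^\beta\simeq\OO_{\YY}$, which is not part of this setting.

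The missing idea is to take the correcting class to be a connecting image of a \emph{section}. Such a class lies in $\ker\iota_j$ by construction and needs no extension beyond $D_n$.
\begin{itemize}
\item Since $j\ge R>n$, put $d=j-n$, so $1\le d<p$ and $\sigma^d\in H^0(D_n,\JJ^d|_{D_n})=H^0(\JJ^d/\JJ^j)$.
\item Let $\gamma'\in H^1(\JJ^j/\JJ^p)$ be the image of $\sigma^d$ under the connecting map of $0\to\JJ^j/\JJ^p\to\JJ^d/\JJ^p\to\JJ^d/\JJ^j\to0$.
\item As $d\ge1$, $\iota_j$ factors through $H^1(\JJ^d/\JJ^p)$, where $\gamma'$ vanishes. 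Hence $\iota_j(\gamma')=0$.
\item Since $R\mid j$, the character of $\sigma^d$ is $\mu^{-d}=\mu^n=\chi$.
\item Compare with the restriction sequence of $\JJ^d$ from $D_{n+1}$ to $D_n$ and apply Lemma~\ref{lem:obstruction-rules}(ii). This gives $s^{-j}(\gamma'|_D)=-d\theta$.
\end{itemize}
Hence $\gamma+(\lambda/d)\gamma'$ is a $\chi$-class with the same image in $V_0$ and zero leading term. It therefore lifts to $H^1(\JJ^{j+1}/\JJ^p)_\chi$, which completes the induction step.
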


\begin{proof}
For $1\leq r<p$, denote the maps induced by inclusion
and restriction by
\[
 \begin{aligned}
 \iota_r&:H^1(\JJ^r/\JJ^p)\longrightarrow V_0,\\
 \rho_r&:H^1(\JJ^r/\JJ^p)
                     \longrightarrow H^1(D,\JJ^r|_D).
 \end{aligned}
\]
Thus $\iota_r$ comes from $\JJ^r\subseteq\JJ$, whereas
$\rho_r$ comes from the quotient
$\JJ^r/\JJ^p\to\JJ^r/\JJ^{r+1}=\JJ^r|_D$.
Set $F^rV_0=\im(\iota_r)$, so $F^1V_0=V_0$.
Since $|G|=R$ is invertible in $k$, taking
$\chi$-eigenspaces is exact. Consequently
\[
 (F^rV_0)_\chi=\iota_r\bigl(H^1(\JJ^r/\JJ^p)_\chi\bigr).
\]
It therefore suffices to prove
$(F^rV_0)_\chi\subseteq F^{r+1}V_0$
for $1\leq r<p-n$.
Fix such an $r$ and a class
$a\in H^1(\JJ^r/\JJ^p)_\chi$, and set
\[
                         v=s^{-r}\rho_r(a)\in V.
\]
We will construct $b$ with the same image in $V_0$ as $a$
and with $\rho_r(b)=0$.

\noindent\emph{Step 1. Restriction to $D$.}
Let $a_n$ be the image of $a$ under the quotient map
\[
 H^1(\JJ^r/\JJ^p)\longrightarrow
                            H^1(\JJ^r/\JJ^{r+n}).
\]
The inequality $r+n+1\leq p$ shows that $a_n$ lifts to
$H^1(\JJ^r/\JJ^{r+n+1})$. Its connecting image is
therefore zero. Proposition~\ref{prop:first-boundary} gives
\[
                   0=\partial_{r,n}(a_n)=r\,v\smile\theta.
\]
Since $1\leq r<p$, the coefficient $r$ is nonzero in $k$.
By \eqref{eq:perfect-cup}, the perfect alternating pairing
on the two-dimensional space $V$ has
\[
 \ker\bigl(V\xrightarrow{\,w\mapsto w\smile\theta\,}
                              H^2(D,\OO_D)\bigr)=k\theta.
\]
Hence $v=\lambda\theta$ for some $\lambda\in k$.

Suppose $v\ne0$. Equivariance of $\rho_r$ and the character
$\mu^{-1}$ of $s$ give
\[
 g(v)=\chi(g)\mu(g)^r v\qquad(g\in G).
\]
On the other hand, $v=\lambda\theta$ with
$\lambda\ne0$ and $\theta\in V_\chi$, so
$g(v)=\chi(g)v$. Thus $\mu^r=1$, or equivalently $R\mid r$.
Since $r>0$ and $n<R$, it follows that
\[
                  r\geq R>n,\qquad 1\leq r-n<p.
\]
In particular, $r-n$ is invertible in $k$.

\smallskip
\noindent\emph{Step 2. A connecting class in $\ker(\iota_r)$.}
Assume $v\ne0$, and set $d=r-n$.
Because $d+n=r$, the section $\sigma^d$ belongs to
\[
 H^0(D_n,\JJ^d|_{D_n})=H^0(\JJ^d/\JJ^r).
\]
Let $\delta$ be the connecting homomorphism of
\[
 0\longrightarrow\JJ^r/\JJ^p
 \longrightarrow\JJ^d/\JJ^p
 \longrightarrow\JJ^d/\JJ^r\longrightarrow0,
\]
and define
\[
                         \gamma=\delta(\sigma^d)
                                     \in H^1(\JJ^r/\JJ^p).
\]
Its two required properties follow from the following maps.

Since $d\geq1$, the inclusion $\JJ^d\subseteq\JJ$
gives a factorization of $\iota_r$:
\[
 H^1(\JJ^r/\JJ^p)\longrightarrow
                  H^1(\JJ^d/\JJ^p)\longrightarrow V_0.
\]
Exactness of the preceding cohomology sequence sends
$\gamma$ to zero under the first arrow. Hence
\[
                              \iota_r(\gamma)=0.
\]
Moreover, $R\mid r$ implies
$\mu^{-d}=\mu^{n-r}=\mu^n=\chi$.
Thus $\sigma^d$ has character $\chi$, and equivariance
of $\delta$ gives $\gamma\in H^1(\JJ^r/\JJ^p)_\chi$.

To compute $\rho_r(\gamma)$, use $r+1\leq p$ to quotient
the preceding short exact sequence by $\JJ^{r+1}$ in its
first two terms.
This gives the restriction sequence
\[
 0\longrightarrow\JJ^r/\JJ^{r+1}
 \longrightarrow\JJ^d/\JJ^{r+1}
 \longrightarrow\JJ^d/\JJ^r\longrightarrow0
\]
for $\JJ^d$ from $D_{n+1}$ to $D_n$.
Naturality of connecting homomorphisms gives
\[
 \begin{tikzcd}[column sep=large,row sep=large]
 H^0(\JJ^d/\JJ^r)\ar[r,"\delta"]\ar[d,equal]
   &H^1(\JJ^r/\JJ^p)\ar[d,"\rho_r"]\\
 H^0(D_n,\JJ^d|_{D_n})\ar[r,"\delta^0_{\JJ^d}"']
   &H^1(D,\JJ^r|_D).
 \end{tikzcd}
\]
Applying Lemma~\ref{lem:obstruction-rules}(ii) to
$\sigma^d$ therefore gives
\[
 s^{-r}\rho_r(\gamma)
      =s^{-r}\delta^0_{\JJ^d}(\sigma^d)=-d\theta.
\]

\smallskip
\noindent\emph{Step 3. Lifting from the next ideal quotient.}
If $v=0$, set $b=a$. Otherwise use the class $\gamma$
of Step~2 and set $b=a+(\lambda/d)\gamma$.
The coefficient in the second case is defined because
$1\leq d<p$.
In both cases,
\[
 b\in H^1(\JJ^r/\JJ^p)_\chi,\qquad
 \iota_r(b)=\iota_r(a),\qquad \rho_r(b)=0;
\]
in the second case the last equality follows explicitly from
\[
                 s^{-r}\rho_r(b)
                    =\lambda\theta+\frac{\lambda}{d}(-d\theta)=0.
\]
The restriction sequence
\[
 0\longrightarrow\JJ^{r+1}/\JJ^p
 \longrightarrow\JJ^r/\JJ^p
 \longrightarrow\JJ^r/\JJ^{r+1}\longrightarrow0
\]
and exactness of taking $\chi$-eigenspaces give an exact sequence
\[
 H^1(\JJ^{r+1}/\JJ^p)_\chi\longrightarrow
 H^1(\JJ^r/\JJ^p)_\chi
       \xrightarrow{\rho_r}H^1(D,\JJ^r|_D)_\chi.
\]
Thus $b$ lifts to $a'\in H^1(\JJ^{r+1}/\JJ^p)_\chi$, and
\[
              \iota_r(a)=\iota_r(b)=\iota_{r+1}(a')\in F^{r+1}V_0.
\]
This proves $(F^rV_0)_\chi\subseteq F^{r+1}V_0$.
Starting from $F^1V_0=V_0$ and repeating for
$r=1,\ldots,p-n-1$ gives
$(V_0)_\chi\subseteq F^{p-n}V_0$, which is
\eqref{eq:high-order-class}.
\end{proof}

We now compare the Witt boundary with the obstruction to
extending a line bundle from $D_p$ to $D_L$.
For $p<L<p^2$, write
\[
 \mathcal K_L=\JJ^p/\JJ^L,\qquad
 W_L=H^2(\mathcal K_L),\qquad
 j_L:W_L\longrightarrow H^2(D_L,\OO_{D_L}).
\]
The map $j_L$ is induced by $\mathcal K_L\subset\OO_{D_L}$.
Let $P$ be the inverse image of $\Pic^0_{D/k}$ in
$\Pic_{D_p/k}$. Lemma~\ref{lem:short-picard} gives
\begin{equation}\label{eq:picard-vector-extension}
 0\longrightarrow V_0\xrightarrow{\iota}P
 \longrightarrow\Pic^0_{D/k}\longrightarrow0,
 \qquad P\text{ smooth}.
\end{equation}
Since $\mathcal K_L^p=0$, the units sequence
\cite[Tag 0C6Q]{Sta} and the truncated logarithm define
\begin{equation}\label{eq:long-picard-connecting}
 \delta_L:P\longrightarrow W_L.
\end{equation}
Explicitly, after any base change by a $k$-algebra $B$, this
is the connecting map
\[
 \Pic((D_p)_B)\longrightarrow
 H^2((D_L)_B,1+(\mathcal K_L)_B)
 \xrightarrow{\log}W_L\otimes_k B,
\]
followed by fppf sheafification. We regard a finite-dimensional
$k$-vector space as its additive vector group in these sequences.

\begin{lemma}
\label{lem:bockstein-comparison}
There are a $k$-linear map $\lambda_L$ and an additive
$p$-semilinear map $C_L$ such that
\begin{equation}\label{eq:linear-semilinear}
 \delta_L\circ\iota=\lambda_L+C_L:V_0\longrightarrow W_L.
\end{equation}
The map $\lambda_L$ is the coherent-cohomology boundary for
\begin{equation}\label{eq:linear-part-sequence}
 0\longrightarrow\JJ^p/\JJ^L
 \longrightarrow\JJ/\JJ^L
 \longrightarrow\JJ/\JJ^p\longrightarrow0.
\end{equation}
For every character $\psi$ of $G$, one has
\begin{equation}\label{eq:bockstein-image-bound}
 \beta_{D_L}\bigl(H^1(D_L,\OO_{D_L})_\psi\bigr)
       \subseteq j_L\bigl(C_L((V_0)_\psi)\bigr).
\end{equation}
\end{lemma}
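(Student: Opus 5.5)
We compute $\delta_L\circ\iota$ with \v Cech cochains on a fixed affine covering $(U_a)$ of $D_L$ that computes coherent cohomology of all the sheaves involved. Take a class $z\in V_0=H^1(\JJ/\JJ^p)$ with additive cocycle $(z_{ab})$. Under the truncated exponential of Lemma~\ref{lem:short-picard}, $\iota(z)$ is the line bundle on $D_p$ with transition functions $\exp(z_{ab})$, where $\exp$ is the polynomial of degree $p-1$. To compute $\delta_L$, lift each $z_{ab}$ to $\tilde z_{ab}\in\JJ/\JJ^L$ and use the transition units $\exp(\tilde z_{ab})$ on $D_L$. The coboundary $\exp(\tilde z_{ab})\exp(\tilde z_{bc})\exp(\tilde z_{ac})^{-1}$ lies in $1+\mathcal K_L$. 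Its logarithm, read in $W_L$, represents $\delta_L(\iota(z))$; here $\mathcal K_L^p=0$ because $\JJ^{p^2}\subseteq\JJ^L$.

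The key identity is the truncated-exponential defect in characteristic $p$: modulo terms of degree $\geq p+1$, which vanish in $\mathcal K_L$ only in the appropriate filtered sense,
\[
 \exp(x)\exp(y)=\exp(x+y)\bigl(1+B_p(x,y)\bigr).
\]
This follows from the degree-$p$ part of $(x+y)^p/p!$ versus $\sum x^iy^{p-i}/(i!(p-i)!)$, and it gives $\log$ of the coboundary as $(d\tilde z)_{abc}+B_p(\tilde z_{ab},\tilde z_{bc})+(\text{higher})$. The linear term $d\tilde z$ is exactly the coherent boundary $\lambda_L$ of \eqref{eq:linear-part-sequence}. The remaining term $C_L(z)=[B_p(\tilde z_{ab},\tilde z_{bc})+\cdots]$ depends only on $p$-th-power-type expressions. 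Since $B_p(cx,cy)=c^pB_p(x,y)$ and the homogeneous higher terms behave the same way after grouping, $C_L$ is $p$-semilinear. Additivity and independence of the choices follow because $\delta_L\circ\iota$ is a group homomorphism and $\lambda_L$ is additive. The main obstacle is this step: the products $\tilde z\tilde z'$ live in $\JJ^2$ but not in $\JJ^p$. One therefore has to check that all non-$\lambda_L$ terms assemble to a well-defined $p$-semilinear class, rather than just a set-map. I would handle this by the homogeneity argument: $\delta_L\iota(cz)$ for $c\in k$ can be computed by scaling lifts, and graded degree considerations force each homogeneous degree-$m$ piece with $p\nmid m$ to cancel up to coboundaries.

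For \eqref{eq:bockstein-image-bound}, let $\alpha\in H^1(D_L,\OO_{D_L})_\psi$. Lemma~\ref{lem:short-picard} applied to $D_p$ (using $(\JJ/\JJ^p)^p=0$) and the tangent-space surjectivity show that $H^1(D_L,\OO)$ restricts into $H^1(D_p,\OO)$. Subtracting a lift of the restriction to $D$, which has $\beta=0$ by \eqref{eq:smooth-picard-bockstein} and the smoothness of $\Pic^0_{D/k}$, reduces us, after a character projection, to classes whose restriction to $D$ vanishes. The vanishing of $\beta_{D_p}$ from smoothness of $P$ then reduces further to classes coming from $V_0$ modulo $\JJ^p$. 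On such a representative $z$, formula \eqref{eq:bockstein-cocycle} gives $\beta_{D_L}(\alpha)=[B_p(z_{ab},z_{bc})]$. This is $j_L$ of the cocycle defining $C_L(z)$: the linear part $\lambda_L$ maps under $j_L$ to a coboundary in $H^2(D_L,\OO)$, since $d\tilde z$ is exact there. Equivariance of every construction keeps $z\in(V_0)_\psi$, because taking eigenspaces is exact for $|G|$ prime to $p$.
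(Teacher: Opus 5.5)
Your decomposition step is the paper's Step 1, done differently: you use scaling on $k$-points, whereas the paper evaluates at a universal point over $k[T_1,\dots,T_d]$ and observes that an additive polynomial of degree $<p^2$ has only the monomials $T_i,T_i^p$. Your route works, with two additions. First, for fixed lifts $c\tilde z$ you must note that each degree-$m$ component $\Lambda_m$ of the logarithm is itself a cocycle; this follows by a Vandermonde argument, since $k$ is infinite. Second, state the vanishing correctly. Additivity gives $\binom mi[\Lambda_m]=0$ for $0<i<m$, which kills every $m$ that is not a power of $p$. The condition $p\nmid m$ that you wrote is too weak: it leaves $m=2p$, although $\binom{2p}{p}\equiv2$, and such a degree would spoil $p$-semilinearity. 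With this fixed, your treatment of classes coming from $H^1(\JJ/\JJ^L)$ is correct. Taking $\tilde z=z$ a cocycle, the same argument gives $C_L(\bar z)=[B_p(z_{ab},z_{bc})]$, whose image under $j_L$ is $\beta_{D_L}([z])$ by \eqref{eq:bockstein-cocycle}.

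\textbf{The gap.} The genuine gap is the reduction of a general $\alpha\in H^1(D_L,\OO_{D_L})_\psi$ to a class vanishing on $D$. You subtract ``a lift of $\alpha|_D$ with $\beta=0$'', but smoothness of $\Pic^0_{D/k}$ only gives $\beta_D(\alpha|_D)=0$ on $D$. It says nothing about $\beta_{D_L}$ of any class on $D_L$ with that restriction, and nothing in your argument produces such a class with vanishing or controlled Bockstein. Controlling exactly these classes is the content of \eqref{eq:bockstein-image-bound}, and it requires that $\Pic^0_{D/k}$ be semiabelian, not merely smooth. Your proposal never uses semiabelianness beyond Lemma~\ref{lem:short-picard}, which is a sign that an ingredient is missing.

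\textbf{What the paper does instead.} Consider the composite of $j_L\delta_L$ with the quotient by the $k$-subspace $j_L(C_L(V_0))$, viewed as a homomorphism of group schemes on $P$.
\begin{itemize}
\item It vanishes on $V_0$, by \eqref{eq:linear-semilinear} together with $j_L\lambda_L=0$.
\item It therefore factors through the semiabelian quotient $\Pic^0_{D/k}$, which admits no nonzero homomorphism to a vector group. Hence it is zero, including on points over nonreduced $k$-algebras.
\end{itemize}
Now let $z$ be an arbitrary additive cocycle on $D_L$.
\begin{itemize}
\item Choose a Witt cocycle $(\bar z,b)$ on $D_p$; this exists because $P$ is smooth.
\item Lift $b$ to $\widetilde b$ and set $c=B_p(z_{ab},z_{bc})+d\widetilde b$. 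This is a cocycle in $\JJ^p/\JJ^L$ with $j_L[c]=\beta_{D_L}([z])$.
\item The transition functions $E(\tau\bar z_{ab})(1+\tau^pb_{ab})$ define a point of $P$ over $k[\tau]/(\tau^{p+1})$. By \eqref{eq:exponential-witt-comparison}, its image under $\delta_L$ is $\tau^p[c]$.
\end{itemize}
Taking the $\tau^p$-coefficient of the vanishing statement yields $\beta_{D_L}([z])\in j_L(C_L(V_0))$. The eigenspace refinement then follows because $\rho\mapsto\rho^p$ permutes the characters of $G$. Your $k$-point scaling cannot replace this step: the Bockstein appears here as a $\tau^p$-coefficient of a deformation, so it is detected only by the scheme-theoretic vanishing on $P$ over a nonreduced base.
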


\begin{proof}
\noindent\emph{Step 1. The linear and $p$-semilinear parts.}
Set $E(T)=\sum_{j=0}^{p-1}T^j/j!$.
Represent a basis $e_1,\ldots,e_d$ of $V_0$ by cocycles
$c^{(i)}$ in $\JJ/\JJ^p$, and lift their entries to
$\JJ/\JJ^L$. With independent variables $T_i$, set
$x_{ab}(T)=\sum_iT_i\widetilde c^{(i)}_{ab}$.
Then
\begin{equation}\label{eq:picard-polynomial-cocycle}
 (\delta_L\circ\iota)\Bigl(\sum_iT_i e_i\Bigr)
 =\left[\log\left(E(x_{ab})E(x_{bc})E(x_{ac})^{-1}\right)\right].
\end{equation}
The expression inside the logarithm is $1$ modulo
$\mathcal K_L$. A coefficient of parameter degree $j$
belongs to $\JJ^j/\JJ^L$, so every degree $j\geq L$ is zero.
Thus \eqref{eq:picard-polynomial-cocycle} is an additive
polynomial of degree $<L<p^2$. Its only possible monomials
are $T_i$ and $T_i^p$, proving \eqref{eq:linear-semilinear}.
Its linear term is $dx(T)$, which gives precisely
\eqref{eq:linear-part-sequence}. In particular,
\begin{equation}\label{eq:linear-boundary-killed}
                              j_L\lambda_L=0.
\end{equation}

\smallskip
\noindent\emph{Step 2. The image of the multiplicative connecting map.}
Since $k$ is perfect, $j_L(C_L(V_0))$ is a $k$-linear
subspace of $H^2(D_L,\OO_{D_L})$.
By \eqref{eq:linear-semilinear} and
\eqref{eq:linear-boundary-killed}, the homomorphism
\[
 P\xrightarrow{\delta_L}W_L\xrightarrow{j_L}
 H^2(D_L,\OO_{D_L})\longrightarrow
 \frac{H^2(D_L,\OO_{D_L})}{j_L(C_L(V_0))}
\]
vanishes on $V_0$. It therefore factors through the
semiabelian quotient $P/V_0=\Pic^0_{D/k}$.
A semiabelian variety has no nonzero homomorphism to a
vector group; see Lemma~\ref{lem:short-picard}. Hence
\begin{equation}\label{eq:picard-image-linear-span}
                     j_L\delta_L(P)\subseteq j_L(C_L(V_0))
\end{equation}
as group sheaves. In particular, this containment holds
after base change by a nonreduced $k$-algebra.

\smallskip
\noindent\emph{Step 3. Comparison with the Witt boundary.}
Let $z$ be an additive cocycle on $D_L$, and let $\bar z$
be its restriction to $D_p$.
Since $P$ is smooth, \eqref{eq:smooth-picard-bockstein}
gives a Witt cocycle $(\bar z,b)$ on $D_p$.
Choose lifts $\widetilde b_{ab}$ to $D_L$, and set
\[
 c_{abc}=B_p(z_{ab},z_{bc})+(d\widetilde b)_{abc}.
\]
By \eqref{eq:bockstein-cocycle}, $c$ is an additive
$2$-cocycle in $\mathcal K_L$ and
\begin{equation}\label{eq:bockstein-comparison-class}
                  j_L([c])=\beta_{D_L}([z]).
\end{equation}
We will realize $\beta_{D_L}([z])$ as a coefficient of
$j_L\delta_L$ and apply \eqref{eq:picard-image-linear-span}.

Use the auxiliary ring $B=k[\tau]/(\tau^{p+1})$.
The identity
\begin{equation}\label{eq:exponential-witt-comparison}
 \frac{E(\tau x)E(\tau y)}{E(\tau(x+y))}
                 =1+\tau^p B_p(x,y)\pmod{\tau^{p+1}}
\end{equation}
follows by comparing the degree-$p$ coefficients:
$1/(j!(p-j)!)=-\binom pj/p$ in $k$, since $(p-1)!=-1$.
Consequently
\[
 u_{ab}=E(\tau\bar z_{ab})(1+\tau^p b_{ab})
\]
are transition functions of a line bundle $\mathcal N$
on $(D_p)_B$. Its restriction at $\tau=0$ is trivial,
so $[\mathcal N]\in P(B)$.
Lifting these functions by
$E(\tau z_{ab})(1+\tau^p\widetilde b_{ab})$ shows that
\[
                  \delta_L([\mathcal N])=\tau^p[c].
\]
Apply $j_L$ and use \eqref{eq:bockstein-comparison-class}:
\[
 j_L\delta_L([\mathcal N])=\tau^p\beta_{D_L}([z])
                           \in j_L(C_L(V_0))\otimes_k B.
\]
Taking the coefficient of $\tau^p$ gives
\[
 \beta_{D_L}(H^1(D_L,\OO_{D_L}))\subseteq j_L(C_L(V_0)).
\]
Finally, all the maps commute with $G$, and
\[
 C_L((V_0)_\rho)\subseteq (W_L)_{\rho^p},\qquad
 \beta_{D_L}(H^1(\OO_{D_L})_\psi)
                         \subseteq H^2(\OO_{D_L})_{\psi^p}.
\]
The map $\rho\mapsto\rho^p$ permutes the characters because
$(p,R)=1$. Taking the $\psi^p$-component proves
\eqref{eq:bockstein-image-bound}.
\end{proof}

Lemma \ref{lem:bockstein-comparison} has removed the linear contribution to the
Witt boundary. It remains to use the vanishing order supplied
by Lemma~\ref{lem:high-order-class} to eliminate $C_L$.

\Needspace{12\baselineskip}
\begin{lemma}\label{lem:witt-lift}
For $1\leq L\leq L_0:=p(p-n)$,
\[
 H^1(D_L,\OO_{D_L})_\chi\subseteq
 \im\bigl(H^1(D_L,W_2\OO_{D_L})
             \xrightarrow{\operatorname{pr}_1}H^1(D_L,\OO_{D_L})\bigr).
\]
On any fixed affine covering of $D_L$, any additive cocycle
representing such a class can be prescribed as the first
coordinate of a Witt cocycle on the same covering.
\end{lemma}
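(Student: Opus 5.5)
The plan is to show $\beta_{D_L}$ vanishes on $H^1(D_L,\OO_{D_L})_\chi$. By the exactness recorded in Section~\ref{sec:witt-preliminaries}, $\ker\beta_{D_L}$ is the image of $\operatorname{pr}_1$, so this vanishing gives the containment. The final sentence about prescribing the first coordinate on a fixed covering then follows from the remark after \eqref{eq:bockstein-cocycle}. For a class in $\ker\beta$ with representative $z$, choose any Witt cocycle $\zeta'$ with $\operatorname{pr}_1\zeta'-z=da$, and replace it by $\zeta'-d[a]$. I split into two ranges of $L$.

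\emph{Case $L\leq p$.} The ideal of $D$ in $D_L$ is $\JJ/\JJ^L$, whose $p$-th power vanishes. Lemma~\ref{lem:short-picard} shows that the inverse image $P$ of $\Pic^0_{D/k}$ in $\Pic_{D_L/k}$ is smooth. Hence $\Pic_{D_L/k}$ is smooth at the identity, and \eqref{eq:smooth-picard-bockstein} gives $\beta_{D_L}=0$ on all of $H^1(D_L,\OO_{D_L})$, with no character restriction needed.

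\emph{Case $p<L\leq p(p-n)$.} Here $L<p^2$, so Lemma~\ref{lem:bockstein-comparison} applies with $\psi=\chi$. It reduces the claim to showing $C_L((V_0)_\chi)=0$ in $W_L=H^2(\JJ^p/\JJ^L)$. By Lemma~\ref{lem:high-order-class}, every $e\in(V_0)_\chi$ is represented by a cocycle $c$ with entries in $\JJ^{p-n}/\JJ^p$.

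I then recompute $\delta_L\circ\iota$ on the multiples $Te$ directly from formula \eqref{eq:picard-polynomial-cocycle}, lifting the entries of $c$ to $\widetilde c$ in $\JJ^{p-n}/\JJ^L$. The coefficient of $T^p$ is a polynomial expression homogeneous of degree $p$ in the $\widetilde c_{ab}$. Concretely it is the term $B_p$ coming from $E$, together with the $p$-th power contributions of $\log$. It therefore lies in $(\JJ^{p-n})^p/\JJ^L=\JJ^{p(p-n)}/\JJ^L$, which is $0$ since $L\leq p(p-n)$. So $C_L(e)=0$.

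The one point needing care is that $C_L$ is well defined, independent of the chosen cocycle representative and lift. This holds because it is the $T^p$-coefficient of the additive polynomial $\delta_L\circ\iota$ on the vector group $V_0$, which is intrinsic. I can therefore compute it with the special representative from Lemma~\ref{lem:high-order-class}. The decomposition \eqref{eq:linear-semilinear} is unique because linear and $p$-semilinear additive maps are distinguished by their degrees.

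Combining the two cases gives $\beta_{D_L}(H^1(D_L,\OO_{D_L})_\chi)\subseteq j_L(C_L((V_0)_\chi))=0$. I expect the main obstacle to be the bookkeeping in the second case. One must check that the degree-$p$ coefficient involves only degree-$p$ monomials in the cocycle entries, with no cross terms of lower $\JJ$-adic order coming from $\log$ applied to a product. This follows since each coefficient of parameter degree $j$ lies in the $j$-th power of the ideal generated by the entries.
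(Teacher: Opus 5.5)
Your proposal is correct and follows the paper's proof essentially verbatim. The case $L\le p$ goes through Lemma~\ref{lem:short-picard} and \eqref{eq:smooth-picard-bockstein}; the case $p<L\le p(p-n)$ goes through Lemma~\ref{lem:bockstein-comparison} with the representatives in $\JJ^{p-n}/\JJ^p$ from Lemma~\ref{lem:high-order-class}, which push the degree-$p$ terms into $\JJ^{p(p-n)}/\JJ^L=0$; and the cocycle-level statement follows from the adjustment $\zeta'\mapsto\zeta'-d[a]$. Your remark that $C_L$ is intrinsic, so it may be computed with any representative and lift, makes explicit a point the paper leaves implicit.
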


\begin{proof}
For $L\leq p$, Lemma~\ref{lem:short-picard} and
\eqref{eq:smooth-picard-bockstein} give $\beta_{D_L}=0$.
Suppose $p<L\leq L_0<p^2$.
By Lemma~\ref{lem:high-order-class}, representatives of a
basis of $(V_0)_\chi$ can be chosen in $\JJ^{p-n}/\JJ^p$.
Lift them to $\JJ^{p-n}/\JJ^L$ in
\eqref{eq:picard-polynomial-cocycle}. Every parameter term
of degree $p$ then belongs to $\JJ^{p(p-n)}/\JJ^L=0$.
Therefore
\[
 C_L((V_0)_\chi)=0,\qquad
 \beta_{D_L}(H^1(D_L,\OO_{D_L})_\chi)=0
\]
by Lemma~\ref{lem:bockstein-comparison}.
The cohomology sequence of \eqref{eq:witt-short-exact}
proves the asserted lifting. The adjustment
$\zeta'\mapsto\zeta'-d[a]$ following
\eqref{eq:bockstein-cocycle} preserves any specified
additive representative.
\end{proof}

\section{Extension of line bundles to finite thickenings}\label{sec:finite}

We now apply the two boundary formulas to powers of $\JJ$.
The aim is to find an integer $N>0$ for which
$\JJ^N|_{D_N}$ is trivial. 

\begin{notation}\label{not:finite-calculation}
In this section $\YY$ is a normal formal threefold, $A$ is
its reduced boundary, and
\[
 \JJ=\OO_{\YY}(-cA),\qquad D_\ell=V(\JJ^\ell),\qquad
 D=D_1,\qquad \MM=\omega_{\YY}(A).
\]
Assume we are in Setting~\ref{not:equivariant}. In particular
$\JJ|_D=\OO_Ds$, $\Pic^0_{D/k}$ is smooth semiabelian, and
$V=H^1(D,\OO_D)$ has dimension two with perfect alternating
cup product. The finite abelian group $G$ acts faithfully
on $V$. The character of $s$ is $\mu^{-1}$, and
$R=\ord(\mu)<p$.
\end{notation}

The main theorem of this section is the following.

\begin{theorem}\label{thm:finite-boundary}
Assume that $G$ is cyclic, $\mu$ is faithful, and
$\MM\JJ^\beta$ has a $G$-invariant generating section for
an integer $\beta\geq1$.
If $e\geq1$ satisfies
\begin{equation}\label{eq:finite-bound}
                         (p-R)p^{e-1}\geq\beta,
\end{equation}
then for some $0\leq i\leq e$ there is a $G$-invariant
isomorphism
\begin{equation}\label{eq:finite-conclusion}
                      \JJ^{Rp^i}|_{D_{Rp^i}}\simeq\OO_{D_{Rp^i}}.
\end{equation}
\end{theorem}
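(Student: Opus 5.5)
We argue by following the obstruction classes $\theta$ as the length of the thickening grows, and we use the two boundary formulas to force these classes to vanish. We begin on $D_1=D$, where $s$ is an equivariant generating section of $\JJ|_D$ with character $\mu^{-1}$. Let $n\geq1$ be maximal such that $\JJ|_{D_n}$ has an equivariant generating section $\sigma$ with $\sigma|_D=s$. If $n\geq R$, then $\sigma^R$ is a $G$-invariant generator of $\JJ^R|_{D_R}$, because $\mu^R=1$. This gives \eqref{eq:finite-conclusion} with $i=0$. So assume $n<R$ and $\theta=\theta_{1,n}(\sigma)\neq0$. Lemma~\ref{lem:obstruction-rules}(iii) gives $\theta\in V_\chi$ with $\chi=\mu^n$. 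Averaging over $G$ lets us keep every section equivariant; Lemma~\ref{lem:section-filtration} shows that nothing is lost by doing so.

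The first step is to show that $\theta$ cannot obstruct at higher orders for powers of $\sigma$. Set $q=1$ and take $\ell=n$ in Proposition~\ref{prop:canonical-boundary}, with $r$ chosen so that $\beta+n+r\equiv0\pmod{p}$, together with Proposition~\ref{prop:first-boundary}. On suitable classes $\alpha$ the two boundaries $\partial_{r,n}(\alpha)$ must agree. Comparing the coefficients $r$ and $\beta+n+r$ of $\alpha_0\smile\theta$ then constrains $\alpha_0$ to lie in $k\theta$. In turn this forces a relation among $\beta$, $n$ and $p$. Because $\mu$ is faithful and $G$ acts faithfully on $V$, the character of $\theta$ is nontrivial. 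Equivariance then makes the $p$-th power $\sigma^p$ extend well past $D_n$. The reason is that $\theta_{p,\ell}(\sigma^p)=p\theta=0$ by \eqref{eq:integer-power-obstruction}, and the next obstructions for $\sigma^p$ only appear at lengths divisible by $p$, via the Witt lift of Lemma~\ref{lem:witt-lift}. Concretely, we aim to show that $\JJ^{p}|_{D_{pn'}}$ is trivial for some $n'$, whose Witt-lifted cocycle is constructed in Appendix~\ref{app:witt} (Proposition~\ref{prop:comparison-class}).

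The second step is an induction on $i$. At stage $i$ we have an equivariant generator $\sigma_i$ of $\JJ^{Rp^i}$ on $D_{\ell_i}$, and we want it to extend to length $Rp^i$. If $\ell_i\geq Rp^i$ we stop. Otherwise its obstruction $\theta_i=\theta_{Rp^i,\ell_i}(\sigma_i)$ is nonzero and lies in a prescribed character space. We apply Proposition~\ref{prop:canonical-boundary} with $q=Rp^i$, choosing $r$ with $q\mid\beta+\ell_i+r$. This expresses $\partial_{r,\ell_i}$ as $\tfrac{\beta+\ell_i+r}{q}\,\alpha_0\smile\theta_i$. We then pick a class $\alpha$ with $\alpha_0\smile\theta_i\neq0$; such a class exists by perfectness of the cup product \eqref{eq:perfect-cup}, and it lifts to $D_{\ell_i}$ by Lemma~\ref{lem:short-picard}. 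This produces a contradiction unless the integer coefficient vanishes mod $p$. That vanishing forces $\ell_i\geq(p-R)p^{i-1}$ roughly, up to a shift by $\beta$. Passing to the $p$-th power multiplies the extension length by $p$, again using Lemma~\ref{lem:witt-lift} in the range $L\leq p(p-n)$. After $e$ steps, the hypothesis \eqref{eq:finite-bound} guarantees that the accumulated length exceeds $Rp^e$, which yields \eqref{eq:finite-conclusion}.

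The main obstacle is the passage from length $\ell$ to length $p\ell$ when $\sigma$ is replaced by $\sigma^p$. Classical obstruction theory gives nothing there, since $p\theta=0$ carries no information. This is exactly where the Witt Bockstein control of Lemmas~\ref{lem:bockstein-comparison} and \ref{lem:witt-lift} must be fed into the comparison class of the appendix. I would first attempt the case $e=1$, checking carefully that the bound $L_0=p(p-n)$ matches $(p-R)\geq\beta$.
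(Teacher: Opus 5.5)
\textbf{There is a genuine gap at the centre of your induction.} The canonical boundary formula \eqref{eq:canonical-boundary} is an identity. It produces a congruence only if $\partial_{r,\ell_i}(\alpha)$ is known by independent means. An arbitrary $\alpha\in H^1(\JJ^r/\JJ^{r+\ell_i})$ with $\alpha_0\smile\theta_i\neq0$ gives no such information. You would need $\alpha$ to extend to $\JJ^r/\JJ^{r+\ell_i+1}$, so that its boundary vanishes, or an explicit value of the boundary.

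Lemma~\ref{lem:short-picard} supplies neither. It needs $\mathcal K^p=0$, so it only controls thickenings of length at most $p$, whereas $\ell_i\geq np^i$. Moreover, such extending classes cannot exist in every admissible twist, because the coefficients for $r$ and $r+q$ differ by $1$.

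The missing idea is Proposition~\ref{prop:comparison-class}. Lemma~\ref{lem:extended-class} uses the Witt lift of Lemma~\ref{lem:witt-lift} and the cochain identity $V_W(b_i)=d([P_{i-1}]+[T_{i-1}])-F_W^{i-1}\zeta$. This builds a class in the specific negative twist $\rho_i=\ell_{i-1}-np^i$ which extends to $D_{\ell_i+1}$ and restricts to $v_{i-1}=n\theta_{i-1}$. Multiplying by $\sigma_i^n$ gives $\alpha_i$ with $\partial_{\ell_{i-1},\ell_i}(\alpha_i)=v_{i-1}\smile v_i\neq0$. Comparing with \eqref{eq:canonical-boundary} for $q=p^i$, $r=\ell_{i-1}$, $\ell=\ell_i$ yields $t_i\equiv n\pmod p$, where $t_i=(\beta+\ell_{i-1}+\ell_i)/p^i$.

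Even to set up this comparison you need $p^i\mid\beta+\ell_{i-1}+\ell_i$ and $\theta_{i-1}\smile\theta_i\neq0$. Both come from Lemma~\ref{lem:order-congruences} via Lemma~\ref{lem:change-of-trivialization}. The class $\widetilde\eta$ there does extend one step further, because it is built from transition functions of a higher power $\JJ^{p^j}$. None of this is in your sketch. Your first step is fine in substance, but what it proves is the case $i=0$, namely $p\mid\beta+n$. It does not force $\alpha_0\in k\theta$, since $H^1(D_n,\OO_{D_n})\to V$ is onto.

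\textbf{The endgame is also not a length-accumulation argument, and the one you outline could not close.} The inequality $\ell_i\geq p\ell_{i-1}$ is elementary, because $(1+\JJ^\ell)^p\subseteq1+\JJ^{p\ell}$. So the step $\ell\mapsto p\ell$ is not where the difficulty lies. A bound of the shape $\ell_e\gtrsim(p-R)p^{e-1}$ is also far below $Rp^e$.

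The paper instead reaches a contradiction in the integers. The $G$-invariant generator of $\MM\JJ^\beta$ is combined with equivariant Serre duality and the character $\mu^{\ell_{i-1}+\ell_i}$ of the generator $v_{i-1}\smile v_i$ of $H^2(D,\OO_D)$. This forces $R\mid t_i$. It is the only place the invariance hypothesis enters, and your plan never uses it. On the other hand, \eqref{eq:finite-bound} together with $\ell_{e-1}<Rp^{e-1}$ and $\ell_e<Rp^e$ gives $0<t_e<R+1$. Hence $t_e=R$, and then $t_e\equiv n\pmod p$ contradicts $0<n<R<p$.

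Finally, $L_0=p(p-n)$ is not to be matched against $\beta$. It guarantees that the Witt-lift error satisfies $db_i=-\epsilon^{p^{i-1}}\in C^2(\mathcal U,\JJ^M)$ with $M-\rho_i=p^{i+1}-\ell_{i-1}>\ell_i$. This is what makes $b_i$ a cocycle on $D_{\ell_i+1}$.
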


Suppose that \eqref{eq:finite-conclusion} fails for every
$0\leq i\leq e$. For each $i$, let $\ell_i$ be the largest integer
such that $\JJ^{p^i}|_{D_{\ell_i}}$ is trivial.
In \S5.1 we derive two divisibility statements from a change
of generating section and from the canonical boundary formula.
In \S5.2 they imply
\[
 p^i\mid\beta+\ell_{i-1}+\ell_i,\qquad
                 \theta_{i-1}\smile\theta_i\ne0
                                    \qquad(1\leq i\leq e),
\]
where $\theta_i$ is the obstruction to extending a generator
from $D_{\ell_i}$ to $D_{\ell_i+1}$.
In \S5.3 a class supplied by Appendix~\ref{app:witt} allows
us to evaluate the same connecting homomorphism in two ways.
The resulting congruence contradicts \eqref{eq:finite-bound}.

\subsection{Changes of trivialization}

We use the class $\theta_{q,\ell}(\sigma)$ defined in
\eqref{eq:theta-definition}. It is the obstruction to extending
the specified generator $\sigma$, rather than an invariant
of the line bundle alone. Nontriviality on $D_{\ell+1}$
means that this class is nonzero for every generator on $D_\ell$.

Whenever $\JJ^q|_{D_t}$ is trivial, its generator can be
chosen to restrict to $s^q$ and to have character $\mu^{-q}$.
Indeed, if $\tau|_D=s^q$, then
\begin{equation}\label{eq:generator-character-projection}
 \tau_{\mu^{-q}}=
 \frac{1}{|G|}\sum_{g\in G}\mu(g)^qg(\tau),
 \qquad \tau_{\mu^{-q}}|_D=s^q.
\end{equation}
The last equality implies that $\tau_{\mu^{-q}}$ generates,
by Nakayama's lemma. We will also use the consequence
\[
 u\smile v\ne0\quad\Longleftrightarrow\quad ku\ne kv
                       \qquad(0\ne u,v\in V)
\]
of the perfect alternating pairing on the two-dimensional
space $V$.

\begin{lemma}\label{lem:change-of-trivialization}
Let $q,\ell\geq1$, and let $\sigma$ be a generating section
of $\JJ^q|_{D_\ell}$ with $\sigma|_D=s^q$.
Set $\theta=\theta_{q,\ell}(\sigma)\ne0$.
Let $\mathcal H$ be an invertible sheaf on $\YY$, trivial
on $D_r$ but nontrivial on $D_{r+1}$, where $r>\ell$.
Choose a generating section $\tau$ of $\mathcal H|_{D_r}$.
Denote by $\delta^0_{\mathcal H^{-1}}$ the connecting map
\[
 H^0(D_r,\mathcal H^{-1}|_{D_r})
 \longrightarrow
 H^1(D,\mathcal H^{-1}|_D\otimes\JJ^r|_D)
\]
for restriction from $D_{r+1}$ to $D_r$, and set
\[
 \eta=(\tau|_D)s^{-r}
                     \delta^0_{\mathcal H^{-1}}(\tau^{-1})\in V.
\]
Then $\eta\ne0$, and there exists a class satisfying
\begin{equation}\label{eq:lifted-transition-class}
 \widetilde\eta\in H^1(\JJ^r/\JJ^{r+\ell}),\qquad
 \widetilde\eta_0=\eta,\qquad
                  \partial_{r,\ell}(\widetilde\eta)=0.
\end{equation}
Moreover:
\begin{enumerate}
\item If $q\mid r-\ell$ and $\eta\in k\theta$, then
      $pq\mid r-\ell$.
\item Suppose also that $\MM\JJ^\beta\simeq\OO_{\YY}$.
      If $q\mid\beta+r+\ell$ and
      $\eta\smile\theta\ne0$, then
      $pq\mid\beta+r+\ell$.
\end{enumerate}
\end{lemma}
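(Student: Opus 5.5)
The plan is to handle the four assertions in the order of the statement. Each one comes from the square-zero units sequence combined with the obstruction calculus of Lemma~\ref{lem:obstruction-rules} and the boundary formulas of Section~\ref{sec:connecting}.

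\emph{Nonvanishing of $\eta$.} First, $\mathcal H|_D\simeq\OO_D$ and $H^0(D,\OO_D)=k$, so $\tau|_D$ is a nonzero constant multiple of a generator; hence $\eta$ is, up to a unit, $\delta^0_{\mathcal H^{-1}}(\tau^{-1})$. Exactness of the restriction sequence for $\mathcal H^{-1}$ from $D_{r+1}$ to $D_r$, exactly as in Lemma~\ref{lem:obstruction-rules}(i), shows the following: $\eta=0$ if and only if $\tau^{-1}$, and hence $\tau$ by Nakayama, extends to a generator on $D_{r+1}$. This contradicts the nontriviality of $\mathcal H|_{D_{r+1}}$.

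\emph{Construction of $\widetilde\eta$.} Since $r>\ell$, the ideals $\JJ^r/\JJ^{r+\ell}$ and $\JJ^r/\JJ^{r+\ell+1}$ have square zero in $\OO_{D_{r+\ell}}$ and $\OO_{D_{r+\ell+1}}$, since $2r\geq r+\ell+1$. So $1+\mathcal K\simeq(\mathcal K,+)$ for both. Lift $\tau$ locally to generators of $\mathcal H|_{D_{r+\ell+1}}$ on an affine cover. The transition functions are $1+k_{ab}$ with $k_{ab}\in\JJ^r/\JJ^{r+\ell+1}$, and they form an honest cocycle because $\mathcal H$ is a line bundle on all of $\YY$. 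Its reduction modulo $\JJ^{r+\ell}$ defines $\widetilde\eta\in H^1(\JJ^r/\JJ^{r+\ell})$. The existence of the lifted cocycle shows that the connecting image in $H^2(\JJ^{r+\ell}/\JJ^{r+\ell+1})$ vanishes, i.e.\ $\partial_{r,\ell}(\widetilde\eta)=0$. Restricting to $D$ recovers the obstruction to extending $\tau$ from $D_r$ to $D_{r+1}$; after fixing signs with the same \v Cech conventions as Lemma~\ref{lem:boundary-product}, this gives $\widetilde\eta_0=\eta$.

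\emph{Assertion (ii).} This is immediate from Proposition~\ref{prop:canonical-boundary} applied to $\alpha=\widetilde\eta$ with the given $\sigma$, which is allowed since $q\mid\beta+\ell+r$. We get
\[
 0=\partial_{r,\ell}(\widetilde\eta)=\tfrac{\beta+\ell+r}{q}\,\eta\smile\theta .
\]
Since $\eta\smile\theta\ne0$, the integer $(\beta+\ell+r)/q$ vanishes in $k$, so $pq\mid\beta+r+\ell$.

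\emph{Assertion (i).} Here the plan is a change of trivialization via Lemma~\ref{lem:obstruction-rules}(iv). Set $d=(r-\ell)/q\geq1$ when $r-\ell>0$. Then $\sigma^d$ is a section of $\JJ^{qd}/\JJ^{qd+\ell}=\JJ^{r-\ell}/\JJ^r\subset\OO_{D_r}$, i.e.\ a nilpotent global function on $D_r$. For $c\in k$, put $u=1+c\sigma^d$, a unit with $u|_D=1$. Comparing the sequence for $\OO$ from $D_{r+1}$ to $D_r$ with the restriction sequence for $\JJ^{qd}$ from $D_{\ell+1}$ to $D_\ell$ and using Lemma~\ref{lem:obstruction-rules}(ii), the linear part should give
\[
 s^{-r}\delta^0_{\OO}(u)=-cd\,\theta .
\]
If $\eta=\lambda\theta$ and $p\nmid d$, choose $c=-\lambda/d$. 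Then $u\tau$ has vanishing obstruction, so it extends to a generator on $D_{r+1}$, a contradiction; hence $p\mid d$, i.e.\ $pq\mid r-\ell$.

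I expect the main obstacle to be exactly the identification of $\delta^0_{\OO}(1+c\sigma^d)$ with the linear term. When $2(r-\ell)\leq r$, the powers $(c\sigma^d)^m$ do not vanish modulo $\JJ^{r+1}$, so the multiplicative boundary is not simply additive in $x=c\sigma^d$. The first fix I would try is to replace $1+x$ by the truncated exponential $E(x)$ of Lemma~\ref{lem:short-picard}, or to take successive corrections in $c$. Failing that, I would apply the additive boundary to the function $x$ directly, noting that only the class in $H^1(D,\JJ^r|_D)$ matters, and check that the higher-order terms contribute multiples of $\theta$ with coefficients divisible by $p$ or zero. The degenerate case $r-\ell\leq0$ cannot occur, since $r>\ell$.
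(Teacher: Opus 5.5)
Your proposal is correct and follows the paper's proof step for step. It uses the square-zero transition class on $D_{r+\ell+1}$ for $\widetilde\eta$; the paper lifts $\tau^{-1}$ rather than $\tau$, which changes the class at most by the sign you said you would fix. It then applies Proposition~\ref{prop:canonical-boundary} to $\widetilde\eta$ for (ii), and the unit change $(1+c\sigma^d)\tau$ together with $s^{-r}\delta^0_{\OO}(\sigma^d)=-d\theta$ for (i).

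The obstacle you anticipate is not actually there. $\delta^0_{\OO}$ is the connecting map of the additive sequence $0\to\JJ^r/\JJ^{r+1}\to\OO_{D_{r+1}}\to\OO_{D_r}\to0$, so it is $k$-linear and $\delta^0_{\OO}(1+c\sigma^d)=c\,\delta^0_{\OO}(\sigma^d)$ exactly. The passage from $u$ to $u^{-1}$, or to $u\tau$, is handled by the degree-zero product rule of Lemma~\ref{lem:boundary-product}. That rule gives $\delta^0_{\OO}(x^m)=m\,x^{m-1}|_D\,\delta^0_{\OO}(x)=0$ for $m\geq2$, because $x|_D=0$. So neither the truncated exponential nor successive corrections are needed.
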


\begin{proof}
\noindent\emph{Step 1. A class with zero connecting image.}
Set
\[
 Z=D_{r+\ell+1},\qquad
 \mathcal I=\JJ^r/\JJ^{r+\ell+1}\subset\OO_Z.
\]
Since $r>\ell$, one has $2r\geq r+\ell+1$, and hence
$\mathcal I^2=0$. Thus
\[
 (1+\mathcal I,\cdot)\xrightarrow{\sim}(\mathcal I,+),
                              \qquad 1+x\longmapsto x
\]
is an isomorphism of sheaves of abelian groups.
Choose an affine covering $(U_a)$ of $Z$ and local
generators $v_a$ of $\mathcal H^{-1}|_Z$ lifting
$\tau^{-1}|_{U_a\cap D_r}$. On $U_a\cap U_b$, their
transition function $v_b/v_a$ lies in $1+\mathcal I$.
The displayed isomorphism gives
\[
 \widehat\eta=[(v_b/v_a)-1]
                  \in H^1(Z,\mathcal I)
                    =H^1(\JJ^r/\JJ^{r+\ell+1}).
\]
Modulo $\JJ^{r+1}$, the identity
$v_b-v_a=v_a((v_b/v_a)-1)$ identifies this class with
$s^r\eta$ under the chosen trivialization of
$\mathcal H^{-1}|_D$.
Let $\widetilde\eta$ be its image in
$H^1(\JJ^r/\JJ^{r+\ell})$.
It satisfies \eqref{eq:lifted-transition-class}: the first
equality of classes follows by restriction to $D$, and
the vanishing follows because $\widehat\eta$ is a lift.
Finally $\eta=0$ would extend $\tau^{-1}$ to a generating
section on $D_{r+1}$, contradicting the hypothesis on
$\mathcal H$.

\smallskip
\noindent\emph{Step 2. The case $\eta\in k\theta$.}
Assume $q\mid r-\ell$, and set $d=(r-\ell)/q$.
The natural inclusion
\[
 \JJ^{r-\ell}/\JJ^r\hookrightarrow\OO_{D_r}
\]
makes $\sigma^d$ a global nilpotent function on $D_r$.
Let $\delta^0_{\OO}$ denote the boundary for
$\OO_{D_{r+1}}\to\OO_{D_r}$.
Naturality of the restriction sequence and
Lemma~\ref{lem:obstruction-rules}(ii) give
\[
                  s^{-r}\delta^0_{\OO}(\sigma^d)=-d\theta.
\]
For $b\in k$, set $\tau_b=(1+b\sigma^d)\tau$.
Since $(1+b\sigma^d)|_D=1$, the degree-zero product
rule yields the explicit change of obstruction
\begin{equation}\label{eq:scalar-unit-change}
 \begin{aligned}
 (\tau|_D)s^{-r}\delta^0_{\mathcal H^{-1}}(\tau_b^{-1})
   &=\eta-s^{-r}\delta^0_{\OO}(1+b\sigma^d)\\
   &=\eta+bd\theta.
 \end{aligned}
\end{equation}
Write $\eta=\lambda\theta$, where $\lambda\in k^*$.
If $p\nmid d$, take $b=-\lambda/d$.
Then \eqref{eq:scalar-unit-change} vanishes, so
$\tau_b^{-1}$ extends to a generating section on $D_{r+1}$.
This contradicts $\mathcal H|_{D_{r+1}}\not\simeq\OO_{D_{r+1}}$.
Consequently $p\mid d$, proving (i).

\smallskip
\noindent\emph{Step 3. The case $\eta\smile\theta\ne0$.}
Under the hypotheses of (ii), apply
Proposition~\ref{prop:canonical-boundary} to the class
$\widetilde\eta$ of Step~1. It gives
\[
 0=\partial_{r,\ell}(\widetilde\eta)
           =\frac{\beta+r+\ell}{q}\,\eta\smile\theta.
\]
The coefficient is an integer and the cup product is
nonzero. Its image in $k$ is therefore zero, which is
equivalent to $pq\mid\beta+r+\ell$.
\end{proof}

For the next corollary the short-length formula suffices:
we apply it to the class in \eqref{eq:lifted-transition-class}.
No trivialization of $\MM\JJ^\beta$ is needed.

\begin{corollary}\label{cor:one-direction}
In Setting~\ref{not:finite-calculation}, suppose that
\[
              \dim_k\Bigl(\bigoplus_{j=1}^{R-1}V_{\mu^j}\Bigr)\leq1.
\]
Then \eqref{eq:finite-conclusion} holds for $i=0$ or $1$.
No hypothesis on the line bundle $\MM\JJ^\beta$ is required.
The group $G$ need not be cyclic.
\end{corollary}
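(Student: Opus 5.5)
We argue by contradiction and assume that \eqref{eq:finite-conclusion} fails for $i=0$ and $i=1$, i.e.\ neither $\JJ^R|_{D_R}$ nor $\JJ^{Rp}|_{D_{Rp}}$ admits a $G$-invariant trivialization. The idea is to pair the obstruction at length $R$ with the transition class of Lemma~\ref{lem:change-of-trivialization}. The short-length formula Proposition~\ref{prop:first-boundary} then forces these two classes to be proportional. The dimension hypothesis on $\bigoplus_{j=1}^{R-1}V_{\mu^j}$, together with the unit-change argument of Lemma~\ref{lem:change-of-trivialization}(i), should then give a contradiction.

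\emph{Step 1.} By \eqref{eq:generator-character-projection}, any trivialization can be replaced by one with $\sigma|_D=s$ and character $\mu^{-1}$, and it stays invariant after raising to powers that are multiples of $R$. Let $n\geq1$ be largest with $\JJ|_{D_n}$ trivial. If $n\geq Rp$, then $\sigma^{Rp}$ gives an invariant trivialization of $\JJ^{Rp}|_{D_{Rp}}$, so the case $i=1$ holds. Similarly, $n\geq R$ would already give the case $i=0$. So assume $n<R<p$, and set $\theta=\theta_{1,n}(\sigma)\neq0$. By Lemma~\ref{lem:obstruction-rules}(iii), $\theta\in V_{\mu^n}$, with $1\leq n\leq R-1$; hence $\theta$ spans the at most one-dimensional space $\bigoplus_{j=1}^{R-1}V_{\mu^j}$.

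\emph{Step 2.} Apply the same idea to $\mathcal H=\JJ^{R}$ with the invariant generator $\sigma^{R}$ on $D_n$. Since the case $i=0$ fails, take $r$ maximal with $\JJ^R|_{D_r}$ trivial (invariantly). We have $r\geq n$, and we must rule out $r=n$. If $r=n$, Lemma~\ref{lem:obstruction-rules}(ii) gives obstruction $R\theta\ne0$, which is consistent; so compare instead with a length-$p$ quantity. Take $\mathcal H=\JJ^{p}$ (or $\JJ^{Rp}$) with generator $\sigma^p$, whose obstruction is $p\theta=0$. Thus $\sigma^p$ extends, $\JJ^p$ is trivial on $D_{r}$ for some $r>n$, and $r<Rp$ by assumption. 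Let $\eta\ne0$ be its transition obstruction. Its character is $\mu^{r-p}$ by the analogue of Lemma~\ref{lem:obstruction-rules}(iii).

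\emph{Step 3.} Apply Lemma~\ref{lem:change-of-trivialization} with $q=1$, $\ell=n$ and this $r$, taking $n<p$. The class $\widetilde\eta$ has $\partial_{r,n}(\widetilde\eta)=0$, so Proposition~\ref{prop:first-boundary} gives $r\,\eta\smile\theta=0$. If $p\nmid r$, then $\eta\in k\theta$, and Lemma~\ref{lem:change-of-trivialization}(i) yields $p\mid r-n$. The main obstacle is the bookkeeping of these divisibilities. We must show that $\eta$ either lies in $V_{\mu^j}$ with $j\not\equiv0$ mod $R$, hence in $k\theta$, or lies in $V_{1}$, and then exclude the invariant direction. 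For the latter, faithfulness of $G$ on $V$ and the dimension bound leave $V_1$ of dimension at least one. Here I would try iterating: replace $\sigma$ by $(1+b\sigma^d)\sigma$ to push $r$ past $Rp$ whenever the divisibility $p\mid d$ fails. This reaches invariant triviality on $D_{Rp}$ for $\JJ^{Rp}$, i.e.\ case $i=1$.
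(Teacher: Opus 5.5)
Your Step~1 and the ingredients you assemble in Step~3 are the right ones: the maximal length $r$ for $\JJ^p$, the class $\widetilde\eta$ of Lemma~\ref{lem:change-of-trivialization}, the relation $0=\partial_{r,n}(\widetilde\eta)=r\,\eta\smile\theta$, and part~(i) of that lemma. However, the argument is never closed, and it stalls at two places that are genuine gaps.

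\textbf{The character of $\eta$ is wrong.} You should take a generator $\tau$ of $\JJ^p|_{D_r}$ itself, not $\sigma^p$, which only lives on $D_{n+1}$. Normalize it by $\tau|_D=s^p$ and give it character $\mu^{-p}$ via \eqref{eq:generator-character-projection}. Then $\eta=\theta_{p,r}(\tau)$, and Lemma~\ref{lem:obstruction-rules}(iii) gives $\eta\in V_{\mu^r}$, not $V_{\mu^{r-p}}$; you dropped the character $\mu^{p}$ of $\tau^{-1}$. This matters. With $\mu^{r-p}$ the divisibilities produce no contradiction: for instance $n=1$, $r=p$ survives.

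\textbf{Your plan for the invariant direction cannot work.} By \eqref{eq:scalar-unit-change}, replacing $\tau$ by $(1+b\sigma^d)\tau$ changes $\eta$ only by a multiple of $\theta$. So no such iteration can kill a class $\eta\notin k\theta$. Your claim that $\dim V_1\geq1$ is also unnecessary, and it is false in general for noncyclic $G$.

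What is missing is a short case split on $\mu^r$. It uses only $n<r<Rp$, which you already have.
\begin{itemize}
\item If $\mu^r\neq1$, then $\eta\in\bigoplus_{j=1}^{R-1}V_{\mu^j}=k\theta$, so $\mu^r=\mu^n$, i.e.\ $R\mid r-n$. Lemma~\ref{lem:change-of-trivialization}(i) with $q=1$ gives $p\mid r-n$. Since $(R,p)=1$, this yields $Rp\mid r-n$, contradicting $0<r-n<Rp$.
\item If $\mu^r=1$, then $R\mid r$. Also $\eta\in V^G$ is not proportional to $\theta\in V_{\mu^n}$, because $\mu^n\neq1$; hence $\eta\smile\theta\neq0$. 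Proposition~\ref{prop:first-boundary}, applicable since $n<p$, turns $0=r\,\eta\smile\theta$ into $p\mid r$. So $Rp\mid r$, contradicting $0<r<Rp$.
\end{itemize}
Thus $\JJ^p|_{D_{Rp}}$ is trivial. The $R$-th power of a generator of character $\mu^{-p}$ then gives an invariant trivialization of $\JJ^{Rp}|_{D_{Rp}}$, which is the case $i=1$. This is precisely the paper's argument.
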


\begin{proof}
If $R=1$, the invariant generator $s$ proves the assertion
with $i=0$. Assume $R>1$, and set
\[
                         W=\bigoplus_{j=1}^{R-1}V_{\mu^j}.
\]

\noindent\emph{Step 1. The first obstruction for $\JJ$.}
If $\JJ|_{D_R}$ is trivial, proceed to Step~3 with $i=0$.
Otherwise define
\[
 n=\max\{t\geq1:\JJ|_{D_t}\simeq\OO_{D_t}\},
                         \qquad 1\leq n<R.
\]
Choose a generator $\sigma$ of $\JJ|_{D_n}$ with
$\sigma|_D=s$ and character $\mu^{-1}$, using
\eqref{eq:generator-character-projection}. Then
\[
 0\ne\theta=\theta_{1,n}(\sigma)\in V_{\mu^n}\subset W.
\]
Since $0<n<R$ and $\dim W\leq1$, one obtains
\[
                         \mu^n\ne1,\qquad W=k\theta.
\]
For later use, taking $p$-th powers of local generators gives
\begin{equation}\label{eq:power-triviality}
 \JJ^q|_{D_t}\simeq\OO_{D_t}
       \quad\Longrightarrow\quad
 \JJ^{pq}|_{D_{pt}}\simeq\OO_{D_{pt}}.
\end{equation}
Indeed, the transition functions can be chosen in
$1+\JJ^t$, and $(1+\JJ^t)^p\subseteq1+\JJ^{pt}$.
In particular, $\JJ^p|_{D_{pn}}$ is trivial.

\smallskip
\noindent\emph{Step 2. Triviality of $\JJ^p$ on $D_{Rp}$.}
Suppose, to the contrary, that this restriction is nontrivial.
Let
\[
 r=\max\{t\geq1:\JJ^p|_{D_t}\simeq\OO_{D_t}\},
                          \qquad pn\leq r<Rp.
\]
Choose a generator $\tau$ of $\JJ^p|_{D_r}$ of
character $\mu^{-p}$, normalized by $\tau|_D=s^p$, and set
\[
                    0\ne\eta=\theta_{p,r}(\tau)
                                  \in V_{\mu^r}.
\]
Use Lemma~\ref{lem:change-of-trivialization} with the
sections $\sigma,\tau$, the line bundle $\mathcal H=\JJ^p$,
and parameters $q=1$ and $\ell=n$.
Its length inequality holds because $r\geq pn>n$.

If $\mu^r\ne1$, then $\eta\in W=k\theta$.
Equality of the two characters and part~(i) of the lemma give
\[
 R\mid r-n,\qquad p\mid r-n.
\]
As $(R,p)=1$, these imply $Rp\mid r-n$, contrary to
$0<r-n<Rp$.

If $\mu^r=1$, then $\eta\in V^G$, whereas
$\theta\in V_{\mu^n}$ and $\mu^n\ne1$.
Thus $\eta\smile\theta\ne0$.
The class in \eqref{eq:lifted-transition-class} satisfies
\[
 \widetilde\eta\in H^1(\JJ^r/\JJ^{r+n}),\qquad
 \widetilde\eta_0=\eta,\qquad
                       \partial_{r,n}(\widetilde\eta)=0.
\]
Since $n<R<p$, Proposition~\ref{prop:first-boundary} applies:
\[
                  0=\partial_{r,n}(\widetilde\eta)
                                    =r\,\eta\smile\theta.
\]
Hence $p\mid r$. The equality $\mu^r=1$ also gives
$R\mid r$, so $Rp\mid r$, contrary to $0<r<Rp$.
Both cases give a contradiction. Therefore
$\JJ^p|_{D_{Rp}}\simeq\OO_{D_{Rp}}$.

\smallskip
\noindent\emph{Step 3. An invariant trivialization.}
For the index $i\in\{0,1\}$ obtained above, choose a
generator $\gamma$ of $\JJ^{p^i}|_{D_{Rp^i}}$ with
character $\mu^{-p^i}$ by
\eqref{eq:generator-character-projection}.
Then $\gamma^R$ generates $\JJ^{Rp^i}|_{D_{Rp^i}}$ and
\[
                 g(\gamma^R)=\mu(g)^{-Rp^i}\gamma^R
                                  =\gamma^R\qquad(g\in G).
\]
This proves \eqref{eq:finite-conclusion}.
\end{proof}

\subsection{The first lengths of nontriviality}

We now assume the hypotheses of Theorem~\ref{thm:finite-boundary}
and suppose that \eqref{eq:finite-conclusion} fails for
every $0\leq i\leq e$.
To remove the factor $R$ from the exponent of $\JJ$, set
\[
 K_t=\ker\bigl(\Pic(D_t)\longrightarrow\Pic(D)\bigr).
\]
If $p^h\geq t$, the ideal $\mathcal I_t=\JJ/\JJ^t$
of $D$ in $D_t$ satisfies $\mathcal I_t^{p^h}=0$.
Consequently $(1+\mathcal I_t)^{p^h}=1$, and the units
sequence gives $p^hK_t=0$.
Since $\JJ|_D$ is trivial and $(R,p)=1$, it follows that
\[
 \JJ^{p^i}|_{D_t}\simeq\OO_{D_t}
 \quad\Longleftrightarrow\quad
 \JJ^{Rp^i}|_{D_t}\simeq\OO_{D_t}.
\]
In the right-hand condition a generating section can be
chosen invariant by \eqref{eq:generator-character-projection},
because $\mu^{Rp^i}=1$.
Thus our assumption is equivalent to
\[
 \JJ^{p^i}|_{D_{Rp^i}}\not\simeq\OO_{D_{Rp^i}}
                                      \qquad(0\leq i\leq e).
\]
Define
\begin{equation}\label{eq:orders-definition}
 \ell_i=\max\{\ell:\JJ^{p^i}|_{D_\ell}\simeq\OO_{D_\ell}\},
 \qquad n=\ell_0,\qquad 0\leq i\leq e.
\end{equation}
Choose an equivariant generating section $\sigma_i$ of
$\JJ^{p^i}|_{D_{\ell_i}}$, normalized by
$\sigma_i|_D=s^{p^i}$, and set
\[
                         \theta_i=\theta_{p^i,\ell_i}(\sigma_i).
\]
The maximality of $\ell_i$ implies $\theta_i\ne0$ for
every choice of $\sigma_i$.
Equations~\eqref{eq:power-triviality} and
\eqref{eq:orders-definition}, together with
Lemma~\ref{lem:obstruction-rules}(iii), give
\begin{equation}\label{eq:order-bounds}
 \begin{gathered}
 1\leq n<R<p,\qquad p\ell_{i-1}\leq\ell_i<Rp^i
                                      \quad(1\leq i\leq e),\\
 0\ne\theta_i\in V_{\mu^{\ell_i}}
                                      \quad(0\leq i\leq e).
 \end{gathered}
\end{equation}
Since $e\geq1$, Corollary~\ref{cor:one-direction} shows that
\[
                         V=\bigoplus_{j=1}^{R-1}V_{\mu^j}.
\]
In particular, every character occurring in $V$ is a
nontrivial power of $\mu$.

\begin{lemma}\label{lem:order-congruences}
There are one-dimensional subspaces $L_0,L_1\subset V$
and choices of $\sigma_0,\ldots,\sigma_e$ such that
\begin{equation}\label{eq:obstruction-parity-lines}
 V=L_0\oplus L_1,\qquad
 \theta_i\in L_{i\bmod 2}\setminus\{0\}
                                      \qquad(0\leq i\leq e).
\end{equation}
For these choices one has
\begin{align}
 p&\mid\beta+n,\label{eq:first-congruence}\\
 p^i&\mid\beta+\ell_{i-1}+\ell_i
                    \qquad(1\leq i\leq e).\label{eq:adjacent-congruence}
\end{align}
For every $0\leq i<j\leq e$, one also has
\begin{equation}\label{eq:later-congruence}
 \ell_j\equiv
 \begin{cases}
 \ell_i&\text{if }\theta_j\in k\theta_i,\\
 -\beta-\ell_i&\text{if }\theta_j\notin k\theta_i
 \end{cases}
                         \pmod {p^{i+1}}.
\end{equation}
\end{lemma}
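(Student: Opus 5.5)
The plan is to apply Lemma~\ref{lem:change-of-trivialization} repeatedly, with $\mathcal H=\JJ^{p^j}$, $r=\ell_j$, $\tau=\sigma_j$, and with the role of $(q,\ell,\sigma)$ played by $(p^i,\ell_i,\sigma_i)$ for $i<j$. Then $\eta=\theta_j$ up to the normalization $\tau|_D=s^{p^j}$, which matches the definition of $\theta_{p^j,\ell_j}$. The length hypothesis $r>\ell$ holds, since \eqref{eq:order-bounds} gives $\ell_j\geq p^{j-i}\ell_i>\ell_i$. We will fix $\beta$ and the $G$-invariant trivialization of $\MM\JJ^\beta$ from Theorem~\ref{thm:finite-boundary}, so that part (ii) of the lemma applies.

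\emph{Step 1: the congruences \eqref{eq:later-congruence}, by induction on $i$.} Assume the congruence modulo $p^{i}$ is known, i.e. $p^i$ divides $\ell_j-\ell_i$ or $\beta+\ell_i+\ell_j$ according to whether $\theta_j\in k\theta_i$. For $i=0$ this is vacuous, because $q=1$. If $\theta_j\in k\theta_i$, part (i) of the lemma with $q=p^i$ upgrades the divisibility to $p^{i+1}\mid\ell_j-\ell_i$. Otherwise $\theta_j\smile\theta_i\ne0$ by the perfect pairing, and part (ii) gives $p^{i+1}\mid\beta+\ell_i+\ell_j$. To start the induction modulo $p^i$, we compare with the earlier indices $i'<i$ already treated. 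We have $\ell_j\equiv\pm$ expressions in $\ell_{i'}$ modulo $p^{i'+1}$, and likewise for $\ell_i$. Combining the two relations for the same $i'$ then gives the needed congruence modulo $p^{i'+1}$, and taking $i'=i-1$ gives it modulo $p^i$. This bookkeeping is exactly where the parity structure of Step~2 is needed. I expect it to be the main obstacle: one must check that the two alternatives are always consistent with the line in which $\theta_i$ sits, and not cross-mixed.

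\emph{Step 2: the parity lines.} Since every character of $V$ is a nontrivial power of $\mu$ and $G$ is faithful on $V$ with $\dim V=2$, we first decide whether $V$ is isotypic. If $V=V_\chi$ for a single $\chi$, then all the $\theta_i$ have character $\chi$, so $\mu^{\ell_i}=\chi$ for all $i$. In this case we use the unit change \eqref{eq:unit-obstruction-change}, via Lemma~\ref{lem:obstruction-rules}(iv) applied to $u=1+b\sigma_{i-1}^{d}$ as in Step~2 of Lemma~\ref{lem:change-of-trivialization}, to move $\theta_i$ off the line $k\theta_{i-1}$. When $\theta_i\in k\theta_{i-1}$, this adjustment is possible as long as the relevant $d$ is prime to $p$; if it is not, part (i) forces a divisibility that combines with the character condition to contradict $\ell_i<Rp^i$. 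After this adjustment we choose the lines $L_0=k\theta_0$ and $L_1=k\theta_1$ and show inductively that $\theta_i\in L_{i\bmod 2}$, again adjusting by units when necessary. If instead $V=V_{\chi_1}\oplus V_{\chi_2}$ with $\chi_1\ne\chi_2$, the lines are the two eigenlines. The fact that consecutive $\theta$'s lie in different lines comes from applying Lemma~\ref{lem:change-of-trivialization}(i) to $i-1<i$: if they lay in the same line, then $p^{i}\mid\ell_i-\ell_{i-1}$, and together with $R\mid\ell_i-\ell_{i-1}$ this contradicts $p\ell_{i-1}\le\ell_i<Rp^i$ (and similarly for $\mu^{\ell_i}=\mu^{\ell_{i-2}}$). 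This argument mirrors Step~2 of Corollary~\ref{cor:one-direction}.

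\emph{Step 3: the congruences \eqref{eq:first-congruence} and \eqref{eq:adjacent-congruence}.} These follow from Step~1 and Step~2. Since $\theta_i\notin k\theta_{i-1}$, applying \eqref{eq:later-congruence} to the pair $(i-1,i)$ gives $p^{i}\mid\beta+\ell_{i-1}+\ell_i$. For \eqref{eq:first-congruence}, we take $r=n$ against itself: we apply Proposition~\ref{prop:canonical-boundary} with $q=1$, $\ell=n$ and $r=0$ to a lift of $\theta_1$ (or to any class with $\alpha_0\notin k\theta_0$) that has zero boundary. Such a lift is supplied by Lemma~\ref{lem:short-picard}, which yields $\partial_{0,n}=0$ on the image, so we get $(\beta+n)\,\alpha_0\smile\theta_0=0$ and hence $p\mid\beta+n$.
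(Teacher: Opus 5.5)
Most of your plan coincides with the paper's proof. The shared ingredients are:
\begin{itemize}
\item the base case, from Lemma~\ref{lem:change-of-trivialization} with $q=1$;
\item the upgrade from $p^i$ to $p^{i+1}$, via parts (i) and (ii) of that lemma with $(\sigma,q,\ell)=(\sigma_i,p^i,\ell_i)$;
\item the exclusion of $k\theta_i=k\theta_{i-1}$, through the impossible divisibility $Rp^i\mid\ell_i-\ell_{i-1}$;
\item \eqref{eq:first-congruence}, from Proposition~\ref{prop:canonical-boundary} with $r=0$, $q=1$, $\ell=n$, applied to a class on $D_n$ that lifts to $D_{n+1}$. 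Lemma~\ref{lem:short-picard} applied to $D_{n+1}$ supplies such a class, so this part is fine.
\end{itemize}
The two-character case of your Step~2 is also fine, since every $\theta_j$ then automatically lies on one of the two eigenlines. However, there is a genuine gap when $G$ acts on $V$ by a scalar.

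\textbf{Where the scalar case goes wrong.} The unit change you propose, $\tau\mapsto(1+b\sigma_{i-1}^d)\tau$, alters the obstruction by $bd\,\theta_{i-1}$ (see \eqref{eq:scalar-unit-change}). So it cannot move $\theta_i$ off $k\theta_{i-1}$. What it really proves is $\theta_i\notin k\theta_{i-1}$, which you already get from the $Rp^i$ argument. The actual danger is that some $\theta_j$ with $j\geq2$ lies on a third line, in neither $L_0=k\theta_0$ nor $L_1=k\theta_1$.

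For such a choice your bookkeeping cannot close, and in fact \eqref{eq:later-congruence} is false. Its instances $(0,j)$ and $(1,j)$ are both in the second case, and together they give $\ell_1\equiv n\pmod p$. Since $\theta_0,\theta_1$ have equal characters, $R\mid\ell_1-n$, so $Rp\mid\ell_1-n$, contradicting $0<\ell_1-n<Rp$. Hence the choice of the $\sigma_j$ is part of the content of the lemma, and ``adjusting by units when necessary'' must be made precise.

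\textbf{The missing normalization.} The paper performs it before the induction, using only mod-$p$ information. Suppose $\theta_j=a_j\theta_0+b_j\theta_1$ with $b_j\ne0$.
\begin{itemize}
\item The $q=1$ case gives $p\mid\beta+n+\ell_j$, hence $p\mid\ell_j$ by \eqref{eq:first-congruence}.
\item So $d_j=\ell_j-n$ is prime to $p$, because $0<n<p$.
\item Equality of characters gives $R\mid d_j$. Thus $u_j=1-(a_j/d_j)\sigma_0^{d_j}$ is an invariant unit on $D_{\ell_j}$ restricting to $1$ on $D$.
\item The section $u_j\sigma_j$ has obstruction $b_j\theta_1\in L_1$.
\end{itemize}

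Once every $\theta_j$ lies in $L_0\cup L_1$, the bookkeeping you flagged is immediate.
\begin{itemize}
\item If $\theta_j\in k\theta_i$, then $\theta_j\notin k\theta_{i-1}$. Subtract $\beta+\ell_{i-1}+\ell_i\equiv0$ from $\beta+\ell_{i-1}+\ell_j\equiv0\pmod{p^i}$.
\item Otherwise $\theta_j\in k\theta_{i-1}$. Add $\ell_j-\ell_{i-1}\equiv0$ to $\beta+\ell_{i-1}+\ell_i\equiv0\pmod{p^i}$.
\end{itemize}
The alternation $\theta_i\in L_{i\bmod 2}$ then falls out of the induction, and the apparent circularity between your Steps~1 and~2 disappears.
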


\begin{proof}
\noindent\emph{Step 1. The congruences modulo $p$.}
The ideal $\JJ/\JJ^n$ of $D$ in $D_n$ has $p$-th
power zero, since $n<p$.
Lemma~\ref{lem:short-picard} therefore gives
\[
                 H^1(D_n,\OO_{D_n})\twoheadrightarrow V.
\]
Choose $\alpha$ in this group with
$\alpha|_D\smile\theta_0\ne0$.
Apply Propositions~\ref{prop:first-boundary} and
\ref{prop:canonical-boundary} with $r=0$, $q=1$ and
$\ell=n$. They give
\[
 0=\partial_{0,n}(\alpha)
                    =(\beta+n)(\alpha|_D)\smile\theta_0.
\]
Thus $p\mid\beta+n$.

For $j\geq1$, use Lemma~\ref{lem:change-of-trivialization}
with $\sigma=\sigma_0$, $q=1$, $\ell=n$, $r=\ell_j$,
$\mathcal H=\JJ^{p^j}$ and $\tau=\sigma_j$.
The class denoted by $\eta$ there is precisely $\theta_j$.
Its two conclusions are
\[
 \begin{cases}
 p\mid\ell_j-n,&\theta_j\in k\theta_0,\\
 p\mid\beta+n+\ell_j,&\theta_j\notin k\theta_0.
 \end{cases}
\]
This is \eqref{eq:later-congruence} for $i=0$.

\smallskip
\noindent\emph{Step 2. Choice of the two lines.}
First, $\theta_1\notin k\theta_0$.
Indeed, proportionality would imply both
$p\mid\ell_1-n$ by Step~1 and $R\mid\ell_1-n$ by
$\theta_j\in V_{\mu^{\ell_j}}$.
It would follow that
\[
                  Rp\mid\ell_1-n,\qquad 0<\ell_1-n<Rp.
\]
This is impossible. Set
\[
                      L_0=k\theta_0,\qquad L_1=k\theta_1.
\]
These are $G$-stable lines and $V=L_0\oplus L_1$.
If their characters differ, every eigenvector $\theta_j$
already belongs to $L_0\cup L_1$.

Suppose their characters are equal. The action of $G$ on
$V$ is then scalar. Fix $j\geq2$ with $\theta_j\notin L_0$,
and write
\[
 \theta_j=a_j\theta_0+b_j\theta_1,\qquad b_j\ne0,
                         \qquad d_j=\ell_j-n.
\]
Step~1 and $p\mid\beta+n$ give $p\mid\ell_j$.
Hence $p\nmid d_j$, since $0<n<p$.
Equality of characters also gives $R\mid d_j$.
Define
\[
 u_j=1-\frac{a_j}{d_j}\sigma_0^{d_j}
                   \in H^0(D_{\ell_j},\OO_{D_{\ell_j}}^*),
                         \qquad \sigma'_j=u_j\sigma_j.
\]
Here $\sigma_0^{d_j}$ is viewed as a function through
\[
 \JJ^{d_j}/\JJ^{d_j+n}
       =\JJ^{d_j}/\JJ^{\ell_j}\hookrightarrow\OO_{D_{\ell_j}}.
\]
It is nilpotent and has character $\mu^{-d_j}=1$.
Thus $u_j$ is invariant, and $\sigma'_j$ has the same
character and the same restriction to $D$ as $\sigma_j$.
Formula~\eqref{eq:scalar-unit-change} gives
\[
 \theta_{p^j,\ell_j}(\sigma'_j)
          =\theta_j-a_j\theta_0=b_j\theta_1\in L_1\setminus\{0\}.
\]
Make this replacement for each such $j$, retaining all
generators whose classes belong to $L_0$.
Every $\theta_j$ now lies in $(L_0\cup L_1)\setminus\{0\}$.
The conclusions of Step~1 remain valid, since their proof
applies to every choice of the generating sections.

\smallskip
\noindent\emph{Step 3. Induction on the power of $p$.}
Let $1\leq i\leq e$, and assume that
\eqref{eq:later-congruence} has been proved with every first
index less than $i$.
If $k\theta_i=k\theta_{i-1}$, its instance with indices
$(i-1,i)$ gives $p^i\mid\ell_i-\ell_{i-1}$.
Equality of characters gives $R\mid\ell_i-\ell_{i-1}$.
Consequently
\[
        Rp^i\mid\ell_i-\ell_{i-1},\qquad
                     0<\ell_i-\ell_{i-1}<Rp^i,
\]
which is impossible. Therefore
\[
 k\theta_i\ne k\theta_{i-1},\qquad
                         p^i\mid\beta+\ell_{i-1}+\ell_i.
\]
The second assertion is the other case of
\eqref{eq:later-congruence} for $(i-1,i)$.
The first assertion, for successive $i$, proves
\eqref{eq:obstruction-parity-lines}.

Fix $j>i$. There are exactly two possible lines for
$\theta_j$, so the induction hypothesis gives the
following two cases.
If $k\theta_j=k\theta_i$, then
\[
 \beta+\ell_{i-1}+\ell_j\equiv
 \beta+\ell_{i-1}+\ell_i\equiv0\pmod{p^i}.
\]
Subtracting gives $p^i\mid\ell_j-\ell_i$.
Apply Lemma~\ref{lem:change-of-trivialization}(i) with
\[
 \sigma=\sigma_i,\quad q=p^i,\quad \ell=\ell_i,\quad
 r=\ell_j,\quad \mathcal H=\JJ^{p^j},\quad\tau=\sigma_j.
\]
It follows that $p^{i+1}\mid\ell_j-\ell_i$.

If $k\theta_j\ne k\theta_i$, then
$k\theta_j=k\theta_{i-1}$, and
\[
 \ell_j-\ell_{i-1}\equiv
 \beta+\ell_{i-1}+\ell_i\equiv0\pmod{p^i}.
\]
Adding gives $p^i\mid\beta+\ell_i+\ell_j$.
For the same data, part~(ii) of
Lemma~\ref{lem:change-of-trivialization} applies because
$\theta_j\smile\theta_i\ne0$.
Thus $p^{i+1}\mid\beta+\ell_i+\ell_j$.
These two cases prove \eqref{eq:later-congruence} for $i$
and complete the induction.
\end{proof}

\subsection{Comparison of two values of the connecting map}

Keep the generators of Lemma~\ref{lem:order-congruences},
and set $v_i=n\theta_i$.
Since $0<n<p$, \eqref{eq:obstruction-parity-lines} gives
$v_{i-1}\smile v_i\ne0$.
We next produce a class whose connecting image is this
nonzero cup product. Its construction is a product of a
class extending to $D_{\ell_i+1}$ and the section
$\sigma_i^n$ on $D_{\ell_i}$.

\begin{proposition}\label{prop:comparison-class}
For $1\leq i\leq e$, there exists
\[
 \alpha_i\in H^1(\JJ^{\ell_{i-1}}/\JJ^{\ell_{i-1}+\ell_i})
\]
such that $(\alpha_i)_0=v_{i-1}$ and
\begin{equation}\label{eq:comparison-class-boundary}
               \partial_{\ell_{i-1},\ell_i}(\alpha_i)
                                      =v_{i-1}\smile v_i.
\end{equation}
\end{proposition}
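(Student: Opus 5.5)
The plan is to realize $\alpha_i$ as a product $\alpha_i=y\,\sigma_i^{\,n}$. Here $\sigma_i^n$ is the section of $\JJ^{np^i}|_{D_{\ell_i}}$, and $y$ is a degree-one class on the \emph{longer} thickening $D_{\ell_i+1}$, with values in $\JJ^{m}$ for
\[
m=\ell_{i-1}-np^i ,
\]
such that $y_0=s^{-m}(y|_D)=v_{i-1}$. Note that $m$ may be negative; fractional powers of $\JJ$ are allowed since $\JJ$ is invertible. Restricting $y$ to $D_{\ell_i}$ and multiplying by $\sigma_i^n$ lands in $H^1(D_{\ell_i},\JJ^{\ell_{i-1}}|_{D_{\ell_i}})=H^1(\JJ^{\ell_{i-1}}/\JJ^{\ell_{i-1}+\ell_i})$, as required. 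The normalizations $\sigma_i|_D=s^{p^i}$ and $y_0=v_{i-1}$ give $(\alpha_i)_0=v_{i-1}$.

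The boundary is then computed by Lemma~\ref{lem:boundary-product} for $Z=D_{\ell_i+1}$ and $Z_0=D_{\ell_i}$, with $\mathcal L=\JJ^m$ and $\mathcal N=\JJ^{np^i}$. The term $\delta^1_{\JJ^m}(y|_{D_{\ell_i}})$ vanishes because $y$ is defined on $D_{\ell_i+1}$. For the other term, Lemma~\ref{lem:obstruction-rules}(ii) with $b=-n$ applied to $\sigma_i^{-1}$ (equivalently its displayed consequence $s^{-q-\ell}\delta^0_{\JJ^q}(\sigma)=-\theta$, raised to the $n$-th power) gives
\[
s^{-np^i-\ell_i}\,\delta^0_{\JJ^{np^i}}(\sigma_i^n)=-n\theta_i .
\]
After multiplying by $s^{-\ell_{i-1}-\ell_i}$, the product rule yields
\[
\partial_{\ell_{i-1},\ell_i}(\alpha_i)=-\,y_0\smile(-n\theta_i)=v_{i-1}\smile v_i,
\]
which is \eqref{eq:comparison-class-boundary}. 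This step is routine bookkeeping of powers of $s$.

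Everything therefore reduces to constructing $y$, i.e.\ to showing that $v_{i-1}=n\theta_{i-1}$ lies in the image of
\[
H^1(D_{\ell_i+1},\JJ^m|_{D_{\ell_i+1}})\longrightarrow H^1(D,\JJ^m|_D)\xrightarrow{s^{-m}}V .
\]
This is the main obstacle: there is no general surjectivity of restriction at lengths $\geq p$, which is exactly why Appendix~\ref{app:witt} is invoked. I would start from the level $i-1$ data, namely the class attached to $\sigma_{i-1}^n$ on $D_{\ell_{i-1}}$, whose obstruction is $-n\theta_{i-1}=-v_{i-1}$, together with the relation $\ell_i\ge p\ell_{i-1}$ from \eqref{eq:order-bounds}. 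I would then produce the longer class by a $p$-th power construction. Lemma~\ref{lem:witt-lift}, applicable since $\theta_{i-1}$ has character a power of $\mu$ and the relevant lengths are $\le p(p-n)$, inductively for higher $i$ via \eqref{eq:power-triviality}, lets me prescribe a chosen additive cocycle $z$ as the first coordinate of a Witt cocycle $(z,b)$. I would then use the additive cocycle
\[
z^p\,(\text{times a suitable power of }\sigma)+p\text{-correction built from }b ,
\]
i.e.\ $F_W$ followed by identification of the second coordinate via $(x,a)-(x,b)=V_W(a-b)$. This should produce a genuine cocycle on the $p$-fold longer thickening, with coefficients in the correct power of $\JJ$ (using $\JJ^{t}\mapsto\JJ^{pt}$), whose reduction to $D$ is $v_{i-1}$ up to the twist by $s^{-m}$.

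The delicate points I expect to check are threefold. First, the cocycle condition for the $p$-th power class must hold modulo $\JJ^{\ell_i+1}$, using \eqref{eq:exponential-witt-comparison}. Second, the twist $m$ must match $\ell_{i-1}-np^i$. Third, the character must be $\chi$, so that after averaging with \eqref{eq:generator-character-projection}-style projection the class is equivariant. If the exponents fail to match exactly, I would correct them by multiplying by a power of $\sigma_{i-1}$ or of $\sigma_i$ on the appropriate thickening. Any resulting extra terms in the leading coefficient would then be removed using $\theta_{i-1}\smile\theta_{i-1}=0$ and the congruences of Lemma~\ref{lem:order-congruences}.
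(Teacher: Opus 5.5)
Your reduction is exactly the paper's. The paper also sets $\alpha_i=(h_i|_{D_{\ell_i}})\,\sigma_i^n$ for a class $h_i\in H^1(D_{\ell_i+1},\JJ^{\rho_i}|_{D_{\ell_i+1}})$ with $\rho_i=\ell_{i-1}-np^i$ and $s^{-\rho_i}(h_i|_D)=v_{i-1}$. It then evaluates the boundary by Lemmas~\ref{lem:boundary-product} and~\ref{lem:obstruction-rules}(ii), as you do. However, the existence of $h_i$ (your $y$) is the entire content of the proposition, and the paper obtains it from Lemma~\ref{lem:extended-class}. Your sketch does not establish it, and its concrete ingredients would fail, for three reasons.

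(1) The exponent cannot be corrected by twisting on $D_{\ell_i+1}$. Because the Picard kernel of $D_t\to D$ is $p$-primary, $\JJ^a|_{D_{\ell_i+1}}$ is trivial only if $v_p(a)\geq i+1$. The shift from the natural class $dP_{i-1}$ (values in $\JJ^{\ell_{i-1}-np^{i-1}}$) to $\rho_i$ is $n(p-1)p^{i-1}$. The shift from its $p$-th power is $(p-1)\ell_{i-1}$, with $0<\ell_{i-1}<p^i$. Both have $p$-adic valuation at most $i-1$. Also, $\sigma_{i-1}$ and $\sigma_i$ exist only on $D_{\ell_{i-1}}$ and $D_{\ell_i}$, so multiplying by them loses exactly the extension to $D_{\ell_i+1}$ that makes $\delta^1$ vanish.

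(2) A $p$-th power $z^p$ has leading coefficient $F(v_{i-1})$, the $p$-semilinear Frobenius image in $H^1(D,\OO_D)$, not $v_{i-1}$.

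(3) Lemma~\ref{lem:witt-lift} treats only the $\mu^n$-eigenspace and lengths $L\le p(p-n)$. Every character of $V$ is a power of $\mu$, so that is not the right criterion. For $i\geq 2$ the class of $dP_{i-1}$ has character $\mu^{np^{i-1}}$, in general different from $\mu^n$. The length needed, $\rho_i+\ell_i+1$, can be of size $(R-n)p^i>p(p-n)$. Finally, \eqref{eq:power-triviality} is a statement about line bundles and gives no induction for Witt lifts.

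The missing idea is a divided $p$-th power rather than a $p$-th power. Let $P_{i-1}$ be a meromorphic $0$-cochain lifting $\sigma_{i-1}^{-n}$, as in Lemma~\ref{lem:meromorphic-cochains}. The second Witt coordinate of $d[P_{i-1}]$ is congruent to $P_{i-1,a}^{p-1}(P_{i-1,b}-P_{i-1,a})$ modulo $\JJ^{\rho_i+1}$. It has order exactly $\rho_i$ and leading class $v_{i-1}$, so it achieves the twist without trivializing anything on $D_{\ell_i+1}$.

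To make it a cocycle to high order, the paper subtracts $F_W^{i-1}\zeta$. Here $\zeta=(dP_0,\eta)$ is a single regular Witt lift of $dP_0$ on $D_{p(p-n)}$, which is the only use of Lemma~\ref{lem:witt-lift}. First coordinates are matched by adding $d[T_{i-1}]$ with $T_{i-1}=P_0^{p^{i-1}}-P_{i-1}$. For $i\geq 2$, the pole bound $\ord_{\JJ}(T_{i-1})\geq -np^{i-2}$ (a separate argument using maximality of the $\ell_j$) keeps $\operatorname{pr}_2 d[T_{i-1}]$ out of the leading term.

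The difference is $V_W(b_i)$, with $db_i=-\epsilon^{p^{i-1}}$ having entries in $\JJ^{p^i(p-n)}$. Since $p^i(p-n)-\rho_i=p^{i+1}-\ell_{i-1}>\ell_i$, the cochain $[b_i]$ defines a class on $D_{\ell_i+1}$. Citing Lemma~\ref{lem:extended-class}, or carrying out this construction, closes your argument.
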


\begin{proof}
\noindent\emph{Step 1. The class and its restriction.}
Set $\rho_i=\ell_{i-1}-np^i$.
Lemma~\ref{lem:extended-class}, proved in
Appendix~\ref{app:witt}, gives
\[
 h_i\in H^1(D_{\ell_i+1},\JJ^{\rho_i}|_{D_{\ell_i+1}}),
                         \qquad h_i|_D=s^{\rho_i}v_{i-1}.
\]
Define
\[
 h'_i=h_i|_{D_{\ell_i}},\qquad
 \alpha_i=h'_i\sigma_i^n
       \in H^1(D_{\ell_i},\JJ^{\ell_{i-1}}|_{D_{\ell_i}}).
\]
The line bundle in the target follows from
$\rho_i+np^i=\ell_{i-1}$. Moreover,
\[
 (\alpha_i)_0
   =s^{-\ell_{i-1}}(s^{\rho_i}v_{i-1})s^{np^i}=v_{i-1}.
\]

\smallskip
\noindent\emph{Step 2. The connecting image.}
Use the restriction sequences from $D_{\ell_i+1}$ to
$D_{\ell_i}$. The class $h'_i$ lifts to $h_i$, so
\[
                         \delta^1_{\JJ^{\rho_i}}(h'_i)=0.
\]
Lemma~\ref{lem:obstruction-rules}(ii) gives the other factor's
connecting image:
\[
 s^{-np^i-\ell_i}\delta^0_{\JJ^{np^i}}(\sigma_i^n)
      =-n\theta_i=-v_i.
\]
Apply Lemma~\ref{lem:boundary-product} to $h'_i\sigma_i^n$
and multiply the target by $s^{-\ell_{i-1}-\ell_i}$.
This gives
\[
 \partial_{\ell_{i-1},\ell_i}(\alpha_i)
       =0-v_{i-1}\smile(-v_i)=v_{i-1}\smile v_i.
\]
\end{proof}

\begin{proof}[Proof of Theorem~\ref{thm:finite-boundary}]
The case $R=1$ is immediate from $\JJ|_D=\OO_Ds$.
Corollary~\ref{cor:one-direction} proves the assertion
when the nontrivial eigenspaces have total dimension
at most one.
Otherwise, suppose that \eqref{eq:finite-conclusion} fails
for every $0\leq i\leq e$, and choose $\ell_i,\sigma_i$
as in \eqref{eq:orders-definition} and
Lemma~\ref{lem:order-congruences}.

\noindent\emph{Step 1. Comparison with the canonical formula.}
By \eqref{eq:adjacent-congruence}, the number
\[
                     t_i=\frac{\beta+\ell_{i-1}+\ell_i}{p^i}
\]
is an integer.
Apply Proposition~\ref{prop:canonical-boundary} to
$\alpha_i$ with
\[
 q=p^i,\quad \ell=\ell_i,\quad r=\ell_{i-1},\quad
                    \theta=\theta_i=v_i/n.
\]
Together with Proposition~\ref{prop:comparison-class},
this gives
\[
 v_{i-1}\smile v_i
   =\partial_{\ell_{i-1},\ell_i}(\alpha_i)
   =\frac{t_i}{n}\,v_{i-1}\smile v_i.
\]
By \eqref{eq:obstruction-parity-lines} this cup product
is nonzero. Since $0<n<R<p$, the integer $n$ is invertible
in $k$. Therefore
\begin{equation}\label{eq:coefficient-congruence}
                              t_i\equiv n\pmod p.
\end{equation}

\smallskip
\noindent\emph{Step 2. Divisibility by $R$.}
The nonzero class $v_{i-1}\smile v_i$ generates
$H^2(D,\OO_D)=H^2(A,\OO_A)$ and has character
$\mu^{\ell_{i-1}+\ell_i}$.
The equivariant Serre-duality pairing
\[
 H^0(A,\omega_A)\otimes H^2(A,\OO_A)
                       \longrightarrow k
\]
has invariant target. By
Lemma~\ref{lem:quotient-duality},
$\omega_A\simeq\MM|_A$ equivariantly, so
$H^0(A,\MM|_A)$ has character $\mu^{-\ell_{i-1}-\ell_i}$.
Since $s^\beta|_A$ generates $\JJ^\beta|_A$ with character
$\mu^{-\beta}$, the character on
$H^0(A,\MM\JJ^\beta|_A)$ is
$\mu^{-\beta-\ell_{i-1}-\ell_i}$.
The assumed invariant generating section forces
\[
 R\mid\beta+\ell_{i-1}+\ell_i=p^it_i.
\]
As $(R,p)=1$, one concludes that $R\mid t_i$.

\smallskip
\noindent\emph{Step 3. The contradiction.}
Using \eqref{eq:finite-bound} and \eqref{eq:order-bounds},
we obtain
\[
 0<t_e<
 \frac{(p-R)p^{e-1}+Rp^{e-1}+Rp^e}{p^e}=R+1.
\]
Because $t_e$ is a positive integer divisible by $R$,
this implies $t_e=R$.
Equation~\eqref{eq:coefficient-congruence} would then give
$R\equiv n\pmod p$, which contradicts $0<n<R<p$.

It follows that $\JJ^{p^i}|_{D_{Rp^i}}$ is trivial for
some $0\leq i\leq e$.
Choose its generator with character $\mu^{-p^i}$ using
\eqref{eq:generator-character-projection}.
Its $R$-th power is invariant and gives
\eqref{eq:finite-conclusion}.
\end{proof}

\section{The log canonical relation and semiampleness}
\label{sec:canonical-relation}

In this section, we prove Corollary \ref{cor:local-canonical-relation}, which is the main technical result of this paper.
Theorem~\ref{thm:finite-boundary} assumes a specified
isomorphism $\MM\JJ^\beta\simeq\OO_{\YY}$ with an integer
$\beta>0$. We will remove this assumption by Subsections \ref{Subsection:inv-coh}, \ref{Subsection:compatible-root} and \ref{Subsection:tame-covers}.

\begin{notation}\label{not:principal-relation}
Let $X$ be a normal projective threefold, and let $S$
be a prime $\Q$-Cartier divisor. Suppose its formal
completion admits a finite quasi-\'{e}tale quotient
\[
                         \rho:\YY\longrightarrow\widehat X_S
\]
as in Lemma~\ref{lem:invariant-descent}.
Write $A=(\rho^{-1}S)_{\mathrm{red}}$ and
\[
 \II_j=\OO_{\YY}(-jA),\quad
 \JJ=\II_c,\quad D_\ell=V(\JJ^\ell),\quad
 D=D_1,\quad\MM=\omega_{\YY}(A),\quad\LL=\MM\JJ.
\]
Assume we are in
Setting~\ref{not:equivariant}.
The quotient group $G$ is cyclic, the character
$\mu^{-1}$ on $\JJ|_D$ is faithful, and
$1<R=|G|\leq12$.
Both characters of $V=H^1(D,\OO_D)$ are nontrivial.
In this section no integer $\beta$ or isomorphism
$\MM\JJ^\beta\simeq\OO_{\YY}$ is part of the setting.
\end{notation}

The bound $R\leq12$ is supplied by
Proposition~\ref{prop:bounded-action} for the covers used in
Corollary~\ref{cor:local-canonical-relation}. In particular,
$p>42$ gives $2R<p$. We will use this inequality to work
with square-zero extensions in
Lemma~\ref{lem:canonical-cyclic-subgroup}.

\begin{proposition}\label{prop:primitive-relation}
In Setting~\ref{not:principal-relation}, choose a nonzero
rational top form $\kappa$ on $X$ and set
$K_X=\divisor(\kappa)$. Suppose that
\begin{equation}\label{eq:original-relation}
 \divisor(f)|_U=\bigl(q(K_X+S)-bS\bigr)|_U,\qquad
 f\in k(X)^*,\qquad q,b\in\Z_{>0}.
\end{equation}
on an open neighbourhood $U$ of $S$.
Write $q=p^hq_0$ with $(q_0,p)=1$.
Then either $\JJ^{Rp^i}|_{D_{Rp^i}}$ has an invariant
generating section for some $i\geq0$, or
\begin{equation}\label{eq:primitive-relation}
 \begin{gathered}
 f=u^{p^h}\text{ for some }u\in k(X)^*,\qquad p^h\mid b,\\
 \divisor(u)|_U=\bigl(q_0(K_X+S)-(b/p^h)S\bigr)|_U.
 \end{gathered}
\end{equation}
\end{proposition}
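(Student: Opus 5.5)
The plan is to induct on $h$; the case $h=0$ is the statement itself with $u=f$. For $h\geq1$ the induction step asks for one of two things: either an invariant generating section of some $\JJ^{Rp^i}|_{D_{Rp^i}}$, or $p\mid b$ together with $f\in k(X)^{*p}$. Given the latter, $u_1=f^{1/p}$ satisfies $\divisor(u_1)|_U=((q/p)(K_X+S)-(b/p)S)|_U$, and we apply the induction hypothesis to $(u_1,q/p,b/p)$.

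\emph{Step 1: translate the relation to $\YY$.} Since $\rho$ is quasi-\'etale and $\rho^*S=A$ is reduced, we have $\rho^*(K_X+S)=K_{\YY}+A$. Hence $g=f\kappa^{q}$ is a generating section of the invertible sheaf $\MM^{q}\otimes\II_{-b}$ on $\YY$. Restricting to $A$ and using $\MM|_A\simeq\OO_A$, we get $\II_{b}|_A\simeq\OO_A$ with $bA$ Cartier. Lemma~\ref{lem:section-period} then gives $c\mid b$; write $b=cb'$. Thus $g$ trivializes $\MM^{q}\JJ^{-b'}$.

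\emph{Step 2: the case $p\nmid b'$.} Set $\mathcal N=\MM^{q/p}$, which is trivial on $D$ by Lemma~\ref{lem:cohomology-and-picard}. Then $\JJ^{b'}\simeq\mathcal N^{p^h}$. Let $x$ be the class of $\JJ|_{D_{p^h}}$ in $K_{p^h}=\ker(\Pic(D_{p^h})\to\Pic(D))$. As in \S5.2, $K_{p^h}$ is killed by $p^h$, so $p^hx=0$. On the other hand $b'x$ is the class of $\mathcal N^{p^h}|_{D_{p^h}}$, which vanishes because $(1+\JJ/\JJ^{p^h})^{p^h}=1$. Since $\gcd(b',p^h)=1$, we conclude $x=0$, so $\JJ|_{D_{p^h}}$ is trivial. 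Because $p>R$ we have $p^h\geq R$, and averaging as in \eqref{eq:generator-character-projection} gives a generator of character $\mu^{-1}$. Its $R$-th power is an invariant generating section of $\JJ^R|_{D_R}$, which is the first alternative with $i=0$.

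\emph{Step 3: the case $p\mid b'$ (so $p\mid b$).} Now $g$ trivializes $\mathcal P^{p}$ with $\mathcal P=\MM^{q/p}\JJ^{-b'/p}$. I want to show that $g$, or equivalently $f$, is a $p$-th power in the total ring of fractions of $\YY$. Locally $g=e_a n_a^p$ for generators $n_a$ of $\mathcal P$ and units $e_a$. The task is to adjust the $n_a$ so that the $e_a$ become $p$-th powers, i.e.\ to kill the class of the closed form $\operatorname{dlog}e_a$. I expect this to be the main obstacle. My first attempt is to run the obstruction calculus of \S4: the differences of the $e_a$ modulo $p$-th powers give classes in $H^1$ of the layers $\JJ^r/\JJ^{r+1}$. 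The boundary formulas (Propositions~\ref{prop:first-boundary} and~\ref{prop:canonical-boundary}, the latter with the relation $\MM^{q}\simeq\JJ^{b'}$ replacing $\beta$) should show that a nonvanishing obstruction forces a congruence like those in Lemma~\ref{lem:change-of-trivialization}. That congruence would in turn produce an invariant trivialization of some $\JJ^{Rp^i}|_{D_{Rp^i}}$, as in Corollary~\ref{cor:one-direction}. If every obstruction vanishes, completeness lets us pass to the limit, giving $g=w^p$ on $\YY$, hence $f=\kappa^{-q}w^p$ is a $p$-th power in the fraction ring of $\YY$.

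\emph{Step 4: descent.} Injectivity of Frobenius makes the $p$-th root unique, so it is $G$-invariant. It therefore lies in the fraction ring of $\widehat X_S$. Now choose a closed point $x\in S$ at which $X$ is smooth; such points exist since $X$ is normal and $S$ is a divisor. The completion $\widehat\OO_{X,x}$ receives $\OO_{\widehat X_S}$, so $f\in\widehat K^{p}$. Lemma~\ref{lem:field-powers} then gives $f\in k(X)^{*p}$. Combined with $p\mid b$ from Step~3, this completes the induction step.
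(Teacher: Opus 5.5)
Your Step~2 is sound once two slips are fixed. The pulled-back section is $\rho^*(\kappa^q/f)$, which trivializes $\MM^q\II_b$, so $\MM^q\simeq\JJ^{-b'}$. You also need $\mathcal N=\MM^{q_0}$, not $\MM^{q/p}$. With these fixes it gives a shorter route to $p^h\mid b$ than the paper's, which derives this from Lemma~\ref{lem:canonical-cyclic-subgroup}. Indeed $\MM^q=(\MM^{q_0})^{p^h}$ is trivial on $D_{p^h}$, hence so is $\JJ^{b'}$. The criterion $\JJ^a|_{D_N}\simeq\OO_{D_N}\Leftrightarrow N\leq\ell_{v_p(a)}$ then forces $v_p(b')\geq h$ unless the first alternative holds. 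Your Step~4 (the unique, hence invariant, $p$-th root, followed by Lemma~\ref{lem:field-powers}) is also fine.

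The gap is Step~3, which is the real content of the proposition and which you only sketch. The mechanism you propose does not apply. Proposition~\ref{prop:canonical-boundary} needs a trivialization of $\MM\JJ^\beta$ with $\MM$ to the first power, because its proof dualizes against $\omega_{D_\ell}$ through $\phi_\ell$. A relation $\MM^q\simeq\JJ^{-b'}$ with $p\mid q$ determines $\MM|_{D_N}$ only up to $p^h$-torsion in $\ker(\Pic(D_N)\to\Pic(D))$. Removing exactly that hypothesis is the point of Section~\ref{sec:canonical-relation}.

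Two ingredients are missing.

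\emph{Triviality of the root bundle.} You need the candidate root $\mathcal P$ to be trivial on every $D_{Q_i}$, $Q_i=Rp^i$. The paper gets this from Lemma~\ref{lem:canonical-cyclic-subgroup}, which shows non-equivariantly that $\MM|_{D_{\ell_e+1}}\simeq\JJ^{\sigma_e-1}|_{D_{\ell_e+1}}$. The proof is an invariant cohomology count: $\chi_G(\JJ^{uQ}/\JJ^{(u+1)Q})=0$, invariant $H^0$ vanishes by the section filtration, and invariant $H^2$ is counted by duality through $\LL\JJ^{-N}|_{D_N}\to\omega_{D_N}$. If the lemma failed, this count would make the connecting maps isomorphisms, contradicting the nonzero class coming from $\JJ^{pQ}$; this is where $p>2R$ is used. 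The congruence $q(\sigma_e-1)+d\equiv0\pmod{p^{e+1}}$ then trivializes $\MM^{q_0}\JJ^{d_0}$ on every $D_{Q_i}$.

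\emph{Rigidification.} A trivialization of $\mathcal P$ is not yet a $p$-th root of the given section $\Gamma$. The unit $\gamma^p/\Gamma$ on $D_N$ need not be a $p$-th power when $H^0(D_N,\OO_{D_N})\neq k$. The paper uses the group here: $H^0(D_{Q_e},\OO_{D_{Q_e}})^G=k$ by Lemma~\ref{lem:invariant-functions}. Hence invariant generators normalized on $D$ are compatible and satisfy $\gamma_i^{p^h}=\Gamma|_{D_{Q_i}}$ exactly.

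Your Step~3 works non-equivariantly in the fraction ring of $\YY$ and only predicts that nonvanishing obstructions \emph{should} force a congruence. It therefore addresses neither point.
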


If the first case of
Proposition~\ref{prop:primitive-relation} does not hold,
we first prove $p^h\mid b$ and triviality of
$\MM^{q_0}\II_{b/p^h}$ on finite thickenings.
We then construct compatible generating sections whose
$p^h$-th powers equal the prescribed section
$\rho^*(\kappa^q/f)$.

\subsection{Invariant cohomology on finite thickenings}\label{Subsection:inv-coh}

Throughout this subsection, assume that the first case
of Proposition~\ref{prop:primitive-relation} does not hold.
The kernel of $\Pic(D_N)\to\Pic(D)$ is $p$-primary, and
$R$ is prime to $p$. Moreover, a trivial line bundle with
trivial character on $D$ has an invariant generating section,
by averaging. Consequently our assumption is equivalent to
\[
                 \JJ^{p^e}|_{D_{Rp^e}}\not\simeq\OO_{D_{Rp^e}}
                 \qquad(e\geq0).
\]
Set
\begin{equation}\label{eq:primitive-orders}
 Q_e=Rp^e,\qquad
 \ell_e=\max\{\ell\geq1:
                   \JJ^{p^e}|_{D_\ell}\simeq\OO_{D_\ell}\}.
\end{equation}
Thus $\ell_e+1$ is the first length at which
$\JJ^{p^e}$ is nontrivial. Taking $p$-th powers of local
generators which agree modulo $\JJ^{\ell_e}$ gives generators
which agree modulo $\JJ^{p\ell_e}$. It follows that
\begin{equation}\label{eq:primitive-order-bounds}
 \begin{gathered}
 1\leq n:=\ell_0<R,\qquad
 np^e\leq\ell_e<Q_e,\qquad p\ell_e\leq\ell_{e+1},\\
       \ord_{\Pic(D_{\ell_e+1})}(\JJ|_{D_{\ell_e+1}})
                                                   =p^{e+1}.
 \end{gathered}
\end{equation}
The last assertion concerns the order of a line bundle
in the ordinary Picard group: its $a$-th tensor power
is trivial if and only if $p^{e+1}\mid a$.
Indeed, maximality and $\ell_e+1\leq p\ell_e$ give, respectively,
\[
 \JJ^{p^e}|_{D_{\ell_e+1}}\not\simeq\OO_{D_{\ell_e+1}},
 \qquad
 \JJ^{p^{e+1}}|_{D_{\ell_e+1}}\simeq\OO_{D_{\ell_e+1}}.
\]
The same $p$-primary Picard kernel gives the useful criterion
\[
 \JJ^a|_{D_N}\simeq\OO_{D_N}
       \quad\Longleftrightarrow\quad N\leq\ell_{v_p(a)}
       \qquad(a\in\Z\setminus\{0\}).
\]
Indeed, writing $a=p^{v_p(a)}a_0$, multiplication by $a_0$
is an automorphism of this kernel. Here $v_p(a)$ denotes
the exponent of $p$ in the nonzero integer $a$.

We will prove that $\LL=\MM\JJ$ is a power of $\JJ$ on
$D_{\ell_e+1}$, and that invariant functions on $D_{Q_e}$
are constant. The first assertion will give integer
congruences from \eqref{eq:original-relation}; the second
will give uniqueness of normalized generating sections.

For the cohomology calculation, set
\[
 h_G^j(\mathcal F)=\dim_kH^j(\mathcal F)^G,\qquad
 \chi_G(\mathcal F)=\sum_{j=0}^2(-1)^jh_G^j(\mathcal F).
\]
All sheaves below are supported on specified finite proper
thickenings. We regard their extensions by zero as sheaves
on $\YY$ when writing an exact sequence involving different
thickenings. We write
$\mathcal F\simeq_G\OO_{D_N}$ to mean that $\mathcal F$
has a $G$-invariant generating section on $D_N$.

We use two facts from Section~\ref{sec:divisorial}.
For an equivariant line bundle $\mathcal F$ on $\YY$
with $\mathcal F|_D\simeq\OO_D$,
Lemma~\ref{lem:section-filtration} filters
$H^0(D_N,\mathcal F|_{D_N})^G$ by order of vanishing
along $D$. Its order-$j$ quotient has dimension one
precisely when
$\mathcal F\JJ^j|_{D_{N-j}}\simeq_G\OO_{D_{N-j}}$.
Lemma~\ref{lem:dualizing-comparison} gives
\begin{equation}\label{eq:canonical-L-map}
                 \LL\JJ^{-N}|_{D_N}\longrightarrow\omega_{D_N}.
\end{equation}
This map induces cohomology isomorphisms after tensoring
by any power of $\JJ$. Thus Serre duality reduces the
calculation of $H^2$ to the same count of sections.

\begin{lemma}\label{lem:canonical-cyclic-subgroup}
Use Setting~\ref{not:principal-relation}, with
$\JJ=\OO_{\YY}(-cA)$, $D_N=V(\JJ^N)$, and
$\LL=\omega_{\YY}(A)\otimes\JJ$.
Assume that $\JJ^{p^e}|_{D_{Rp^e}}$ is nontrivial for
every $e\geq0$. For each $e$, let $\ell_e$ be the largest
integer for which $\JJ^{p^e}|_{D_{\ell_e}}$ is trivial.
Then there is an integer $\sigma_e$ such that
\begin{equation}\label{eq:canonical-cyclic-subgroup}
                  \LL|_{D_{\ell_e+1}}
                      \simeq\JJ^{\sigma_e}|_{D_{\ell_e+1}}.
\end{equation}
This is an isomorphism of line bundles; it is not asserted
to preserve the $G$-actions. Equivalently,
$\omega_{\YY}(A)|_{D_{\ell_e+1}}
 \simeq\JJ^{\sigma_e-1}|_{D_{\ell_e+1}}$.
\end{lemma}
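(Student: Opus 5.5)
The plan is to prove \eqref{eq:canonical-cyclic-subgroup} by induction on the length $N$, for $1\leq N\leq\ell_e+1$. The induction hypothesis is a statement $P(N)$: there is an integer $\sigma$ with $\LL|_{D_N}\simeq\JJ^{\sigma}|_{D_N}$.

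\emph{Base case.} $P(1)$ holds with $\sigma=0$. Indeed $\MM|_D$ and $\JJ|_D$ are trivial by Lemma~\ref{lem:cohomology-and-picard}, so $\LL|_D\simeq\OO_D$.

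\emph{Inductive step.} Suppose $P(N)$ holds with $N\leq\ell_e$. Set $\mathcal H=\LL\JJ^{-\sigma}$, which is trivial on $D_N$. By the units sequence, the obstruction to extending a chosen generator $\tau$ of $\mathcal H|_{D_N}$ is a class $\eta(\tau)\in V$, normalized as in Lemma~\ref{lem:change-of-trivialization} by $s^{N}$. The freedom in the construction is of two kinds.
\begin{enumerate}
\item We may change $\tau$ by a unit $u$ with $u|_D=1$. By Lemma~\ref{lem:obstruction-rules}(iv) this moves $\eta$ by $s^{-N}\delta^0_{\OO}(u)$.
\item We may replace $\sigma$ by $\sigma+ap^{v}$. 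Since $\JJ^{p^v}|_{D_N}$ is trivial exactly when $N\leq\ell_v$, this is allowed for every $v$ with $\ell_v\geq N$. By Lemma~\ref{lem:obstruction-rules}(ii) it moves $\eta$ by $-a\,\theta_{p^v,N}$.
\end{enumerate}
When $N=\ell_v$ for some $v$, so that $\theta_{p^v,N}=\theta_v\neq0$, the second change moves $\eta$ along the line $k\theta_v$. The goal of the step is therefore the following claim, which gives $P(N+1)$: after a suitable choice of $\tau$, the class $\eta$ lies in the span of the obstructions $\theta_{p^v,N}$ available at length $N$. If $\eta$ can be made zero, we keep $\sigma$.

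To control $\eta$ I would use two constraints.

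\emph{The canonical constraint.} This is the analogue of Step~2 of Proposition~\ref{prop:canonical-boundary}, with the trivialization of $\MM\JJ^\beta$ replaced by the comparison map \eqref{eq:canonical-L-map}, $\LL\JJ^{-N}|_{D_N}\to\omega_{D_N}$. The top-degree boundary of $\omega_{D_{N+1}}\to\omega_{D_N}$ vanishes, because $1$ lifts. Feeding this into the product rule Lemma~\ref{lem:boundary-product} gives
\[
\partial(\alpha)=(\text{integer})\cdot\alpha_0\smile\eta+(\text{terms in }\theta\text{'s})
\]
for every class $\alpha$ that lifts. The supply of lifting classes comes from two sources: the surjectivity $H^1(D_N,\OO)\to V$ of Lemma~\ref{lem:short-picard} when $N<p$, and the classes of Lemma~\ref{lem:change-of-trivialization} and Proposition~\ref{prop:comparison-class} at longer lengths. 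Using the perfect alternating pairing on the two-dimensional space $V$, this should force $\eta\in k\theta$ for the relevant obstruction $\theta$, or $\eta=0$.

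\emph{The character constraint.} The class $\eta$ lies in an eigenspace $V_{\psi}$ determined by the characters of $\LL|_D$ and $s^N$. Both characters of $V$ are nontrivial, and consecutive $\theta_i$ lie on the two distinct lines of Lemma~\ref{lem:order-congruences}. Comparing characters then says on which line $\eta$ can lie.

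The hypothesis $2R<p$ enters when passing from $N$ to $N+1$ across lengths that are not thresholds $\ell_v$. At such lengths no $\theta$ is available to absorb $\eta$, so I must show $\eta$ can be killed by a unit change $u=1+b\sigma_v^{d}$, exactly as in Step~2 of Lemma~\ref{lem:change-of-trivialization}. This requires the relevant ideal $\JJ^{N}/\JJ^{2N}$ to be square-zero and the coefficient $d<2R<p$ to be invertible in $k$.

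\emph{Main obstacle.} I expect the hardest step to be the non-threshold lengths: showing that $\eta$ is proportional to $\delta^0_{\OO}$ of an invariant nilpotent function there. The first thing I would try is to use that the restriction $\mathcal H|_{D_{N}}$ is pulled back from the formal scheme, together with the flatness over $k[t]/(t^{N+1})$ from Proposition~\ref{prop:first-boundary}, which makes $H^1(D_{N+1},\OO)$ free.
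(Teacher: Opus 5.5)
There is a genuine gap. The inductive step is never carried out. Your claim that, after changing $\tau$, the obstruction $\eta$ lies in the span of the available $\theta_{p^v,N}$ is only a restatement of $P(N+1)$, and neither constraint proves it.

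\emph{The canonical constraint.} It gives $\alpha_0\smile\eta'=0$ only for classes $\alpha\in H^1(\JJ^r/\JJ^{r+N})$ with $\partial_{r,N}(\alpha)=0$. Here $\eta'$ is the obstruction of $\tau$ times a generator of a power of $\JJ$. Moreover $r$ is confined to a congruence class modulo $p^v$, so that $\JJ^{\sigma-N-r-1}|_{D_N}$ is trivial. To conclude, you need such classes whose restrictions $\alpha_0$ detect $\eta$, at every length $N\leq\ell_e$. Your sources do not supply them:
\begin{itemize}
\item Lemma~\ref{lem:short-picard} works only for $N<p$.
\item Lemma~\ref{lem:change-of-trivialization} gives a single specific class.
\item Lemma~\ref{lem:order-congruences} and Proposition~\ref{prop:comparison-class}, including the fact that consecutive $\theta_i$ lie on two distinct lines, are proved under the hypotheses of Theorem~\ref{thm:finite-boundary}. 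These include an invariant trivialization of $\MM\JJ^\beta$, which Setting~\ref{not:principal-relation} explicitly lacks.
\end{itemize}
This lemma is exactly what replaces $\beta$ in Proposition~\ref{prop:primitive-relation}, so appealing to those results here is circular.

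\emph{The remaining freedom.} At lengths where the available $\theta_{p^v,N}$ are zero, you can only move $\eta$ by $s^{-N}\delta^0_{\OO}(u)$. Nothing shows $\eta$ lies in that image. The units $1+b\sigma_v^d$ move $\eta$ only along $\theta$-lines, and your bound $d<2R$ has no justification. The flatness over $k[t]/(t^{N+1})$ from Proposition~\ref{prop:first-boundary} needs a generator of $\JJ|_{D_N}$, hence $N\leq\ell_0<R$, so it cannot resolve the obstacle you flag.

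\emph{The missing idea.} The paper does not induct. It argues by contradiction at the single length $\ell_e+1$, using a global $G$-invariant dimension count.
\begin{itemize}
\item Set $Q=Rp^e$, $\mathcal P^u=\JJ^{uQ}/\JJ^{(u+1)Q}$ and $\mathcal E_u=\JJ^{uQ}/\JJ^{(u+2)Q}$.
\item The section filtration of Lemma~\ref{lem:section-filtration}, together with the criterion that $\JJ^a|_{D_N}$ is trivial if and only if $N\leq\ell_{v_p(a)}$, gives $h^0_G(\mathcal P^u)=0$ for $1\leq u<p$.
\item Every character of $G$ occurs $p^e$ times among the graded pieces, so $\chi_G(\mathcal P^u)=p^e\chi(\OO_D)=0$.
\item Equivariant Serre duality through \eqref{eq:canonical-L-map} counts $h^2_G$ as the number of lengths $\ell$ with $\LL\JJ^{-uQ-\ell}|_{D_\ell}$ invariantly trivial. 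If the conclusion fails, no $\ell>\ell_e$ qualifies, since restricting to $D_{\ell_e+1}$ would give $\LL\simeq\JJ^{uQ+\ell}$ there.
\item Hence $h^2_G(\mathcal P^u)=h^2_G(\mathcal P^{u+1})=h^2_G(\mathcal E_u)$. This forces $H^1(\mathcal P^u)^G\to H^2(\mathcal P^{u+1})^G$ to be an isomorphism, so the restriction $H^1(\mathcal E_u)^G\to H^1(\mathcal P^u)^G$ is zero.
\item Since $2R<p$, one has $2Q<p^{e+1}\leq\ell_{e+1}<pQ$. Choose $u\geq2$ with $uQ\leq\ell_{e+1}<(u+1)Q$. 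Then $\JJ^{pQ}$ is trivial on $D_{uQ}$ but not on $D_{(u+1)Q}$.
\item Its square-zero extension class is an invariant element of $H^1(\mathcal E_u)$ with nonzero image in $H^1(\mathcal P^u)$, a contradiction.
\end{itemize}
This step is also where $2R<p$ is genuinely used.
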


\begin{proof}
Fix $e$, set $Q=Q_e$, and suppose that
\eqref{eq:canonical-cyclic-subgroup} fails.
Define a line bundle on $D_Q$ and coherent quotient sheaves by
\[
 \begin{gathered}
 \mathcal P=\JJ^Q|_{D_Q},\qquad
 \mathcal P^u=\JJ^{uQ}/\JJ^{(u+1)Q},\\
 \mathcal E_u=\JJ^{uQ}/\JJ^{(u+2)Q}
             =\JJ^{uQ}|_{D_{2Q}}\qquad(1\leq u<p).
 \end{gathered}
\]
The first three steps will show that the restriction map
\[
            H^1(\mathcal E_u)^G\longrightarrow H^1(\mathcal P^u)^G
\]
is zero. In Step~4, the line bundle $\JJ^{pQ}$ will give a
class whose image under this map is nonzero.

\noindent\emph{Step 1. Compute $H^0$ and the Euler characteristic.}
The same section calculation also applies to
$\mathcal P^0=\OO_{D_Q}$. We claim
\begin{equation}\label{eq:invariant-Euler}
 \begin{gathered}
 h_G^0(\mathcal P^0)=1,\qquad
 h_G^0(\mathcal P^u)=0\quad(1\leq u<p),\\
                  \chi_G(\mathcal P^u)=0\quad(0\leq u<p).
 \end{gathered}
\end{equation}
This calculation uses only \eqref{eq:primitive-order-bounds},
not our assumption that \eqref{eq:canonical-cyclic-subgroup}
fails.

Fix $0\leq u<p$. A nonzero order-$j$ quotient of the
section filtration, with $0\leq j<Q$, has a nonzero image in
$H^0(D,\JJ^{uQ+j}|_D)$, on which $G$ acts by $\mu^{-j}$.
Hence $j=Ra$ for an integer $0\leq a<p^e$.
Lemma~\ref{lem:section-filtration} would give
\[
                         \JJ^{uQ+j}|_{D_{Q-j}}
                                      \simeq_G\OO_{D_{Q-j}}.
\]
Suppose $(u,j)\ne(0,0)$ and set $\nu=v_p(uQ+j)$.
Since $uQ+j=R(up^e+a)$, we have
\[
 \nu=
 \begin{cases}
 e,&a=0,\\
 v_p(a)<e,&a>0.
 \end{cases}
\]
In both cases $Q-j=R(p^e-a)$ is a positive multiple of
$Rp^\nu$. Therefore
\[
                    Q-j\geq Rp^\nu>\ell_\nu,
\]
contrary to the triviality criterion following
\eqref{eq:primitive-order-bounds}.
Thus all graded quotients vanish unless $u=j=0$.
In that case the constant section $1$ gives a
one-dimensional quotient. This proves the two assertions
about $H^0$ in \eqref{eq:invariant-Euler}.

The filtration of $\mathcal P^u$ has quotients
$\JJ^{uQ+j}|_D$ for $0\leq j<Q$.
Their characters run through all characters of $G$
exactly $p^e$ times. Additivity of the Euler characteristic
and exactness of invariants give
\[
 \begin{aligned}
 \chi_G(\mathcal P^u)
 &=p^e\sum_{a=0}^{R-1}\sum_{i=0}^2
            (-1)^i\dim_k H^i(D,\OO_D)_{\mu^a}\\
 &=p^e(1-2+1)=0.
 \end{aligned}
\]

\smallskip
\noindent\emph{Step 2. Compare $H^2$ on $D_Q$ and $D_{2Q}$.}
For $u\geq1$ and $N\in\{Q,2Q\}$, equivariant Serre duality
and \eqref{eq:canonical-L-map} identify the dual of
$H^2(D_N,\JJ^{uQ}|_{D_N})$ with
\[
                     H^0(D_N,\LL\JJ^{-uQ-N}|_{D_N}).
\]
A $G$-invariant section of order $j$ in this space gives
an invariant generator of the line bundle
\[
 \LL\JJ^{-uQ-N+j}|_{D_{N-j}}
       =\LL\JJ^{-uQ-\ell}|_{D_\ell},\qquad\ell=N-j.
\]
Lemma~\ref{lem:section-filtration} therefore gives the count
\begin{equation}\label{eq:dual-section-filtration}
 h_G^2(\JJ^{uQ}|_{D_N})
  =\#\{\ell\in\Z:1\leq\ell\leq N,\
       \LL\JJ^{-uQ-\ell}|_{D_\ell}\simeq_G\OO_{D_\ell}\}.
\end{equation}
If a length $\ell>\ell_e$ occurred, restriction to
$D_{\ell_e+1}$ would give
\[
                  \LL|_{D_{\ell_e+1}}
                      \simeq\JJ^{uQ+\ell}|_{D_{\ell_e+1}},
\]
contrary to our assumption.
For $1\leq\ell\leq\ell_e$, the line $\JJ^Q|_{D_\ell}$
is trivial and its character on $D$ is $\mu^{-Q}=1$.
It therefore has an invariant generator. Tensoring by its
powers shows that the condition in
\eqref{eq:dual-section-filtration} is independent of $u$.
Thus, if
\[
 T=\{\ell\in\Z:1\leq\ell\leq\ell_e,\
                      \LL\JJ^{-\ell}|_{D_\ell}\simeq_G\OO_{D_\ell}\},
 \qquad t=\#T,
\]
then
\[
 h_G^2(\mathcal P^u)=h_G^2(\mathcal P^{u+1})
                  =h_G^2(\mathcal E_u)=t
                                      \qquad(1\leq u<p).
\]
In particular, this comparison includes $\mathcal P^p$;
no assertion about $H^0(\mathcal P^p)^G$ is needed.

\smallskip
\noindent\emph{Step 3. The connecting homomorphism is injective.}
The quotient of $\JJ^{uQ}$ by two successive powers
gives an exact sequence of coherent sheaves
\begin{equation}\label{eq:two-thickenings}
 0\longrightarrow\mathcal P^{u+1}\longrightarrow\mathcal E_u
                         \longrightarrow\mathcal P^u\longrightarrow0.
\end{equation}
All three sheaves are supported on proper surfaces, so their
cohomology vanishes in degree three. Hence
\[
                 H^2(\mathcal E_u)^G\longrightarrow
                         H^2(\mathcal P^u)^G\longrightarrow0
\]
is surjective. Its source and target both have dimension
$t$, by Step~2, so it is an isomorphism.
The long exact sequence of \eqref{eq:two-thickenings}
now shows that its connecting homomorphism
$\delta_u:H^1(\mathcal P^u)^G\to H^2(\mathcal P^{u+1})^G$
is surjective. On the other hand, Step~1 gives
\[
 h_G^1(\mathcal P^u)=h_G^2(\mathcal P^u)=t
                                      =h_G^2(\mathcal P^{u+1}).
\]
Therefore
\begin{equation}\label{eq:injective-connecting}
 \delta_u:H^1(\mathcal P^u)^G
                   \xrightarrow{\ \sim\ }H^2(\mathcal P^{u+1})^G.
\end{equation}
In particular, exactness gives
\[
 \im\bigl(H^1(\mathcal E_u)^G\longrightarrow
                  H^1(\mathcal P^u)^G\bigr)=\ker\delta_u=0.
\]

\smallskip
\noindent\emph{Step 4. A line bundle gives a nonzero image.}
Set $r=\ell_{e+1}$. Since $p>2R$ and
$\ell_e\geq p^e$, \eqref{eq:primitive-order-bounds} gives
\[
              2Q<p^{e+1}\leq p\ell_e\leq r<pQ.
\]
Choose $2\leq u<p$ such that $uQ\leq r<(u+1)Q$.
Set
\[
 \mathcal H=\JJ^{pQ},\qquad
 Z_0=D_{uQ}\subset Z_1=D_{(u+1)Q}\subset Z_2=D_{(u+2)Q}.
\]
Since $v_p(pQ)=e+1$, the triviality criterion gives
\[
           \mathcal H|_{Z_0}\simeq\OO_{Z_0},
           \qquad \mathcal H|_{Z_1}\not\simeq\OO_{Z_1}.
\]
The character of $\mathcal H|_D$ is $\mu^{-pQ}=1$.
Choose an invariant generating section $\tau$ on $Z_0$
by averaging.

The ideals of $Z_0$ in $Z_2$ and $Z_1$ are respectively
$\mathcal E_u$ and $\mathcal P^u$.
Since $u\geq2$, both are square-zero:
\[
 \JJ^{2uQ}\subseteq\JJ^{(u+2)Q}\subseteq\JJ^{(u+1)Q}.
\]
For either ideal $\mathcal K$, the map
$1+\mathcal K\to(\mathcal K,+)$ is simply $1+x\mapsto x$.
The exact sequences of units for $Z_0\subset Z_2$ and
$Z_0\subset Z_1$ therefore give the commutative diagram
\[
\begin{tikzcd}[column sep=large]
 H^1(\mathcal E_u)\arrow[r]\arrow[d]
   &\Pic(Z_2)\arrow[r]\arrow[d,"\mathrm{res}"]
   &\Pic(Z_0)\arrow[d,equal]\\
 H^1(\mathcal P^u)\arrow[r]
   &\Pic(Z_1)\arrow[r]&\Pic(Z_0).
\end{tikzcd}
\]
The fixed trivialization $\tau$ determines a class
$\eta\in H^1(\mathcal E_u)$ mapping to $[\mathcal H|_{Z_2}]$:
lift $\tau$ locally to generators on $Z_2$ and apply
the map $1+x\mapsto x$ to their transition functions.
Changing the lifts changes this class by a coboundary.
Since $\mathcal H$ and $\tau$ are equivariant, this
construction gives $\eta\in H^1(\mathcal E_u)^G$.

Let $\bar\eta\in H^1(\mathcal P^u)^G$ be its restriction.
The diagram sends $\bar\eta$ to
$[\mathcal H|_{Z_1}]\ne0$, so $\bar\eta\ne0$.
But Step~3 says that the map sending $\eta$ to $\bar\eta$
is zero. This contradiction proves
\eqref{eq:canonical-cyclic-subgroup}.
\end{proof}

If two invariant generating sections of the same
line bundle agree on $D$, their quotient is an invariant
unit restricting to $1$ on $D$. The next lemma will force
that quotient to be $1$ on $D_{Q_e}$.

\begin{lemma}\label{lem:invariant-functions}
For every $e\geq0$,
\[
                         H^0(D_{Q_e},\OO_{D_{Q_e}})^G=k.
\]
\end{lemma}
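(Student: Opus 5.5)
The plan is to compute $H^0(D_{Q_e},\OO_{D_{Q_e}})^G$ with the section filtration of Lemma~\ref{lem:section-filtration}, applied to $\mathcal H=\OO_{\YY}$ and $N=Q_e$. This is exactly the first claim of Step~1 in the proof of Lemma~\ref{lem:canonical-cyclic-subgroup}, namely $h_G^0(\mathcal P^0)=1$. That step used only \eqref{eq:primitive-order-bounds} and the triviality criterion following it, not the failure of \eqref{eq:canonical-cyclic-subgroup}. I would therefore repeat that argument in the case $u=0$ and spell it out as follows.

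The filtration $\operatorname{Fil}^j=H^0(D_{Q},\JJ^j/\JJ^{Q})^G$, with $Q=Q_e$, has graded pieces of dimension at most one. The piece of order $j$ is nonzero exactly when $\JJ^j|_{D_{Q-j}}\simeq_G\OO_{D_{Q-j}}$.

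For $j=0$, the constant section $1$ gives a one-dimensional piece, and it spans the scalars $k\subseteq H^0(D_Q,\OO_{D_Q})^G$.

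For $0<j<Q$, a nonzero piece would restrict to a nonzero invariant element of $H^0(D,\JJ^j|_D)=ks^j$. Since $G$ acts on $s^j$ by $\mu^{-j}$ and $\mu$ is faithful of order $R$, this forces $j=Ra$ with $0<a<p^e$. Set $\nu=v_p(j)=v_p(a)<e$. Then $Q-j=R(p^e-a)$ is a positive multiple of $Rp^\nu$, because $p^\nu$ divides both $p^e$ and $a$. Using $\ell_\nu<Q_\nu=Rp^\nu$ from \eqref{eq:primitive-order-bounds}, we get $Q-j\geq Rp^\nu>\ell_\nu$. The criterion $\JJ^a|_{D_N}\simeq\OO_{D_N}\Leftrightarrow N\leq\ell_{v_p(a)}$ then shows that $\JJ^j|_{D_{Q-j}}$ is not even trivial, let alone invariantly trivial. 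Hence all pieces with $j>0$ vanish.

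The filtration is finite and exhausts $H^0(D_Q,\OO_{D_Q})^G$, with $\operatorname{Fil}^Q=0$. Its only nonzero graded piece is the one in degree $0$, spanned by $1$, so $H^0(D_{Q_e},\OO_{D_{Q_e}})^G=k$.

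There is no substantive obstacle here. The only points needing care are that $\nu<e$ in the case $a>0$, which is what makes the strict inequality $\ell_\nu<Rp^\nu$ applicable, and that the standing hypothesis of this subsection (the first case of Proposition~\ref{prop:primitive-relation} fails) is what makes the triviality criterion and the bounds \eqref{eq:primitive-order-bounds} available for every $\nu\leq e$.
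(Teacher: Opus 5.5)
Your proposal is correct and follows the paper's own argument. The paper proves the lemma by citing the case $u=0$ of Step~1 in the proof of Lemma~\ref{lem:canonical-cyclic-subgroup}, noting that this step does not use the failure of \eqref{eq:canonical-cyclic-subgroup}; you have written out exactly that section-filtration computation, and your details (that $j=Ra$ with $\nu=v_p(a)<e$, and that $Q-j\geq Rp^\nu>\ell_\nu$) are right.
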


\begin{proof}
The case $u=0$ of \eqref{eq:invariant-Euler} gives
$\dim_kH^0(D_{Q_e},\OO_{D_{Q_e}})^G=1$.
Since this space contains the constants, it equals $k$.
As noted in Step~1 of the preceding proof, that calculation
does not use the assumption that
\eqref{eq:canonical-cyclic-subgroup} fails.
\end{proof}

\subsection{A compatible root of the log canonical section}\label{Subsection:compatible-root}

We now use the two lemmas separately.
Lemma~\ref{lem:canonical-cyclic-subgroup} will imply
$p^h\mid b$ and triviality of the line bundle
$\MM^{q_0}\II_{b/p^h}$ on every $D_{Q_i}$.
Lemma~\ref{lem:invariant-functions} will then produce
compatible generating sections whose $p^h$-th powers
equal $\rho^*(\kappa^q/f)$.
Only after constructing this specified root on $\YY$
will we recover a root of $f$ in $k(X)$.

\begin{proof}[Proof of Proposition~\ref{prop:primitive-relation}]
If $h=0$, the second case holds with $u=f$.
We may therefore assume $h>0$ and that the first
case does not hold. All the conclusions of
Section~6.1 are then available.

\noindent\emph{Step 1. Prove $p^h\mid b$ in the integers.}
The quasi-\'{e}tale pullback of
\eqref{eq:original-relation} gives an invariant
generating section
\[
          \Gamma=\rho^*(\kappa^q/f):
          \OO_{\YY}\xrightarrow{\sim}\MM^q\II_b.
\]
In particular, $\II_b\simeq\MM^{-q}$ is invertible and
$\II_b|_A\simeq\OO_A$. Lemma~\ref{lem:section-period}
therefore gives $c\mid b$. Thus we may write
\begin{equation}\label{eq:canonical-period}
            d=b/c>0,\qquad
            \Gamma:\OO_{\YY}\xrightarrow{\sim}\MM^q\JJ^d.
\end{equation}

For each $e$, Lemma~\ref{lem:canonical-cyclic-subgroup}
provides an integer $\sigma_e$ with
\[
        \MM|_{D_{\ell_e+1}}
                  \simeq\JJ^{\sigma_e-1}|_{D_{\ell_e+1}}.
\]
Taking its $q$-th tensor power and using the trivialization
$\Gamma$ in \eqref{eq:canonical-period} gives
\[
 \OO_{D_{\ell_e+1}}
   \simeq\MM^q\JJ^d|_{D_{\ell_e+1}}
   \simeq\JJ^{q(\sigma_e-1)+d}|_{D_{\ell_e+1}}.
\]
The line bundle $\JJ|_{D_{\ell_e+1}}$ has exact order
$p^{e+1}$ by \eqref{eq:primitive-order-bounds}.
Thus the exponent in the last expression is divisible
by $p^{e+1}$:
\begin{equation}\label{eq:integer-relation}
                         q(\sigma_e-1)+d\equiv0\pmod{p^{e+1}}.
\end{equation}
In particular, take $e=h$ and recall $q=p^hq_0$.
For some integer $t$ we obtain
\[
 \begin{aligned}
 p^hq_0(\sigma_h-1)+d&=p^{h+1}t,\\
 d&=p^h\bigl(pt-q_0(\sigma_h-1)\bigr).
 \end{aligned}
\]
This proves $p^h\mid d$ in $\Z$, and therefore
$p^h\mid b=cd$. No divisibility property of $c$ is used.
Define
\[
 d_0=d/p^h,\qquad b_0=b/p^h=cd_0,\qquad
 \mathcal H=\MM^{q_0}\JJ^{d_0}.
\]
Then $\mathcal H^{p^h}=\MM^q\JJ^d$, with the specified
generating section $\Gamma$.
The integers $\sigma_e$ may depend on $e$; the line
bundle $\mathcal H$ is fixed.

\smallskip
\noindent\emph{Step 2. Trivialize $\mathcal H$ on the thickenings $D_{Q_i}$.}
Fix $i\geq0$ and apply \eqref{eq:integer-relation}
with $e=i+h$. Dividing by $p^h$ gives
\[
          q_0(\sigma_{i+h}-1)+d_0=p^{i+1}t_i,
                                      \qquad t_i\in\Z.
\]
The inequalities $h\geq1$, $R<p$, and
\eqref{eq:primitive-order-bounds} give
\[
                   Q_i=Rp^i<p^{i+1}\leq\ell_{i+1}
                                                \leq\ell_{i+h}.
\]
Thus both the isomorphism for $\MM$ from Step~1 and
the trivialization of $\JJ^{p^{i+1}}$ on $D_{\ell_{i+1}}$
restrict to $D_{Q_i}$. They give
\[
 \begin{aligned}
 \mathcal H|_{D_{Q_i}}
   &\simeq\JJ^{q_0(\sigma_{i+h}-1)+d_0}|_{D_{Q_i}}\\
   &=\bigl(\JJ^{p^{i+1}}|_{D_{Q_i}}\bigr)^{t_i}
                                                \simeq\OO_{D_{Q_i}}.
 \end{aligned}
\]
This proves
\begin{equation}\label{eq:primitive-finite-lines}
                 \mathcal H|_{D_{Q_i}}
                          \simeq\OO_{D_{Q_i}}\qquad(i\geq0).
\end{equation}
At this stage these are isomorphisms of line bundles;
no compatibility between their generating sections
has yet been asserted.

\smallskip
\noindent\emph{Step 3. Choose invariant generators with a fixed restriction.}
Let $\chi$ be the character of $G$ on
$H^0(D,\mathcal H|_D)$. The section $\Gamma|_D$ is invariant,
so $\chi^{p^h}=1$. Since $\chi^R=1$ and $(R,p)=1$,
we have $\chi=1$.
The equality $H^0(D,\OO_D)=k$ and algebraic closedness
therefore allow us to choose an invariant generating section
$\gamma_D$ satisfying
\[
                              \gamma_D^{p^h}=\Gamma|_D.
\]

For each $i\geq0$, choose a generating section of
$\mathcal H|_{D_{Q_i}}$, which exists by Step~2.
Its restriction to $D$ is a nonzero scalar multiple
of $\gamma_D$. Averaging over $G$ preserves that
restriction, so the average is still generating by
Nakayama's lemma. Rescaling gives an invariant section
\[
 \gamma_i\in H^0(D_{Q_i},\mathcal H|_{D_{Q_i}})^G,
                         \qquad\gamma_i|_D=\gamma_D.
\]

\smallskip
\noindent\emph{Step 4. Prove compatibility and the prescribed power identity.}
If $i'\geq i$, the following quotients of generating
sections are invariant units on $D_{Q_i}$:
\[
          \frac{\gamma_{i'}|_{D_{Q_i}}}{\gamma_i},
          \qquad
          \frac{\gamma_i^{p^h}}{\Gamma|_{D_{Q_i}}}.
\]
Both restrict to $1$ on $D$.
Lemma~\ref{lem:invariant-functions} says that they belong
to $k^*$, and hence both equal $1$. Thus
\[
 \gamma_{i'}|_{D_{Q_i}}=\gamma_i,\qquad
                 \gamma_i^{p^h}=\Gamma|_{D_{Q_i}}.
\]
The lengths $Q_i=Rp^i$ are cofinal among all positive
integers. By completeness, the compatible sections
$\gamma_i$ define a generating section $\gamma$ of
$\mathcal H$ on $\YY$. They satisfy
\begin{equation}\label{eq:formal-power-relation}
                                  \gamma^{p^h}=\Gamma.
\end{equation}
This constructs a root of the specified section $\Gamma$,
not only an abstract trivialization of $\mathcal H^{p^h}$.

\smallskip
\noindent\emph{Step 5. Recover the root in $k(X)$.}
Choose $x\in S(k)$ at which both $X$ and $S$ are smooth
and $\rho$ is \'{e}tale. Such points form a nonempty open
subset of $S$, since $X$ is normal and $\rho$ is quasi-\'{e}tale.
For a point $y$ above $x$, the residue field is $k$,
and the completed local rings are isomorphic
\cite[Tag 0AGX]{Sta}.
Under this identification, $\mathcal H=\MM^{q_0}\II_{b_0}$
is the completion at $x$ of the line bundle
\[
                   \omega_X^{\otimes q_0}((q_0-b_0)S)
\]
on a smooth neighbourhood of $x$.
Set $\widehat K_x=\operatorname{Frac}\widehat\OO_{X,x}$.
The numerator $\kappa^{q_0}$ and denominator $\gamma$
are rational sections of the same rank-one module over
$\widehat\OO_{X,x}$. Their quotient is therefore an
element of $\widehat K_x^*$.
Using $\Gamma=\rho^*(\kappa^q/f)$ in
\eqref{eq:formal-power-relation} gives
\begin{equation}\label{eq:local-pth-power}
                     f=(\kappa^{q_0}/\gamma)^{p^h}
                                      \quad\text{in }\widehat K_x.
\end{equation}
Lemma~\ref{lem:field-powers} applies at the smooth point $x$:
\[
               k(X)\cap\widehat K_x^{p^h}=k(X)^{p^h}.
\]
Hence $f=u^{p^h}$ for some $u\in k(X)^*$.
Taking divisors and dividing \eqref{eq:original-relation}
by $p^h$ now gives
\[
             \divisor(u)|_U
                   =\bigl(q_0(K_X+S)-b_0S\bigr)|_U,
\]
which is \eqref{eq:primitive-relation}.
\end{proof}

\subsection{The tame cyclic cover}\label{Subsection:tame-covers}

The root used in this subsection is algebraic, i.e. its defining function
belongs to $k(X)$ by Proposition~\ref{prop:primitive-relation}.
The normalized cyclic algebra and its ramification
indices are recalled in
Lemma~\ref{lem:normalized-cyclic-algebra}.

\begin{corollary}\label{cor:tame-canonical-relation}
Let $X$ be a normal variety, $S$ a prime
effective $\Q$-Cartier divisor, and
$K_X=\divisor(\kappa)$ the divisor of a nonzero rational
top form.
Suppose that
\[
 \divisor(f)=q(K_X+S)-bS,\qquad
                   (q,p)=1,\qquad q,b\in\Z_{>0}.
\]
Let $\pi:X'\to X$ be a connected component of the
normalization in $z^q=f$.
Set
\[
 g=\gcd(q,b),\quad e=q/g,\quad B=b/g,\quad
                         D'=(\pi^{-1}S)_{\mathrm{red}}.
\]
Then
\begin{equation}\label{eq:tame-global-formula}
 \pi^*S=eD',\qquad
                  \divisor(\pi^*\kappa/z)=(B-1)D'.
\end{equation}
In particular, $K_{X'}+D'\sim BD'$ with $B>0$.
If $X$ is klt and $(X,S)$ is lc, then $X'$ is klt
and $(X',D')$ is lc.
\end{corollary}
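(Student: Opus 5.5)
The plan is to compute everything divisorially, using Lemma~\ref{lem:normalized-cyclic-algebra}. Since $q$ is prime to $p$, the extension $k(X)[z]/(z^q-f)$ is separable, so its normalization $\pi\colon X'\to X$ is a finite tame cyclic cover. The connected component $X'$ is Galois over $X$ with group its stabilizer $\mu_{q'}$, where $q'\mid q$. The ramification indices are given by \eqref{eq:cyclic-ramification-index}.

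First compute the valuations of $f$. The relation $\divisor(f)=q(K_X+S)-bS$ shows:
\begin{itemize}
\item at a prime divisor $P\neq S$ outside $\Supp K_X$, $v_P(f)=0$;
\item at a prime $P\subseteq\Supp K_X$ with $P\neq S$, $v_P(f)=q\,v_P(K_X)$, which is divisible by $q$;
\item at $S$, $v_S(f)=q(v_S(K_X)+1)-b\equiv -b\pmod q$.
\end{itemize}
Hence $\gcd(q,v_S(f))=\gcd(q,b)=g$, so every prime divisor of $X'$ over $S$ has ramification index $e=q/g$, and $\pi$ is étale in codimension one away from $S$. On a connected component the index is still $q/\gcd(q,v_S(f))$, because the inertia of the component is the same subgroup. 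This gives $\pi^*S=eD'$, with $D'$ reduced.

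Next compute $\divisor(\pi^*\kappa/z)$ one prime at a time. On $X'$, $q\,\divisor(z)=\pi^*\divisor(f)$ as $\Q$-divisors, so $\divisor(z)=\pi^*(K_X+S)-\tfrac bq\pi^*S$. Hurwitz's formula for the tame cover, which is étale in codimension one off $S$, gives
\[
\divisor(\pi^*\kappa)=K_{X'}=\pi^*K_X+(e-1)D'.
\]
Subtracting,
\[
\divisor(\pi^*\kappa/z)=(e-1)D'-\pi^*S+\tfrac bq eD'=(e-1-e+B)D'=(B-1)D',
\]
using $\tfrac bq e=\tfrac bg=B$. Since $K_{X'}=\divisor(\pi^*\kappa)$, this says $K_{X'}+D'=\divisor(z)+BD'\sim BD'$, and $B>0$ because $b>0$.

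For the singularity statement, the identity $K_{X'}=\pi^*K_X+(e-1)D'$ can be rewritten as $K_{X'}+D'=\pi^*(K_X+S)$, since $\pi^*S=eD'$; likewise $K_{X'}=\pi^*(K_X+(1-\tfrac1e)S)$. Then \cite[Proposition 5.20]{KM98}, for finite tame morphisms, shows that $(X',D')$ is lc because $(X,S)$ is lc. For the klt claim, however, one needs $(X,(1-\tfrac1e)S)$ to be klt, and this does not follow formally from $X$ klt and $(X,S)$ lc. Concavity of discrepancies in the boundary gives the bound
\[
a\bigl(E,X,(1-\tfrac1e)S\bigr)\ge \tfrac1e\,a(E,X,0)+\bigl(1-\tfrac1e\bigr)a(E,X,S)>-1,
\]
since $a(E,X,0)>-1$ and $a(E,X,S)\ge-1$ with $e\ge1$. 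So $(X,(1-\tfrac1e)S)$ is klt, and by the same proposition $X'$ is klt.

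The step needing the most care is the bookkeeping of ramification on a chosen connected component, together with the possibility that $\pi^{-1}S$ has several components. This is handled by noting that all of them have the same index, and that Hurwitz's formula only needs codimension-one information.
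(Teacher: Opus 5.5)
Your proposal is correct and follows the paper's proof. Both compute $v_P(f)=q\,v_P(K_X)$ for $P\ne S$ and $v_S(f)\equiv -b\pmod q$ and feed this into Lemma~\ref{lem:normalized-cyclic-algebra}, then use the tame Hurwitz formula $\divisor(\pi^*\kappa)=\pi^*K_X+(e-1)D'$, subtract $\divisor(z)=q^{-1}\pi^*\divisor(f)$, and finish by citing \cite[Proposition 5.20]{KM98}; the only difference is that you make the klt step explicit by writing $K_{X'}=\pi^*\bigl(K_X+(1-\tfrac1e)S\bigr)$ and interpolating discrepancies, where the paper leaves this to the citation (discrepancies are affine in the coefficient of $S$, so your inequality is actually an equality).
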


\begin{proof}
For a prime divisor $P\ne S$, the valuation of $f$
is $q\operatorname{coeff}_P(K_X)$.
Lemma~\ref{lem:normalized-cyclic-algebra} gives
ramification index one at every prime above $P$.
Above $S$ the index is $e=q/\gcd(q,b)$.
Thus $\pi^*S=eD'$ and the ramification divisor is
$(e-1)D'$.
All indices are prime to $p$, so
\[
 \begin{aligned}
 \divisor(\pi^*\kappa)&=\pi^*K_X+(e-1)D',\\
 \divisor(z)&=q^{-1}\pi^*\divisor(f)
             =\pi^*K_X+(1-b/q)eD'.
 \end{aligned}
\]
Subtracting, and using $eb/q=B$, gives
\eqref{eq:tame-global-formula}.
The latter asertion holds by \cite[Proposition 5.20]{KM98}.
\end{proof}

\subsection{The local semiampleness criterion}\label{sec:main-proof}

Now we can prove the main technical result of this paper.

\begin{corollary}\label{cor:local-canonical-relation}
Let $X$ be a normal projective threefold over an algebraically
closed field of characteristic $p>42$, and let $S$ be a prime
effective $\Q$-Cartier divisor. Assume that $X$ is klt and
$(X,S)$ is lc on an open neighbourhood $U$ of $S$. Suppose
\[
 (K_X+S)|_U\sim_{\Q}aS|_U,\qquad a\in\Q_{>0},
 \qquad
 mS\text{ is Cartier},\quad \OO_X(mS)|_S\simeq\OO_S
\]
for some integer $m>0$. Then $S$ is semiample on $X$.
\end{corollary}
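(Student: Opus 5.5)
The plan is to reduce to the two cases of Proposition~\ref{prop:surface-cover}. In case (ii) we then pass to the bounded quotient of Proposition~\ref{prop:bounded-action}. Either Corollary~\ref{cor:one-direction} applies, or we are in Setting~\ref{not:principal-relation}. There we produce an integer $\beta\geq1$ and an invariant trivialization of $\MM\JJ^\beta$, so that Theorem~\ref{thm:finite-boundary} and Lemma~\ref{lem:invariant-descent} finish the argument.

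\emph{Setting up the hypotheses.} The relation $(K_X+S)|_U\sim_\Q aS|_U$ together with $\OO_X(mS)|_S\simeq\OO_S$ gives some $q>0$ with $q(K_X+S)$ Cartier near $S$ and trivial on $S$. We take $q$ to be a multiple of $m$ with $qa\in m\Z$. Hence the hypotheses \eqref{eq:geometric-hypotheses} hold with $V=X$, $T=S$ and $N=m$. Proposition~\ref{prop:surface-cover} now yields a tame abelian formal cover. In case (i) it already gives semiampleness of $S$. In case (ii), Proposition~\ref{prop:bounded-action} and the discussion after it leave two possibilities. Either the nontrivial eigenspaces have total dimension at most one, and then Corollary~\ref{cor:one-direction} with Lemma~\ref{lem:invariant-descent} concludes. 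Or $G$ is cyclic of order $R\leq12$, $\mu$ is faithful, and both characters of $V$ are nontrivial; this is Setting~\ref{not:principal-relation}.

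\emph{Producing the relation.} Write the $\Q$-linear equivalence as an honest relation $\divisor(f)|_U=(q(K_X+S)-bS)|_U$ with $q,b\in\Z_{>0}$, where $b=qa$. Apply Proposition~\ref{prop:primitive-relation}. Its first alternative directly gives the conclusion \eqref{eq:finite-conclusion}, and Lemma~\ref{lem:invariant-descent} then proves semiampleness. Otherwise we obtain $u$ with $\divisor(u)|_U=(q_0(K_X+S)-b_0S)|_U$, where $(q_0,p)=1$. Corollary~\ref{cor:tame-canonical-relation} then produces a tame cyclic cover $X'\to X$ with $X'$ klt, $(X',D')$ lc, and $K_{X'}+D'\sim BD'$ with $B\in\Z_{>0}$. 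This is now a genuine linear equivalence rather than a $\Q$-linear one. Moreover $D'$ still satisfies the Cartier/triviality hypothesis, obtained by pulling back $mS$.

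\emph{Rerunning the construction on $X'$.} We redo the construction on $X'$. The lc line $\MM=\omega(D')$ is now invertible near $D'$ and equals $\OO(BD')$, so the log-canonical cover of Lemma~\ref{lem:formal-log-cover} is trivial (take $q_0=1$). The subsequent covers in Proposition~\ref{prop:surface-cover} are cyclic roots of $\II_N$ or of $\II_d$. On them $\MM$ pulls back to $\omega_{\YY}(A)\simeq\II_{-B'}$ for a suitable positive $B'$, which must be tracked through the ramification along $A$. Hence $\MM\II_{B'}\simeq\OO_{\YY}$ with an invariant generator, coming from the function $u$ (respectively the generator $z$ on $X'$).

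We then need $B'$ to be divisible by the period $c$, so that $\MM\JJ^{\beta}\simeq_G\OO_{\YY}$ with $\beta=B'/c\geq1$. Lemma~\ref{lem:section-period} gives this, since $\II_{B'}|_A\simeq\OO_A$. Passing to the bounded quotient $\YY/H$ preserves the invariant generator by averaging. Finally, Theorem~\ref{thm:finite-boundary} is applied with $e$ chosen large enough that $(p-R)p^{e-1}\geq\beta$. Lemma~\ref{lem:invariant-descent} then shows that $D'$ is semiample on $X'$. Since $\pi^*S=eD'$ and $\pi$ is finite, $S$ is semiample on $X$.

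\emph{Main obstacle.} I expect the main difficulty to be the bookkeeping in the last step. We must verify that all the covers used to reach Setting~\ref{not:principal-relation} over $X'$ keep $\MM$ equal to a negative power of $\II_1$ with an invariant generator. This includes checking that $\beta$ stays positive and is an integer after the root constructions. A secondary concern is that Setting~\ref{not:principal-relation} is reached on $X'$ and not on $X$. The plan is to apply the whole chain to $(X',D')$ from the start, where Proposition~\ref{prop:primitive-relation} is no longer needed.
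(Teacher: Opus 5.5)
Your proposal follows the paper's proof essentially step for step. The route is the same: a first pass on $X$ through Proposition~\ref{prop:surface-cover}, Proposition~\ref{prop:bounded-action}, Corollary~\ref{cor:one-direction} and Proposition~\ref{prop:primitive-relation}; the tame cover of Corollary~\ref{cor:tame-canonical-relation}; a second pass on each connected component $T$ of $D'$, where $\pi^*\kappa/z$ pulls back to an invariant generator of $\MM\II_B$, so Lemma~\ref{lem:section-period} gives $c\mid B$ and Theorem~\ref{thm:finite-boundary} applies with $\beta=B/c$; and finally descent along the finite map $\pi$.

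Two small corrections. First, $\omega_{X'}(D')\simeq\OO_{X'}(BD')$ need not be invertible near $D'$, since nothing makes $BD'$ Cartier, so the log canonical cover of Lemma~\ref{lem:formal-log-cover} is still needed. Second, the bookkeeping you flag as the main obstacle is trivial: every cover in Proposition~\ref{prop:surface-cover} is quasi-\'{e}tale with reduced boundary $A=h^*T$, so the exponent stays exactly $B$, and the generator is already invariant without averaging.
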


\begin{proof}
\noindent\emph{Step 1. Reduction to a prime-to-$p$ denominator.}
Choose a nonzero rational top form $\kappa$ and a function
$f\in k(X)^*$ such that
\[
 K_X=\divisor(\kappa),\qquad
 \divisor(f)|_U=\bigl(q(K_X+S)-bS\bigr)|_U,
 \qquad q,b\in\Z_{>0},\quad b/q=a.
\]
After clearing denominators, the hypotheses imply that
a Cartier multiple of $K_X+S$ restricts trivially to $S$.
Apply Proposition~\ref{prop:surface-cover} on $\widehat X_S$.
If the first cohomology groups specified there vanish,
the proposition proves semiampleness directly.
Otherwise apply Proposition~\ref{prop:bounded-action}.
If Corollary~\ref{cor:one-direction} applies,
Lemma~\ref{lem:invariant-descent} again proves semiampleness.

In the remaining case the formal cover satisfies
Setting~\ref{not:principal-relation}, with cyclic group
of order $1<R\leq12<p$ and two nontrivial characters on
$H^1(D,\OO_D)$.
Proposition~\ref{prop:primitive-relation} either gives
the invariant finite trivialization needed by
Lemma~\ref{lem:invariant-descent}, or replaces $q,b,f$ so that
\begin{equation}\label{eq:local-prime-to-p-relation}
 (q,p)=1,
 \qquad \divisor(f)|_U=\bigl(q(K_X+S)-bS\bigr)|_U,
 \qquad b>0.
\end{equation}
We consider the latter case.

\smallskip
\noindent\emph{Step 2. The algebraic tame cyclic cover.}
Normalize $X$ in a field component of $z^q=f$ and let
$\pi:X'\to X$ be the resulting finite projective morphism.
Set
\[
 g=\gcd(q,b),\qquad e=q/g,\qquad B=b/g,
 \qquad D'=(\pi^{-1}S)_{\mathrm{red}},\quad U'=\pi^{-1}U.
\]
Corollary~\ref{cor:tame-canonical-relation}, applied to $U$,
gives
\[
 \pi^*S=eD',\qquad
 \divisor(\pi^*\kappa/z)|_{U'}=(B-1)D'|_{U'},
 \qquad B\in\Z_{>0}.
\]
It also proves that $X'$ is klt and $(X',D')$ is lc near
$D'$. No singularity assertion outside $U'$ is required.
Pullback of the given restriction isomorphism yields
\begin{equation}\label{eq:global-reduced-trivialization}
 meD'\text{ is Cartier},\qquad
                \OO_{X'}(meD')|_{D'}\simeq\OO_{D'}.
\end{equation}
For a connected component $T$ of $D'$, disjointness of
the connected components shows that $meT$ is Cartier on $X'$
and $\OO_{X'}(meT)|_T\simeq\OO_T$.
On $\widehat{X'}_T$, the rational form $\pi^*\kappa/z$
specifies the relation
\begin{equation}\label{eq:integral-log-relation}
                         K_{X'}+T\sim BT.
\end{equation}

\smallskip
\noindent\emph{Step 3. Triviality on a Cartier thickening.}
Apply Proposition~\ref{prop:surface-cover} and then
Proposition~\ref{prop:bounded-action} along $T$.
The vanishing and one-eigenspace cases are treated by the
results used in Step~1. In the remaining case let $\YY$
be the resulting formal cover, $A$ its reduced boundary,
and $\JJ=\II_c$ the invertible ideal supplied by the
surface calculation.
The section in \eqref{eq:integral-log-relation} pulls back
to an invariant generating section of $\MM\II_B$.
Lemma~\ref{lem:section-period} gives $c\mid B$, and hence
\[
 \MM\JJ^\beta\simeq\OO_{\YY},\qquad
                      \beta=B/c\in\Z_{>0},
\]
with an invariant generating section.
Theorem~\ref{thm:finite-boundary} and
Lemma~\ref{lem:invariant-descent} give an integer $N>0$ with
\[
 NT\text{ Cartier},\qquad \OO_{X'}(NT)|_{NT}\simeq\OO_{NT}.
\]
Lemma~\ref{lem:principal-parts} therefore makes $T$ semiample.

\smallskip
\noindent\emph{Step 4. Descent of semiampleness.}
Step~3 applies to every connected component of $D'$, so
$\pi^*\OO_X(mS)=\OO_{X'}(meD')$ is semiample.
The morphism $\pi$ is finite and surjective, and $X$ is
normal. Semiampleness therefore descends by
\cite[Lemma 2.10, with $D=\varnothing$]{Kee99}.
Hence $\OO_X(mS)$, and thus $S$, is semiample.
\end{proof}

\section{Proof of abundance}\label{sec:abundance}

\begin{lemma}\label{lem:abundance-component}
Let $(Y,B)$ be the pair supplied by
Proposition~\ref{prop:reduction}, and assume that
$\nu(K_Y+B)=1$. Write
\[
 B=\coprod_{i=1}^t S_i,\qquad
 L:=K_Y+B\sim_{\Q}\sum_{i=1}^t d_iS_i,\qquad d_i>0.
\]
Then $Y$ is klt. For each $i$ there exists an integer $m_i>0$
such that $m_iS_i$ is Cartier and
$\OO_Y(m_iS_i)|_{S_i}\simeq\OO_{S_i}$.
\end{lemma}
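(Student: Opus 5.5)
The plan is to prove the two assertions separately: klt-ness first, then the torsion statement along each $S_i$. The second one carries the real content.

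\emph{Klt-ness of $Y$.} By Proposition~\ref{prop:reduction}(1), $Y\setminus B$ is terminal, so only points of $B$ need attention. Since $(Y,B)$ is lc with $B$ reduced and the $S_i$ pairwise disjoint, I would argue near a single $S=S_i$. I would show that $Y$ is klt near $S$ by perturbing the coefficient of $S$. Since $L\sim_{\Q}\sum d_jS_j$ with $d_j>0$, the divisor $S$ is $\Q$-Cartier. The pair $(Y,(1-\varepsilon)S)$ has discrepancies at least those of $(Y,S)$, increased by $\varepsilon\,\mathrm{mult}_E(S)$. An lc centre of $(Y,S)$ that is not contained in $S$ would lie in $Y\setminus B$, which is terminal; so every lc centre lies in $S$. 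Any divisor $E$ with discrepancy $-1$ therefore has centre inside $S$, so $\mathrm{mult}_E S>0$ and its discrepancy strictly improves. Hence $(Y,(1-\varepsilon)S)$ is klt, and so is $Y$, because $S\ge0$ is $\Q$-Cartier and removing an effective $\Q$-Cartier boundary preserves klt.

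\emph{Torsion of $S_i|_{S_i}$.} Set $S=S_i$ and $d=d_i$. By disjointness, $L|_S\sim_{\Q}dS|_S$, so $S|_S\equiv d^{-1}L|_S$ is nef. Since $\nu(L)=1$, we have $L^2\equiv0$, which gives $L^2\cdot S=0$ and hence $(S|_S)^2=0$. The same computation with $L\cdot H$ for an ample $H$ gives $L\cdot S\cdot H=0$. Indeed, $L^2\cdot H=0$, and $L^2\cdot H=\sum d_j(L\cdot S_j\cdot H)$ is a sum of nonnegative terms, since $L$ is nef. So $S|_S$ is a nef $\Q$-Cartier divisor on $S$ with $S|_S\cdot H|_S=0$, hence numerically trivial. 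In the same way $(K_Y+S)|_S=K_S+\mathrm{Diff}$ is numerically trivial, so $S$ is an slc log Calabi--Yau surface.

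It remains to upgrade numerical triviality to torsion in $\Pic$. First I pass to the normalization of $S$ and use abundance for log Calabi--Yau surfaces, in the form of the index bounds \cite{BWX26} as in Lemma~\ref{lem:ambient-log-index}. This gives that some multiple $q(K_Y+S)$ is Cartier near $S$ with trivial restriction. Since $L|_S\sim_{\Q}dS|_S$, the restriction $S|_S$ is then $\Q$-linearly trivial up to the $\Q$-linear equivalence $L\sim_{\Q}\sum d_jS_j$. Clearing denominators gives an honest linear equivalence $M(L-dS)\sim M\sum_{j\ne i}d_jS_j$, which is supported away from $S$ and so restricts trivially. Combining with the triviality of $q(K_Y+S)|_S$ yields $\OO_Y(m_iS)|_S\simeq\OO_S$ for suitable $m_i$. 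To make $m_iS$ Cartier near $S$, I would use Lemma~\ref{lem:ambient-section-lifting}: $S$ is Cartier at generic curve points after multiplying by the different denominators, and the divisorial sheaves are Cohen--Macaulay by \cite{BK23}. Away from $S$, $m_iS$ is trivially Cartier.

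\emph{Main obstacle.} The hard step is the passage from numerical to $\Q$-linear triviality of $(K_Y+S)|_S$ on a possibly non-normal slc surface. The gluing along the conductor may produce non-torsion classes in $\Pic^0$, for example the $\Gm$ factor in Lemma~\ref{lem:elliptic-cycle}. Here I would try to use that $K_Y+S\sim_{\Q}\sum d_jS_j$ globally: this exhibits $(K_Y+S)|_S\sim_{\Q}dS|_S$, which is a genuine $\Q$-divisor relation rather than a numerical one, and it reduces the question to the torsion-ness of the single class $S|_S$. That class is controlled by the slc abundance of \cite{BWX26} applied to $K_S+\mathrm{Diff}$.
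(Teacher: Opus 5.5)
Your klt argument and your proof that $L|_S$ and $S|_S$ are numerically trivial are fine, and they match the paper. The paper simply takes $\varepsilon=1$: it writes $A_E(Y,0)=A_E(Y,B)+\ord_E(B)>0$ for $E$ centred in $B$, and uses $L^2\cdot H=\sum_i d_i\,(L\cdot S_i\cdot H)$ together with Hodge index on a resolution of $S_i$.

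The gap is exactly at the step you flag as the main obstacle, and neither of your ways around it works.
\begin{itemize}
\item Lemma~\ref{lem:ambient-log-index} takes as a \emph{hypothesis} that some Cartier multiple $q(K_V+T)$ restricts trivially to $T$. It only replaces such a $q$ by one prime to $p$. In its proof, \cite{BWX26} is applied on the normal components $(T_i^\nu,\Delta_i)$. The torsion of the glued sheaf on $T$ comes from the assumed restriction isomorphism, not from \cite{BWX26}. Citing it to obtain that $q(K_Y+S)|_S$ is trivial is therefore circular.
\item Your second idea does not reduce anything. The relation $(K_Y+S)|_S\sim_{\Q}dS|_S$ only says the two classes are torsion simultaneously, and the torsion of either one is precisely what is unknown.
\end{itemize}
As you note yourself, a numerically trivial line bundle on a nonnormal slc surface can be non-torsion because of the gluing (the $\Gm$-part of $\Pic^0$). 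So index bounds on the normalization cannot close this step.

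The missing ingredient is abundance for slc surfaces in positive characteristic, \cite[Theorem 1]{Pos23}; it is not the index bounds of \cite{BWX26}. Adjunction gives an slc pair $(S,\Delta_S)$ with $K_S+\Delta_S\sim_{\Q}(K_Y+S)|_S=L|_S\equiv0$. Posva's theorem upgrades this directly to $L|_S\sim_{\Q}0$.

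Disjointness of the $S_j$ then gives $d\,S|_S\sim_{\Q}L|_S\sim_{\Q}0$. Since $Y$ is $\Q$-factorial by Proposition~\ref{prop:reduction}, some multiple $m_iS$ is Cartier. The line bundle $\OO_Y(m_iS)|_S$ is torsion, so it becomes trivial after passing to a further multiple. No appeal to Lemma~\ref{lem:ambient-section-lifting} or to different denominators is needed; that lemma would in any case require the triviality of $\mathcal E_D$, which is again the missing torsion statement.
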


\begin{proof}
For a divisorial valuation $E$ centred in $B$,
\[
 A_E(Y,0)=A_E(Y,B)+\ord_E(B)>0,
\]
since $B$ is effective and $\Q$-Cartier.
Together with terminality of $Y\setminus B$, this proves
that $Y$ is klt.
The Cohen--Macaulayness of $\OO_Y$ and $\OO_Y(-S_i)$
\cite[Theorems 3 and 19(2)]{BK23}, and their restriction
sequence, imply that $S_i$ is Cohen--Macaulay.

Let $H$ be ample. Nefness and numerical dimension one give
\[
 0=L^2\cdot H=\sum_i d_i(L\cdot S_i\cdot H),\qquad
 L\cdot S_i\cdot H\geq0.
\]
Thus $L\cdot S_i\cdot H=0$. The Hodge index theorem on
a resolution of $S_i$ implies $L|_{S_i}\equiv0$.
Adjunction \cite[Lemma 2.19]{Xu23} gives an slc surface pair
$(S_i,\Delta_i)$ with
\[
 K_{S_i}+\Delta_i\sim_{\Q}(K_Y+S_i)|_{S_i}
                         =L|_{S_i}\equiv0.
\]
Abundance for slc surfaces \cite[Theorem 1]{Pos23}
therefore gives $L|_{S_i}\sim_{\Q}0$.
Since the $S_i$ are disjoint,
\[
                 L|_{S_i}\sim_{\Q}d_iS_i|_{S_i}.
\]
Clearing denominators proves the assertion.
\end{proof}

\begin{proof}[Proof of Theorem~\ref{thm:abundance}]
We may assume
that $k$ is algebraically closed.
The known abundance cases
\cite[Theorems 1.1, 2.28 and 2.29]{Xu23}
reduce the problem to $\nu(K_X+B)=1$ and $n(K_X+B)=3$.
Applying Proposition~\ref{prop:reduction}, we get
\[
 B_Y=\coprod_iS_i,\qquad
 L_Y:=K_Y+B_Y\sim_{\Q}\sum_i d_iS_i,\qquad d_i>0,
 \qquad \nu(L_Y)=1.
\]
Lemma~\ref{lem:abundance-component} gives $Y$ klt and
a Cartier multiple of each $S_i$ trivial on $S_i$.
For $U_i=Y\setminus\bigcup_{j\ne i}S_j$,
\[
                    (K_Y+S_i)|_{U_i}\sim_{\Q}d_iS_i|_{U_i}.
\]
Corollary~\ref{cor:local-canonical-relation} proves that every
$S_i$ is semiample. Hence $L_Y$ is semiample, and
\[
          \kappa(K_X+B)=\kappa(L_Y)=\nu(L_Y)=1.
\]
Theorem 2.28 of \cite{Xu23} now proves that $K_X+B$ is semiample.
\end{proof}

\appendix
\section{Normal surfaces and divisorial sheaves}\label{app:surface}

Throughout this appendix, $k$ is algebraically closed of
characteristic $p>42$. A rank-one reflexive sheaf $\FF$ is
called \emph{torsion} if $\FF^{[m]}\simeq\OO$ for some $m>0$.
We write $h^*(\FF)=(h^0(\FF),h^1(\FF),h^2(\FF))$. We say that
$\FF$ is \emph{acyclic} if this triple is zero.

All the surfaces in this appendix are normal.
Normalization and gluing of nonnormal surfaces are treated
separately in Appendix~\ref{app:models}.

\subsection{Normal Calabi--Yau surfaces}

The following two statements are used  in 
Section~\ref{sec:surface-cover} in the normal case.

\begin{lemma}\label{lem:normal-klt-classification}
Let $A$ be a projective Gorenstein klt surface with
$\omega_A\simeq\OO_A$. Its minimal resolution is a K3
surface or an abelian surface.
\end{lemma}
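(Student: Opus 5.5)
The plan is to pass to the minimal resolution $f:\widetilde A\to A$ and use the classification of surfaces with trivial canonical class. A Gorenstein klt surface has only canonical singularities, i.e.\ rational double points. In characteristic $p>42$ these are the tame ADE configurations, and they are crepant. Hence $\omega_{\widetilde A}\simeq f^*\omega_A\simeq\OO_{\widetilde A}$, and $\widetilde A$ is a smooth projective minimal surface with $K_{\widetilde A}\sim0$; in particular $\kappa(\widetilde A)=0$. By the Bombieri--Mumford classification, $\widetilde A$ is then one of the following: a K3 surface, an abelian surface, an Enriques surface, a bielliptic surface, or (only in characteristics $2,3$) a quasi-hyperelliptic surface.

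The next step is to exclude the remaining cases using the stronger condition $K_{\widetilde A}\sim0$, not merely numerical triviality. Quasi-hyperelliptic surfaces do not occur because $p>3$. Classical and non-classical Enriques surfaces with $K\sim0$ exist only in characteristic $2$, so for $p>2$ an Enriques surface has $K$ of order exactly two and is excluded. A bielliptic surface $(E\times F)/G$ with $p\nmid|G|$ has $K$ of order $|G|\in\{2,3,4,6\}$, which is nontrivial; all wild or degenerate cases occur only for $p\le3$. What remains is the K3 or abelian case.

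The main obstacle is bookkeeping in positive characteristic: making sure that no exotic minimal surface with $K\sim0$ survives for $p>3$. I would cite Bombieri--Mumford's tables of $\operatorname{ord}(K)$ for bielliptic and Enriques surfaces, together with $\chi(\OO)$ and $b_2$, to check that $K\sim0$ forces $b_2\in\{22,6\}$, which identifies the K3 and abelian cases.

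A secondary point is the crepancy of $f$. This follows because klt Gorenstein surface singularities are canonical, and this needs only $K_A$ Cartier together with $a_E>-1$ integral, so that $a_E\ge0$. Minimality then gives $a_E=0$.
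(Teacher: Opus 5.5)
Your argument is correct and is essentially the paper's proof: integral discrepancies $>-1$, together with $f$-nefness of $K_{\widetilde A}$ and negativity of the exceptional intersection matrix, make the minimal resolution crepant, so $\omega_{\widetilde A}\simeq\OO_{\widetilde A}$, and the Bombieri--Mumford classification for $p>3$ then leaves only K3 and abelian surfaces. One harmless slip: on a bielliptic surface the order of $K$ is the order of the linear part of the $G$-action on the second elliptic factor, not $|G|$ (for example it is $2$ when $G=(\Z/2)^2$); the argument only needs $K\not\sim0$, and this does hold for $p>3$.
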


\begin{proof}
Let $f:\widetilde A\to A$ be the minimal resolution.
The discrepancies are integral and greater than $-1$.
Relative nefness of $K_{\widetilde A}$ and negativity of the
exceptional intersection matrix give
$K_{\widetilde A}=f^*K_A$. Thus
$\omega_{\widetilde A}\simeq\OO_{\widetilde A}$.
The classification of smooth surfaces of Kodaira dimension
zero in characteristic greater than three gives the assertion
(see \cite{BM77}).
\end{proof}

\begin{proposition}\label{lem:normal-strict-lc-cohomology}
Let $A$ be a projective Gorenstein lc surface with
$\omega_A\simeq\OO_A$. If $A$ is not klt, then
\[
 H^1(A,\FF)=0
 \quad\text{for every torsion rank-one reflexive sheaf }\FF.
\]
\end{proposition}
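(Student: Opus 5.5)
The approach is to pass to the minimal resolution and reduce the vanishing of $H^1(A,\FF)$ to a statement about cohomology of torsion line bundles on a smooth surface, using the structure of strictly lc Gorenstein Calabi--Yau surfaces. Let $f:\widetilde A\to A$ be the minimal resolution. Since $A$ is Gorenstein lc with $\omega_A\simeq\OO_A$ and is not klt, some singular point is simple elliptic or a cusp. There we have $K_{\widetilde A}=f^*K_A-Z$, where $Z>0$ is the reduced elliptic cycle (a smooth elliptic curve or a cycle of rational curves) over each non-klt point. The remaining singularities are Du Val, with zero discrepancy. Thus $-K_{\widetilde A}=Z$ is effective and nonzero, so $\kappa(\widetilde A)=-\infty$, and $\widetilde A$ is rational or birationally ruled over an elliptic curve. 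The base cannot have genus $\ge2$, since $Z$ contains an elliptic curve or a cycle of rational curves dominating the base.

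\textbf{Step 1: reduce to a line bundle on $\widetilde A$.} Set $M=f^{[*]}\FF$, chosen so that $f_*M=\FF$. Concretely, write $\FF=\OO_A(L)$ and take $M=\OO(\lfloor f^*L\rfloor)$, or rather the ``maximal'' choice making $R^1f_*$ controllable. At Du Val points a torsion Weil class is handled by the rational-singularity formula: $f_*\OO(\lfloor f^*L\rfloor)=\OO_A(L)$ and $R^1f_*=0$. At a simple elliptic or cusp point the local class group is an extension of a finite group by (a quotient of) the Picard group of $Z$. Because $\FF$ is torsion and $p>42$ does not divide the relevant orders, $f^*L$ is integral along $Z$ up to a torsion twist. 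The Leray sequence then gives
\[
0\to H^1(A,\FF)\to H^1(\widetilde A,M)\to H^0(A,R^1f_*M).
\]
It therefore suffices to show $H^1(\widetilde A,M)\to H^0(R^1f_*M)$ is injective. In the cleanest case it suffices to show $H^1(\widetilde A,M)=0$ when $M|_Z$ is nontrivial, and to handle $M|_Z$ trivial separately.

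\textbf{Step 2: compute on $\widetilde A$.} Here $M$ is numerically trivial (torsion in $\Pic$ modulo exceptional classes; after the twist it is genuinely torsion). If $\widetilde A$ is rational, $\Pic$ is torsion free, so $M\simeq\OO$ and $H^1(\widetilde A,\OO)=0$ gives the claim, including for $\FF\simeq\OO_A$. If $\widetilde A$ is ruled over an elliptic curve $E$, then $M=g^*N$ with $N$ torsion on $E$, and $h^1(M)=h^1(E,N)$, which vanishes unless $N\simeq\OO_E$. In the remaining case $h^1(\widetilde A,M)=1$ and $Z$ contains a section-type elliptic curve mapping isomorphically (or isogenously) to $E$. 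The restriction $H^1(\widetilde A,\OO)\to H^1(Z,\OO_Z)=H^0(R^1f_*\OO)$ is then an isomorphism, since it is pulled back from $E$ and $Z\to E$ is finite surjective of degree prime to $p$. This isomorphism makes the Leray edge map injective, so $H^1(A,\FF)=0$.

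\textbf{Main obstacle.} The delicate step is Step 1 at the non-klt points: identifying $R^1f_*M$ with $H^1(Z,M|_Z)$ (using $R^1f_*\OO_{\widetilde A}(-Z)=0$, a local Grauert--Riemenschneider/Kawamata--Viehweg statement, which I would verify by hand for elliptic and cusp cycles) and controlling torsion Weil classes there. I would first try to treat $\FF$ invertible at the non-klt points, using the fact that torsion in the local class group of a simple elliptic or cusp singularity is detected on $Z$. If $Z$ is a cycle of rational curves (the cusp case), $\Pic^0(Z)=\Gm$ and the base is forced to be rational, so Step 2 gives the vanishing directly.
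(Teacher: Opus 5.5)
There is a genuine gap, at the step you flag yourself and then pass over. Since $\FF$ is torsion, $f^*L\equiv 0$, so your sheaf $M=\OO_{\widetilde A}(\lfloor f^*L\rfloor)$ satisfies $M\equiv-\{f^*L\}$. As soon as $\FF$ fails to be Cartier at some singular point of $A$ (Du Val or non-klt), $\{f^*L\}$ is an exceptional $\Q$-divisor with coefficients in $[0,1)$, not all zero. Then $M$ is \emph{not} numerically trivial. This fractional part is fixed by the local class of $\FF$, and no ``torsion twist'' removes it.

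Nor may you assume the relevant orders are prime to $p$. A torsion reflexive sheaf can have $p$-power order; in the application $\FF_j$ has order dividing $N=p^vr$. The discriminant group at a simple elliptic point has order $-E^2$, which is unbounded. Hence both claims in your Step~2 are unfounded: ``$\Pic$ is torsion free, so $M\simeq\OO$'' and ``$M=g^*N$ with $N$ torsion''.

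This is not a removable technicality. Proposition~\ref{lem:rational-elliptic-sheaves} allows the following situation. The partial resolution $W$ of a simple elliptic point is a rational surface with elliptic boundary $E$. It carries a nontrivial order-two divisorial sheaf $\varepsilon$ with $h^*(\varepsilon)=(0,1,0)$. This sheaf is necessarily non-Cartier at some rational double point, since a $2$-torsion line bundle would pull back trivially to a smooth rational surface.

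If $\FF=h_*\varepsilon$, the Leray injection gives $H^1(\widetilde A,M)\supseteq H^1(W,\varepsilon)\neq 0$ with $\widetilde A$ rational. So your rational case is false as stated. In the paper, vanishing on $A$ comes only from two facts: the isomorphism $H^1(W,\varepsilon)\simeq H^1(E,\OO_E)$ of Corollary~\ref{cor:exceptional-restriction}, and the fact that $E$ is contracted.

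The other cases need comparable input:
\begin{itemize}
\item For a cusp with fractional coefficients along the rational cycle, possibly of $p$-power order, the paper uses three things. These are the $F$-splitting of the nodal cycle, a tame cyclic cover for the prime-to-$p$ part, and the quasi-$F$-split vanishing of Lemma~\ref{lem:conductor-Witt-splitting} and Corollary~\ref{lem:conductor-Witt-vanishing}.
\item On the elliptic ruled surface it needs Lemma~\ref{lem:elliptic-ruling-sheaves}, which handles fractional coefficients $\alpha,\gamma$ and rational double points, not just pullbacks of torsion line bundles.
\end{itemize}

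Your skeleton matches the paper's: the Leray injection, and killing classes by restriction to a contracted elliptic curve. What is missing is the analysis of $H^1$ for divisorial sheaves with fractional boundary coefficients and $p$-power order, which is the substance of the proof.
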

The proof is given in Section~\ref{subsec:normal-lc-proof}.

\subsection{Normal surfaces with anticanonical boundary}

Let $(W,C)$ be a normal projective lc surface pair such that
$C$ is reduced and $K_W+C\sim0$ is Cartier. 

\begin{lemma}\label{lem:log-surface-resolution}
For the minimal regular resolution $h:V\to W$, write
$K_V+B=h^*(K_W+C)$. Then $B$ is reduced and nodal.
Every exceptional curve not contained in $B$ is a
rational $(-2)$-curve disjoint from $B$.
\end{lemma}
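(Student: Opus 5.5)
The plan is to run the standard minimal-resolution argument for lc surfaces, using that $K_W+C$ is Cartier and linearly trivial. Write $K_V+B=h^*(K_W+C)$, so that $B=h^{-1}_*C+\sum_E(-a_E)E$ where $a_E$ is the discrepancy of the exceptional curve $E$ with respect to $(W,C)$. Since $K_W+C$ is Cartier, every coefficient of $h^*(K_W+C)-K_V$ is an integer. Since $(W,C)$ is lc, every such coefficient is at most $1$. Since $h$ is the minimal resolution, $K_V$ is $h$-nef, and the negativity lemma applied to $K_V-h^*K_W$ shows that the pullback of $K_W$ alone has effective exceptional part. Adding the strict transform of $C$ only increases the coefficients, so $B$ is effective.

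The coefficients of $B$ are therefore integers in $[0,1]$, so $B$ is reduced. To see that $B$ is nodal, note that $(V,B)$ is lc with $V$ regular and $B$ reduced. At a point where two branches of $B$ meet non-transversally, or where three components pass, or at a cusp, the blow-up has discrepancy $<-1$. This is the standard computation on a regular surface, and it is valid in every characteristic since only multiplicities enter. So $B$ has only normal crossing singularities, that is, nodes.

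For an exceptional curve $E\not\subset B$, the coefficient of $E$ in $B$ is $0$. Then
\[
 K_V\cdot E=-B\cdot E\le 0.
\]
Since $K_V$ is $h$-nef by minimality, we get $K_V\cdot E\ge 0$, hence $K_V\cdot E=0$ and $B\cdot E=0$. Because $E\not\subset B$, each component of $B$ meets $E$ nonnegatively, and the sum is zero, so $E\cap B=\varnothing$. Finally, $E^2<0$ together with $K_V\cdot E=0$ and adjunction, $2p_a(E)-2=E^2<0$, gives $p_a(E)=0$ and $E^2=-2$. So $E$ is a smooth rational $(-2)$-curve, and over an algebraically closed field it is $\mathbb P^1$.

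The main point needing care is the effectivity of $B$, namely that minimality of the resolution forces $a_E\le 0$. This is exactly the negativity lemma applied to the $h$-nef divisor $K_V-h^*K_W$ (intersection-theoretic pullback on the normal surface). The step is characteristic-free, so I expect no obstruction beyond citing it, for instance \cite{KM98}.
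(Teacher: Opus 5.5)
Your proof is correct and follows the paper's argument. Integrality and log canonicity bound the coefficients by $1$. Effectivity comes from $h$-nefness of $K_V$ plus negative definiteness of the exceptional intersection matrix: you package this as the negativity lemma for $K_V-h^*K_W$ (Mumford pullback) together with effectivity of $h^*C$, while the paper applies it directly to $\sum b_jE_j$ using $(\sum b_jE_j)\cdot E_i=-(K_V+h^{-1}_*C)\cdot E_i\leq 0$. The coefficient-zero curves are handled by the same adjunction computation.

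One point in your nodality sketch is inaccurate. For a cusp or two tangent branches, the first blow-up has discrepancy exactly $-1$. Discrepancy $-2$ appears only after one or two further blow-ups, at the point where three branches meet. The conclusion is still the standard fact, which the paper simply quotes.
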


\begin{proof}
\noindent\emph{Step 1. Exceptional coefficients.}
Write $B=C'+\sum b_iE_i$, where $C'$ is the strict
transform and the $E_i$ are exceptional. Cartierness and
log canonicity give $b_i\in\Z$ and $b_i\leq1$.
Since $K_V$ is $h$-nef, for every $i$,
\[
 \Bigl(\sum b_jE_j\Bigr)\cdot E_i
                    =-(K_V+C')\cdot E_i\leq0.
\]
The exceptional matrix is negative definite with
nonnegative off-diagonal entries. Its inverse has
nonpositive entries. Hence $b_i\geq0$, and $b_i\in\{0,1\}$.

\smallskip
\noindent\emph{Step 2. Curves of coefficient zero.}
If $b_i=0$, then
\[
 0=(K_V+B)\cdot E_i=K_V\cdot E_i+B\cdot E_i,
 \qquad K_V\cdot E_i\geq0,\quad B\cdot E_i\geq0.
\]
Thus both intersections vanish. Adjunction gives
$E_i^2=2p_a(E_i)-2<0$, so $p_a(E_i)=0$ and $E_i^2=-2$.
Moreover $E_i\cap B=\varnothing$.
The pair $(V,B)$ is lc; a reduced lc divisor on a smooth
surface has only nodes. This proves the lemma.
\end{proof}

\begin{lemma}\label{lem:surface-configurations}
Let $V$ be a smooth projective surface and let
$0\ne B\in|-K_V|$ be reduced and nodal. Exactly one of
the following occurs:
\begin{enumerate}
\item $V$ is rational and $B$ is a smooth elliptic curve
or a connected cycle of rational curves;
\item $V$ is ruled over an elliptic curve and
$B=C_-+C_+$ consists of two disjoint sections.
\end{enumerate}
\end{lemma}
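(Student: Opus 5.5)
The plan is to begin with Riemann--Roch and the Kodaira-dimension classification of smooth surfaces. Since $B\ne0$ is effective and $B\sim -K_V$, every plurigenus vanishes: a nonzero section of $mK_V$ times a section of $mB$ would give a nonzero section of $\OO_V$ vanishing along $B$. Hence $\kappa(V)=-\infty$, and $V$ is rational or birationally ruled over a curve $\Gamma$ of genus $g\ge1$. Adjunction gives $\omega_B\simeq(K_V+B)|_B\simeq\OO_B$. Taking the cohomology of $0\to\OO_V(-B)=\omega_V\to\OO_V\to\OO_B\to0$ and using Serre duality, we get
\[
 h^0(\OO_B)=1+h^1(\omega_V)-h^1(\OO_V)+\ldots,
\]
and more precisely $0\to H^1(\OO_V)\to H^1(\OO_B)\to H^2(\omega_V)=k\to H^2(\OO_V)=0$. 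So $B$ is connected, once one checks that $H^0(\OO_V)\to H^0(\OO_B)$ is onto. This follows because $H^1(\omega_V)\cong H^1(\OO_V)^*$ and the map $H^1(\omega_V)\to H^1(\OO_V)$ can be controlled; the simplest route is to treat the cases separately. In either case $h^1(\OO_B)=q(V)+1$.

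\emph{Rational case.} Here $q=0$, so $h^1(\OO_B)=1$. I will show $h^0(\OO_B)=1$. A reduced nodal curve with trivial dualizing sheaf has the property that each connected component $B'$ satisfies $h^0(\OO_{B'})=h^1(\OO_{B'})\ge1$. Since $h^1(\OO_B)=1$, $B$ is connected and has arithmetic genus one. Moreover $\omega_B\simeq\OO_B$ forces every component to have degree zero for $\omega_B$: for a component $B_j$ of genus $g_j$ meeting the rest in $m_j$ points, $2g_j-2+m_j=0$. So either $B$ is irreducible, hence a smooth elliptic curve or a nodal rational curve (a cycle of length one), or every component is a smooth rational curve meeting the rest in exactly two points. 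Connectedness then makes $B$ a cycle.

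\emph{Irrational ruled case.} Let $\varphi\colon V\to\Gamma$ be the ruling with general fibre $F\cong\mathbb P^1$, so $B\cdot F=-K_V\cdot F=2$. Among the components of $B$, the horizontal ones have genus at least $g\ge1$ by Hurwitz. The count $2g_j-2+m_j=0$ then forces each horizontal component to be a smooth curve of genus one with $m_j=0$, that is, disjoint from the rest of $B$. In particular $g=1$. Vertical components are rational. A rational component must satisfy $m_j=2$, so the rational components form cycles and chains closing up among themselves, and such a cycle would be a connected component of $B$ with $h^1=1$. Now I use $h^1(\OO_B)=q+1=2$ together with $\chi(\OO_B)=0$. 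The horizontal part has total degree $2$ over $\Gamma$, so it is either one bisection or two sections, each contributing $1$ to $h^1$. A single smooth elliptic bisection gives $h^1=1$, so it would need an extra vertical cycle. But a cycle of fibre components $Z$ satisfies $Z^2<0$ unless $Z$ is a full fibre, and a full fibre with $Z\cdot B=0$ contradicts $B\cdot F=2$, since that bisection meets every fibre. Hence there are no vertical components, and the horizontal part consists of two disjoint sections $C_\pm$ (an elliptic bisection alone would give $h^1(\OO_B)=1\ne2$).

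It remains to show that $V$ is actually a ruled surface, that is, $\varphi$ has no reducible fibres. I expect this to be the main obstacle. If a fibre contains a $(-1)$-curve $E$, then $-K_V\cdot E=1$, so $B\cdot E=1$ with $E\not\subset B$. Thus $E$ meets exactly one section, once. The other components of that fibre also have $B\cdot E'=-K_V\cdot E'\ge0$, and equality holds for $(-2)$-curves. Summing over the fibre, $B\cdot F=2$, while each section meets the fibre in a multiplicity-one component. A chain argument with $(-1)$- and $(-2)$-curves and $K_V\cdot F=-2$ should show that some reduction is incompatible with $-K_V$ being effective along the two disjoint sections. The alternative is to contract $(-1)$-curves: contraction keeps $B$ an anticanonical nodal curve only when $E$ is not in $B$, and then the image of $B$ acquires a point where the two sections meet, contradicting disjointness after contraction. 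I will try this second approach first.

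The final step is to show that the two cases are mutually exclusive. This is clear, since $q(V)=0$ in the rational case and $q(V)=1$ in the ruled case.
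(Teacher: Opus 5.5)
Your rational case is correct and essentially matches the paper's argument. The elliptic ruled case, however, has a real gap. To exclude a single elliptic bisection you use $h^1(\OO_B)=q+1=2$, but you never prove it. Your displayed sequence begins with $0\to H^1(\OO_V)$, and that left exactness is equivalent to the vanishing of $H^1(\omega_V)\to H^1(\OO_V)$, which you postpone (``can be controlled''). Your preceding sentence, asking for surjectivity of $H^0(\OO_V)\to H^0(\OO_B)$, would in fact give the opposite, $h^1(\OO_B)=1$; and in case (ii) $B$ is disconnected anyway.

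This injectivity is the heart of the lemma, and intersection numbers plus adjunction cannot replace it. They do not distinguish $B\sim-K_V$ from $B\sim-K_V+\epsilon$, where $\epsilon$ is a $2$-torsion class pulled back from $E$. In the latter class, \'{e}tale elliptic bisections do exist: take the image of $\widetilde E\times\{0,\infty\}$ in the quotient of $\widetilde E\times\mathbb P^1$ by $(x,z)\mapsto(x+a,z^{-1})$, with $a$ a nonzero $2$-torsion point.

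The missing idea is the paper's trace argument. One has $H^1(\OO_V)=f^*H^1(E,\OO_E)$. Its restriction to a horizontal component $B_h$ is pullback along $B_h\to E$, of degree $d\in\{1,2\}$. Composing with the trace gives multiplication by $d\neq0$ in $k$, which is where $p>2$ enters. Hence restriction to $B$ is injective. The paper phrases this as a contradiction: for a bisection, $H^0(\OO_V)\to H^0(\OO_B)$ is an isomorphism, so exactness and $h^1(\omega_V)=h^1(\OO_V)=1$ force $\operatorname{res}_B=0$.

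Two further points need repair.

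First, your exclusion of a vertical cycle $Z$ is incomplete. When $Z$ is not a whole fibre you only note $Z^2<0$. That gives no contradiction, since $Z\cdot B=Z^2=-K_V\cdot Z$ is exactly what adjunction predicts for a cycle. What rules such cycles out is that the reduced fibres of a ruling are trees of smooth rational curves. So no set of vertical components can have each member meeting exactly two others. The paper uses this to remove all vertical components at once, with no $h^1$ count.

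Second, the step you call the main obstacle aims at a false statement. In (ii), ``ruled'' means admitting a ruling $V\to E$ with general fibre $\mathbb P^1$, not being a $\mathbb P^1$-bundle. Blow up a point of $C_\infty$ on $\mathbb P_E(\OO_E\oplus L)$: the strict transforms of $C_0$ and $C_\infty$ stay disjoint and anticanonical, while a reducible fibre appears. The paper's Lemma~\ref{lem:elliptic-ruling-sheaves} works precisely with such fibres. Your contraction argument fails for the reason you yourself found: a vertical $(-1)$-curve meets only one of the sections, so their images stay disjoint. Drop that step.
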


\begin{proof}
\noindent\emph{Step 1. The components of $B$.}
For a component $D\subset B$, let $\widetilde D$ be its
normalization and let $r_D$ count the branches of the nodes
on $\widetilde D$. Adjunction gives
\begin{equation}\label{eq:configuration-adjunction}
                   2g(\widetilde D)-2+r_D=0.
\end{equation}
Consequently an elliptic component is disjoint from the
other components, and each rational component meets two
branches of $B$, counting a self-node twice.

\smallskip
\noindent\emph{Step 2. The rational case.}
For an ample divisor $H$, $K_V\cdot H=-B\cdot H<0$.
The classification of surfaces with Kodaira dimension
$-\infty$ makes $V$ rational or ruled over a curve of
positive genus \cite{BM77}.
If $V$ is rational, the sequence
\[
 0\longrightarrow\omega_V\longrightarrow\OO_V
                       \longrightarrow\OO_B\longrightarrow0
\]
and $H^1(V,\omega_V)=0$ imply $H^0(B,\OO_B)=k$.
Thus $B$ is connected, and \eqref{eq:configuration-adjunction}
gives (i).

\smallskip
\noindent\emph{Step 3. The ruling over a curve of positive genus.}
Let $f:V\to E$ be the ruling and let $F$ be a general fibre.
Since $B\cdot F=2$, some component of $B$ dominates $E$
with degree at most two. This morphism is separable because
$p>2$. Riemann--Hurwitz and
\eqref{eq:configuration-adjunction} give
$g(E)=1$ and show that every horizontal component is
elliptic and \'{e}tale over $E$.
There are no vertical components of $B$: they would form
a subgraph of a fibre tree with every vertex of valency
two, contrary to \eqref{eq:configuration-adjunction}.
Hence $B$ consists of two sections or one \'{e}tale bisection.

Suppose $B$ is one bisection. In the preceding ideal
sequence, $H^0(\OO_V)\to H^0(\OO_B)$ is an isomorphism
and $h^1(\omega_V)=h^1(\OO_V)=1$. Exactness implies that
\[
 \operatorname{res}_B:H^1(\OO_V)\longrightarrow H^1(\OO_B),
\]
is equal to zero. On the other hand this homomorphism is pullback along
$B\to E$. Its composite with trace is multiplication by
$2$, so it is injective. This contradiction proves (ii).
\end{proof}

The coefficient-zero exceptional curves in
Lemma~\ref{lem:log-surface-resolution} may be contracted.
After that contraction the surface has only rational
double points away from $B$. A component of $B$ exceptional
over $W$ is retained until its effect on cohomology is computed.

\subsection{Sheaves on a surface ruled over an elliptic curve}

\begin{lemma}\label{lem:elliptic-ruling-sheaves}
Let $f:V\to E$ be a normal projective surface ruled over
an elliptic curve. Assume that its singularities are rational
double points and that
$C_-,C_+\subset V_{\mathrm{reg}}$ are disjoint sections with
$K_V+C_-+C_+\sim0$ Cartier. Suppose
\[
 \FF^{[m]}\simeq\OO_V(-m\alpha C_--m\gamma C_+)
                         \hookrightarrow\OO_V,
 \qquad 0\leq\alpha,\gamma<1,
\]
for a rank-one reflexive sheaf $\FF$. Then
\[
 \alpha+\gamma\in\{0,1\},\qquad
 \begin{cases}
 Rf_*\FF=0,&\alpha+\gamma=1,\\
 \FF\simeq f^*L,\ Rf_*\FF\simeq L,\ L^m\simeq\OO_E,
       &\alpha=\gamma=0.
 \end{cases}
\]
\end{lemma}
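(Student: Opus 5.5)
The plan is to first compute the class of $\FF$ on a general fibre, and then use the two sections to pin $\FF$ down globally. The rational double points lie away from $C_\pm$, so near the sections $V$ is smooth. $\FF$ is determined up to isomorphism by a Weil divisor class $F$ with $mF\sim-m\alpha C_--m\gamma C_+$.

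\textbf{The constraint $\alpha+\gamma\in\{0,1\}$.} A general fibre $\Phi\simeq\mathbb P^1$ avoids the singular points, so $F\cdot\Phi\in\Z$. Intersecting the relation with $\Phi$ gives $m(F\cdot\Phi)=-m(\alpha+\gamma)$, hence $\alpha+\gamma\in\Z\cap[0,2)$, which is $\{0,1\}$.

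\textbf{Case $\alpha+\gamma=1$.} Here $F\cdot\Phi=-1$, so $\FF|_\Phi\simeq\OO_{\mathbb P^1}(-1)$ on general fibres. I must check that $R^if_*\FF=0$ on every fibre, including those through singular points. The plan is to pass to the minimal resolution $g:\widetilde V\to V$, which is crepant since the points are RDPs. Write $\FF=g_*\OO_{\widetilde V}(\widetilde F)$ with $\widetilde F$ the round-down of $g^*F$, so that $R^1g_*=0$; this uses rationality of the singularities and a standard choice of the reflexive extension. On $\widetilde V\to E$, fibres are trees of rational curves, and $\widetilde F$ has degree $-1$ on the fibre. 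One then shows $H^0=H^1=0$ on each fibre component chain: either by choosing $\widetilde F$ so that its restriction to the fibre is a chain-normalized line bundle of total degree $-1$, or by using $K+C_-+C_+\sim0$. That relation gives $\widetilde F-K_{\widetilde V}\equiv\widetilde F+C_-+C_+$, and Kawamata--Viehweg-type vanishing, valid relatively over a curve for rational surfaces in any characteristic, then yields $R^1f_*=0$ and $f_*=0$ because the degree is negative. \emph{This relative vanishing on singular fibres is the main obstacle.} My first attempt would be the direct fibrewise computation on the resolved fibre.

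\textbf{Case $\alpha=\gamma=0$.} Now $F\cdot\Phi=0$ and $mF\sim0$. The class group of $V$ modulo $f^*\Pic E$ is generated by a section and fibre components; the torsion part comes from the local class groups at the RDPs and from fibre components. Restricting $\FF$ to $C_-$, which lies in the smooth locus, gives a torsion line bundle $L$ on $E\simeq C_-$. Then $\FF\otimes f^*L^{-1}$ is torsion, trivial on $C_-$, and of degree zero on fibres.

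To see that it is trivial, consider a singular fibre. Its reduced components, together with the local class groups, could a priori carry a torsion class. However, the class is trivial near $C_-$ and $C_+$, and the fibre structure is a chain linking $C_-$ to $C_+$, because the reduced boundary $C_-+C_+$ in $|-K|$ forces the singular fibres to be chains of $(-2)$-configurations. Combined with negative definiteness of the fibre components modulo $\Phi$, this forces the class to be $0$. I would argue concretely on $\widetilde V$: the pullback class is an integral combination of fibre components, has degree $0$ on every component, and so by Zariski's lemma is a multiple of the fibre, which is pulled back from $E$.

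Hence $\FF\simeq f^*L$, and $L^m\simeq\OO_E$ follows by restricting to $C_-$. Finally, $Rf_*\FF=L\otimes Rf_*\OO_V=L$, since $R^1f_*\OO_V=0$ for a ruling with rational singularities and $f_*\OO_V=\OO_E$.
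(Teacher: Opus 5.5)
There is a genuine gap in the case $\alpha+\gamma=1$, which is the real content of the lemma. You correctly identify that you need $H^1(V_e,\FF_e)=0$ on every fibre, including the special ones. Equivalently you need $H^0(V_e,\FF_e)=0$: a uniformizer of $E$ is a nonzerodivisor on the reflexive sheaf $\FF$, so $\FF$ is flat over $E$ and $\chi(V_e,\FF_e)=1-\alpha-\gamma=0$ for all $e$. But you only list strategies and call this step the main obstacle. Neither strategy is carried out, and neither works as stated.

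\begin{itemize}
\item A ``chain-normalized'' lift does not come for free. On a reducible fibre of $\widetilde V$, a line bundle of total degree $-1$ can have sections: multidegree $(1,-2)$ on a chain of two $\mathbb P^1$'s gives $h^0=h^1=1$. Twisting an integral lift of $F$ by exceptional curves changes the multidegree, so you would have to specify the lift and prove the vanishing for it.
\item The Kawamata--Viehweg route is invoked ``for rational surfaces'', but $V$ is birationally ruled over an elliptic curve. Moreover, $\lfloor g^*F\rfloor-K_{\widetilde V}$ need not be $\widetilde f$-nef, because of the fractional part of $g^*F$ on the exceptional curves. You would need a correctly stated relative vanishing theorem for klt surfaces (or pairs) over a curve, applied to $F-K_V\equiv(1-\alpha)C_-+(1-\gamma)C_+$, and you supply none.
\end{itemize}

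The missing idea, which is how the paper argues, is to use the power map on each fibre. First, the fibres $V_e$ are reduced chains of rational curves with $H^0(V_e,\OO_{V_e})=k$. Indeed, every vertical $(-1)$-curve on the minimal resolution has anticanonical degree one, so it meets exactly one of $C_\pm$. Hence the resolution is a $\mathbb P^1$-bundle blown up at points of the sections, and contracting the exceptional $(-2)$-curves keeps the fibres reduced.

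Now suppose $0\ne s\in H^0(V_e,\FF_e)$. The map $\FF^{\otimes m}\to\FF^{[m]}\hookrightarrow\OO_V$ sends $s^{\otimes m}$ to a constant in $H^0(V_e,\OO_{V_e})=k$. This constant is nonzero, because $\FF_e$ is torsion-free, $V_e$ is reduced, and the power map is an isomorphism at the generic point of every fibre component. If $\alpha+\gamma=1$, then $\alpha,\gamma>0$, so the constant lies in the ideal of $C_\pm\cap V_e$, a contradiction. Thus $h^0(\FF_e)=h^1(\FF_e)=0$ for all $e$, and cohomology and base change gives $Rf_*\FF=0$.

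For $\alpha=\gamma=0$, the same argument with $\chi(\FF_e)=1$ gives $\FF_e\simeq\OO_{V_e}$, and base change gives $\FF\simeq f^*f_*\FF$. Your class-group argument is a workable alternative there, but not as written. A priori, an integral lift to $\widetilde V$ of a torsion class that is not Cartier at some rational double point is not numerically trivial on the exceptional curves. So Zariski's lemma must be applied to the Mumford pullback, with rational coefficients, normalized by the non-exceptional, multiplicity-one component through $C_-\cap V_e$.
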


\begin{proof}
\noindent\emph{Step 1. Fibres and degrees.}
On the minimal resolution every vertical $(-1)$-curve
meets exactly one of the two sections, since its
anticanonical degree is one. Successive contraction of
these curves gives a $\mathbb P^1$-bundle over $E$.
Reversing the contractions consists of blowing up a section
at a smooth point of a fibre component. Each fibre is
therefore a reduced chain of rational curves, with the
two sections on its end components.
Contraction of the disjoint $(-2)$-curves preserves
reducedness and $h^*(\OO_{V_e})=(1,0)$: use
$Rg_*\OO=\OO$ and the exact sequence for multiplication
by a uniformizer at $e\in E$.

The sheaf $\FF$ is flat over $E$, since a uniformizer
is a nonzerodivisor on $\FF$. Its restriction $\FF_e$
is torsion-free of rank one on each component of $V_e$.
The degree on the generic fibre and flatness give
\[
 -\alpha-\gamma\in\Z,\qquad
 \chi(V_e,\FF_e)=1-\alpha-\gamma\in\{0,1\}.
\]

\smallskip
\noindent\emph{Step 2. Sections on a fibre.}
For $0\ne s\in H^0(V_e,\FF_e)$, the specified power map
gives $s^m\in H^0(V_e,\OO_{V_e})=k$.
This power is nonzero: $\FF_e$ is torsion-free and the
power map is nonzero at the generic point of each fibre
component. If $\alpha+\gamma=1$, the power map vanishes
at $C_-\cap V_e$ and $C_+\cap V_e$, a contradiction.
Thus $h^0(\FF_e)=h^1(\FF_e)=0$ in that case.

If $\alpha=\gamma=0$, the equality $\chi(\FF_e)=1$
supplies a nonzero section $s$. Since $s^m\in k^*$,
multiplication by $s^{m-1}/s^m$ is an inverse to
$\OO_{V_e}\xrightarrow{s}\FF_e$.
Indeed it splits this map; the complementary module has
rank zero on every component and is torsion-free, hence zero.
Therefore
\[
                    \FF_e\simeq\OO_{V_e},\qquad h^*(\FF_e)=(1,0).
\]

\smallskip
\noindent\emph{Step 3. Relative cohomology.}
Cohomology and base change now give $Rf_*\FF=0$ in the
first case. In the second, $L=f_*\FF$ is invertible,
$R^1f_*\FF=0$, and the evaluation $f^*L\to\FF$ is an
isomorphism on every fibre, hence an isomorphism.
Restriction to $C_-$ gives $L^m\simeq\OO_E$.
\end{proof}

\subsection{Vanishing with fractional anticanonical coefficients}
\label{subsec:surface-witt}

This subsection proves the vanishing needed when the order
of a divisorial sheaf is divisible by $p$.
All splittings are constructed on a normal surface $W$.

For a $\mathbb Q$-divisor $T$ on $W$, set
$\OO_W(T)=\OO_W(\lfloor T\rfloor)$ and
\[
 W_n\OO_W(T)=
 \{(a_0,\ldots,a_{n-1})\in W_nk(W) \mid 
             a_i\in\OO_W(\lfloor p^iT\rfloor)\}.
\]
Define $Q_{T,n}$ by the pushout
\begin{equation}\label{eq:Q-pushout}
\begin{tikzcd}[column sep=large]
 W_n\OO_W(T)\ar[r,"F"]\ar[d,"R^{n-1}"']
 &F_*W_n\OO_W(pT)\ar[d]\\
 \OO_W(T)\ar[r,"\Phi_{T,n}"']&Q_{T,n},
\end{tikzcd}
\end{equation}
and set $B_{T,n}=\operatorname{coker}(F)$. The following are exact sequences of coherent $\OO_W$-modules
\cite[\S3.1 and Lemma 3.8]{KTT+24}:
\begin{align}
0&\to\OO_W(T)\xrightarrow{\Phi_{T,n}}Q_{T,n}\to B_{T,n}\to0,
                                      \label{eq:witt-Q-sequence}\\
0&\to F_*B_{pT,n}\to Q_{T,n+1}\to F_*\OO_W(pT)\to0,
                                      \label{eq:witt-Q-recursion}\\
0&\to F_*B_{pT,n}\to B_{T,n+1}\to B_{T,1}\to0.
                                      \label{eq:witt-B-recursion}
\end{align}
For $0\leq\Gamma<1$, the pair $(W,\Gamma)$ is
$n$-quasi-$F$-split if $\Phi_{\Gamma,n}$ has an
$\OO_W$-linear splitting. Splitting depends only on
$\{T\}$, by \cite[Lemma 3.11]{KTT+24}.

\begin{lemma}\label{lem:conductor-Witt-splitting}
Let $W$ be a normal proper Cohen--Macaulay surface and
$0\ne C=\sum C_i$ a reduced divisor. Assume
\[
 \II_C\simeq\omega_W,\qquad H^1(W,\OO_W)=H^2(W,\OO_W)=0.
\]
If $C$ is $n$-quasi-$F$-split, then $\Phi_{T,n}$ splits
whenever $\{T\}=\sum\gamma_iC_i$ with $0\leq\gamma_i<1$.
\end{lemma}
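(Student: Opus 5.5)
The plan is to split off $\Phi_{T,n}$ from the quasi-$F$-splitting of $C$ by an induction on the fractional part, using the relation $\II_C\simeq\omega_W$ to turn the splitting question into a cohomological one. The target is the criterion of \cite{KTT+24}: $\Phi_{T,n}$ splits if and only if the evaluation map
\[
 \Hom_W(Q_{T,n},\omega_W)\longrightarrow\Hom_W(\OO_W(T),\omega_W)
\]
hits a generator. Equivalently, by Serre duality on the Cohen--Macaulay surface $W$, the induced map $H^2(W,\OO_W(K_W-T))\to H^2(W,Q_{T,n}\otimes\omega_W)$ must be injective on the relevant class; here $\OO_W(K_W-T)$ means $\OO_W(\lfloor K_W-T\rfloor)$.

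The first step is to reduce to the case $\lfloor T\rfloor=0$. This uses \cite[Lemma 3.11]{KTT+24}, which says splitting depends only on $\{T\}$, so I may take $T=\Gamma=\sum\gamma_iC_i$.

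The second step treats $\Gamma=0$ and the boundary case. I would compare $W$ with $C$ via the restriction sequence
\[
 0\to\II_C\to\OO_W\to\OO_C\to0,\qquad \II_C\simeq\omega_W.
\]
The hypotheses $H^1(\OO_W)=H^2(\OO_W)=0$ give an isomorphism
\[
 H^1(C,\OO_C)\simeq H^2(W,\omega_W)=k.
\]
The constructions \eqref{eq:Q-pushout} are functorial for $W\to C$. The $n$-quasi-$F$-splitting of $C$ gives a splitting of $\Phi_{C,n}$ on $C$, hence nonvanishing of the relevant map on $H^1(C,\OO_C)$, which is the dual obstruction for $C$. Transporting this through the connecting isomorphism, and using adjunction $\omega_C=(\omega_W(C))|_C\simeq\OO_C$, should show that the class in $H^2(W,\omega_W)$ is not killed. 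That is the splitting of $\Phi_{-C,n}$, and so of $\Phi_{0,n}$ by the dependence on fractional parts only, giving in fact splitting for $T$ with $\{T\}=0$ and for the "coefficient $1$'' limit.

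The third step handles general $0\le\gamma_i<1$ by monotonicity. For $0\le\Gamma'\le\Gamma''$ with the same support, there is a natural inclusion $\OO_W(-\Gamma'')$-twisted comparison of the pushout diagrams. A splitting for the larger boundary $C$, viewed as the limit $\gamma_i\to1^-$, that is for $T=-C+\varepsilon$ type fractional parts, should restrict to one for every smaller boundary. Concretely, I would use that $W_n\OO_W(\Gamma)\subseteq W_n\OO_W(\Gamma+\delta)$, and then compose the splitting with the inclusion $\OO_W(\Gamma)\to\OO_W(\lfloor\Gamma+\delta\rfloor)=\OO_W(\Gamma)$ for small $\delta$ making all $\lfloor p^i(\cdot)\rfloor$ jump only at coefficient $1$. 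To justify the step I would verify, via \eqref{eq:witt-Q-recursion} and \eqref{eq:witt-B-recursion} inductively on $n$, that $Q_{\Gamma,n}$ maps to $Q_{\Gamma+\delta,n}$ compatibly with $\Phi$.

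The main obstacle is the second step: identifying the duality obstruction for $W$ with that of $C$ compatibly with the Witt-vector pushouts, in particular showing the connecting map $H^1(C,Q_{C,n}|_C)\to H^2(W,Q\otimes\omega_W)$ is injective. I would try first to build the map of \eqref{eq:witt-Q-sequence} for $(W,-C)$ onto that for $C$, using $W_n\OO_W(-C)\to W_n\OO_W\to W_n\OO_C$. The vanishing of $H^1(\OO_W)$ and $H^2(\OO_W)$, applied with \eqref{eq:witt-Q-recursion} and Frobenius twists, should then kill the error terms.
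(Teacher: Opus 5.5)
Your proposal has a genuine gap in Step 2: it targets the wrong sheaf, so the three steps do not connect. By \cite[Lemma 3.11]{KTT+24}, a splitting of $\Phi_{-C,n}$ is equivalent to a splitting of $\Phi_{0,n}$, i.e.\ to $n$-quasi-$F$-splitting of $W$ itself. This is the case of fractional part $0$, the \emph{bottom} of your monotonicity, not the coefficient-one limit. Indeed, $W_n\OO_W(-C)$ has $i$-th coordinate in $\OO_W(-p^iC)$, and $T=-C+\varepsilon C$ has fractional part $\varepsilon C$, close to $0$. Your Step 3 only transfers a splitting from a larger fractional part to a smaller one with the same round-down, so it yields nothing for $\gamma_i>0$.

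The transport from $C$ also cannot run through $W_n\OO_W(-C)\to W_n\OO_W\to W_n\OO_C$, because this is not exact in the middle for $n\geq2$. For $a\in\II_C\setminus\OO_W(-pC)$, the vector $(0,a,0,\ldots,0)$ dies in $W_n\OO_C$ but does not lie in $W_n\OO_W(-C)$. Separately, your duality criterion only produces a retraction when $\OO_W(\lfloor T\rfloor)\simeq\omega_W$. So it cannot be run after you normalize to $\lfloor T\rfloor=0$.

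The missing idea is the Witt ideal $W_n\II_C=\ker(W_n\OO_W\to W_n\OO_C)$, all of whose coordinates lie in $\II_C$. It coincides with $W_n\OO_W(-\varepsilon C)$ for $0<\varepsilon\leq p^{-n}$. The fractional part $(1-\varepsilon)C$ of that divisor dominates every admissible $\Gamma$, since $\lfloor p^i\gamma_j\rfloor\leq p^i-1$.

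Let $Q_n^C$ be the pushout of the Frobenius of $W_n\II_C$ along the first coordinate. This gives a map of short exact sequences from $0\to\II_C\to\OO_W\to\OO_C\to0$ to $0\to Q_n^C\to Q_{0,n}\to Q_{C,n}\to0$; exactness of the second row uses injectivity of Frobenius on the reduced ring $\OO_C$. Induction on $n$ with $H^1(\OO_W)=H^2(\OO_W)=0$ gives $H^1(Q_{0,n})=0$, so $H^1(Q_{C,n})\to H^2(Q_n^C)$ is injective. The splitting on $C$ and the isomorphism $H^1(\OO_C)\simeq H^2(\II_C)$ then make $H^2(\II_C)\to H^2(Q_n^C)$ injective. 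Serre duality with $\II_C\simeq\omega_W$ turns this into a retraction $r_n:Q_n^C\to\II_C$. This is the paper's first step.

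The paper then reaches $\Gamma$ as follows. It multiplies by the Teichm\"uller lifts $[t],[t^p]$ of a local equation $t$ of $C$ on $W_{\mathrm{reg}}$ and sets $\rho_t=t^{-1}r_n\psi_t$. It checks that $\rho_t$ is independent of $t$, glues, and extends across the singular points by reflexivity. Once the correct top case is in place, your downward monotonicity combined with \cite[Lemma 3.11]{KTT+24} would be an acceptable substitute for that last step.
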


\begin{proof}
\noindent\emph{Step 1. Splitting on the canonical ideal.}
Let $W_n\II_C=\ker(W_n\OO_W\to W_n\OO_C)$ and let
$Q_n^C$ be the pushout of its Frobenius map along the
first-coordinate map $W_n\II_C\to\II_C$.
This is the Witt ideal: every coordinate is in $\II_C$.
It is not $W_n\OO_W(-C)$.
The Witt ideal sequence induces the exact diagram
\begin{equation}\label{eq:conductor-Q-diagram}
\begin{tikzcd}[column sep=large]
0\ar[r]&\II_C\ar[r]\ar[d,"\Phi_n^C"']
 &\OO_W\ar[r]\ar[d,"\Phi_{0,n}"]
 &\OO_C\ar[r]\ar[d,"\Phi_{C,n}"]&0\\
0\ar[r]&Q_n^C\ar[r]&Q_{0,n}\ar[r]&Q_{C,n}\ar[r]&0.
\end{tikzcd}
\end{equation}
Here $Q_{C,n}$ is formed on the curve $C$.
For exactness of the bottom row, quotient the Witt ideal
sequence by $VF(W_{n-1}(-))$. The intersection with the
ideal term is $VF(W_{n-1}\II_C)$, because Frobenius on
the reduced ring $\OO_C$ is injective.

The sequences above and $H^1(\OO_W)=H^2(\OO_W)=0$ give,
by induction on $n$,
\[
 H^1(B_{0,n})=H^2(B_{0,n})=H^1(Q_{0,n})=0.
\]
The connecting homomorphisms in
\eqref{eq:conductor-Q-diagram} therefore satisfy
\[
 \begin{gathered}
 \delta:H^1(\OO_C)\xrightarrow{\sim}H^2(\II_C),\qquad
 \delta_n:H^1(Q_{C,n})\hookrightarrow H^2(Q_n^C),\\
 H^2(\Phi_n^C)\delta=\delta_nH^1(\Phi_{C,n}).
 \end{gathered}
\]
The assumed splitting on $C$ makes $H^2(\Phi_n^C)$
injective. Serre duality consequently makes
\[
 \Hom_W(Q_n^C,\omega_W)\longrightarrow
                         \Hom_W(\II_C,\omega_W)
\]
surjective. A preimage of $1_{\II_C}$ gives a splitting
$r_n:Q_n^C\to\II_C$.

\smallskip
\noindent\emph{Step 2. Fractional coefficients and integral twists.}
Work first on $W_{\mathrm{reg}}$ and set $\Gamma=\{T\}$.
On an open subset $U$, choose an equation $t$ for $C|_U$.
For $0\leq i<n$ and $0\leq\gamma_j<1$,
\[
 \lfloor p^i\gamma_j\rfloor\leq p^i-1,
 \qquad\lfloor p^{i+1}\gamma_j\rfloor\leq p^{i+1}-1.
\]
Thus multiplication by the Teichm\"uller representatives
$[t]$ and $[t^p]$ gives a commutative diagram
\[
\begin{tikzcd}
W_n\OO_W(\Gamma)\ar[r,"F"]\ar[d,"{[t]}"']
 &F_*W_n\OO_W(p\Gamma)\ar[d,"{[t^p]}"]\\
W_n\II_C\ar[r,"F"']&F_*W_n\II_C.
\end{tikzcd}
\]
On first coordinates the left vertical map is
$\OO_W\xrightarrow{t}\II_C$.
The induced map of pushouts
$\psi_t:Q_{\Gamma,n}|_U\to Q_n^C|_U$ satisfies
$\psi_t\Phi_{\Gamma,n}=t\Phi_n^C$. Therefore
\[
 \rho_t:=t^{-1}r_n\psi_t:Q_{\Gamma,n}|_U\longrightarrow\OO_U,
 \qquad \rho_t\Phi_{\Gamma,n}=1_{\OO_U}.
\]
If $t'=ut$ for a unit $u\in\OO_U^*$, the two vertical
maps change by $[u]$ and $[u^p]$, respectively.
On the Frobenius term, multiplication by $[u^p]$ is
the scalar action of $u$. Thus
\[
 \psi_{ut}=u\psi_t,\qquad
 \rho_{ut}=(ut)^{-1}r_n(u\psi_t)=t^{-1}r_n\psi_t=\rho_t.
\]
The retractions consequently glue on $W_{\mathrm{reg}}$.
If $j:W_{\mathrm{reg}}\hookrightarrow W$, a homomorphism
to $\OO_{W_{\mathrm{reg}}}$ extends by adjunction to
$j_*\OO_{W_{\mathrm{reg}}}=\OO_W$.
This gives the splitting on $W$.
Finally apply the integral Weil-twist assertion
\cite[Lemma 3.11]{KTT+24} to $T=\lfloor T\rfloor+\Gamma$.
\end{proof}

\begin{corollary}\label{lem:conductor-Witt-vanishing}
Under Lemma~\ref{lem:conductor-Witt-splitting}, assume
$\Supp\{p^iT\}\subseteq C$ for $0\leq i\leq a$.
Then, for every $j$,
\[
 H^j(W,Q_{p^aT,n})=0\quad\Longrightarrow\quad
                            H^j(W,\OO_W(T))=0.
\]
\end{corollary}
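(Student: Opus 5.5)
The plan is to run the two exact sequences \eqref{eq:witt-Q-sequence} and \eqref{eq:witt-Q-recursion} in an induction on $a$. Along the way we use the splitting of Lemma~\ref{lem:conductor-Witt-splitting} to pass from a twist $pT$ back to $T$. The key point is that a split injection induces injections on cohomology.

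\emph{Base case $a=0$.} Since $\Supp\{T\}\subseteq C$, we can write $\{T\}=\sum\gamma_iC_i$ with $0\leq\gamma_i<1$. Lemma~\ref{lem:conductor-Witt-splitting} then gives a retraction of $\Phi_{T,n}:\OO_W(T)\to Q_{T,n}$. Hence $H^j(\OO_W(T))$ is a direct summand of $H^j(Q_{T,n})$, and vanishing of the latter gives vanishing of the former.

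\emph{Induction on $a$.} Assume the assertion holds for $a-1$ and every divisor satisfying the support condition. Apply it to $T'=pT$: the hypothesis $\Supp\{p^i T'\}\subseteq C$ for $0\leq i\leq a-1$ is exactly the given hypothesis for $1\leq i\leq a$. Since $p^{a-1}T'=p^aT$, the vanishing of $H^j(Q_{p^aT,n})$ yields $H^j(\OO_W(pT))=0$. We then need $H^j(\OO_W(T))=0$, which by the base-case splitting reduces to $H^j(Q_{T,m})=0$ for a suitable $m$.

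To get this, use \eqref{eq:witt-Q-recursion} for $Q_{T,m+1}$. It is an extension of $F_*\OO_W(pT)$ by $F_*B_{pT,m}$, so it suffices to control $H^j(B_{pT,m})$. By \eqref{eq:witt-Q-sequence} for $pT$, $B_{pT,m}$ sits between $\OO_W(pT)$ and $Q_{pT,m}$. Because $\Phi_{pT,m}$ is split (again by Lemma~\ref{lem:conductor-Witt-splitting}, since $\{pT\}$ is supported on $C$), $B_{pT,m}$ is a direct summand of $Q_{pT,m}$. So $H^j(B_{pT,m})$ vanishes once $H^j(Q_{pT,m})$ does.

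\emph{Main obstacle: the Witt length.} The recursion raises the length from $m$ to $m+1$, while the hypothesis only concerns length $n$. I would therefore strengthen the induction statement to: $H^j(Q_{p^aT,n})=0$ implies $H^j(Q_{p^{a-i}T,n})=0$, and hence $H^j(\OO_W(p^{a-i}T))=0$. To prove this I would try two routes. First, filter $Q_{T,n}$ by iterating \eqref{eq:witt-Q-recursion} and \eqref{eq:witt-B-recursion}, so that its graded pieces are Frobenius pushforwards of $\OO_W(p^iT)$ and of $B_{p^iT,\cdot}$ with $i\leq n$. Second, observe that the splitting of $\Phi_{T,n}$ also splits $\Phi_{T,n'}$ for $n'\geq n$ (quasi-$F$-splitting is preserved under increasing length), which lets us work at a common length throughout. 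Combined with the summand arguments above, this should bound every piece by cohomology at level $p^a T$. The most delicate step is checking that the required splittings hold at each length used, and that the degrees $j$ do not shift under the connecting maps. Splitting handles this, since all the relevant sequences become split on cohomology.
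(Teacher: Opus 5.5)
Your base case and your single descent step are correct, and they are exactly the mechanism of the paper's proof. If $\Phi_{pT,m}$ splits, then $\OO_W(pT)$ and $B_{pT,m}$ are direct summands of $Q_{pT,m}$, and \eqref{eq:witt-Q-recursion} turns $H^j(Q_{pT,m})=0$ into $H^j(Q_{T,m+1})=0$. For this you only need exactness of the long exact sequence at the middle term. The sequence \eqref{eq:witt-Q-recursion} itself is not split, so your remark that all relevant sequences become split on cohomology is inaccurate, though harmless.

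The gap is in the Witt-length bookkeeping. You correctly identify it as the obstacle, but then resolve it wrongly.
\begin{enumerate}
\item Your strengthened claim keeps the \emph{same} length $n$: $H^j(Q_{p^aT,n})=0$ implies $H^j(Q_{p^{a-i}T,n})=0$. That is not what the recursion produces. There is also no way to ``work at a common length'': cohomology vanishing does not pass between $Q_{T,m}$ and $Q_{T,m'}$ for $m\neq m'$, and each step from $p^{i+1}T$ down to $p^iT$ costs exactly one unit of length.
\item Your first route fails as well. Iterating \eqref{eq:witt-Q-recursion} and \eqref{eq:witt-B-recursion} on $Q_{T,n}$ yields pieces built from $\OO_W(p^iT)$ for $i$ up to $n$. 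A hypothesis on $Q_{p^aT,n}$ alone does not control their cohomology, and for $i>a$ the support condition on $\{p^iT\}$ is not even assumed.
\item Your plain induction on $a$, applied to $T'=pT$, only returns $H^j(\OO_W(pT))=0$. That conclusion has discarded exactly the information needed to reach $T$.
\end{enumerate}

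The correct formulation lets the length grow. Put $N_i=n+a-i$ and prove $H^j(Q_{p^iT,N_i})=0$ by descending induction on $i$, starting from the hypothesis at $i=a$, where $N_a=n$. The step from $i+1$ to $i$ is your descent step with $m=N_{i+1}$. The needed splitting of $\Phi_{p^{i+1}T,N_{i+1}}$ comes from Lemma~\ref{lem:conductor-Witt-splitting}, for two reasons: $\{p^{i+1}T\}$ is supported on $C$ because $i+1\leq a$, and $C$, being $n$-quasi-$F$-split, is also $N_{i+1}$-quasi-$F$-split. At $i=0$ you split $\Phi_{T,n+a}$, not $\Phi_{T,n}$, in \eqref{eq:witt-Q-sequence}, and this gives $H^j(\OO_W(T))=0$. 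So monotonicity in the length is used to allow increasing lengths along the descent, not to equalize them.

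Equivalently, your induction on $a$ works once the statement is quantified over all admissible lengths and you peel off the top twist instead of the bottom one. First, $H^j(Q_{p^aT,n})=0$ gives $H^j(Q_{p^{a-1}T,n+1})=0$. Then the case $a-1$ applies to $T$ with length $n+1$.
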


\begin{proof}
Set $N_i=n+a-i$. Suppose
$H^j(Q_{p^{i+1}T,N_{i+1}})=0$.
The splitting lemma, and monotonicity in Witt length,
make both $\OO_W(p^{i+1}T)$ and $B_{p^{i+1}T,N_{i+1}}$
direct summands of $Q_{p^{i+1}T,N_{i+1}}$.
Sequence~\eqref{eq:witt-Q-recursion} gives
$H^j(Q_{p^iT,N_i})=0$.
Descending induction starts at $i=a$.
Splitting \eqref{eq:witt-Q-sequence} at $i=0$ proves the claim.
\end{proof}

\subsection{Rational surfaces with elliptic boundary}

Fix a normal projective rational surface $W$ and a smooth
elliptic curve $E$ such that $(W,E)$ is plt and
$K_W+E\sim0$ is Cartier. Adjunction gives
$\operatorname{Diff}_E(0)=0$; hence $W$ is regular along
$E$ and has only rational double points elsewhere.
In particular, restrictions of reflexive sheaves to $E$
are restrictions of invertible sheaves near $E$.

\begin{proposition}\label{lem:rational-elliptic-sheaves}
Suppose
$\FF^{[m]}\simeq\OO_W(-bE)\hookrightarrow\OO_W$,
where $0\leq b<m$ and $\FF$ is rank-one reflexive.
If $H^1(W,\FF)\ne0$, then
\[
 b=0,\qquad\FF^{[2]}\simeq\OO_W,\qquad
 \FF\not\simeq\OO_W,\qquad h^*(\FF)=(0,1,0).
\]
There is at most one such sheaf; denote it by $\varepsilon$.
Its quasi-\'{e}tale double cover is ruled over an elliptic
curve, with two boundary sections mapping isomorphically to $E$.
\end{proposition}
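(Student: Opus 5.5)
The plan is to split into the case $b>0$, which I expect to give acyclicity, and the case $b=0$, where $\FF$ is torsion with $\FF^{[m]}\simeq\OO_W$.

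\emph{Basic cohomology of $W$.} Since $W$ is rational with rational double points and $\omega_W\simeq\OO_W(-E)$, we have $h^*(\OO_W)=(1,0,0)$. Serre duality gives $H^2(W,\FF)\simeq H^0(W,\FF^{[-1]}(-E))^*$.

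\emph{Vanishing of $H^0$ and $H^2$ unless trivial.} A nonzero section $s$ of $\FF$ would give $s^m\in H^0(\OO_W(-bE))$. This forces $b=0$ and $s^m\in k^*$, and the splitting argument of Lemma~\ref{lem:elliptic-ruling-sheaves}, Step~2, then gives $\FF\simeq\OO_W$. Similarly, a section of $\FF^{[-1]}(-E)$ is in particular a section of $\FF^{[-1]}$, whose $m$-th power lies in $\OO_W(bE)$. That power vanishes along $E$ to order $m$, so it cannot be a nonzero constant; and if $b>0$, then $H^0(\OO_W(bE))=k$ because $E^2=0$ or negative on the rational surface, and the same contradiction applies. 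Hence $h^0=h^2=0$ unless $\FF\simeq\OO_W$, in which case $H^1=0$. So it suffices to compute $\chi(\FF)$ for nontrivial $\FF$, and $h^1(\FF)=-\chi(\FF)$.

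\emph{Riemann--Roch.} Use singular Riemann--Roch on the surface with rational double points. Write $\FF\simeq\OO_W(F)$ with $mF\sim -bE$, so $F\equiv -(b/m)E$. Then
\[
 \chi(\FF)=1+\tfrac12F\cdot(F-K_W)-\sum_x c_x(\FF),
\]
where the $c_x\geq0$ are local correction terms at the rational double points away from $E$. Since $K_W\equiv-E$, this becomes $\chi(\FF)=1+\tfrac12\,(b/m)(b/m-1)E^2-\sum c_x$.

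\emph{The case $E^2>0$.} This is the main obstacle. I would handle it by exploiting $b<m$ and the integrality of the terms, or, when $p\mid m$, by Corollary~\ref{lem:conductor-Witt-vanishing} applied with $C=E$. That requires $E$ to be quasi-$F$-split, which holds for elliptic curves of finite height, and $H^1(\OO_W)=H^2(\OO_W)=0$; it reduces the vanishing for $\OO_W(T)$ to the vanishing for $Q_{p^aT,n}$, where the divisor is integral. In the prime-to-$p$ case, I would instead take the quasi-\'etale cyclic cover $W'\to W$ defined by $\FF$. Then $K_{W'}+E'\sim0$, where $E'$ is the preimage of $E$, and $H^1(W,\FF)$ is an eigenspace of $H^1(W',\OO_{W'})$. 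By Lemma~\ref{lem:surface-configurations}, either $W'$ is rational, so $H^1(\OO_{W'})=0$, or $W'$ is ruled over an elliptic curve with boundary two sections $C_\pm$, so $h^1(\OO_{W'})=1$. In the ruled case, the rational boundary $E$ receives two sections, which forces the cover to have degree~$2$, $b=0$, and $\FF^{[2]}\simeq\OO_W$. Each $C_\pm$ then maps isomorphically to $E$, and $\chi(\FF)=\chi(\OO_{W'})-\chi(\OO_W)=-1$, giving $h^*(\FF)=(0,1,0)$.

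\emph{Uniqueness.} If $\varepsilon$ and $\varepsilon'$ are two such sheaves, then $\varepsilon\otimes\varepsilon'$ is $2$-torsion and trivial on $E$. Its double cover would have a boundary with four components, or would be rational; the first is excluded by Lemma~\ref{lem:surface-configurations}, and in the second case $H^1=0$ forces $h^1$ of a nontrivial sheaf to be $0$. Combining this with the additivity $h^1(\OO_{W''})=2$ on the $(\Z/2)^2$-cover, which contradicts Lemma~\ref{lem:surface-configurations}, shows $\varepsilon\otimes\varepsilon'\simeq\OO_W$, hence $\varepsilon=\varepsilon'$.
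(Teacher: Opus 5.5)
Your proposal has a genuine gap in the case $p\mid m$, and an unproved step in the case $p\nmid m$.

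\textbf{The case $p\mid m$.} Corollary~\ref{lem:conductor-Witt-vanishing} only gives the one-way implication $H^1(W,Q_{p^aT,n})=0\Rightarrow H^1(W,\FF)=0$, and you never compute $H^1(W,Q_{p^aT,n})$. Note also that $T'=p^aT=p^aL+(b/r)E$ is not integral in general; it only has denominator $r$ prime to $p$.

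Computing this group means running \eqref{eq:witt-Q-recursion} with $n=2$ through all the sheaves $G_j=\OO_W(\lfloor p^jT'\rfloor)$, $j\geq0$. This is where the tame cover analysis is really needed: it gives $H^2(G_j)=0$, and it shows $H^1(G_j)\neq0$ only if $p^jT'$ is integral and $\OO_W(p^jT')$ has order two.

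Even then the argument does not close. It leaves exactly the case where $T'$ is integral and $\OO_W(T')$ is the order-two sheaf. There $H^1(W,Q_{T',2})\neq0$, since it contains $H^1(W,\OO_W(T'))$ as a direct summand, so the corollary says nothing about $\FF$. A priori $\FF$ could then be a $p$-power-torsion twist of $\OO_W(T')$, or have $b=rt$ with $0<t<p^a$. Nothing in your proposal excludes this, yet the statement asserts $b=0$ and $\FF^{[2]}\simeq\OO_W$.

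The missing step is the paper's elimination of the $p$-primary part:
\begin{enumerate}
\item Pass to the quasi-\'etale double cover $\pi:W'\to W$ defined by $\OO_W(T')$. It is ruled, $f:W'\to B$, over an elliptic curve, with boundary sections $E_1,E_2$.
\item There $(\pi^{[*]}\FF)^{[p^a]}\simeq\OO_{W'}(-t(E_1+E_2))$. Integrality of the fibre degree $-2t/p^a$, together with $p\neq2$, forces $t=0$.
\item Lemma~\ref{lem:elliptic-ruling-sheaves} then gives $\pi^{[*]}\FF\simeq f^*N$ with $N$ a $p$-primary torsion line bundle on $B$, which is acyclic unless trivial.
\item The tame trace makes $\FF$ a direct summand of $\pi_*\pi^{[*]}\FF$. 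So $H^1(W,\FF)\neq0$ forces $N\simeq\OO_B$, and $\FF$ is one of the two descents $\OO_W,\varepsilon$ of $\OO_{W'}$.
\end{enumerate}

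\textbf{The case $p\nmid m$.} Your step ``the boundary receives two sections, which forces the cover to have degree $2$ and $b=0$'' is asserted without proof. What forces degree two is the hypothesis $H^1(W,\FF)\neq0$, which that sentence does not use. The paper's argument is a character computation:
\begin{enumerate}
\item The group $G_0$ of a connected component $W'$ acts on the line $H^1(W',\OO_{W'})$ by a character $\chi_0$.
\item $\chi_0$ is trivial on the stabilizer of a section: restriction to that section is an equivariant isomorphism on $H^1$, and the stabilizer acts on the elliptic section by translations because $E'\to E$ is \'etale.
\item $\chi_0$ is nontrivial because $H^1(W,\OO_W)=0$.
\item $\FF$ carries a faithful character of $G_0$, so $H^1(W,\FF)\neq0$ forces $|G_0|=2$. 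Hence the stabilizer is trivial, there is no ramification along $E$, and $b=0$.
\end{enumerate}
Also, the cover is not quasi-\'etale when $b>0$; you only have $K_{W'}+E'=\pi^*(K_W+E)$ by tame ramification.

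\textbf{Smaller points.}
\begin{enumerate}
\item The Riemann--Roch paragraph plays no role, and it cannot decide anything because of the correction terms at the rational double points.
\item Your claim $E^2\leq0$ is false in general: $E\in|-K_W|$, e.g.\ a plane cubic.
\item $H^2(W,\FF)=0$ is immediate, since the $m$-th reflexive power of $\sheafHom(\FF,\omega_W)$ is $\OO_W(-(m-b)E)$ with $m-b>0$.
\item Your closing $(\Z/2)^2$-cover count for uniqueness matches the paper.
\end{enumerate}
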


\begin{proof}
\noindent\emph{Step 1. Covers of degree prime to $p$.}
Let $T$ be a rational Weil divisor with
$rT=\divisor(h)$, $(r,p)=1$, and $\{T\}=\alpha E$,
$0\leq\alpha<1$. In the normalized cyclic cover,
$\OO_W(\lfloor lT\rfloor)$ is the character-$l$ summand
of the algebra \cite[Proposition 3.1]{Amb15}.
Choose a connected component $W'$ with stabilizer
$G_0\subseteq\mu_r$, and set $E'=(\pi^{-1}E)_{\mathrm{red}}$.
Tame ramification and adjunction give
\[
 K_{W'}+E'=\pi^*(K_W+E),\qquad E'\longrightarrow E
                          \text{ \'{e}tale on each component}.
\]
By Lemmas~\ref{lem:log-surface-resolution} and
\ref{lem:surface-configurations}, $W'$ is rational or
ruled over an elliptic curve; in the latter case $E'$
has two sections. Thus $H^2(W',\OO)=0$ and $h^1(W',\OO)\leq1$.

In the ruled case, $G_0$ permutes the two sections
transitively. Their stabilizer acts by translations,
and restriction from $W'$ to either section is an
isomorphism on $H^1$. Hence the representation on
$H^1(W',\OO)$ is the nontrivial character of the
quotient of order two; its invariant part is zero
because $H^1(W,\OO)=0$.
For every $j\geq0$, the character $p^j$ is faithful on
$G_0$. It contributes to $H^1$ only if $|G_0|=2$.
Then the section stabilizer and the ramification along
$E$ are trivial, so $\alpha=0$ and
$\OO_W(T)$ has order two. This argument also applies
when the cyclic algebra is disconnected, by induction
of the representation from $G_0$ to $\mu_r$.

\smallskip
\noindent\emph{Step 2. Reduction of an arbitrary order.}
Write $m=p^ar$ with $(r,p)=1$, choose $L$ with
$\FF=\OO_W(L)$, and set
\[
 mL+bE=\divisor(h),\qquad T=L+(b/m)E,\qquad T'=p^aT.
\]
An elliptic curve has quasi-$F$-splitting height at most
two \cite[Remark 2.12]{KTT+24}.
Corollary~\ref{lem:conductor-Witt-vanishing}, with $n=2$, gives
\[
 H^1(W,\FF)\ne0\quad\Longrightarrow\quad H^1(W,Q_{T',2})\ne0.
\]
Set $G_j=\OO_W(\lfloor p^jT'\rfloor)$.
If $H^1(G_j)=H^2(G_j)=0$ for all $j\geq0$, then
$0\to G_j\to F_*G_{j+1}\to B_{p^jT',1}\to0$
and \eqref{eq:witt-Q-recursion} give $H^1(Q_{T',2})=0$.
Step 1 gives $H^2(G_j)=0$ and characterizes every
possible nonzero $H^1(G_j)$.
Consequently $T'$ is integral and $\OO_W(T')$ is an
order-two sheaf defining a quasi-\'{e}tale double cover
$\pi:W'\to W$ with two elliptic boundary sections $E_1,E_2$.

\smallskip
\noindent\emph{Step 3. Elimination of the $p$-primary part.}
Integrality of $T'$ implies $b=rt$, $0\leq t<p^a$.
For $\FF'=(\pi^*\FF)^{**}$,
\[
 (\FF')^{[p^a]}\simeq\OO_{W'}(-t(E_1+E_2)),\qquad
                         \deg(\FF'|_{\mathbb P^1})=-2t/p^a\in\Z.
\]
Since $p\ne2$, these inequalities force $t=0$.
Lemma~\ref{lem:elliptic-ruling-sheaves} gives
$\FF'=f^*N$ for a $p$-primary torsion line bundle on
the elliptic base. Tame trace makes $\FF$ a direct
summand of $\pi_*\FF'$.
A nontrivial $N$ is acyclic, so $H^1(W,\FF)\ne0$
forces $N\simeq\OO$.
The two linearizations of $\OO_{W'}$ descend to
$\OO_W$ and the sign summand $\varepsilon$.
Since $H^1(W,\OO_W)=0$, one has $\FF\simeq\varepsilon$.

\smallskip
\noindent\emph{Step 4. The other cohomology groups and uniqueness.}
A nonzero section of $\FF$ has nonzero constant
$m$-th power. It therefore forces $b=0$ and
$\FF\simeq\OO_W$. Serre duality gives $H^2(W,\FF)=0$,
because the $m$-th reflexive power of
$\sheafHom(\FF,\omega_W)$ is $\OO_W(-(m-b)E)$.
Step 1 identifies the sign summand of $H^1(W',\OO)$
with $k$, proving $h^*(\varepsilon)=(0,1,0)$.
Two distinct order-two sheaves with nonzero $H^1$
would give a connected cover with group $(\Z/2)^2$
and two distinct nonzero character summands in $H^1$.
Lemma~\ref{lem:surface-configurations} bounds that
dimension by one. Hence $\varepsilon$ is unique.
\end{proof}

\begin{corollary}\label{cor:exceptional-restriction}
If $\varepsilon$ exists, then
\[
 \varepsilon|_E\simeq\OO_E,\qquad
 H^1(W,\varepsilon)\xrightarrow{\sim}H^1(E,\OO_E).
\]
For any nontrivial torsion rank-one reflexive sheaf $\FF$,
$\FF|_E\simeq\OO_E$ if and only if $\FF\simeq\varepsilon$.
\end{corollary}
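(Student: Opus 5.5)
The plan is to work on the quasi-\'{e}tale double cover $\pi:W'\to W$ attached to $\varepsilon$. By Proposition~\ref{lem:rational-elliptic-sheaves}, $W'$ is ruled over an elliptic curve, $f:W'\to E'$, with two boundary sections $E_1,E_2$ mapping isomorphically to $E$. The deck involution $\iota$ interchanges $E_1$ and $E_2$, and
\[
 \pi_*\OO_{W'}=\OO_W\oplus\varepsilon .
\]
Since $W$ is regular along $E$ and $\pi^{-1}E=E_1\sqcup E_2$ is \'{e}tale over $E$, restricting this decomposition to $E$ gives
\[
 \pi_*\OO_{E_1\sqcup E_2}=\OO_E\oplus\varepsilon|_E .
\]
The left side is $\OO_E\oplus\OO_E$, with $\iota$ swapping the factors, because the cover of $E$ is split. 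Its anti-invariant part is therefore generated by the section $(1,-1)$, and this gives $\varepsilon|_E\simeq\OO_E$.

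For the cohomology isomorphism, I would take anti-invariants of the restriction map
\[
 H^1(W',\OO_{W'})\to H^1(E_1,\OO)\oplus H^1(E_2,\OO).
\]
In Step~1 of the proof of Proposition~\ref{lem:rational-elliptic-sheaves} it was shown that restriction from $W'$ to each section is an isomorphism on $H^1$, and that $H^1(W',\OO)$ is the sign character. Taking anti-invariants then gives an isomorphism
\[
 H^1(W,\varepsilon)=H^1(W',\OO)^-\longrightarrow\bigl(H^1(E_1)\oplus H^1(E_2)\bigr)^-\simeq H^1(E,\OO_E).
\]
This map is the restriction map for $\varepsilon$, using the identification $\varepsilon|_E\simeq\OO_E$ just obtained. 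Both sides are one-dimensional and the map is injective, since it is injective on the first factor. Alternatively, one can use the sequence $0\to\varepsilon(-E)\to\varepsilon\to\OO_E\to0$ together with Serre duality $H^i(\varepsilon(-E))\simeq H^{2-i}(\varepsilon)^*$, which follows from $\omega_W=\OO_W(-E)$ and $\varepsilon^{[-1]}=\varepsilon$. This gives $h^*(\varepsilon(-E))=(0,1,0)$, so the restriction fits into an exact sequence $0\to k\to k\to\cdots$ once one checks that $H^0(\varepsilon)\to H^0(\OO_E)$ is zero and $H^1(\varepsilon(-E))\to H^1(\varepsilon)$ vanishes. I would use the cover argument as the primary route, since it avoids checking these maps.

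For the final equivalence, one direction is the first part. For the converse, let $\FF$ be nontrivial torsion with $\FF|_E\simeq\OO_E$. Then $\FF$ is invertible near $E$, so $b=0$, and $\FF^{[m]}\simeq\OO_W$. Consider
\[
 0\to\FF(-E)\to\FF\to\OO_E\to0 .
\]
Here $H^0(\FF)=0$, because a nonzero section would have a nonzero constant $m$-th power and would force $\FF\simeq\OO_W$. By Serre duality $H^1(\FF(-E))\simeq H^1(\FF^{[-1]})^*$, so we have the exact sequence
\[
 0\to H^0(\OO_E)=k\to H^1(\FF(-E))\simeq H^1(W,\FF^{[-1]})^* .
\]
Hence $H^1(W,\FF^{[-1]})\ne0$. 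The sheaf $\FF^{[-1]}$ is nontrivial torsion with $b=0$, so Proposition~\ref{lem:rational-elliptic-sheaves} applies and gives $\FF^{[-1]}\simeq\varepsilon$. Since $\varepsilon$ has order two, $\FF\simeq\varepsilon$.

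The main obstacle is ensuring that the restriction map in the second assertion is the one induced by the specific trivialization, and that the anti-invariant identifications are compatible. A scalar ambiguity is harmless here, because only bijectivity is claimed.
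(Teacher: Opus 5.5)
Your proposal is correct and follows essentially the same route as the paper. The sign summand of restriction from the double cover $W'$ to $E_1\amalg E_2$ gives both $\varepsilon|_E\simeq\OO_E$ and the isomorphism on $H^1$, and the converse uses $0\to\FF(-E)\to\FF\to\OO_E\to0$, $H^0(W,\FF)=0$, Serre duality, and the uniqueness of $\varepsilon$ in Proposition~\ref{lem:rational-elliptic-sheaves}. One small correction: $b=0$ for $\FF^{[-1]}$ holds simply because torsion means $\FF^{[m]}\simeq\OO_W$, not because $\FF$ is invertible near $E$.
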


\begin{proof}
On the double cover, restriction from $W'$ to either
section is an isomorphism on $H^1$.
Taking the sign summand of restriction to $E_1\amalg E_2$
gives the first assertion.
Conversely, if $\FF|_E\simeq\OO_E$, the sequence
\[
 0\to\FF(-E)\to\FF\to\OO_E\to0
\]
and $H^0(W,\FF)=0$ give $H^1(W,\FF(-E))\ne0$.
Duality gives $H^1(W,\FF^\vee)\ne0$.
The proposition implies $\FF^\vee\simeq\varepsilon$,
hence $\FF\simeq\varepsilon$.
\end{proof}

\subsection{Contraction of elliptic sections}
\label{subsec:normal-lc-proof}

\begin{lemma}\label{lem:contracted-elliptic-sheaves}
Let $V,E,C_-,C_+$ be as in
Lemma~\ref{lem:elliptic-ruling-sheaves}.
Let $h:V\to W$ contract one or both sections, and let
$C$ be the image of the uncontracted sections.
Suppose $\FF\not\simeq\OO_W$ is rank-one reflexive and
\[
 \FF^{[m]}\simeq\OO_W\Bigl(-\sum m\alpha_i C_i\Bigr),
                         \qquad 0\leq\alpha_i<1.
\]
Then $H^*(W,\FF)=0$. If exactly one section is contracted,
then $h^*(W,\OO_W)=(1,0,0)$.
\end{lemma}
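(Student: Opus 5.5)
The plan is to pass to the minimal resolution and reduce to Lemma~\ref{lem:elliptic-ruling-sheaves}. Let $h:V\to W$ be the given contraction, and let $E_0$ denote a contracted section, so $E_0\in\{C_-,C_+\}$. Since $E_0$ is a smooth elliptic curve with $E_0^2<0$, the point $h(E_0)$ is a simple elliptic singularity. Such a point is lc but not rational, and
\[
 R^1h_*\OO_V\simeq H^1(E_0,\OO_{E_0})\simeq k
\]
at that point. The boundary on $W$ is $C=h_*(C_-+C_+)$ with the contracted components removed. From $K_V+C_-+C_+\sim0$ we get $h^*(K_W+C)=K_V+C_-+C_+$, because the discrepancy of $E_0$ is $-1$.

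\emph{Step 1: the structure sheaf.} I first compute $h^*(W,\OO_W)$ from the Leray sequence
\[
 0\to H^1(W,\OO_W)\to H^1(V,\OO_V)\to H^0(W,R^1h_*\OO_V)\to H^2(W,\OO_W)\to H^2(V,\OO_V)=0,
\]
using $h^*(V,\OO_V)=(1,1,0)$ from Lemma~\ref{lem:elliptic-ruling-sheaves} with $L=\OO_E$. The middle map is the restriction $H^1(V,\OO_V)\to\bigoplus H^1(E_0,\OO_{E_0})$. For each contracted section, this restriction is an isomorphism $f^*:H^1(E,\OO_E)\to H^1(E_0,\OO_{E_0})$, since $E_0\to E$ is an isomorphism. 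If exactly one section is contracted, the map is therefore an isomorphism, and we get $h^*(W,\OO_W)=(1,0,0)$. If both are contracted, we get $h^*(W,\OO_W)=(1,0,1)$.

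\emph{Step 2: reduction of $\FF$ to $V$.} Set $\FF_V=(h^*\FF)^{**}$. Away from the contracted points, $\FF_V^{[m]}$ agrees with the pullback of $\OO_W(-\sum m\alpha_iC_i)$. Near each $E_0$ it differs from this by a multiple of $E_0$. I will normalize the coefficient of $E_0$ to lie in $[0,1)$; call the resulting reflexive sheaf $\FF'$, so that
\[
 \FF'^{[m]}=\OO_V(-m\alpha C_--m\gamma C_+),\qquad 0\le\alpha,\gamma<1.
\]
Lemma~\ref{lem:elliptic-ruling-sheaves} then gives two cases.

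\emph{Case $\alpha+\gamma=1$.} Here $Rf_*\FF'=0$, so $\FF'$ is acyclic on $V$. I need $h_*\FF'=\FF$ and $R^1h_*\FF'=0$. Near $E_0$ the sheaf $\FF'$ has fractional slope on the contracted curve, so its restriction to $E_0$ is a line bundle of negative degree (or nontrivial). Then the vanishing of $R^1h_*$ follows from the formal functions theorem applied to $\FF'(-jE_0)|_{E_0}$, which has positive-degree inverse.

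\emph{Case $\alpha=\gamma=0$.} Here $\FF'=f^*L$ with $L$ torsion on $E$. If $L\not\simeq\OO_E$, then $\FF'$ is acyclic and $\FF'|_{E_0}\simeq L$ is a nontrivial torsion bundle. Its $H^0$ and $H^1$ vanish, so the same Leray argument gives $H^*(W,\FF)=0$. If $L\simeq\OO_E$, then $\FF'\simeq\OO_V$, and since $h_*\OO_V=\OO_W$ this forces $\FF\simeq\OO_W$, which is excluded.

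\emph{Main obstacle.} The hardest step is the local identification of $h_*\FF'$ with $\FF$, together with the vanishing of $R^1h_*\FF'$ near $E_0$, when $\FF$ is not $\Q$-Cartier-trivial at the elliptic point. The local class group of a simple elliptic singularity contains the continuous part $\Pic^0(E_0)$, so I must check that the twist by a multiple of $E_0$ chosen in Step~2 really reproduces $\FF$ as $h_*$. I would first try to control this by $H^1$-vanishing on thickenings of $E_0$: negativity of $E_0$ makes $\OO(-jE_0)|_{E_0}$ positive of degree $j(-E_0^2)$, so $H^1(E_0,\FF'\otimes\OO(-jE_0)|_{E_0})=0$ for $j\ge1$, and then only the $j=0$ term $H^1(E_0,\FF'|_{E_0})$ needs to vanish. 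That term vanishes exactly when $\FF'|_{E_0}$ is nontrivial of degree $0$ or has negative degree, which is precisely the case analysis above.
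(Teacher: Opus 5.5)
Your outline is the paper's: pull back to $V$ by a rounded pullback, apply Lemma~\ref{lem:elliptic-ruling-sheaves}, then use Leray. Your structure-sheaf computation is also correct. There is one gap and one sign error.

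\textbf{The gap: $h_*\FF'=\FF$.} You single out this identification as the main obstacle but leave it unproved. The tool you propose does not address it: vanishing of $H^1(E_0,\FF'(-jE_0)|_{E_0})$ controls $R^1h_*\FF'$, not $h_*\FF'$. The missing observation is short.
\begin{itemize}
\item The uncontracted sections are disjoint from the contracted ones, so $\FF^{[m]}$ is invertible near each point $h(E_0)$.
\item Writing $\FF=\OO_W(L)$, the divisor $L$ is therefore $\Q$-Cartier there, and your normalized sheaf is exactly $\FF'=\OO_V(\lfloor h^*L\rfloor)$.
\item For any rational function $s$, $\divisor(s)$ is integral, so
\[
 \divisor(s)+\lfloor h^*L\rfloor\geq0\iff\divisor(s)+h^*L\geq0\iff\divisor(s)+L\geq0 .
\]
The last equivalence holds because a $\Q$-Cartier divisor is effective if and only if its pullback is.
\end{itemize}
This gives $h_*\FF'=\FF$ directly. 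The continuous part of the local class group plays no role, since $\FF$ is torsion at $h(E_0)$.

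\textbf{The sign error.} Let $\beta\in\{\alpha,\gamma\}$ be the coefficient on $E_0$. Near $E_0$ one has $(\FF')^{\otimes m}\simeq\OO_V(-m\beta E_0)$, so
\[
 \deg\FF'|_{E_0}=\beta\,(-E_0^2)\geq0,
\]
not negative. Your vanishing criterion is also reversed. On an elliptic curve, a line bundle of negative degree $d$ has $h^1=-d>0$. It is positive degree, or nontrivial degree zero, that gives $H^1=0$. With the correct sign, $H^1(E_0,\FF'(-jE_0)|_{E_0})=0$ for every $j\geq0$ in both cases of the ruling lemma:
\begin{itemize}
\item if $\beta>0$, every twist has positive degree;
\item if $\beta=0$, the case $j=0$ is the nontrivial torsion bundle $L$, and every $j\geq1$ has positive degree.
\end{itemize}
Hence $R^1h_*\FF'=0$, and Leray gives $H^*(W,\FF)=H^*(V,\FF')=0$ in one step.

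\textbf{Comparison with the paper.} Once repaired, your route is valid but slightly different. The paper never computes $R^1h_*\FF_V$:
\begin{itemize}
\item $H^1(W,\FF)=0$ comes from the Leray injection $H^1(W,h_*\FF_V)\hookrightarrow H^1(V,\FF_V)=0$;
\item $H^0(W,\FF)=0$ comes from the power relation;
\item $H^2(W,\FF)=0$ comes from Serre duality with $\omega_W\simeq\OO_W(-C)$.
\end{itemize}
Your version trades that duality step for the formal-functions computation along $E_0$.
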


\begin{proof}
\noindent\emph{Step 1. Pullback of the divisorial sheaf.}
Write $\FF=\OO_W(L)$ and set
$\FF_V=\OO_V(\lfloor h^*L\rfloor)$.
The power relation makes $L$ $\Q$-Cartier.
Rounding its pullback gives a power relation for
$\FF_V$ with coefficients in $[0,1)$ on $C_-,C_+$.
For every rational function $s$,
\[
 \divisor(s)+\lfloor h^*L\rfloor\geq0
 \ \Longleftrightarrow\ \divisor(s)+h^*L\geq0.
\]
Effectivity of a $\Q$-Cartier divisor is equivalent to
effectivity of its pullback. Hence $h_*\FF_V=\FF$.
Lemma~\ref{lem:elliptic-ruling-sheaves} gives either
$Rf_*\FF_V=0$ or $Rf_*\FF_V=L$ for a torsion line
bundle $L$ on $E$. If $L\not\simeq\OO_E$, then
$H^*(E,L)=0$. If $L\simeq\OO_E$, the same lemma gives
$\FF_V\simeq\OO_V$ and hence $\FF\simeq\OO_W$,
contrary to the hypothesis. Thus $\FF_V$ is acyclic.
The Leray injection proves $H^1(W,\FF)=0$.
The power relation gives $H^0(W,\FF)=0$; duality gives
$H^2(W,\FF)=0$, using $\omega_W=\OO_W(-C)$.

\smallskip
\noindent\emph{Step 2. The structure sheaf.}
Let $C_0$ be a contracted section and choose an affine
neighbourhood $U$ of $h(C_0)$.
The pullback and restriction maps give a commutative square
\[
\begin{tikzcd}
H^1(W,\OO_W)\ar[r,hook]\ar[d]
 &H^1(V,\OO_V)\ar[d,"\operatorname{res}_{C_0}","\sim"']\\
H^1(U,\OO_U)=0\ar[r]&H^1(C_0,\OO_{C_0}).
\end{tikzcd}
\]
Thus $H^1(W,\OO_W)=0$.
If $C\ne0$, then
$H^2(W,\OO_W)^*=H^0(W,\OO_W(-C))=0$.
\end{proof}

\begin{proof}[Proof of Proposition~\ref{lem:normal-strict-lc-cohomology}]
\noindent\emph{Step 1. The pullback and the restriction map.}
Resolve the non-klt points of $A$ and contract the
coefficient-zero $(-2)$-curves from
Lemma~\ref{lem:log-surface-resolution}.
Write $h:V\to A$ for the resulting morphism and $B$ for
its nonempty reduced exceptional divisor. Then
\[
 K_V+B\sim0,\quad \FF=\OO_A(L),\quad
 \FF_V=\OO_V(\lfloor h^*L\rfloor),\quad h_*\FF_V=\FF,
 \quad\iota:H^1(A,\FF)\hookrightarrow H^1(V,\FF_V).
\]
The proof of the preceding lemma gives $h_*\FF_V=\FF$.
For an elliptic component $E\subset B$, restriction
through an affine neighbourhood of $h(E)$ gives
\begin{equation}\label{eq:exceptional-restriction-zero}
                     \operatorname{res}_E\circ\iota=0.
\end{equation}

\smallskip
\noindent\emph{Step 2. A rational surface with rational boundary.}
Suppose $V$ is rational and $B$ is a rational nodal
cycle. The curve $B$ is $F$-split: the splittings of
its normalizations $\mathbb P^1$ compatible with $0+\infty$
glue at the nodes.
Write $\FF_V=\OO_V(\lfloor T\rfloor)$ with
$\Supp\{T\}\subseteq B$ and $mT$ principal, and set
$m=p^ar$. A normalized degree-$r$ cyclic cover for
$p^aT$ is log crepant with reduced boundary.
After resolving the boundary, its components over
each rational curve are tame covers of $\mathbb P^1$
ramified only at the two nodes; Riemann--Hurwitz
makes them rational. The smooth surface is rational
by Lemma~\ref{lem:surface-configurations}.
Every character summand therefore has zero $H^1,H^2$.
Sequences \eqref{eq:witt-Q-sequence}--\eqref{eq:witt-B-recursion}
give $H^1(Q_{p^aT,1})=0$.
Corollary~\ref{lem:conductor-Witt-vanishing} gives
$H^1(V,\FF_V)=0$.

\smallskip
\noindent\emph{Step 3. An elliptic boundary on a rational surface.}
Proposition~\ref{lem:rational-elliptic-sheaves} leaves
only $\FF_V\simeq\varepsilon$ when $H^1(V,\FF_V)\ne0$.
By Corollary~\ref{cor:exceptional-restriction},
$\operatorname{res}_B$ is an isomorphism.
Equation~\eqref{eq:exceptional-restriction-zero} gives $\iota=0$.

\smallskip
\noindent\emph{Step 4. A surface ruled over an elliptic curve.}
Both sections are exceptional. By
Lemma~\ref{lem:elliptic-ruling-sheaves}, nonzero
$H^1(V,\FF_V)$ requires $\FF_V\simeq\OO_V$.
Restriction to either section is an isomorphism on
$H^1$, and \eqref{eq:exceptional-restriction-zero}
again gives $\iota=0$.
Lemma~\ref{lem:surface-configurations} exhausts the cases.
\end{proof}

\section{Nonnormal surfaces: conductor and cohomology}
\label{app:models}

The field in section is assumed to be algebraically closed of characteristic $p>42$.
We first describe the conductor of a connected projective
nonnormal Gorenstein slc surface $A$ with $\omega_A\simeq\OO_A$.
For rational conductor, the main conclusion is
Proposition~\ref{cor:rational-conductor-picard}: the structure-sheaf
cohomology is that of a sphere or a torus, and the connected
Picard scheme is respectively trivial or $\Gm^2$.
The additional hypothesis used for this conclusion is the
triviality of a Cartier multiple of the full pullback of
the original threefold divisor. Its multiplicities are
retained throughout the proof.

\subsection{Normalization and the conductor}

Write
\[
 \nu:A^\nu=\coprod_{i\in I}V_i\longrightarrow A,
 \qquad C\subset A,\qquad
 \overline C=(\nu^{-1}C)_{\mathrm{red}}.
\]
Here $C$ is the conductor subscheme. Write $C_i$ for its
reduced inverse image on $V_i$, and $\tau$ for the
involution on the normalization of $\overline C$.
Normalized adjunction gives
\begin{equation}\label{eq:normalized-anticanonical}
                  K_{V_i}+C_i\sim0\quad\text{Cartier}.
\end{equation}
The restriction of a generator of $\omega_A$ has opposite
residues on the two branches identified by $\tau$.

\begin{lemma}\label{lem:nonnormal-conductor-types}
Exactly one of the following occurs:
\begin{enumerate}
\item every component of the normalized conductor is
rational, and every $V_i$ is rational;
\item every component of the normalized conductor is
elliptic. The components $V_i$ and their conductor curves
form a chain or a cycle.
\end{enumerate}
In \textup{(ii)}, a component with two conductor curves
is ruled over an elliptic curve with these curves as
sections. A component with one conductor curve is
rational or is obtained by contracting one section of
such a ruling.
\end{lemma}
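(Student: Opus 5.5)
The plan is to work component by component on the normalization, using \eqref{eq:normalized-anticanonical} to reduce to the normal log Calabi--Yau pairs $(V_i,C_i)$ treated in Appendix~\ref{app:surface}, and then to glue. Each pair $(V_i,C_i)$ is lc with $C_i$ reduced and $K_{V_i}+C_i\sim0$ Cartier. Since $A$ is nonnormal, $C_i\ne0$ for at least one $i$. In fact $C_i\ne0$ for every $i$: $A$ is connected, the components meet only along the double curve (by $S_2$), and a component with $C_i=0$ would be a connected component of $A$, hence all of $A$.

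For each $i$, take the minimal resolution $h_i:V_i'\to V_i$ and write $K_{V_i'}+B_i=h_i^*(K_{V_i}+C_i)$. By Lemma~\ref{lem:log-surface-resolution}, $B_i$ is reduced and nodal, and it contains the strict transform of $C_i$. Contract the coefficient-zero $(-2)$-curves. Lemma~\ref{lem:surface-configurations} then applies to $B_i\in|-K|$, and gives two possibilities for $V_i$:
\begin{itemize}
\item $V_i$ is rational, and $B_i$ is either a smooth elliptic curve or a cycle of rational curves;
\item $V_i$ is ruled over an elliptic curve, and $B_i$ is a pair of disjoint elliptic sections.
\end{itemize}
The components of $C_i$ are among the components of $B_i$ (the images of non-exceptional components). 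So on each $V_i$ the conductor components are either all rational or all elliptic. In the ruled case, if $C_i$ has two components, $V_i=V_i'$ up to $(-2)$-curves and both sections are conductor curves. If $C_i$ has one component, the other section was exceptional, so $V_i$ is obtained by contracting one section. In the rational case with $B_i$ elliptic, $C_i$ is that single curve. This gives the local description claimed in (ii).

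Next I would show that the type is the same on all components. The involution $\tau$ identifies branches of $\overline C$ isomorphically after normalization, so a component of $C_i$ glued to a component of $C_j$ has the same genus. Connectedness of $A$ then propagates the type through the dual graph, whose vertices are the $V_i$ and whose edges are glued conductor curves. In case (i), each $V_i$ has a rational conductor component, and I must rule out the ruled case. For a ruled $V_i$ all boundary components are elliptic, so it cannot carry a rational conductor curve. Hence every $V_i$ is rational.

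In case (ii), each $V_i$ has one or two conductor curves, all smooth elliptic curves disjoint from one another. So every vertex of the graph has degree at most $2$, and a connected graph with this property is a chain or a cycle. The one subtlety, which I expect to be the main obstacle, is an elliptic conductor curve $E$ that $\tau$ maps to itself, that is, glued to itself by an involution with quotient $\mathbb P^1$. This has to be recorded as an auxiliary endpoint vertex, and one must check that such self-gluing happens only at ends. This is where the degree count in the graph needs care. I would treat it through the residue condition: a generator of $\omega_A$ has opposite residues on identified branches. A fixed-point-free translation-type involution preserves the invariant differential, which is incompatible with the sign flip. So a self-glued $E$ must be glued by an involution acting by $-1$ on the differential, and therefore with quotient $\mathbb P^1$. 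Such a self-gluing uses up that curve's contribution to the degree, so the graph remains a chain with that vertex as an endpoint.
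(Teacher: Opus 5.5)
Your proposal is correct and follows essentially the same route as the paper. You classify each $(V_i,C_i)$ via Lemmas~\ref{lem:log-surface-resolution} and~\ref{lem:surface-configurations}, propagate the genus along the $\tau$-pairing graph (connected by $S_2$), and treat a $\tau$-stable elliptic curve with the residue argument plus an auxiliary degree-one vertex, which forces it to be a chain endpoint. The one small slip is that Lemma~\ref{lem:surface-configurations} must be applied on the smooth minimal resolution, before you contract the coefficient-zero $(-2)$-curves, since that lemma assumes a smooth surface.
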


\begin{proof}
\noindent\emph{Step 1. The possibilities on a component.}
Apply Lemmas~\ref{lem:log-surface-resolution} and
\ref{lem:surface-configurations} to \eqref{eq:normalized-anticanonical}.
Each conductor curve is part of the reduced anticanonical
divisor on the resolution. Its normalized genus is zero
or one. On a fixed $V_i$ the genus is the same for all
conductor curves. In genus zero the resolution is rational.
In genus one it has one elliptic boundary on a rational
surface, or two sections on an elliptic ruling. Exceptional
sections are contracted, leaving one or two conductor curves.

\smallskip
\noindent\emph{Step 2. Connectedness along curves.}
The two curves paired by $\tau$ are isomorphic.
The graph whose vertices are the $V_i$ and whose edges
are paired conductor curves is connected: otherwise $A$
would be a union of two closed subschemes meeting only
in finitely many points, contradicting $S_2$.
The common genus therefore agrees on all components.
This proves the dichotomy.

\smallskip
\noindent\emph{Step 3. The graph in the elliptic case.}
If $\tau$ preserves an elliptic curve $E$, its action
on $H^0(E,\omega_E)$ is $-1$ by the residue condition.
In characteristic greater than three, such an involution
is a translate of inversion. Hence $E/\tau\simeq\mathbb P^1$.
For each such curve add one auxiliary vertex and join it
to the vertex of the component containing $E$.
Every vertex now has degree at most two. The graph is
connected, so it is a chain or a cycle.
In a cycle all vertices correspond to surfaces with two
sections. In a chain the two vertices of degree one are
either surfaces with one conductor curve or the specified
auxiliary vertices. This also defines the two ends used
in Section~\ref{subsec:elliptic-conductor}.
\end{proof}

\subsection{The statement for rational conductor}
\label{subsec:rational-main}

We specify the formal schemes to which the following
result applies. Start with $V,T$ as in
Proposition~\ref{prop:surface-cover} and take the logarithmic
cover of Lemma~\ref{lem:formal-log-cover}.
One may further take a normalized cyclic cover defined by
\begin{equation}\label{eq:additional-tame-algebra}
 \bigoplus_{l=0}^{r-1}\II_{dl}\mathcal U^{-l},
 \qquad (r,p)=1,\qquad
 \mathcal U^r\simeq\II_{dr},\qquad\mathcal U|_A\simeq\OO_A.
\end{equation}
Here $d,r>0$, $drA$ is Cartier, and $\II_{dr}|_A\simeq\OO_A$.
Choose a connected component whenever necessary, and
denote the resulting formal pair again by $(\YY,A)$.
These are precisely the covers used in
Section~\ref{subsec:rational-conductor}.

\begin{proposition}\label{cor:rational-conductor-picard}
Suppose $A$ has rational conductor in the preceding
construction. There exists a closed oriented surface
$\Delta$, homeomorphic to $S^2$ or $S^1\times S^1$, such that
\begin{equation}\label{eq:rational-cohomology}
 H^*(A,\OO_A)\simeq H^*(\Delta,k),\qquad
                 \Pic^0_{A/k}\simeq\Gm^{b_1(\Delta)}.
\end{equation}
The cohomology isomorphism respects cup products and
Frobenius. In particular, Frobenius is bijective on
$H^*(A,\OO_A)$.
\end{proposition}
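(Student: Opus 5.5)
We would compute everything from the normalization sequence and the dual complex of the conductor. By Lemma~\ref{lem:nonnormal-conductor-types}(i), every $V_i$ is rational and every component of $\overline C$ is rational. Applying Lemma~\ref{lem:log-surface-resolution} and Lemma~\ref{lem:surface-configurations}(i) to \eqref{eq:normalized-anticanonical}, each $C_i$ is a cycle of rational curves, or a nodal rational curve, on a rational surface with $K_{V_i}+C_i\sim0$. Moreover $h^*(\OO_{V_i})=(1,0,0)$, and $h^*(\OO_{C_i})=(1,1,0)$. We build a regular cell complex $\Delta$. Its $2$-cells are the $V_i$, its $1$-cells are the components of $\overline C$ modulo $\tau$, and its $0$-cells are the nodes of the $C_i$ modulo the gluing. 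Each $2$-cell has boundary the cycle $C_i$, and each edge is glued to exactly two face-sides, since the generic double locus consists of ordinary double points. Hence $\Delta$ is a closed surface. It is orientable because the residues of a generator of $\omega_A$ are opposite on the two branches paired by $\tau$. That residue condition orients the boundary cycles of the $V_i$ compatibly, in the same way it is used in Lemma~\ref{lem:elliptic-cycle}.

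Next we compute $H^*(A,\OO_A)$. We use the conductor sequence $0\to\OO_A\to\nu_*\OO_{A^\nu}\to\nu_*\OO_{\overline C}/\OO_C\to 0$, refined to a three-term complex $\OO_{A^\nu}\to\OO_{\overline C}\to\OO_{\text{nodes}}$. Since $A$ is $S_2$ and weakly normal (Lemma~\ref{lem:formal-log-cover}), the strata are rational with acyclic structure sheaves. So the Mayer--Vietoris/descent spectral sequence degenerates, and it identifies $H^*(A,\OO_A)$ with the cellular cohomology $H^*(\Delta,k)$. This identification is compatible with cup products, because both are computed by the \v Cech--cellular complex of locally constant functions. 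It is compatible with Frobenius, because Frobenius acts on locally constant functions by $x\mapsto x^p$, which is bijective on $k$. Serre duality with $\omega_A\simeq\OO_A$ gives $h^2=h^0=1$, so $\chi(\Delta)=2-b_1$ and $\Delta$ is a sphere or a torus of genus $g$. The main obstacle is bounding the genus: we must rule out $g\geq2$. For this we plan to use the additional hypothesis, namely that a Cartier multiple of the pullback of the threefold divisor is trivial on $A$, together with $\omega_A\simeq\OO_A$ and Lemma~\ref{lem:power-splitting}-style Frobenius arguments. The first thing to try is a combinatorial Noether-type identity on the components. Each $V_i$ is a log Calabi--Yau rational pair, and the restricted normal data $\FF_j$ along the conductor have multiplicities summing to zero on each cycle. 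Summing $\chi$ contributions with these multiplicities, as in Lemma~\ref{lem:elliptic-quotients} Step~2, should force $\chi(\Delta)\geq0$, i.e.\ $g\leq1$. If that identity does not close, the fallback is to invoke the global-index bounds of \cite{BWX26} to deduce $b_1(\Delta)\leq2$.

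Finally we identify the Picard scheme. Line bundles on $A$ are obtained by gluing line bundles on the $V_i$ along $\overline C$. Since $H^1(V_i,\OO)=0$ and the $V_i$ are rational, $\Pic^0_{V_i/k}=0$, so $\Pic^0_{A/k}$ consists of gluing data. As in Step~2 of Lemma~\ref{lem:elliptic-cycle}, these are $\Gm$-valued parameters on the edges modulo rescalings at the faces, with compatibility at the vertices. This yields the torus $H^1(\Delta,\Gm)\simeq\Gm^{b_1(\Delta)}$ as fppf group schemes. It is smooth, with tangent space $H^1(\Delta,k)=H^1(A,\OO_A)$, which is consistent with the dimension count.
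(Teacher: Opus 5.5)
The step that carries the real content of the proposition---that $\Delta$ has genus at most one---is not proved. You correctly identify it as the main obstacle, but neither of your suggestions closes it.

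A $\chi$-summation over the components cannot work. All coherent invariants of $A$, of the $V_i$, of the conductor and of the $\FF_j$ are computed by $\Delta$, and they are mutually consistent for every genus. For example, gluing toric surfaces along their boundary cycles according to a triangulation of a closed orientable surface of genus $g$ gives normal crossing surfaces with $\omega\simeq\OO$, rational conductor and $h^1(\OO)=2g$. So nothing intrinsic to $A$ forces $g\leq1$. The global-index bounds of \cite{BWX26} concern orders of torsion divisorial sheaves and say nothing about the topology of $\Delta$. Your remark about normal data along the conductor points at the right input. It has to be used as intersection numbers combined with Hodge index, not as Euler characteristics.

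This is what the paper does. It passes to the crepant model $g\colon(\mathfrak W,B)\to(\YY,A)$ of Lemma~\ref{lem:rational-crepant-modification}. This model is snc along the double locus, and the full pullback $P=\sum_ir_iB_i$ of $T$ has all $r_i>0$ and $\OO_{\mathfrak W}(NP)|_B\simeq\OO_B$. Then $P\cdot C_e=0$ on every double curve, which is the weighted triple point relation \eqref{eq:weighted-edge}. This relation implies that $L\colon Z^1(\Delta,\RR)\to\bigoplus_iN^1(B_i)_\RR$, $L_i(z)=\sum_{e=(i,j)}r_jz_{ij,e}[C_{e,i}]$, has image totally isotropic for $Q=\sum_ir_i(\cdot)_{B_i}$ and orthogonal to an ample class. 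Hodge index then gives $\dim\im L\leq v-1$. Positivity of the $r_l$ propagates vanishing from a face to its neighbours, so $\dim\ker L\leq2$, and hence $b_1(\Delta)\leq2$.

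The same model also removes difficulties that your direct approach on $A$ passes over. First, $\tau$ may fold a rational conductor curve onto itself, creating pinch points; then an edge need not be shared by two face-sides, and the conductor term becomes an anti-invariant part. Second, the $V_i$ may be singular at the nodes of $C_i$. Third, the gluing description of $\Pic^0$ at such points needs proof. The paper computes on $B$, where the restriction complex is the standard one for $xy=0$ and $xyz=0$. It then proves that $g^*\colon\Pic^0_{A/k}\to\Pic^0_{B/k}$ is an isomorphism using $Rg_*\OO_B\simeq\OO_A$.

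Finally, with the $V_i$ as $2$-cells, the global sections of your three-term complex are cellular chains in reversed degree. You therefore need Poincar\'e duality, or the dual complex, to speak of $H^*(\Delta,k)$.
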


We compute both invariants on one crepant model
$g:B\to A$. Its components are rational surfaces, its
double curves are smooth rational curves, and
$Rg_*\OO_B\simeq\OO_A$. This last equality preserves
structure-sheaf cohomology and, as proved below, the
connected Picard scheme. Alternating restrictions and
gluing constants then compute the two invariants from
the same dual complex. The threefold hypothesis enters
through $P\cdot C=0$ for each double curve $C$, where $P$
is the full pullback of $T$. Hodge index turns these
equalities into $b_1(\Delta)\leq2$.

\subsection{Crepant birational morphisms}

The following local lemma ensures that cyclic covers near
double curves are covers of simple normal crossing pairs.

\begin{lemma}\label{lem:double-locus-regularity}
Let $(Y,D)$ be a dlt threefold pair with $Y$ klt,
let $K_Y+D$ be Cartier, and let all components of $D$ be  $\Q$-Cartier.
If
\[
                  \widehat\OO_{D,x}\simeq k[[u,v,z]]/(uv),
\]
then $Y$ is regular at $x$ and $(Y,D)$ is snc at $x$.
\end{lemma}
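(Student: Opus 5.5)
The plan is to first show that $x$ lies on exactly two components of $D$, meeting transversally along a smooth curve. From this we deduce regularity of $Y$ by an index-one cover argument, using Cartierness of $K_Y+D$. Since $\widehat\OO_{D,x}\simeq k[[u,v,z]]/(uv)$, the germ of $D$ at $x$ has two analytic branches, and each is smooth. These branches meet along the smooth curve germ $u=v=0$.

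\textbf{Step 1: the branches are global components.} Dlt pairs have normal components of the reduced boundary, since they are lc centres of a dlt pair and hence normal. Therefore the two analytic branches cannot both come from a single component $D_1$. Otherwise $D_1$ would be nonnormal at $x$, with two analytic branches. So $x\in D_1\cap D_2$ for distinct components $D_1,D_2$, with each $D_i$ smooth at $x$ (its completion is $k[[v,z]]$, resp. $k[[u,z]]$). The coefficient of each is one, because $D$ is reduced at $x$. Then $D_1\cap D_2$ is an lc centre of codimension two through $x$. By the definition of dlt, $(Y,D)$ is snc at the generic point $\eta$ of that curve $Z=D_1\cap D_2$.

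\textbf{Step 2: regularity of $Y$ at $x$.} Since $D_1$ is $\Q$-Cartier and $Y$ is klt, I would pass to the index-one cover of $D_1$ locally at $x$. This is tame once we know the local class group at $x$ is prime-to-$p$ torsion. Alternatively, I would use adjunction directly. $K_Y+D$ is Cartier, so $(K_Y+D_1+D_2)|_{D_1}=K_{D_1}+\operatorname{Diff}_{D_1}(D_2)$ is Cartier on the smooth surface germ $D_1$. Moreover $\operatorname{Diff}_{D_1}(D_2)=Z+\Delta$, with $\Delta$ supported at points and the coefficient of $Z$ equal to one. Because the different has no other curve components through $x$ and the coefficient of $Z$ is exactly one, inversion of adjunction shows that $(Y,D_1+D_2)$ is plt-like near $x$ along $D_1$. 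The key point is the standard fact (Kollár, \emph{Singularities of the MMP}, dlt/plt adjunction) that the different coefficient at a codimension-two point on a normal divisor equals $1-1/m$, where $m$ is the local index of the ambient Weil divisor class group. Here this coefficient is an integer, since the different restricted to $D_1$ is the Cartier divisor $Z$ and $K_{D_1}$ is Cartier. This forces $D_2|_{D_1}$ to be exactly $Z$ as a Cartier divisor on $D_1$, so the local class group contributions vanish. Thus $D_1$ and $D_2$ are Cartier at $x$.

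\textbf{Step 3: conclusion.} With $D_1$ Cartier at $x$ and $\OO_{D_1,x}$ regular, $Y$ is regular at $x$: a Cartier divisor that is regular forces the ambient local ring to be regular. Similarly $D_2$ is Cartier, and $D_1\cap D_2=Z$ is a smooth curve with $D_1,D_2$ transversal. Hence $(Y,D)$ is snc at $x$, with local equations $u,v$ lifting to a regular system of parameters $u,v,z$.

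The main obstacle is Step 2, deducing Cartierness of $D_1$ at the closed point $x$ rather than merely at $\eta$. I would try first to show that $\OO_Y(-D_1)|_{D_2}$ is invertible with $\OO_Y(-D_1)\otimes\OO_{D_2}\to\OO_{D_2}(-Z)$. For this I would use the Cohen--Macaulayness of divisorial sheaves from \cite[Theorems 3 and 19(2)]{BK23}, so that $\OO_Y(-D_1)/\OO_Y(-D_1-D_2)$ is $S_2$ on $D_2$. This identifies the quotient with the reflexive sheaf $\OO_{D_2}(-Z)$, which is invertible on the smooth surface $D_2$. Lemma~\ref{lem:ambient-section-lifting}-style Nakayama lifting then shows that $D_1$ is Cartier at $x$.
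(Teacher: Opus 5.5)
The adjunction argument in the body of your Step~2 does not give Cartierness at $x$. The different only sees codimension-two points of $Y$. Integrality of $\operatorname{Diff}_{D_1}(D_2)$ therefore makes $D_1$ Cartier at the generic point of $Z$ (and of other curves), but says nothing at the closed point. A concrete example: take $Y=\{uv=tz\}$ with $D=(t)=D_1+D_2$, $D_1=(u,t)$, $D_2=(v,t)$. This is a toric dlt pair with $Y$ terminal, $K_Y+D$ Cartier, $\widehat\OO_{D,0}\simeq k[[u,v,z]]/(uv)$ and $\operatorname{Diff}_{D_1}(D_2)=Z$ exactly. Yet $D_1$ is not even $\Q$-Cartier at the origin. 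Only the $\Q$-Cartier hypothesis on the components excludes this, and that paragraph never uses it.

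The route in your last paragraph is sound, however, and should replace that paragraph. By \cite{BK23}, $\OO_Y(-D_1)$ and $\OO_Y(-D_1-D_2)$ are maximal Cohen--Macaulay. Hence $\mathcal E=\OO_Y(-D_1)/\OO_Y(-D_1-D_2)$ is a rank-one maximal Cohen--Macaulay module over the regular local ring $\OO_{D_2,x}$, so it is free by Auslander--Buchsbaum. The multiplication map $\mathcal E^{\otimes N}\to\OO_Y(-ND_1)|_{D_2}$ is an isomorphism at every codimension-one point of $D_2$ near $x$: the only one lying on $D_1$ is the generic point of $Z$, where the pair is snc. So it is an isomorphism everywhere. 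Since $ND_1$ is Cartier, $\OO_Y(-ND_1-D_2)=\OO_Y(-ND_1)\OO_Y(-D_2)$, and Nakayama shows that $u^N$ generates $\OO_Y(-ND_1)$ for a lift $u$ of the generator of $\mathcal E$. Then $u^{-1}=u^{N-1}u^{-N}\in\OO_Y(D_1)$, so $\divisor(u)=D_1$ near $x$, exactly as in Lemma~\ref{lem:ambient-section-lifting}. Your Step~3 then finishes.

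This is a genuinely different route from the paper's. The paper applies \cite{BK23} to $\FF_1=\II_D/\OO_Y(-2D)$ on the nodal germ, where maximal Cohen--Macaulay does not imply free. It therefore computes $\Pic$ of the punctured spectrum of the node ($\simeq\Z$, hence torsion-free) to make $D=D_1+D_2$ Cartier. It then writes $\widehat\OO_{Y,x}=k[[u,v,z,t]]/(uv-th(z,t))$ and rules out a non-unit $h$ by blowing up $(u,t)$. This is a small modification on which that ideal has degree one on the exceptional $\mathbb P^1$, contradicting $\Q$-Cartierness of $D_1$.

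Working on the regular component $D_2$ instead makes freeness automatic and gives Cartierness of the component itself. Regularity of $Y$ is then immediate, and both the punctured-spectrum computation and the hypersurface/blow-up analysis disappear. In your version the $\Q$-Cartier hypothesis enters through \cite{BK23} and the power trick rather than at the end. Both arguments rely on \cite{BK23} and on snc at the generic point of the double curve.
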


\begin{proof}
Set $R=\widehat\OO_{Y,x}$ and $B=R/\II_D$.

\noindent\emph{Step 1. A local equation for $D$.}
The module $\FF_1=\II_D/\OO_Y(-2D)$ is maximal
Cohen--Macaulay over $B$, by \cite[Theorems 3 and 19(2)]{BK23}.
On the punctured spectrum $U=\Spec B\setminus\{\mathfrak m\}$,
it is invertible: this follows from regularity at smooth
points of $D$ and from the snc property at the generic
point of its double curve.
The two branches of the normalization have local rings
$k[[u,z]]$ and $k[[v,z]]$. Their punctured spectra have
trivial Picard groups. The conductor square gives
\begin{equation}\label{eq:punctured-node-picard}
\Pic(U)=k((z))^*/
 \bigl(k[[z]]^*k[[z]]^*\bigr)\xrightarrow[\ v_z\ ]{\sim}\Z.
\end{equation}
The power relation for $D$ makes $\FF_1|_U$ torsion,
so it is trivial. Extension across the closed point by
$S_2$ gives $\FF_1\simeq\OO_D$.
Lemma~\ref{lem:quotient-duality} makes $D$ Cartier; write
$\II_D=(t)$.

\smallskip
\noindent\emph{Step 2. The equation of the ambient ring.}
If $R$ is not regular, its embedding dimension is four.
It is a hypersurface
$k[[u,v,z,t]]/(f)$ with $f\bmod t=uv$.
The formal splitting lemma for the nondegenerate quadratic
form $uv$ over $k[[z,t]]$ gives
\begin{equation}\label{eq:double-ambient-equation}
                   R=k[[u,v,z,t]]/(uv-th(z,t)).
\end{equation}
The generic snc condition gives $h(z,0)\ne0$.
If $h$ is a unit, the equation has a nonzero $t$-derivative
and $R$ is regular. Assume instead $h\in(z,t)$.

\smallskip
\noindent\emph{Step 3. The two boundary components.}
Their ideals are $(u,t)$ and $(v,t)$.
The two affine charts of the blowup of $(u,t)$ are
\begin{equation}\label{eq:double-blowup-charts}
 t=u\tau,\quad v=\tau h(z,u\tau);
 \qquad u=t\sigma,\quad \sigma v=h(z,t).
\end{equation}
The first chart is regular. The second is a hypersurface
domain whose singular locus is contained in
$\{\sigma=v=h(z,t)=0\}$, of codimension at least two;
it is normal. The exceptional fibre is $\mathbb P^1$,
there is no exceptional divisor, and the invertible
ideal $(u,t)\OO$ has degree one on this fibre.
If $qD_1$ were Cartier for $D_1=(u,t)$, its inverse
line bundle would pull back with degree zero. Off the
exceptional fibre this pullback equals $((u,t)\OO)^q$.
Normality extends the isomorphism across codimension two,
giving the contradiction $0=q$ for their degrees.
Thus $h$ is a unit and $(Y,D)$ is snc.
\end{proof}

\begin{lemma}\label{lem:rational-crepant-modification}
In Section~\ref{subsec:rational-main}, there is a projective
birational morphism of formal pairs
$g:(\mathfrak W,B)\to(\YY,A)$ with the following properties:
\begin{enumerate}
\item $B_i$ is normal rational for every component of $B$;
its singularities are rational double points away from
the double curves, which are smooth rational curves.
The ambient pair is snc along all double curves and triple points.
\item $K_{\mathfrak W}+B=g^*(K_{\YY}+A)$, and
$R(g|_B)_*\OO_B\simeq\OO_A$.
\item The full pullback $P$ of $T$ satisfies
\[
 P=\sum_i r_iB_i,\quad r_i>0,\quad
             NP\text{ Cartier},\quad\OO_{\mathfrak W}(NP)|_B\simeq\OO_B.
\]
\end{enumerate}
\end{lemma}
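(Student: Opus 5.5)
We plan to construct $g$ as a dlt modification of $(\YY,A)$ followed by the minimal resolution of each boundary component away from its double locus. Then we check the three properties by combining adjunction with Lemmas~\ref{lem:log-surface-resolution}, \ref{lem:surface-configurations} and \ref{lem:double-locus-regularity}.

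\emph{Step 1 (dlt model).} The pair $(\YY,A)$ is lc with $K_{\YY}+A$ Cartier and trivial near $A$. Since $\YY$ is klt, we take a $\Q$-factorial dlt modification $g_1:(\mathfrak W_1,B^{(1)})\to(\YY,A)$ extracting exactly the lc places with log discrepancy zero. Then $K_{\mathfrak W_1}+B^{(1)}=g_1^*(K_{\YY}+A)$, and $B^{(1)}$ is reduced. The MMP of Theorem~\ref{thm:mmp} is available over the formal base after algebraization near $T$ (or directly in the relative setting over $V$ near $T$); we then pass to completions. We expect this to be routine but notationally heavy. By adjunction, each component $B_i$ is normal with $K_{B_i}+\operatorname{Diff}=0$ Cartier. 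The conductor of $A$ is rational, and $g_1$ only inserts lc centres over rational curves or points. Hence, by Lemma~\ref{lem:surface-configurations}(i), each normalized component is rational with a reduced boundary that is a rational cycle or a chain closing up through the triple points.

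\emph{Step 2 (snc along double curves).} At a general point of a double curve of $B^{(1)}$, the completed local ring of $B^{(1)}$ is $k[[u,v,z]]/(uv)$, since dlt pairs are snc at generic points of strata. We extend this to every point of the double curve. By adjunction along the curve, the different is zero, so the boundary is nodal there. Lemma~\ref{lem:double-locus-regularity} then makes $\mathfrak W_1$ regular and the pair snc along all double curves. At the triple points, dlt together with Cartierness of $K+B$ gives the snc structure $k[[x,y,z]]/(xyz)$. Away from the double locus, Lemma~\ref{lem:log-surface-resolution} shows that the singularities of $B_i$ are rational double points. In particular, double curves are smooth rational curves, which gives (i).

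\emph{Step 3 (cohomology and the divisor $P$).} For (ii), crepancy is built in. The equality $R(g|_B)_*\OO_B\simeq\OO_A$ is the main obstacle. We would first prove $Rg_*\OO_{\mathfrak W}=\OO_{\YY}$, which holds because $\YY$ is klt and $g$ is crepant with respect to a dlt pair; a Kawamata--Viehweg type vanishing is needed here, and in positive characteristic we would replace it with the relative vanishing for birational morphisms of threefolds (\cite{BK23}-type results with $p>5$). Next we would prove $Rg_*\OO_{\mathfrak W}(-B)=\OO_{\YY}(-A)$ by writing $-B\equiv_g K_{\mathfrak W}$ and using Grauert--Riemenschneider in dimension three. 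Comparing the two restriction sequences then gives the claim. If this vanishing is not directly available, the fallback is a component-by-component computation: the fibres of $B\to A$ are trees of rational curves, or snc configurations whose dual complex is contractible (rational conductor), and the higher direct images vanish by a Mayer--Vietoris argument over the dual complex.

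For (iii), set $P=g^*T$. Each $B_i$ lies over $T$, so $r_i>0$. Since $NT$ is Cartier with $\OO(NT)|_A$ trivial and $B\to A$ factors through pullback, $NP=g^*(NT)$ is Cartier and $\OO(NP)|_B=(g|_B)^*\OO(NT)|_A\simeq\OO_B$.
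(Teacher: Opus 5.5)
There is a genuine gap in Step~1, and it carries over into Step~3. The formal scheme $\YY$ is not a variety, and it is not known to be the completion of one. It arises from $\widehat V_T$ through the log canonical cover of Lemma~\ref{lem:formal-log-cover} and the cyclic cover \eqref{eq:additional-tame-algebra}. Both covers are defined using \emph{formal} roots $\mathcal U$ of line bundles that are only known to be trivial on $T$; the roots come from Hensel's lemma on $1+\mathcal K$, where $\mathcal K$ is the ideal of the closed fibre. There is no reason for such roots to exist on a Zariski neighbourhood of $T$, so ``algebraization near $T$'' is not available. Meanwhile Theorem~\ref{thm:mmp} and the dlt modifications of \cite{HNT19} are statements about quasi-projective varieties. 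The paper never constructs a dlt modification of $(\YY,A)$ directly. It takes a dlt modification $f:(W_0,B_0)\to(V,T)$ of the \emph{algebraic} pair and transports it by normalized base change, first through the log canonical cover and then through the additional cyclic cover. Likewise, the vanishing you invoke in Step~3 is applied in the paper only on local algebraic representatives of the two covers. These are obtained by lifting the formal unit modulo $\mathcal K$ and absorbing the error into its unique $q$-th root in $1+\mathcal K$; the result is then completed via the theorem on formal functions. Your proposal does not say how to pass between the formal and algebraic settings.

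The transport is where the substance of the proof lies, and it is not routine. Base change by the log canonical cover is harmless. By crepancy its cyclic algebra becomes that of an invertible sheaf on the snc locus of $(W_0,B_0)$, hence finite \'etale there. The resulting pair $(\mathfrak W_1,B_1)$ is therefore dlt, with $K_{\mathfrak W_1}+B_1$ Cartier and $\Q$-Cartier components, and your Step~2 argument via Lemma~\ref{lem:double-locus-regularity} applies to it.

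Base change by the second cover is not harmless. That cover is locally the normalization of $z^r=s$, where $\divisor(s)=drA_0$ on the intermediate cover $(\YY_0,A_0)$. The pullback of $s$ vanishes along a boundary component of $\mathfrak W_1$ to order $dr$ times the multiplicity of the full pullback of $T$ there. On exceptional components that multiplicity is a log discrepancy of $V$, and $dr$ times it need not be divisible by $r$. The normalized base change therefore ramifies along boundary components and acquires monomial-root singularities along the double curves and at the triple points. These must be resolved toroidally \cite{IT14}. Afterwards one has to re-verify crepancy, klt-ness, $\Q$-Cartierness of the boundary components, and positivity of the coefficients of $P$ on the new toric divisors.

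Two smaller points.
\begin{enumerate}
\item Your claim that the modification ``only inserts lc centres over rational curves or points'' does not exclude an exceptional component over a point that meets its neighbours along elliptic curves. Rationality instead follows from Lemma~\ref{lem:nonnormal-conductor-types} applied to the connected Gorenstein slc surface $B$ with $\omega_B\simeq\OO_B$, which contains the strict transform of a rational conductor curve.
\item Step~3 needs the actual isomorphism $\OO_{\mathfrak W}(-B)\simeq\omega_{\mathfrak W}\otimes g^*\OO_{\YY}(K_{\YY}+A)^{-1}$, which comes from Cartierness of $K_{\YY}+A$. The numerical relation $-B\equiv_g K_{\mathfrak W}$ is not enough.
\end{enumerate}
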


\begin{proof}
The target $(\YY,A)$ is obtained from the original pair
$(V,T)$ by the following two finite covers:
\[
 (\YY,A)\xrightarrow{\pi}(\YY_0,A_0)
             \xrightarrow{h_0}(\widehat V_T,T).
\]
Here $h_0$ is the cover of Lemma~\ref{lem:formal-log-cover},
and $\pi$ is defined by \eqref{eq:additional-tame-algebra},
with $\II_j=\OO_{\YY_0}(-jA_0)$. If there is no additional
cover, take $r=1$ and $\pi=\operatorname{id}$.
The ramification formula of Lemma~\ref{lem:normalized-cyclic-algebra}
shows that $\pi$ is quasi-\'{e}tale. Thus both maps are
quasi-\'{e}tale, and
\[
 A_0=h_0^*T,\qquad A=\pi^*A_0
\]
are reduced. We take a dlt modification of $(V,T)$ and
pull it back through these covers. Only the second base
change can create singularities along double curves;
there it is a normalized root of a monomial, which has
a log crepant toroidal resolution.

\smallskip
\noindent\emph{Step 1. A modification of the log canonical cover.}
Shrink $V$ to the neighbourhood in
\eqref{eq:geometric-hypotheses}. By
\cite[Corollary 3.6]{HNT19}, there is a projective
$\Q$-factorial dlt modification $f:(W_0,B_0)\to(V,T)$.
Write
\begin{equation}\label{eq:weighted-pullback}
 K_{W_0}+B_0=f^*(K_V+T),\qquad
 P_0:=f^*T=\sum_i\lambda_iB_{0,i},\quad\lambda_i>0.
\end{equation}
For a strict transform $\lambda_i=1$. For an exceptional
component, crepancy gives
$\lambda_i=A_{B_{0,i}}(V,0)>0$, since $V$ is klt.

Complete $W_0$ along $B_0$, and normalize the components of
$\widehat W_0\times_{\widehat V_T}\YY_0$ that dominate
$\YY_0$. Denote the resulting maps by
\[
\begin{tikzcd}[column sep=large]
 \mathfrak W_1\arrow[r,"h_1"]\arrow[d,"g_1"']
      &\widehat W_0\arrow[d,"\widehat f"]\\
 \YY_0\arrow[r,"h_0"']&\widehat V_T.
\end{tikzcd}
\]
Set $P_1=g_1^*A_0=h_1^*P_0$ and $B_1=(P_1)_{\mathrm{red}}$.
The map $g_1$ is projective and birational. Let $q_0$ and
$\mathcal U_0$ be the integer and invertible sheaf used to
define $h_0$ in Lemma~\ref{lem:formal-log-cover}.
On the snc locus of $(W_0,B_0)$, all sheaves in the next
display are restricted to that locus; set
\[
 \mathcal L_1=\OO_{\widehat W_0}(K_{W_0}+B_0)
                         \otimes\widehat f^*\mathcal U_0^{-1},
 \qquad\mathcal L_1^{q_0}\simeq\OO_{\widehat W_0}.
\]
Crepancy identifies its generic cyclic algebra with that
defining $h_1$. Since $\mathcal L_1$ is invertible,
$\bigoplus_{j=0}^{q_0-1}\mathcal L_1^j$ is already finite
\'{e}tale and normal. Uniqueness of normalization identifies
the corresponding component with $h_1$. In particular,
$h_1$ is quasi-\'{e}tale and
$\mathfrak W_1$ is klt. Tame discrepancy comparison gives
a dlt pair with
\[
 K_{\mathfrak W_1}+B_1=g_1^*(K_{\YY_0}+A_0)
                         \quad\text{Cartier}.
\]

Every component of $B_1$ is $\Q$-Cartier. Indeed
$(W_0,B_{0,i})$ is plt, so its reduced inverse image under
$h_1$ is normal by tame discrepancy comparison and plt
adjunction. Its components are disjoint, and their sum is
the pullback of the $\Q$-Cartier divisor $B_{0,i}$.

Adjunction gives Gorenstein dlt surface pairs on the
components of $B_1$. Each component is regular along its
conductor and has rational double points elsewhere.
At a closed point on exactly two components, their conductor
gluing has completed local ring $k[[u,v,w]]/(uv)$;
Lemma~\ref{lem:double-locus-regularity} makes the ambient
pair snc there. Triple points are zero-dimensional lc
strata and hence lie in the snc locus of the dlt pair.
Thus $(\mathfrak W_1,B_1)$ is snc on a neighbourhood of
\[
             \Sigma_1:=\bigcup_{i\ne j}(B_{1,i}\cap B_{1,j}).
\]

\smallskip
\noindent\emph{Step 2. The additional cover and its resolution.}
On an affine formal open $\mathfrak U=\Spf R_0\subset\YY_0$,
choose a generator $e$ of $\mathcal U$. The defining
isomorphism in \eqref{eq:additional-tame-algebra} gives
\[
 e^r\longmapsto s,\qquad
 s\in R_0,\qquad\divisor(s)=drA_0|_{\mathfrak U}.
\]
Thus $s$ generates the ideal $\OO(-drA_0)$; it is not a
unit along $A_0$. Over $K_0=\operatorname{Frac}R_0$, set
$w=e^{-1}$. The cyclic algebra and its generic equation are
\[
 \bigoplus_{l=0}^{r-1}\OO_{\mathfrak U}(-dlA_0)w^l,
 \qquad w^r=s^{-1},\qquad z=w^{-1},\quad z^r=s.
\]
Since $\lfloor l\divisor(s^{-1})/r\rfloor=-ldA_0$,
Lemma~\ref{lem:normalized-cyclic-algebra} identifies this
algebra with the normalization of $R_0$ in
$K_0[z]/(z^r-s)$.

Let $\mathfrak W_2$ be the normalized base change of
$\mathfrak W_1$ by $\pi$, with maps
\[
 \pi_2:\mathfrak W_2\to\mathfrak W_1,\qquad
 g_2:\mathfrak W_2\to\YY,\qquad
 g_1\pi_2=\pi g_2.
\]
Only components dominating $\YY$ are retained.
Uniqueness of integral closure shows that, over an
affine open $\Spf R\subset g_1^{-1}(\mathfrak U)$, its
ring is the integral closure of $R$ in
$\operatorname{Frac}(R)[z]/(z^r-g_1^*s)$.

Near a point of $\Sigma_1$, the ambient space
$\mathfrak W_1$ is regular. Choose local equations
$x_1,\ldots,x_t$ for the components of $B_1$ through
that point, where $t=2$ or $3$. Write
\[
 \begin{gathered}
 B_1=V(x_1\cdots x_t),\qquad
 P_1=\sum_{\alpha=1}^t\rho_\alpha V(x_\alpha),\\
 g_1^*s=u\prod_{\alpha=1}^t x_\alpha^{a_\alpha},
 \qquad a_\alpha=dr\rho_\alpha\in\Z_{>0},\quad u\in R^*.
 \end{gathered}
\]
Consequently $\mathfrak W_2$ is locally the normalization of
\begin{equation}\label{eq:tame-monomial-chart}
 \Spf\!\left(R[z]\Big/
        \bigl(z^r-u x_1^{a_1}\cdots x_t^{a_t}\bigr)\right).
\end{equation}
The equations $x_\alpha=0$ define divisors on
$\mathfrak W_1$, and the exponents are the multiplicities
of $drP_1$. This is a normalized base change, not an
identification of ordinary and reflexive pullbacks.

Adjoining an $r$-th root of $u$ is \'{e}tale.
After this extension, \eqref{eq:tame-monomial-chart}
is a normalized monomial cover and hence toroidal
\cite[Proposition 3.7 and Theorem 3.8]{Amb15}.
Apply the projective toroidal resolution of
\cite[Theorem 3.4.9 and Lemma 3.4.6]{IT14},
including a barycentric subdivision if needed to
separate boundary components. Its saturated log blowups
are compatible on overlaps. Their centres lie over
$\Sigma_1$: a saturated one-dimensional cone is already
regular. Extending their ideals by the unit ideal
outside this locus gives a projective morphism
\[
 \mu:\mathfrak W\to\mathfrak W_2,\qquad
 B=(\mu^{-1}\pi_2^{-1}B_1)_{\mathrm{red}},\qquad g=g_2\mu.
\]
The resulting pair is snc over $\Sigma_1$ and plt
elsewhere. Its boundary components are $\Q$-Cartier:
this is clear on the snc locus; on the plt locus the
components over a given $B_{1,i}$ are disjoint and their
positive weighted sum is $\pi_2^*B_{1,i}$.

Tame ramification and the equality $K+D=0$ for a toric
variety with its full boundary give
\[
 K_{\mathfrak W}+B
   =\mu^*(K_{\mathfrak W_2}+(\pi_2^{-1}B_1)_{\mathrm{red}})
   =g^*(K_{\YY}+A)\quad\text{Cartier}.
\]
The pair is dlt. Since $B$ is $\Q$-Cartier, every
divisorial valuation centred in $B$ satisfies
\[
 A_E(\mathfrak W,0)
      =A_E(\mathfrak W,B)+\ord_E(B)>0.
\]
Away from $B$ the klt locus is unchanged. Thus
$\mathfrak W$ is klt. Adjunction shows that its boundary
components have only rational double points away from
the double curves.

\smallskip
\noindent\emph{Step 3. The full pullback and the rational components.}
Let $\eta=h_0\pi g:\mathfrak W\to\widehat V_T$ and set
$P=\eta^*T=\mu^*\pi_2^*P_1$. The coefficients of $P_1$
are positive. Finite pullback preserves this property,
and a new toric divisor $E$ has coefficient
\[
             \ord_E(P)=\sum_{\alpha=1}^t
                                \rho_\alpha\ord_E(x_\alpha)>0.
\]
Consequently
\[
 P=\sum_i r_iB_i,\quad r_i>0,\qquad
 B=g^{-1}(A)_{\mathrm{red}},\qquad NP\text{ is Cartier}.
\]
The morphism $\eta|_B$ factors through $T$, so the prescribed
trivialization pulls back to
\[
 \OO_{\mathfrak W}(NP)|_B
       \simeq(\eta|_B)^*(\OO_V(NT)|_T)\simeq\OO_B.
\]
This proves (iii). The fibres of $g$ are connected because
$g$ is proper birational and $\YY$ is normal; hence $B$
is connected. Adjunction gives
$\omega_B\simeq(g|_B)^*\omega_A\simeq\OO_B$, and $B$ is
a Gorenstein slc surface.

To prove rationality, let $V_i$ be a component of $A^\nu$,
with conductor $C_i$, and choose $B_j$ dominating $V_i$.
There is a proper birational morphism $b_j:B_j\to V_i$.
Taking residues of compatible rational log canonical
forms in the crepant equality of Step 2 gives
\[
 K_{B_j}+C_{B_j}=b_j^*(K_{V_i}+C_i),\qquad
                         C_{B_j}=(B-B_j)|_{B_j}.
\]
Thus the strict transform of any component of $C_i$ is a
component of $C_{B_j}$. It is a smooth double curve on
$B$, birational to a rational conductor curve on $V_i$,
and hence isomorphic to $\mathbb P^1$.
Lemma~\ref{lem:nonnormal-conductor-types}, applied to the
connected surface $B$, implies that all its normalized
conductor curves and all its normal components are rational.
This completes (i).

\smallskip
\noindent\emph{Step 4. Direct images and formal completion.}
The finite covers above admit local algebraic representatives.
Indeed, trivialize the invertible sheaf defining a cyclic
cover and choose an algebraic generator of its prescribed
power. The remaining factor is
a formal unit $v$. Lift its restriction to the reduced
closed fibre to an algebraic unit $v_0$.
For the relevant root order $q$, which is prime to $p$,
\[
 v/v_0\in1+\mathcal K,\qquad v/v_0=b^q
                         \quad\text{for a unique }b\in1+\mathcal K,
\]
where $\mathcal K$ is the ideal of that closed fibre.
Rescaling the root by $b$ identifies the formal cyclic cover
with the completion of the algebraic one. Normalization
commutes with these excellent completions. The normalized
blowups in Step 2 also commute with completion and with
strict \'{e}tale changes of charts, by the cited
Lemma 3.4.6. It therefore suffices to compute the coherent
direct images on these local algebraic representatives:
completion then gives the formal direct images by the
theorem on formal functions \cite[Tag 02OC]{Sta}.
No global algebraization of the formal cover is required.

Write $g:W\to Y$ for such a representative, retaining
$A,B$ for its boundary divisors, and set
$\mathcal M=\OO_Y(K_Y+A)$. Both $W$ and $Y$ are klt
threefolds. Rationality and duality give
\cite[Theorems 3 and 19]{BK23}
\[
 Rg_*\OO_W\simeq\OO_Y,\qquad Rg_*\omega_W\simeq\omega_Y.
\]
Since $K_W+B=g^*(K_Y+A)$, the projection formula gives
\[
 \begin{aligned}
 Rg_*\OO_W(-B)
   &=Rg_*(\omega_W\otimes g^*\mathcal M^{-1})\\
   &\simeq\omega_Y\otimes\mathcal M^{-1}=\OO_Y(-A).
 \end{aligned}
\]
These identifications agree with the natural inclusions
of the divisorial ideals into the structure sheaves.
Applying $Rg_*$ to
\[
              0\to\OO_W(-B)\to\OO_W\to\OO_B\to0
\]
and comparing with
$0\to\OO_Y(-A)\to\OO_Y\to\OO_A\to0$ therefore yields
$R(g|_B)_*\OO_B\simeq\OO_A$. This proves (ii).
\end{proof}

\subsection{The intersection calculation}

Fix the model $g:(\mathfrak W,B)\to(\YY,A)$ of
Lemma~\ref{lem:rational-crepant-modification}, and retain
the full pullback
\[
 B=\bigcup_{i\in I}B_i,\qquad
 P=\sum_{i\in I}r_iB_i,\qquad
 r_i>0,\qquad \OO_{\mathfrak W}(NP)|_B\simeq\OO_B.
\]
For every double curve $C\subset B$, the last equality
gives $P\cdot C=0$. We first express this equality using
the self-intersections of $C$ on the two components
containing it. We then apply Hodge index on the $B_i$ to
bound the first Betti number of their dual complex.

Let $E$ index the irreducible components of all
$B_i\cap B_j$ with $i\ne j$, and let $F$ index the triple
points. The dual complex $\Delta$ has vertices $I$, edges
$E$, and triangular faces $F$. For $e\in E$ joining
$i,j$, write
\[
 C_e\subset B_i\cap B_j,\qquad
 C_{e,i}=C_e\subset B_i,\qquad
 C_{e,j}=C_e\subset B_j,\qquad
 s_{i,e}=(C_{e,i})^2_{B_i}.
\]
Thus an edge records one irreducible curve, even when
$B_i\cap B_j$ has several components. If a triple point
$P_f$ lies on $C_e$, denote by $k(f,e)$ the index of the
third component of $B$ through $P_f$. The notation
$f\supset e$ below means precisely $P_f\in C_e$.

\begin{lemma}\label{lem:weighted-edge-relation}
The positive multiplicities $r_i$ in the full pullback
of $T$ satisfy, for every edge $e=(i,j)$,
\begin{equation}\label{eq:weighted-edge}
             r_js_{i,e}+r_is_{j,e}
                         +\sum_{f\supset e}r_{k(f,e)}=0.
\end{equation}
\end{lemma}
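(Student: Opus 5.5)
The relation will come from computing the degree of $\OO_{\mathfrak W}(NP)$ on the double curve $C_e$ twice. Lemma~\ref{lem:rational-crepant-modification}(iii) gives $\OO_{\mathfrak W}(NP)|_B\simeq\OO_B$. Restricting further to $C_e\subset B$ yields $\deg\OO_{\mathfrak W}(NP)|_{C_e}=0$, that is, $P\cdot C_e=0$. I will then evaluate $P\cdot C_e=\sum_m r_m(B_m\cdot C_e)$ term by term. Since $P$ is $\Q$-Cartier, each intersection number makes sense after multiplying by $N$. The ambient pair is snc along $C_e$ by Lemma~\ref{lem:rational-crepant-modification}(i), so every $B_m$ is Cartier in a neighbourhood of $C_e$. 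This means each term can be computed locally, even though $\mathfrak W$ is only formal.

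The computation splits according to the position of $B_m$ relative to $C_e$.
\begin{enumerate}
\item For $m\notin\{i,j\}$, the divisor $B_m$ meets $C_e$ only at triple points $P_f$ with $f\supset e$ and $k(f,e)=m$. By the snc property each such point contributes a transverse intersection of multiplicity one. Hence
\[
 \sum_{m\ne i,j}r_m(B_m\cdot C_e)=\sum_{f\supset e}r_{k(f,e)}.
\]
\item For $m=i$, I compute $B_i\cdot C_e$ on the surface $B_j\supset C_e$. Near $C_e$ the divisor $B_i$ is Cartier, and $B_i|_{B_j}$ is the reduced divisor $(B_i\cap B_j)$ there. Other components of $B_i\cap B_j$ are disjoint from $C_e$, because the double curves are smooth and the intersection is snc. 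So $B_i\cdot C_e=(C_{e,j})^2_{B_j}=s_{j,e}$. Symmetrically, $B_j\cdot C_e=s_{i,e}$.
\end{enumerate}
Summing gives $r_is_{j,e}+r_js_{i,e}+\sum_{f\supset e}r_{k(f,e)}=P\cdot C_e=0$, which is \eqref{eq:weighted-edge}.

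The main obstacle is that the phrase ``$B_i|_{B_j}$ equals $C_e$ near $C_e$'' must be justified as an equality of Cartier divisors. If a component $B_m$ were self-intersecting along $C_e$, then the edge would join $i$ to itself and the formula would need modification. I will rule this out using Lemma~\ref{lem:rational-crepant-modification}(i): the components $B_i$ are normal, and the double curves are intersections of distinct components. If needed, I will pass to a toroidal refinement.

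A second point is that self-intersections on the normal surfaces $B_i$ are well defined, since $C_e$ lies in the regular (snc) locus. Finally, the multiplicities at triple points must be exactly one, which follows from the snc local form $x_1x_2x_3=0$.
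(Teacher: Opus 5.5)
Your proposal is correct and follows the paper's proof essentially verbatim: you compute $\frac1N\deg\OO_{\mathfrak W}(NP)|_{C_e}=0$, use that distinct components of $B_i\cap B_j$ are disjoint to get $\deg\OO(B_i)|_{C_e}=s_{j,e}$ and $\deg\OO(B_j)|_{C_e}=s_{i,e}$, and count each remaining component transversally at the triple points. The self-intersection concern you raise does not arise and no toroidal refinement is needed, because the components $B_i$ are normal and edges are by definition components of $B_i\cap B_j$ with $i\ne j$.
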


\begin{proof}
The ambient space is regular near $C_e$. Since
$C_e$ is one component of $B_i\cap B_j$, and distinct
components of this intersection are disjoint, one has
\[
 \deg\OO(B_i)|_{C_e}=s_{j,e},\qquad
 \deg\OO(B_j)|_{C_e}=s_{i,e}.
\]
Every remaining $B_k$ meets $C_e$ transversely at the
triple points with $k(f,e)=k$. Consequently
\[
 0=\frac1N\deg\OO_{\mathfrak W}(NP)|_{C_e}
   =r_is_{j,e}+r_js_{i,e}+\sum_{f\supset e}r_{k(f,e)}.
\]
\end{proof}

\begin{lemma}\label{lem:weighted-hodge}
Suppose $\Delta$ is a closed connected surface and
\eqref{eq:weighted-edge} holds with all $r_i>0$.
Then $b_1(\Delta,\RR)\leq2$.
\end{lemma}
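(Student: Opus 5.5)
We must prove: if $\Delta$ is a closed connected surface (triangulated by vertices $I$, edges $E$, faces $F$) and the weighted edge relations \eqref{eq:weighted-edge} hold with all $r_i>0$, then $b_1(\Delta,\RR)\leq 2$. Our plan is to convert \eqref{eq:weighted-edge} into a sign constraint on the self-intersections of double curves. Hodge index on each rational component $B_i$ then bounds the number of double curves with nonnegative square. Finally Euler characteristic turns this into the Betti bound.

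\emph{Step 1: sum over edges.} For each vertex $i$, the double curves $C_{e,i}$ with $e\ni i$ form the cycle $C_{B_i}\in|-K_{B_i}|$. The link of $i$ in the closed surface $\Delta$ is a circle, so these curves form a cycle of smooth rational curves with $\#\{e\ni i\}=\#\{f\ni i\}=:d_i$. Divide \eqref{eq:weighted-edge} by $r_ir_j$ and sum over all edges. The contribution of a face $f$ with vertices $i,j,k$ is
\[
 \frac{r_k}{r_ir_j}+\frac{r_i}{r_jr_k}+\frac{r_j}{r_ir_k}
 =\frac{r_i^2+r_j^2+r_k^2}{r_ir_jr_k}\geq\frac{3}{(r_ir_jr_k)^{1/3}}\cdot(\text{AM--GM}),
\]
which is positive. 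Rather than bounding it numerically, we keep the weighted form. Dividing by $r_ir_j$ produces $\sum_i\frac1{r_i}\sum_{e\ni i}s_{i,e}$ plus the positive face terms, and this total is $0$. Hence $\sum_i r_i^{-1}(C_{B_i}^2)_{\text{edge sum}}<0$.

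\emph{Step 2: local input from Hodge index and Noether.} On the rational surface $B_i$ with anticanonical cycle $C_{B_i}=\sum_{e\ni i}C_{e,i}$, we have
\[
 \sum_e s_{i,e}=C_{B_i}^2-2d_i=K_{B_i}^2-2d_i .
\]
Here $C_{e,i}\cdot C_{e',i}$ counts the adjacencies in the cycle. (For $d_i=1$ or $2$ we adjust with nodal self-intersections; the cyclic count gives $2d_i$ in general.) Using the minimal resolution, $K^2=10-\rho$ for a smooth rational surface. Rational double points do not change $K^2$.

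\emph{Step 3: the Betti bound.} This is the obstacle: the unweighted Euler characteristic $\chi(\Delta)=|I|-|E|+|F|$ does not match the weighted sums directly. The plan is to combine Step 1 with a second relation obtained by pairing \eqref{eq:weighted-edge} with the vector $(r_i)$. The aim is to show that the quadratic form $Q(x)=\sum_e(\ldots)$ defined by the edge data is negative semidefinite, with kernel spanned by $r$ on each side.

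Then we consider the Hodge-theoretic argument. The divisor $P|_{B_i}=\sum_j r_j C_{e,i}+r_iB_i|_{B_i}$ is numerically trivial on each $B_i$. Hodge index on $B_i$ forces the intersection matrix of $\{C_{e,i}\}$ to have at most one positive eigenvalue. This intersection matrix is that of a cycle with diagonal $s_{i,e}$. Combined with Step 1, it yields $\sum_i(K_{B_i}^2)\geq\ldots$.

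We expect to obtain $2-2g\geq 0$, i.e.\ $\chi(\Delta)\geq0$, which for a closed surface gives $b_1\leq2$. That is, $\Delta$ is a sphere, a torus, or (in the nonorientable case) excluded via orientation from the residue condition. The hardest point is making this global estimate precise. We would first try the classical Kulikov/Friedman triple-point-formula argument: in the unweighted case, $\sum s=-\#$triple points gives $\chi=0$ or $2$. We would then adapt it by rescaling the components with weights $r_i$, showing that positive weights only strengthen the inequality.
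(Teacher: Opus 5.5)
Your proposal has a genuine gap. Step 3, which is the decisive step, is never carried out, and the route you sketch for it cannot work as stated.

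\emph{Why the Noether-formula route stalls.} Summing \eqref{eq:weighted-edge} and using Noether's formula only relates the edge data to the Picard numbers $\rho_i$ of the components, and nothing in your argument controls those. Set $Q_i:=12-K_{B_i}^2-d_i$. Your own Step 1 computation, divided by $r_ir_j$ and summed, gives exactly
\[
 \sum_i\frac{12-Q_i}{r_i}=\sum_{f=(i,j,k)}\frac{3(r_ir_j+r_jr_k+r_kr_i)-(r_i^2+r_j^2+r_k^2)}{r_ir_jr_k},
\]
using $\sum_{e\ni i}s_{i,e}=12-3d_i-Q_i$ and the fact that the link of each vertex is a cycle.
\begin{itemize}
\item In the unweighted case this reads $\sum_iQ_i=12\chi(\Delta)$. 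So even there, $\sum s=-\#\{\text{triple points}\}$ does not by itself give $\chi=0$ or $2$. You would additionally need $Q_i\ge0$ for every anticanonical pair $(B_i,C_i)$, which is a separate toric-model theorem your plan does not supply.
\item With weights it is worse. Since $r_i^2+r_j^2+r_k^2\ge r_ir_j+r_jr_k+r_kr_i$, with equality only for equal weights, the face terms are smaller than in the unweighted count. They can even be negative, e.g.\ for weights $(1,1,t)$ with $t$ large.
\item So unequal weights weaken the inequality rather than strengthen it. No bound on $|F|$ in terms of $|I|$, and hence on $\chi(\Delta)$, follows.
\end{itemize}
Your remark that Hodge index bounds the positive eigenvalues of the cycle matrix on each $B_i$ is true but purely local. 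You give no mechanism that ties the components together.

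\emph{The missing idea.} What the paper's proof uses is a global isotropic subspace built from $1$-cocycles. Define
\[
L\colon Z^1(\Delta,\RR)\to\bigoplus_iN^1(B_i)_\RR,\qquad L_i(z)=\sum_{e=(i,j)}r_jz_{ij,e}[C_{e,i}].
\]
The argument then runs as follows.
\begin{itemize}
\item \eqref{eq:weighted-edge} together with the cocycle identity $z_{ik}-z_{jk}=z_{ij}$ on faces gives $L_i(z)\cdot C_{e,i}=L_j(z)\cdot C_{e,j}$ for every edge $e=(i,j)$.
\item Consequently $\im L$ is totally isotropic for $Q(u,w)=\sum_ir_i(u_i\cdot w_i)$, and it is $Q$-orthogonal to the class of an ample bundle $H$ on $B$.
\item Because all $r_i>0$, Hodge index on each $B_i$ gives $Q$ exactly $v=|I|$ positive eigenvalues. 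Hence $Q$ has $v-1$ positive eigenvalues on $H^\perp$, so $\dim\im L\le v-1$.
\item For the kernel: if $z\in\ker L$ vanishes on one face, intersecting $L_i(z)=0$ with $C_{e,i}$ and using $r_l>0$ propagates the vanishing across adjacent faces. Hence $\ker L$ injects into the $2$-dimensional cocycle space of a single face.
\item Therefore $v-1+b_1=\dim Z^1(\Delta,\RR)\le v+1$, i.e.\ $b_1\le2$.
\end{itemize}
This argument treats arbitrary positive weights uniformly. It needs no Noether formula, no charge positivity, no orientability and no Euler characteristic count.
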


\begin{proof}
Set $v=|I|$. For an oriented edge $e=(i,j)$ let
$z_{ij,e}$ be the coefficient of
$z\in Z^1(\Delta,\RR)$, so $z_{ji,e}=-z_{ij,e}$.
On a face $(i,j,k)$ the cocycle identity is
$z_{ij}+z_{jk}=z_{ik}$; the face specifies the three edges.
Define
\[
 L_i(z)=\sum_{e=(i,j)}r_jz_{ij,e}[C_{e,i}],\qquad
 L:Z^1(\Delta,\RR)\longrightarrow
                   \mathcal V:=\bigoplus_iN^1(B_i)_{\RR}.
\]

\noindent\emph{Step 1. Agreement of intersections along a double curve.}
The boundary is snc near $C_e$. Hence
\[
 \begin{aligned}
 L_i(z)\cdot C_{e,i}
  &=r_jz_{ij,e}s_{i,e}
                 +\sum_{f=(i,j,k)\supset e}r_kz_{ik,f}\\
  &=-r_iz_{ij,e}s_{j,e}
                 +\sum_{f=(i,j,k)\supset e}r_kz_{jk,f}
   =L_j(z)\cdot C_{e,j}.
 \end{aligned}
\]
The second equality uses \eqref{eq:weighted-edge}
and the cocycle identity. Thus
\begin{equation}\label{eq:edge-pairing-equality}
                  L_i(z)\cdot C_{e,i}=L_j(z)\cdot C_{e,j}.
\end{equation}

\smallskip
\noindent\emph{Step 2. The dimension of the image.}
On $\mathcal V$ set $Q(u,w)=\sum_i r_i(u_i\cdot w_i)$.
Hodge index gives $v$ positive eigenvalues.
For $z,w\in Z^1(\Delta,\RR)$,
\[
 Q(L(z),L(w))=
 \sum_{e=(i,j)}r_ir_jz_{ij,e}
    \bigl(L_i(w)\cdot C_{e,i}-L_j(w)\cdot C_{e,j}\bigr)=0.
\]
Choose an ample line bundle on $B$ and let $H_i$
be its numerical class on $B_i$.
Its degrees on the two copies of $C_e$ agree, so
$Q(H,L(z))=0$, while $Q(H,H)>0$.
The form on $H^\perp$ has $v-1$ positive eigenvalues.
The totally isotropic space $\im L\subset H^\perp$
therefore satisfies
\begin{equation}\label{eq:hodge-image-bound}
                            \dim\im L\leq v-1.
\end{equation}

\smallskip
\noindent\emph{Step 3. The dimension of the kernel.}
Fix a face $F$. Suppose $z\in\ker L$ and $z|_F=0$.
If $F=(i,j,k)$ and $F'=(i,j,l)$ share $e=(i,j)$,
intersection with $C_{e,i}$ gives
\[
 0=r_jz_{ij}s_{i,e}+r_kz_{ik}+r_lz_{il}=r_lz_{il}.
\]
Since $r_l>0$, $z_{il}=0$; then the cocycle identity
gives $z_{jl}=0$. Thus $z|_{F'}=0$.
The faces of a connected closed surface are connected
by successive common edges, so $z=0$.
Consequently
\[
 \ker L\hookrightarrow Z^1(F,\RR),\qquad\dim\ker L\leq2.
\]
Finally
\[
 v-1+b_1(\Delta,\RR)=\dim Z^1(\Delta,\RR)
         =\dim\im L+\dim\ker L\leq v+1.
\]
\end{proof}

\subsection{The structure sheaf and the Picard scheme}

\begin{proof}[Proof of Proposition~\ref{cor:rational-conductor-picard}]
Take $g:B\to A$ from
Lemma~\ref{lem:rational-crepant-modification}. Write
$I,E,F$ for the components, double curves, and triple
points of $B$, as in the preceding subsection.

\noindent\emph{Step 1. The dual complex and coherent cohomology.}
For each $B_i$, adjunction gives
\[
 C_i=(B-B_i)|_{B_i},\qquad K_{B_i}+C_i\sim0.
\]
The surface $B_i$ is rational, and $C_i$ is a connected
cycle of smooth rational curves by
Lemmas~\ref{lem:log-surface-resolution}
and~\ref{lem:surface-configurations}. Its cycle is the
link of the vertex $i$ in $\Delta$. Each double curve
contains exactly two triple points. Hence $\Delta$ is
a closed connected topological surface.

The alternating restriction maps give the exact complex
\begin{equation}\label{eq:rational-normalization-resolution}
 0\longrightarrow\OO_B
 \longrightarrow\bigoplus_{i\in I}\OO_{B_i}
 \longrightarrow\bigoplus_{e\in E}\OO_{C_e}
 \longrightarrow\bigoplus_{f\in F}\OO_{P_f}
 \longrightarrow0.
\end{equation}
Exactness is local: at a double or triple point it is
the standard restriction complex for $xy=0$ or $xyz=0$.
Away from these points the first map is an isomorphism.
The three terms on the right have no higher cohomology:
the $B_i$ are rational with rational double points, and
$C_e\simeq\mathbb P^1$. Their global sections form
$C^\bullet(\Delta,k)$. Since $Rg_*\OO_B\simeq\OO_A$,
\begin{equation}\label{eq:rational-cohomology-model}
 H^j(A,\OO_A)\simeq H^j(B,\OO_B)\simeq H^j(\Delta,k).
\end{equation}
Frobenius acts on the cochain complex by coefficientwise
$p$-th powers, so these isomorphisms respect Frobenius.

\smallskip
\noindent\emph{Step 2. The Betti number and cup products.}
Serre duality and $\omega_A\simeq\OO_A$ give
$H^2(\Delta,k)=H^2(A,\OO_A)=k$. Since $p\ne2$, the closed
surface $\Delta$ is orientable. Its integral first homology
is free, and Lemmas~\ref{lem:weighted-edge-relation}
and~\ref{lem:weighted-hodge} give
\[
 q:=h^1(A,\OO_A)=b_1(\Delta,k)=b_1(\Delta,\RR)\leq2.
\]
Thus $\Delta$ is a sphere or a torus, and $q=0$ or $2$.
The cup product on $A$ is the perfect alternating pairing
\[
 H^1(A,\OO_A)\times H^1(A,\OO_A)\longrightarrow
 H^2(A,\OO_A)=k
\]
given by Serre duality. The complex in Step 1 is defined
over $\mathbb F_p$, and cup product preserves
Frobenius-fixed classes. If $q=2$, a symplectic basis
$u,v$ over $\mathbb F_p$ gives $u^2=v^2=0$ and
$u\smile v=w$, a basis of $H^2$. If $q=0$, only degrees
zero and two occur. These are precisely the cohomology
algebras of the torus and the sphere, with Frobenius.

\smallskip
\noindent\emph{Step 3. The Picard scheme of $B$.}
Since $H^1(B_i,\OO_{B_i})=0$, the tangent-space
description of the Picard scheme
\cite[Theorem 5.11]{Kle05} gives $\Pic^0_{B_i/k}=0$.
A line bundle in $\Pic^0_{B/k}$ is therefore obtained,
locally on the parameter scheme, by gluing trivial line
bundles on the $B_i$. An oriented double curve
$e=(i,j)$ carries a gluing constant $\lambda_{ij,e}$,
with $\lambda_{ji,e}=\lambda_{ij,e}^{-1}$. At a triple
point the compatibility condition is
\[
                  \lambda_{ij}\lambda_{jk}\lambda_{ki}=1.
\]
Changing the trivialization on $B_i$ by $a_i$ replaces
$\lambda_{ij,e}$ by $a_j a_i^{-1}\lambda_{ij,e}$.
Gluing is effective: locally the union has equation
$xy=0$ or $xyz=0$, and its ring is the corresponding
fibre product of the component rings
\cite[Tag 0D2J]{Sta}.
Consequently the kernel of restriction to the component
Picard schemes is
\begin{equation}\label{eq:picard-gluing-complex}
 \frac{\ker\bigl(\delta^1:\Gm^E\to\Gm^F\bigr)}
      {\im\bigl(\delta^0:\Gm^I\to\Gm^E\bigr)}
 \simeq\operatorname{Hom}\bigl(H_1(\Delta,\Z),\Gm\bigr)
 \simeq\Gm^q.
\end{equation}
The quotient is taken as an fppf sheaf. Here $\delta^0$
is the ratio $a_j/a_i$, and $\delta^1$ is the displayed
product around a face. The first isomorphism is the
degree-one universal coefficient formula, using
$H_0(\Delta,\Z)=\Z$. The last group is connected, so it
is exactly $\Pic^0_{B/k}$.

\smallskip
\noindent\emph{Step 4. Descent to $A$.}
We prove directly that pullback induces
\begin{equation}\label{eq:picard-crepant-descent}
                       g^*:\Pic^0_{A/k}\xrightarrow{\sim}\Pic^0_{B/k}.
\end{equation}
The equality $g_*\OO_B=\OO_A$ persists after every base
change over $k$. By the projection formula, pullback of
line bundles is fully faithful; hence $g^*$ is a
monomorphism of Picard schemes.

Let $R$ be a local Artinian $k$-algebra with residue field
$k$, and let $\mathcal L$ be a line bundle on $B_R$
whose restriction to $B$ is trivial. For every affine
open $U\subset A$, set $W=g^{-1}(U)$. Leray gives
\[
              H^1(W,\OO_W)=H^0(U,R^1g_*\OO_B)=0.
\]
For a small extension $R\to R'$ with kernel $J$,
the units sequence on $W_R$ is
\[
 1\longrightarrow 1+J\OO_W
 \longrightarrow\OO_{W_R}^{\,*}
 \longrightarrow\OO_{W_{R'}}^{\,*}\longrightarrow1,
 \qquad 1+J\OO_W\simeq J\otimes_k\OO_W.
\]
Its cohomology makes $\Pic(W_R)\to\Pic(W_{R'})$
injective. Induction on the length of $R$ therefore
shows that $\mathcal L|_{W_R}$ is trivial.
On overlaps, its transition functions descend uniquely
to $A_R$, because $g_{R*}\OO_{B_R}=\OO_{A_R}$.
They define a line bundle $\mathcal M$ on $A_R$ with
$g_R^*\mathcal M\simeq\mathcal L$, uniquely up to isomorphism.

Thus $g^*$ induces an isomorphism on the completed local
rings at the identity, and is \'{e}tale there. Translation
makes it \'{e}tale everywhere. An \'{e}tale monomorphism
is an open immersion \cite[Tag 025G]{Sta}; its image is
an open subgroup of the connected group $\Pic^0_{B/k}$,
hence the whole group. This proves
\eqref{eq:picard-crepant-descent}. Combining it with
\eqref{eq:picard-gluing-complex} completes the proof.
\end{proof}

\section{Cohomology classes on finite thickenings}\label{app:witt}

Proposition~\ref{prop:comparison-class} requires a class
whose restriction to $D$ is prescribed and which extends
to $D_{\ell_i+1}$. We construct it by subtracting two Witt
$1$-cochains with the same first coordinate. Their difference
is $V_W(b_i)$ for an ordinary meromorphic $1$-cochain $b_i$.
An estimate for $db_i$ makes $b_i$ a cocycle on the required
thickening; its leading coefficient gives the prescribed
restriction to $D$.

We use Setting~\ref{not:finite-calculation} and the choices of
Lemma~\ref{lem:order-congruences}. Thus
\[
 \begin{gathered}
 1\leq n=\ell_0<R<p,\qquad
       p\ell_{i-1}\leq\ell_i<Rp^i\quad(1\leq i\leq e),\\
 \sigma_i|_D=s^{p^i},\qquad v_i=n\theta_i\quad(0\leq i\leq e).
 \end{gathered}
\]
Fix a $G$-stable indexed affine covering $\mathcal U=(U_a)$
of $\YY$, and use the differential $d$ of
Lemma~\ref{lem:boundary-product}.
Let $\mathcal Q_{\YY}$ be the sheaf of total quotient rings
of $\OO_{\YY}$.
For a meromorphic cochain $q$, write
$\ord_{\JJ}(q)\geq b$ if every entry belongs to
$\JJ^b\subset\mathcal Q_{\YY}$; negative integers $b$ are
allowed. All quotient sheaves and their cohomology below
are considered on the common underlying topological space
of $A$, $D_t$ and $\YY$.

\subsection{Comparison with powers of the first trivialization}

The sections $\sigma_i$ need not be powers of $\sigma_0$.
The following estimate bounds their difference after taking
meromorphic local representatives. This allows all the
cohomology classes to be constructed using a single Witt
lift of $dP_0$ on $D_{p(p-n)}$.

\begin{lemma}\label{lem:meromorphic-cochains}
There are meromorphic zero-cochains $P_i$, of character
$\mu^{np^i}$, representing $\sigma_i^{-n}$ and satisfying
\begin{equation}\label{eq:P-orders}
 \begin{gathered}
 \ord_{\JJ}(P_i)=-np^i,\qquad
 dP_i\in C^1(\mathcal U,\JJ^{\ell_i-np^i}),\\
 [s^{np^i-\ell_i}dP_i]=v_i
       \quad\text{in }H^1(D,\OO_D).
 \end{gathered}
\end{equation}
Set $T_0=0$ and $T_i=P_0^{p^i}-P_i$ for $i\geq1$.
Then
\begin{equation}\label{eq:P-initial-comparison}
 \ord_{\JJ}(T_i)\geq-np^{i-1},\qquad
 dT_i\in C^1(\mathcal U,\OO_{\YY})\quad(i\geq1).
\end{equation}
\end{lemma}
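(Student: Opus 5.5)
The plan is to build the $P_i$ from local lifts of the sections $\sigma_i$ on the covering $\mathcal U$. For each $i$ and each $a$, choose a local generator $\tau_{i,a}$ of $\JJ^{p^i}$ on $U_a$ whose image in $\JJ^{p^i}|_{D_{\ell_i}}$ is $\sigma_i|_{U_a}$. Then replace it by its $\mu^{-p^i}$-projection $|G|^{-1}\sum_g\mu(g)^{p^i}g(\tau_{i,a})$, as in \eqref{eq:generator-character-projection}. This keeps the reduction and, by Nakayama's lemma, generation; the $G$-stability of $\mathcal U$ makes the cochain $(\tau_{i,a})_a$ equivariant. Set $P_i=(\tau_{i,a}^{-n})_a$, a meromorphic $0$-cochain with entries in $\JJ^{-np^i}\subset\mathcal Q_{\YY}$. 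Its character is $\mu^{np^i}$, and $\ord_{\JJ}(P_i)=-np^i$ holds because $\tau_{i,a}$ generates $\JJ^{p^i}$.

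For the coboundary, use that $\tau_{i,b}/\tau_{i,a}\in1+\JJ^{\ell_i}$ on $U_{ab}$, since both lift $\sigma_i$ on $D_{\ell_i}$. Hence
\[
 (dP_i)_{ab}=\tau_{i,a}^{-n}\bigl((\tau_{i,b}/\tau_{i,a})^{-n}-1\bigr)
   \in\JJ^{\ell_i-np^i}.
\]
Modulo $\JJ^{\ell_i-np^i+1}$, this cochain is exactly the \v Cech representative of the connecting map $\delta^0_{\JJ^{-np^i}}(\sigma_i^{-n})$ for the restriction from $D_{\ell_i+1}$ to $D_{\ell_i}$. Lemma~\ref{lem:obstruction-rules}(ii) with $q=p^i$, $\ell=\ell_i$ and $b=n$ then gives $[s^{np^i-\ell_i}dP_i]=n\theta_i=v_i$, which proves \eqref{eq:P-orders}.

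For $T_i$, the integrality of $dT_i$ should be formal. In characteristic $p$, $(dP_0^{p^i})_{ab}=((dP_0)_{ab})^{p^i}$ has order at least $p^i(\ell_0-n)=0$, and $dP_i$ has order at least $\ell_i-np^i\geq0$ by \eqref{eq:order-bounds}. Hence $dT_i\in C^1(\mathcal U,\OO_{\YY})$.

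The estimate $\ord_{\JJ}(T_i)\geq-np^{i-1}$ is the main obstacle, and I would prove it by induction on $i$. I would write
\[
 T_i=(P_0^{p^{i-1}}-P_{i-1})^p+(P_{i-1}^p-P_i)=T_{i-1}^p+(P_{i-1}^p-P_i).
\]
The first term has order at least $-np^{i-1}$ by induction, starting from $T_1=P_0^p-P_1$. The second term equals $\tau_{i-1,a}^{-np}(1-w_a^{-n})$, where $w_a=\tau_{i,a}/\tau_{i-1,a}^p\in1+\JJ$. The issue is therefore the vanishing order of $w_a-1$: we need it to be at least $n(p^i-p^{i-1})$.

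Taking $p$-th powers as in \eqref{eq:power-triviality} shows that $\sigma_{i-1}^p$ generates $\JJ^{p^i}$ on $D_{p\ell_{i-1}}$. So $w$ is an invariant unit on $D_{p\ell_{i-1}}$ with $w|_D=1$. I would first try to exploit the remaining freedom in the lifts $\tau_{i,a}$, and in $\sigma_i$ itself, to force this unit to be $1$ to high order. The modifications of $\sigma_i$ in Step~2 of Lemma~\ref{lem:order-congruences} are by units $1-(a_j/d_j)\sigma_0^{d_j}$ with $d_j=\ell_j-n$. These deviate from $1$ only in order $d_j\geq p\ell_{i-1}-n$, which exceeds $n(p^i-p^{i-1})$ when $\ell_{i-1}\geq np^{i-1}$. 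For the unmodified generators, I would compare $\sigma_i$ and $\sigma_{i-1}^p$ on $D_{p\ell_{i-1}}$ through their common normalization on $D$ and the invariance of the ratio, in the spirit of Lemma~\ref{lem:invariant-functions}. If this cannot be arranged directly, I would choose $\tau_{i,a}:=\tau_{i-1,a}^p\widetilde w_a$ with $\widetilde w_a$ a lift of $w$. Then I would check that only the part of $w-1$ of order below $n(p^i-p^{i-1})$ matters, and that it can be absorbed into the choice of $\sigma_i$ without changing the line of $\theta_i$ fixed in \eqref{eq:obstruction-parity-lines}.
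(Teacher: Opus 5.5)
\textbf{Verdict: genuine gap.} Your proposal correctly reduces the lemma to a lower bound on the order of $w-1$, but it never proves that bound, and the routes you sketch for it do not work.

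\textbf{What matches the paper.} The following parts are correct and agree with the paper's proof: the construction of $P_i$ as $\tau_i^{-n}$ from character-projected local generators lifting $\sigma_i$; the proof of \eqref{eq:P-orders} via Lemma~\ref{lem:obstruction-rules}(ii); the integrality of $dT_i$; and the telescoping $T_i=T_{i-1}^p+(P_{i-1}^p-P_i)$. This last step reduces everything to $\ord_{\JJ}(w_a-1)\geq n(p^i-p^{i-1})$, which is the entire content of the lemma.

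\textbf{Why your suggested routes fail.}
\begin{enumerate}
\item Re-choosing the lifts cannot help. Changing $\tau_{i,a}$ by a factor in $1+\JJ^{\ell_i}$, or $\tau_{i-1,a}$ by a factor in $1+\JJ^{\ell_{i-1}}$ (whose $p$-th power lies in $1+\JJ^{p\ell_{i-1}}$), does not change $w$ on $D_{p\ell_{i-1}}$ at all.
\item Lemma~\ref{lem:invariant-functions} is not available. It belongs to Section~\ref{sec:canonical-relation}, has different hypotheses, and concerns the lengths $Rp^e$. On $D_{p\ell_{i-1}}$ nothing rules out invariant functions of large order, so you should not aim for $w=1$.
\item Modifying $\sigma_i$ is unnecessary, and it is not permitted: the lemma is asserted for the generators already fixed in Lemma~\ref{lem:order-congruences}, whose classes $\theta_i$ lie in prescribed lines.
\end{enumerate}

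\textbf{The missing argument.} Apply a vanishing-order argument to the invariant unit $w$ on $D_N$, where $N=p\ell_{i-1}$; it works for the given $\sigma_i$.
\begin{enumerate}
\item If $w\neq1$, let $1\leq h<N$ be the exact order of $w-1$. Its image in $H^0(D,\JJ^h|_D)=ks^h$ is $\lambda s^h$ with $\lambda\neq0$. Invariance of $w$ and the character $\mu^{-h}$ of $s^h$ give $R\mid h$.
\item Write $h=p^ja$ with $(a,p)=1$. Since $(R,p)=1$, we get $R\mid a$, so $Rp^j\leq h<N<Rp^i$ and hence $j\leq i-1$.
\item By Nakayama, $w-1\in H^0(D_N,\JJ^h/\JJ^N)$ trivializes $\JJ^h|_{D_{N-h}}$, so $(\JJ^{p^j}|_{D_{N-h}})^{\otimes a}$ is trivial.
\item The kernel of $\Pic(D_{N-h})\to\Pic(D)$ is $p$-primary and $(a,p)=1$, so $\JJ^{p^j}|_{D_{N-h}}$ is itself trivial. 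Maximality of $\ell_j$ then gives $N-h\leq\ell_j\leq\ell_{i-1}$.
\item Hence $h\geq(p-1)\ell_{i-1}\geq(p-1)np^{i-1}$. If $w=1$ on $D_N$, then $w_a-1\in\JJ^N$ and the same bound holds trivially.
\end{enumerate}
This yields $\ord_{\JJ}(P_{i-1}^p-P_i)\geq -np^i+(p-1)\ell_{i-1}\geq -np^{i-1}$, and your induction then closes.
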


In the last line of \eqref{eq:P-orders}, first reduce $dP_i$
modulo $\JJ^{\ell_i-np^i+1}$ and then divide by the generator
$s^{\ell_i-np^i}$ on $D$. Thus the expression denotes an
ordinary cohomology class in $H^1(D,\OO_D)$.

\begin{proof}
\noindent\emph{Step 1. Equivariant lifts of $\sigma_i^{-n}$.}
Fix $0\leq i\leq e$. We lift $\sigma_i^{-n}$ directly as
a section of $\JJ^{-np^i}|_{D_{\ell_i}}$.
On each affine formal open $U_a$, reduction gives a
surjective map
\[
 \Gamma(U_a,\JJ^{-np^i})\longrightarrow
 \Gamma(U_a\cap D_{\ell_i},
                       \JJ^{-np^i}/\JJ^{\ell_i-np^i}).
\]
Choose $Q_{i,a}$ mapping to $\sigma_i^{-n}|_{U_a\cap D_{\ell_i}}$
and set $Q_i=(Q_{i,a})_a$.
The action on zero-cochains is
\[
 (gQ_i)_a=g(Q_{i,g^{-1}a})\qquad(g\in G),
\]
where $gU_a=U_{ga}$ and the right side uses the natural
action on the fractional ideal $\JJ^{-np^i}$.
Since $|G|$ is invertible in $k$, define
\[
 P_i=\frac1{|G|}\sum_{g\in G}\mu(g)^{-np^i}gQ_i
                  \in C^0(\mathcal U,\JJ^{-np^i}).
\]
The character of $\sigma_i^{-n}$ is $\mu^{np^i}$.
Equivariance of reduction therefore gives
\[
 \begin{gathered}
 gP_i=\mu(g)^{np^i}P_i,\qquad
 P_{i,a}\bmod\JJ^{\ell_i-np^i}
                      =\sigma_i^{-n}|_{U_a\cap D_{\ell_i}},\\
 P_{i,a}|_D=s^{-np^i}|_{U_a\cap D}.
 \end{gathered}
\]
The last section generates $\JJ^{-np^i}|_D$.
Nakayama's lemma implies that $P_{i,a}$ generates
$\JJ^{-np^i}|_{U_a}$, and hence
$\ord_{\JJ}(P_i)=-np^i$.

\smallskip
\noindent\emph{Step 2. The connecting class represented by $dP_i$.}
The reductions of $P_{i,a}$ agree on every overlap, so
\[
 (dP_i)_{ab}=P_{i,b}-P_{i,a}
       \in\Gamma(U_a\cap U_b,\JJ^{\ell_i-np^i}).
\]
Let $\delta^0_{\JJ^{-np^i}}$ be the connecting homomorphism
for restriction from $D_{\ell_i+1}$ to $D_{\ell_i}$.
Its value on $\sigma_i^{-n}$ is, by the definition of the
connecting homomorphism,
\[
 [dP_i\bmod\JJ^{\ell_i-np^i+1}]
   =\delta^0_{\JJ^{-np^i}}(\sigma_i^{-n})
        \in H^1(D,\JJ^{\ell_i-np^i}|_D).
\]
Apply Lemma~\ref{lem:obstruction-rules}(ii) with
$q=p^i$, $\ell=\ell_i$ and $b=n$. It gives
\[
 [s^{np^i-\ell_i}dP_i]
   =s^{np^i-\ell_i}\delta^0_{\JJ^{-np^i}}(\sigma_i^{-n})
   =n\theta_i=v_i.
\]
Together with Step~1, this proves \eqref{eq:P-orders}.

\Needspace{12\baselineskip}
\smallskip
\noindent\emph{Step 3. Comparing consecutive representatives.}
Fix $1\leq i\leq e$ and set $N=p\ell_{i-1}$.
Since $N\leq\ell_i$, Step~2 gives
\[
 \begin{aligned}
 dP_i&\in C^1(\mathcal U,\JJ^{N-np^i}),\\
 d(P_{i-1}^p)=(dP_{i-1})^p
      &\in C^1(\mathcal U,\JJ^{N-np^i}).
 \end{aligned}
\]
Thus $P_i$ and $P_{i-1}^p$ reduce to global generating
sections of $\JJ^{-np^i}|_{D_N}$.
Their local ratios and their common reduction are
\[
 u_a:=\frac{P_{i,a}}{P_{i-1,a}^p}\in\Gamma(U_a,\OO_{\YY}^*),
 \qquad u=(u_a\bmod\JJ^N)_a\in H^0(D_N,\OO_{D_N}^*).
\]
Both numerator and denominator have character $\mu^{np^i}$
and restrict to $s^{-np^i}$ on $D$. Therefore
\[
                         g(u)=u\quad(g\in G),\qquad u|_D=1.
\]

If $u\ne1$, let $h$ be the unique integer with
\[
 u-1\in H^0(D_N,\JJ^h/\JJ^N)
             \setminus H^0(D_N,\JJ^{h+1}/\JJ^N),
                         \qquad 1\leq h<N.
\]
Its image in $H^0(D,\JJ^h|_D)=ks^h$ is $\lambda s^h$
for some $\lambda\in k^*$.
Invariance gives $\mu^h=1$, hence $R\mid h$.
Write $h=p^ja$ with $(a,p)=1$. Since $(R,p)=1$,
\[
                 R\mid a,\qquad Rp^j\leq h<N<Rp^i,
                         \qquad 0\leq j<i.
\]
Set $t=N-h>0$. Under
$\JJ^h/\JJ^N=\JJ^h|_{D_t}$, the section $u-1$
restricts to the generator $\lambda s^h$ on $D$.
It therefore trivializes $\JJ^h|_{D_t}$ by Nakayama's lemma,
as in Lemma~\ref{lem:section-filtration}. Thus
\[
 \mathcal L:=\JJ^{p^j}|_{D_t},\qquad
 \mathcal L|_D\simeq\OO_D,\qquad
                  \mathcal L^{\otimes a}\simeq\OO_{D_t}.
\]
The group $K_t=\ker(\Pic(D_t)\to\Pic(D))$ is $p$-primary:
if $p^b\geq t$, then $(1+\JJ/\JJ^t)^{p^b}=1$, and the
units sequence gives $p^bK_t=0$.
Since $[\mathcal L]\in K_t$ and $(a,p)=1$, one has
$\mathcal L\simeq\OO_{D_t}$.
The definition \eqref{eq:orders-definition} now gives
\[
 t\leq\ell_j\leq\ell_{i-1},\qquad
          h=N-t\geq(p-1)\ell_{i-1}.
\]
Consequently $u_a-1\in\JJ^{(p-1)\ell_{i-1}}$ for every $a$.
If $u=1$, then $u_a-1\in\JJ^N$, so the same inclusion holds.
In both cases
\[
 \begin{aligned}
 P_{i-1,a}^p-P_{i,a}&=P_{i-1,a}^p(1-u_a),\\
 \ord_{\JJ}(P_{i-1}^p-P_i)
    &\geq-np^i+(p-1)\ell_{i-1}\geq-np^{i-1},
 \end{aligned}
\]
where the last inequality uses $\ell_{i-1}\geq np^{i-1}$.
We have proved
\begin{equation}\label{eq:consecutive-pole-bound}
            \ord_{\JJ}(P_{i-1}^p-P_i)\geq-np^{i-1}.
\end{equation}

\smallskip
\noindent\emph{Step 4. Comparison with $P_0^{p^i}$.}
The characteristic-$p$ identity
\[
 T_i=P_0^{p^i}-P_i
       =\sum_{r=1}^i(P_{r-1}^p-P_r)^{p^{i-r}}
\]
is an equality of ordinary meromorphic zero-cochains.
For every $1\leq r\leq i$, \eqref{eq:consecutive-pole-bound}
gives
\[
 \ord_{\JJ}\bigl((P_{r-1}^p-P_r)^{p^{i-r}}\bigr)
                         \geq(-np^{r-1})p^{i-r}=-np^{i-1}.
\]
Hence $\ord_{\JJ}(T_i)\geq-np^{i-1}$.
Finally, $\ell_0=n$ and $\ell_i\geq np^i$ imply
\[
 dP_0\in C^1(\mathcal U,\OO_{\YY}),\qquad
 dP_i\in C^1(\mathcal U,\JJ^{\ell_i-np^i})
                         \subseteq C^1(\mathcal U,\OO_{\YY}).
\]
Therefore
$dT_i=(dP_0)^{p^i}-dP_i\in C^1(\mathcal U,\OO_{\YY})$,
which proves \eqref{eq:P-initial-comparison}.
\end{proof}

\Needspace{12\baselineskip}
\subsection{An ordinary cocycle obtained from two Witt cochains}

For $1\leq i\leq e$, set $j=i-1$.
The leading second coordinate of $d[P_j]$ will give $v_j$,
but its first coordinate is $dP_j$. Adding $d[T_j]$ changes
this first coordinate to $(dP_0)^{p^j}$ without changing the
leading second coordinate. We can therefore subtract the
Frobenius image of one regular Witt lift of $dP_0$.
The difference has first coordinate zero, so it uniquely
determines an ordinary cochain by the Witt sequence
\eqref{eq:witt-short-exact}. The following proof carries
out this construction and the two required order estimates.

\begin{lemma}\label{lem:extended-class}
For $1\leq i\leq e$, set $\rho_i=\ell_{i-1}-np^i$.
There exists
\begin{equation}\label{eq:extended-class}
 h_i\in H^1(D_{\ell_i+1},\JJ^{\rho_i}|_{D_{\ell_i+1}}),
 \qquad s^{-\rho_i}(h_i|_D)=v_{i-1}.
\end{equation}
\end{lemma}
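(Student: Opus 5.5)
We would follow the recipe announced at the start of this appendix. Fix $1\leq i\leq e$ and put $j=i-1$. The class $h_i$ will be the leading part of the second coordinate of a Witt $1$-cochain whose first coordinate vanishes. That cochain is the difference of two Witt $1$-cochains built from the meromorphic cochains $P_j,T_j,P_0$ of Lemma~\ref{lem:meromorphic-cochains}.

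\emph{Step 1: the cochain $\Xi$.} Form the meromorphic Witt $0$-cochain $[P_j]+[T_j]$ and its coboundary $d([P_j]+[T_j])$. Its first coordinate is $dP_j+dT_j=(dP_0)^{p^j}$; for $j=0$ we have $T_0=0$, and for $j\geq1$ this is the definition of $T_j$. By \eqref{eq:P-orders} the cochain $dP_0$ is regular. Its class on $D_{L_0}$, with $L_0=p(p-n)$, lies in the $\chi$-eigenspace, since $P_0$ has character $\mu^n=\chi$. So Lemma~\ref{lem:witt-lift} gives a Witt cocycle $\zeta=(dP_0,b)$ on the fixed covering, after lifting $b$ to a regular cochain on $\YY$. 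Set
\[
 \Xi=d([P_j]+[T_j])-F_W^j(\zeta).
\]
Its first coordinate is zero, so by the observation in Section~\ref{sec:witt-preliminaries} it equals $V_W(b_i)$ for a unique ordinary meromorphic $1$-cochain $b_i$.

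\emph{Step 2: the cocycle condition.} Since $d\circ d=0$ on Witt cochains and $d\zeta$ vanishes modulo $\JJ^{L_0}$, we get $V_W(db_i)=dV_W(b_i)=-F_W^j(d\zeta)$. Hence $db_i$ has order at least $p^{j}L_0$ coming from the error in $\zeta$, up to the $p$-power weighting of $F_W$ on second coordinates. We aim to show that this order exceeds $\rho_i+\ell_i$, using $\ell_i<Rp^i$ and $p>2R$. Then $b_i$, reduced modulo $\JJ^{\rho_i+\ell_i+1}$, is a cocycle with values in $\JJ^{\rho_i}/\JJ^{\rho_i+\ell_i+1}$. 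Its class is $h_i$, up to a nonzero constant fixed in Step~3.

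\emph{Step 3: the leading term.} Expand the second coordinate of $d[P_j]$ by the addition law, writing $[P_{j,b}]=[P_{j,a}+dP_j]$. The leading term is $-B_p(P_{j,a},(dP_j)_{ab})\equiv -P_{j,a}^{p-1}(dP_j)_{ab}$ to lowest order. By \eqref{eq:P-orders} this has order
\[
 (p-1)(-np^j)+(\ell_j-np^j)=\ell_j-np^{j+1}=\rho_i .
\]
On $D$ it restricts to $\pm s^{\rho_i}$ times a representative of $s^{np^j-\ell_j}dP_j$. By \eqref{eq:P-orders} the class of the latter is $v_j$. Dividing by the resulting sign then gives $s^{-\rho_i}(h_i|_D)=v_{j}=v_{i-1}$.

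\emph{Main obstacle.} The remaining contributions must have $\JJ$-order strictly greater than $\rho_i$. These are the cross terms in $d([P_j]+[T_j])$ involving $T_j$ (including $B_p(P_j,T_j)$-type terms), the higher terms of $B_p$, and the second coordinate $b^{p^j}$ of $F_W^j\zeta$. For $T_j$ we would use $\ord_\JJ(T_j)\geq -np^{j-1}$ and $dT_j$ regular from \eqref{eq:P-initial-comparison}. The term $b^{p^j}$ is regular, so it is harmless exactly when $\rho_i<0$. When $\rho_i\geq0$ we would instead use the refinement in Lemma~\ref{lem:high-order-class}: choose the representative of $dP_0$ in $\JJ^{p-n}/\JJ^p$, so that $b$ and its Frobenius powers acquire high order. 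Balancing all these estimates against the bounds $p\ell_{j}\leq \ell_i<Rp^i$ and $p>42$ is the step we expect to be hardest. We would first check the case $i=1$ by hand, where $T_0=0$, before attempting the general induction.
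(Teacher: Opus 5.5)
Your construction is the paper's own. It uses the same Witt lift $\zeta=(dP_0,\eta)$ from Lemma~\ref{lem:witt-lift}, the same ordinary cochain $b_i$ with $V_W(b_i)=d([P_j]+[T_j])-F_W^j\zeta$, and the same identity $db_i=-\epsilon^{p^j}$ with $\epsilon$ valued in $\JJ^{L_0}$. The leading term is also the same, $P_{j,a}^{p-1}(dP_j)_{ab}$, of order at least $\rho_i$.

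What you leave open is routine arithmetic, and the case split you worry about never arises. Indeed $\rho_i=\ell_{i-1}-np^i<0$ always, since $\ell_{i-1}<Rp^{i-1}<p^i\leq np^i$. So the regular term $\eta^{p^j}$ is harmless, and you should drop the fallback for $\rho_i\geq0$. It could not work anyway, for two reasons. First, $dP_0$ restricts to $v_0=n\theta_0\neq0$ on $D$, so its class has no representative with values in $\JJ^{p-n}$. Second, the first coordinate of $\zeta$ must be exactly $dP_0$ for the first coordinates of $\Xi$ to cancel.

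To finish the estimates, proceed as follows.
\begin{enumerate}
\item Use additivity of $d$ on Witt cochains, $d([P_j]+[T_j])=d[P_j]+d[T_j]$. The cross term is then $B_p(dP_j,dT_j)$, which is regular. Do not estimate $B_p(P_j,T_j)$, which has large poles.
\item For $j\geq1$, the second coordinate of $d[T_j]$ is $-B_p(T_{j,a},(dT_j)_{ab})$, of order at least $-n(p-1)p^{j-1}$. This exceeds $\rho_i$ because $\ell_j<Rp^j\leq(p-1)p^j<np^{j-1}(p^2-p+1)$. For $j=0$ this term vanishes since $T_0=0$.
\item The terms of $-B_p(P_{j,a},(dP_j)_{ab})$ of degree $h\geq2$ in $dP_j$ have order at least $\rho_i+(h-1)\ell_j>\rho_i$.
\item The cocycle condition $(p-n)p^i=p^jL_0\geq\rho_i+\ell_i+1$ is equivalent to $\ell_{i-1}+\ell_i<p^{i+1}$. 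This follows from $R<p$ alone; you do not need $p>2R$.
\end{enumerate}
Finally, $-B_p(x,u)\equiv x^{p-1}u\pmod{u^2}$, not $-x^{p-1}u$. So the sign is $+$ and no rescaling of $h_i$ is needed.
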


\begin{proof}
\noindent\emph{Step 1. A regular Witt cochain with controlled coboundary.}
Set $L_0=p(p-n)$ and $z=dP_0$.
By \eqref{eq:P-orders}, $z$ is a regular additive $1$-cocycle
of character $\mu^n$. Lemma~\ref{lem:witt-lift} gives a
Witt cocycle on $D_{L_0}$ whose first coordinate is the
specified reduction of $z$. Lift its second coordinate
to a regular $1$-cochain $\eta$ on $\mathcal U$, and set
\[
                    \zeta=(z,\eta)\in C^1(\mathcal U,W_2\OO_{\YY}).
\]
These lifts exist because the intersections of $\mathcal U$
are affine. Since $dz=0$ and $\zeta$ is a cocycle modulo
$\JJ^{L_0}$, there is a unique $2$-cochain $\epsilon$ with
\begin{equation}\label{eq:witt-cochain-error}
 d\zeta=V_W(\epsilon),\qquad
                     \epsilon\in C^2(\mathcal U,\JJ^{L_0}).
\end{equation}
This choice of $\zeta$ will be used for every $i$.

\smallskip
\noindent\emph{Step 2. An ordinary cochain and its coboundary.}
Fix $i$ and set
\[
 j=i-1,\qquad M=L_0p^j,\qquad
 \rho=\ell_j-np^{j+1}=\rho_i.
\]
The inequality $\ell_j<Rp^j<np^{j+1}$ gives $\rho<0$.
The meromorphic Witt cochain $d([P_j]+[T_j])$ and the
regular Witt cochain $F_W^j\zeta$ have the same first
coordinate:
\[
 d(P_j+T_j)=d(P_0^{p^j})=z^{p^j}
                            =\operatorname{pr}_1(F_W^j\zeta).
\]
The kernel of $\operatorname{pr}_1$ on $W_2\mathcal Q_{\YY}$
is $V_W\mathcal Q_{\YY}$. There is therefore a unique
$b_i\in C^1(\mathcal U,\mathcal Q_{\YY})$ such that
\begin{equation}\label{eq:ordinary-comparison-cochain}
             V_W(b_i)=d([P_j]+[T_j])-F_W^j\zeta.
\end{equation}
Taking coboundaries, and using $d^2=0$ and
\eqref{eq:witt-cochain-error}, gives
\[
 V_W(db_i)=-F_W^jV_W(\epsilon)=-V_W(\epsilon^{p^j}).
\]
The injectivity of $V_W$ consequently gives
\begin{equation}\label{eq:ordinary-comparison-coboundary}
                 db_i=-\epsilon^{p^j}\in C^2(\mathcal U,\JJ^M).
\end{equation}

\smallskip
\noindent\emph{Step 3. The pole order and the restriction to $D$.}
We show that
\begin{equation}\label{eq:ordinary-comparison-leading}
 \begin{gathered}
 b_i\in C^1(\mathcal U,\JJ^\rho),\\
 b_{i,ab}\equiv P_{j,a}^{p-1}(P_{j,b}-P_{j,a})
                                      \pmod{\JJ^{\rho+1}}.
 \end{gathered}
\end{equation}
Only the second coordinate of $d[P_j]$ contributes to
this congruence, as the following bounds show.
For meromorphic $x,u$, the Witt addition law gives
\begin{equation}\label{eq:witt-polynomial}
 \operatorname{pr}_2([x+u]-[x])=Q_p(x,u),\qquad
 Q_p(x,u)=\frac{(x+u)^p-x^p-u^p}{p}\in\Z[x,u].
\end{equation}
The polynomial is homogeneous of degree $p$, and
\[
                  Q_p(x,u)\equiv x^{p-1}u\pmod{u^2}.
\]
For $x=P_{j,a}$ and $u=P_{j,b}-P_{j,a}$,
\eqref{eq:P-orders} gives
\[
 \ord_{\JJ}(x^{p-h}u^h)
       \geq-np^{j+1}+h\ell_j
       =\rho+(h-1)\ell_j\qquad(1\leq h<p).
\]
As $\ell_j\geq1$, all terms with $h\geq2$ belong to
$\JJ^{\rho+1}$. Thus
\[
 \operatorname{pr}_2(d[P_j])_{ab}
   \equiv P_{j,a}^{p-1}(P_{j,b}-P_{j,a})
                                      \pmod{\JJ^{\rho+1}}.
\]

For $j\geq1$, $dT_j$ is regular and
$\ord_{\JJ}(T_j)\geq-np^{j-1}$. Applying the same polynomial
to $d[T_j]$ therefore gives
\[
 \ord_{\JJ}\bigl(\operatorname{pr}_2(d[T_j])\bigr)
       \geq-n(p-1)p^{j-1}>\ell_j-np^{j+1}=\rho.
\]
The strict inequality follows from
$\ell_j<Rp^j$ and
$Rp\leq p(p-1)<n(p^2-p+1)$.
For $j=0$, the assertion follows from $T_0=0$.
Both first coordinates $dP_j$ and $dT_j$ are regular,
so the cross term $B_p(dP_j,dT_j)$ is regular.
The second coordinate of $F_W^j\zeta$ is also regular.
Since $\rho<0$, these regular terms belong to
$\JJ^{\rho+1}$. Finally, the two cochains on the right of
\eqref{eq:ordinary-comparison-cochain} have identical first
coordinates, so their difference has second coordinate
equal to the ordinary difference of their second coordinates.
This proves \eqref{eq:ordinary-comparison-leading}.

\smallskip
\noindent\emph{Step 4. The class on $D_{\ell_i+1}$.}
Equations \eqref{eq:ordinary-comparison-coboundary} and
\eqref{eq:ordinary-comparison-leading} define a class
\[
       \widetilde h_i=[b_i]\in H^1(\JJ^\rho/\JJ^M).
\]
The leading term of $P_j$ is $s^{-np^j}$.
Since $-\rho-n(p-1)p^j=np^j-\ell_j$, formula
\eqref{eq:P-orders} yields
\[
 s^{-\rho}(\widetilde h_i|_D)
       =[s^{np^j-\ell_j}dP_j]=v_j.
\]
Moreover,
\[
 M-\rho=p^{i+1}-\ell_{i-1}>\ell_i,
\]
because
$\ell_{i-1}+\ell_i<Rp^{i-1}(p+1)<p^{i+1}$.
As these are integers, $M\geq\rho+\ell_i+1$.
Restrict $\widetilde h_i$ by the quotient map
\[
 \JJ^\rho/\JJ^M\longrightarrow
 \JJ^\rho/\JJ^{\rho+\ell_i+1}
            =\JJ^\rho|_{D_{\ell_i+1}}.
\]
Its image is the class $h_i$ in \eqref{eq:extended-class}.
\end{proof}

\bibliographystyle{amsalpha}
\bibliography{ATPC}

\end{document}